\documentclass[10pt]{article}
\usepackage[dvipdf]{graphicx}
\usepackage[latin1]{inputenc}
\usepackage{latexsym}
\usepackage{amsmath,amssymb,amsfonts,amsthm}
\usepackage{xcolor}
\usepackage{mathrsfs}
\usepackage{booktabs}
\usepackage{dsfont}
\usepackage{bigints}

\numberwithin{equation}{section}
\usepackage{paralist}
\usepackage{enumitem}

\usepackage[toc,page]{appendix}
\setcounter{tocdepth}{2}

\usepackage[numbers,comma,sort]{natbib}
\usepackage{stmaryrd}
\usepackage{tikz}
\definecolor{db}{RGB}{0, 0, 130}
\usepackage[colorlinks=true,citecolor=db,linkcolor=black,urlcolor=black,pdfstartview=FitH]{hyperref}

\usepackage{pdfsync}
\usepackage{breqn}

\definecolor{rp}{rgb}{0.25, 0, 0.75}
\definecolor{dg}{rgb}{0, 0.5, 0}

\newcommand{\R}{\mathbb{R}}

\newcommand{\N}{\mathbb{N}}
\newcommand{\EE}{\mathbb{E}}

\newcommand{\diff}{\, d}

\newcommand{\B}{\mathcal{B}}

\newcommand{\M}{\mathbf{M}}
\newcommand{\T}{\mathbb{T}}

\def \limbashaut#1#2#3{\mathrel{\mathop{\kern 0pt#1}\limits_{#2}^{#3}}}

\DeclareMathOperator{\Leb}{Leb} 

\makeatletter
\def\namedlabel#1#2{\begingroup
    #2%
    \def\@currentlabel{#2}%
    \phantomsection\label{#1}\endgroup
}
\makeatother

\makeatletter
\newcommand{\myitem}[1]{%
\item[#1]\protected@edef\@currentlabel{#1}%
}
\makeatother

\textheight = 23 cm 
\textwidth = 15 cm 
\footskip = 1 cm
\topmargin = 0 cm 
\headheight = 0 cm 
\headsep =0 cm 
\oddsidemargin= 0.7 cm 
\evensidemargin = 0.7 cm 
\marginparwidth =  3cm %
\marginparsep = 0 cm \topskip = 0 cm

\newtheorem{definition}{Definition}[section]
\newtheorem{theorem}[definition]{Theorem}
\newtheorem{prop}[definition]{Proposition}
\newtheorem{corollary}[definition]{Corollary}

\newtheorem{lemma}[definition]{Lemma}

\newtheorem{proposition}[definition]{Proposition}
\newtheorem{remark}[definition]{Remark}

\author{Ludovic Gouden\`ege\footnote{F\'ed\'eration de Math\'ematiques de CentraleSup\'elec, CNRS FR-3487; Universit\'e Paris-Saclay, France, \texttt{goudenege@math.cnrs.fr}. This work is supported by the SIMALIN project ANR-19-CE40-0016 and the SDAIM project ANR-22-CE40-0015 from the French National Research Agency.}  \and
El Mehdi Haress\footnote{Universit\'e Paris-Saclay, CentraleSup\'elec, MICS and CNRS FR-3487, France.} \textsuperscript{,}\footnote{\texttt{el-mehdi.haress@centralesupelec.fr}. EH acknowledges the support of the Labex Math\'ematique Hadamard.} \and 
Alexandre Richard$^\dagger$\textsuperscript{,}\footnote{\texttt{alexandre.richard@centralesupelec.fr}}}

\title{ \Large{\textbf{Numerical approximation of the stochastic heat equation with a distributional reaction term}}}

\begin{document}

\maketitle

\medskip

\begin{abstract}
We study the numerical approximation of the stochastic heat equation with a distributional reaction term. Under a condition on the Besov regularity of the reaction term, it was proven recently that a strong solution exists and is  unique in the pathwise sense, in a class of H\"older continuous processes. For a suitable choice of sequence $(b^k)_{k\in \N}$ approximating $b$, we prove that the error between the solution $u$ of the SPDE with reaction term $b$ and its tamed Euler finite-difference scheme with mollified drift $b^k$, converges to $0$ in $L^m(\Omega)$ with a rate that depends on the Besov regularity of $b$. In particular, one can consider two interesting cases: first, even when $b$ is only a (finite) measure, a rate of convergence is obtained. On the other hand, when $b$ is a bounded measurable function, the (almost) optimal rate of convergence $(\frac{1}{2}-\varepsilon)$-in space and $(\frac{1}{4}-\varepsilon)$-in time is achieved.

Stochastic sewing techniques are used in the proofs, in particular to deduce new regularising properties of the discrete Ornstein-Uhlenbeck process.
\end{abstract}

\noindent\textit{\textbf{Keywords and phrases:} Numerical approximation of SPDEs, regularisation by noise, stochastic sewing.} 

\medskip

\noindent\textbf{MSC2020 subject classification: } 65C30, 60H50, 60H15, 60H17, 30H25.

\section{Introduction}\label{sec:intro}
Numerical schemes for Stochastic Partial Differential Equations (SPDEs) have been extensively studied in the literature, see for instance the monographs~\cite{JentzenKloeden, kruse2012strong, LordPowellShardlow}.
In this paper, we aim at improving results on the numerical analysis of stochastic reaction-diffusion equations, a field with roots tracing back to Gy\"ongy's pioneering work in \cite{Gyongy:98, Gyongy:99}, where he introduced the first fully discrete numerical scheme.
Compared to Stochastic Differential Equations (SDEs), a numerical approximation of SPDEs involved both temporal and spatial discretization to describe a fully discrete scheme exhibiting convergence rate with respect to time and also with respect to space.
Gy\"ongy showed that a space-time finite differences approximation converges to the true solution under the condition that the reaction term, also known as the drift, remains a bounded measurable function.
Furthermore, he demonstrated that the scheme exhibits strong convergence rates of $1/4$ with respect to time and $1/2$ with respect to space, given that the drift is a Lipschitz continuous function.
Later, Davie and Gaines in \cite{davie2001convergence} proved that these rates are sharp.

Many results in the literature cover the numerical analysis of SPDEs with regular coefficients (see e.g. \cite{LordPowellShardlow,kruse2012strong,jentzen2011taylor}), and further works also explore situations with relaxed regularity assumptions on the coefficients, expanding into higher dimensions, exploring rougher initial conditions, and refining discretization techniques.
We recall that strong convergence usually refers to convergence in the mean-square sense,
see for instance \cite{GyongyMillet:05,GyongyNualart:95, GyongyNualart:97} for implicit scheme, \cite{LordTambue:13,anton2020fully} for exponential scheme, or \cite{Jentzen:11} for higher order scheme.
With respect to the approximation in space, we can cite \cite{KloedenShott:01} for Galerkin approximation or \cite{Wang:17} for finite element methods.
We refer also to \cite{Printems:01} for a statement about convergence in probability with a rate, 
and \cite{BrehierDebussche:17, ConusJentzenKurniawan:14, Debussche:11, JentzenKurniawan:15} for 
weak convergence, which refers to convergence in distribution.

\smallskip

In this paper, we consider the following $(1+1)-$stochastic heat equation with singular reaction term:
\begin{align}\label{eq:spde}
\left\{
    \begin{array}{ll}
    \partial_t u_t(x) = \Delta u_t(x) + b(u_t(x)) + \xi & \text{for } (t,x)\in (0,1] \times \mathbb{T}, \\
     u_0(x) = \psi_0(x) & \text{for } x\in\mathbb{T},
\end{array}
\right.
\end{align}
where $\mathbb{T}=\R \backslash \mathbb{Z}$ is the one-dimensional torus (in other words, we consider the equation with periodic boundary conditions), $b$ is a distribution in the Besov space $\mathcal{B}_{p,\infty}^\gamma(\mathbb{R}, \mathbb{R}) \equiv \mathcal{B}_{p}^\gamma$, $\gamma  \in \mathbb{R}$, $p \in [1,\infty]$, $\xi$ is a space-time white noise and $\psi_0 : \T \rightarrow \mathbb{R}$ is a bounded measurable function.
In \cite{athreya2020well}, Athreya, Butkovsky, L\^e and Mytnik showed that when $\gamma-1/p \geq -1$, Equation~\eqref{eq:spde} admits an adapted solution which is pathwise unique in a class of processes with a certain regularity. In particular for $\gamma < 0$, this equation is not well-posed in the standard sense, since in this case $b$ might only be a distribution. Hence the solution is constructed by mollification of the drift and by establishing Davie's type estimates \cite{Davie}, to then perform a tightness-stability argument on a sequence of approximate solutions. 
Sometimes combined with the recently introduced Stochastic Sewing Lemma (SSL) of \citet{le2020stochastic}, this approach has found a wide range of applications. For instance, it has been used to study well-posedness of rough SDEs \cite{FHL}, SDEs driven by fractional Brownian motion \cite{le2020stochastic,anzeletti2021regularisation,GaleatiGerencser,ButkovskyLeMytnik}, McKean-Vlasov SDEs \cite{GaleatiHarangMayorcas}, as well as to obtain path-by-path uniqueness of Stochastic Differential Equations (SDEs) with singular drift \cite{AnzelettiLeLing}. 
On the numerical side, sewing techniques have also permitted to obtain sharp strong error rates for the Euler scheme of singular SDEs \cite{le2021taming} and fractional SDEs \cite{butkovsky2021approximation,GHR2023}.
For the SPDE \eqref{eq:spde}, Butkovsky, Dareiotis and Gerencs\'er~\cite{butkovsky2021optimal} have recently extended Gy\"ongy's strong error rate to drifts which are merely bounded measurable functions.

\smallskip

Our goal in this paper is to further extend the numerical approximation of \eqref{eq:spde} to the whole regime of well-posedness established in \cite{athreya2020well}.
As in \cite{butkovsky2021optimal}, we want to define an explicit Euler  scheme with central finite-difference approximation in space. 
However, to deal with its singularity, the drift needs to be tamed in the  scheme and particular care needs to be paid in order to obtain convergence to $u$ with a suitable rate.

For $n \in \mathbb{N}\setminus\{0\}$, let $\delta x= (2n)^{-1}$ be a space step of a uniform discretization of $\mathbb{T}$ and let $h$ be a time step of a uniform discretization of $[0,1]$ of the form $h= c (2n)^{-2}$, where $c$ is a constant. Thus the following time and space grids are introduced:
\begin{align}\label{eq:time-space-grids}
\mathbb{T}_n=\left\{0,  (2n)^{-1}, \ldots,  (2n-1) (2n)^{-1}  \right\}, \quad \Lambda_{h}=\left\{0, h, 2 h, \ldots, \lfloor \frac{1}{h} \rfloor h \right \},
\end{align}
and are assumed throughout to satisfy the usual Courant-Friedrichs-Lewy (CFL) condition $c < \frac{1}{2}$.
This natural condition ensures stability of the scheme and its convergence towards the continuous equation as time step $h$ and space step $\delta x$ tend to $0$ together, i.e. when $n$ goes to infinity.
We now define the numerical scheme for $x \in \mathbb{T}_n$ and $t \in \Lambda_h$ as 
\begin{align}\label{eq:scheme-bounded-drift}
\left\{
\begin{array}{ll}
u_{t+h}^{n,k}(x) =u_{t}^{n,k}(x)+ h \Delta_{n} u_{t}^{n,k}(x)+h b^k (u_{t}^{n,k}(x))+n \xi_{n}(t, x) \\
u_0^{n,k} (x) = \psi_0 (x),
\end{array}
\right.
\end{align}
where $\Delta_n$ is the discrete Laplacian, $\xi_n$ is a time and space discretization of the noise $\xi$, which we will both define later, and $(b^k)_{k \in \mathbb{N}}$ is a sequence of smooth functions that approximates $b$.
Due to the explicit relation between $h$ and $n$ ($h= c (2n)^{-2}$), we denote the scheme by $u^{n,k}$ instead of $u^{h,n,k}$.
We prove that the moments of $u_t(x) - u^{n,k}_t(x)$ are controlled by
\begin{align}\label{eq:main-result-SDE-0}
\| b-b^k \|_{\mathcal{B}_p^{\gamma-1}} + (1+\|b^k\|_\infty) n^{-\frac{1}{2}+\varepsilon} +  (1+\|b^k\|_\infty)  \|b^k\|_{\mathcal{C}^1} n^{-1+\varepsilon},
\end{align}
 see Theorem \ref{thm:main-SPDE} for a detailed statement.
 We see that the error is decomposed into a stability error $\| b-b^k \|_{\mathcal{B}_p^{\gamma-1}}$ due to the taming, and the rest which corresponds to the optimal error of the scheme for a bounded drift (see \cite{butkovsky2021optimal}). The stability estimate in $\mathcal{B}_p^{\gamma-1}$ corresponds to the gap between the solution with drift $b$ and the one with drift $b^k$. It is similar to what was found for SDEs in \cite{GaleatiHarangMayorcas,GHR2023} and extends $L^\infty$ stability estimates of standard ODEs with Lipschitz coefficients.

Choosing an approximating sequence $(b^{k_n})_{n \in \mathbb{N}}$ which permits an explicit computation of the norms of $b^{k_n}$ and $b-b^{k_n}$, we obtain the following rate of convergence:
\begin{align*}
\sup_{\substack{t \in \Lambda_h \\ x \in \mathbb{T}_n}} \| u_t(x) - u_t^{n,k_{n}}(x) \|_{L^m(\Omega)} \leq 
C \, n^{-\alpha}
\end{align*}
with $\alpha= \frac{1}{2(1-\gamma+1/p)}$
when $\gamma-1/p>-1$ (sub-critical case). In particular, given a generic $b\in \mathcal{B}_{\infty}^\gamma$ and the error \eqref{eq:main-result-SDE-0}, we find that this rate cannot be improved (see Section~\ref{sec:optimality} for details). In the limit case $\gamma-1/p=-1$, which allows e.g. to choose $b$ as a finite measure, we obtain a non-explicit rate of convergence $\alpha>0$.

\medskip

We now explain briefly the ideas that are used to deal with integrals of singular drifts and their approximation. When comparing $u$ and $u^{n,k}$, we consider mild forms of the solution and of the scheme which introduce error terms of the form
\[
\int_0^t \int_{\T} p_{t-r}(x,y) b(u_r(y)) - p_{(t-r)_h}^n(x,y) b^k(u_{r_h}^{n,k}(y_n)) \, dy dr
\]
with $p_{t}$ the heat semigroup, $p_{t}^n$ the discrete heat semigroup, 
$r_h$ the projection of $r$ on $\Lambda_h$, and $y_n$ the projection of $y$ on $\mathbb{T}_n$.

First, since $b$ is not a smooth function, we rely on regularisation properties induced by the Ornstein-Uhlenbeck process.
Formally, freezing $u_r(y)-O_{r}(y)$ as the variable $z$, this means that we control moments of 
\begin{align*}
\int_s^t \int_{\T} p_{t-r}(x,y) b^k(z+O_{r}(y)) \, dy dr
\end{align*}
by $(t-s)^{\frac{1}{2}+\varepsilon}$ and a weak norm of the drift, namely Besov norm $\|b^k\|_{\mathcal{B}^\gamma_p}$.
This permits to take the limit $k\rightarrow +\infty$ to give a meaning to the integral even when $b$ is very singular.
The precise regularisation results are either taken from \cite{athreya2020well} or newly established lemmas presented in Section~\ref{sec:reg-O}.
The regularisation effect comes from the strong oscillations of the noise, which at the technical level are exploited by applying the Stochastic Sewing Lemma \cite{le2020stochastic}.

Secondly, in the numerical approximation procedure, we are led to establish new regularisation properties for the discrete and continuous Ornstein-Uhlenbeck processes. Namely, denoting by $O_{r_h}^n(y_n)$ the discrete Ornstein-Uhlenbeck process for $r_h \in \Lambda_h$ and $y_n\in \mathbb{T}_n$, we control moments of quantities of the form
\begin{align*}
\int_s^t \int_{\T} p_{t-r}(x,y) b^k(z+O_{r}(y)) - p_{(t-r)_h}^n(x,y)  b^k(z' + O_{r_h}^n(y_n)) \, dy dr
\end{align*}
by $z-z'$, $(t-s)^{\frac{1}{2}+\varepsilon}$, the Besov norm $\|b^k\|_{\mathcal{B}^\gamma_p}$, a negative power of $n$,
$\| b^k\|_{\mathcal{C}^1} n^{-1+\varepsilon}$
and
$\| b^k \|_{\infty} n^{-\frac{1}{2}+\varepsilon}$.
The last two terms are the price to pay when using discrete Ornstein-Uhlenbeck process
whose oscillations, at the small scales, are not as strong as those of the continuous Ornstein-Uhlenbeck process.

\smallskip

For future research, it could be interesting to extend this numerical study to various other numerical schemes, including implicit schemes to treat large time behavior, as well as studying weak error approximation.

\paragraph{Organisation of the paper.}
In Section~\ref{sec:notations}, we begin with definitions and notations, then recall the definition of a solution to \eqref{eq:spde} with distributional drift, and define the numerical scheme in Section~\ref{subsec:defsol&schema}. The main results are stated in Section \ref{sec:main-results}. In Section~\ref{sec:convergence}, we prove the convergence of the tamed Euler finite-difference scheme and obtain a rate of convergence, in both sub-critical and limit cases. The proof relies strongly on the regularisation lemmas for quantities involving the Ornstein-Uhlenbeck process, the discrete Ornstein-Uhlenbeck process and both, which are stated and proven respectively in Section~\ref{sec:reg-O}, Section~\ref{sec:reg-On-1} and Section~\ref{sec:reg-On}. In the Appendix \ref{app:general-lemmas}, we gather some technical lemmas on the continuous and discrete heat semigroups, on Besov estimates and we recall the Stochastic Sewing Lemma with a couple of corollaries. Eventually, in Appendix~\ref{app:B}, we state and prove a critical Gr\"onwall-type lemma for $2$-parameter functions that is used to quantify the error in the limit case.

\section{Framework and results}\label{sec:numerical-scheme}

\subsection{Notations and definitions}\label{sec:notations}

We define some notations that will be useful throughout the paper.

\begin{compactitem}

\item On $(\Omega, \mathcal{F},\mathbb{P})$ a probability space, denote by $\mathbb{F}=\left(\mathcal{F}_{t}\right)_{t \in[0,1]}$ a filtration satisfying the usual conditions.

\item Denote by $\mathbb{E}^{t}$ the conditional expectation given $\mathcal{F}_{t}$.

\item  An $\mathbb{R}$-valued random process $(u_t(x))_{\substack{t \in [0,1], x \in \mathbb{T}}}$ is said to be $\mathbb{F}$-adapted if for all $t$ in $[0,1]$ and $x$ in $\mathbb{T}$, $u_t(x)$ is $\mathcal{F}_{t}$-measurable.

\item The $L^m(\Omega)$ norm is denoted $\|\cdot \|_{L^m}$, and the space $L^m(\Omega)$ is simply denoted by $L^m$.

\item We encounter three different heat kernels in the article: the Gaussian kernel on $\mathbb{R}$, $
g_{t}(x)=\frac{1}{\sqrt{2 \pi} t} \exp \left(-\frac{x^{2}}{2 t}\right)$, the heat kernel on $\mathbb{T}$ associated with periodic boundary conditions $p_t(x,y)= \sum_{k \in \mathbb{Z}}  \frac{1}{\sqrt{2 \pi }t} \exp\Big(-\frac{(x-y + k)^2}{4t}\Big) = \sum_{k \in \mathbb{Z}} e^{-4\pi^2 k^2 t} e^{i 2 \pi k (x-y)}$, and the discrete heat kernel on $[0,1]$ which will be recalled later. We denote by $G_t$ and $P_t$ the respective convolutions with $g_t$ and $p_t$. That is, for any bounded measurable function $f$, we write $G_t f(x) = \int_{\R} g_t(x-y) f(y) d y$, $P_t f(x) = \int_{\T} p_t(x,y) f(y) d y $.

\item For a Borel-measurable function $f: \R \to \R$, we denote the $L^\infty$ and $\mathcal{C}^1$ norms of $f$ by $\|f \|_\infty = \sup_{x \in \R} |f(x)|$ and $\|f\|_{\mathcal{C}^1} = \| f \|_\infty + \sup_{x \neq y} \frac{|f(x)-f(y)|}{|x-y|}$. 

\item For any interval  $I$ of $[0,1]$, we denote the simplex $\Delta_{I}$ by
\begin{align*}
\Delta_{I} = \{ (s,t) \in I, s < t \}.
\end{align*}

\item  For a partition $\Pi$ of an interval, the mesh size is denoted by $|\Pi|$.

\item For a $2$-parameter function $A: \Delta_{[S, T]} \rightarrow \R$, define the $3$-increment as follows: for any $(s, u, t)$ such that $S \leq s \leq u \leq t \leq T$,
\begin{equation*}
\delta A_{s, u, t}:=A_{s, t}-A_{s, u}-A_{u, t} .
\end{equation*}

\item We use the classical definition of nonhomogeneous Besov spaces \cite{bahouri2011fourier}. We write $\mathcal{B}_p^\gamma$ for the space $\mathcal{B}_{p, \infty}^\gamma(\R)$. We recall that for $1\leq p_1 \leq p_2 \leq \infty$, the space $\mathcal{B}_{p_1}^\gamma$ is continuously embedded into $\mathcal{B}^{\gamma-1/p_1-1/p_2}_{p_2}$, which is written as ${\mathcal{B}_{p_1}^\gamma \hookrightarrow \mathcal{B}^{\gamma-1/p_1-1/p_2}_{p_2}}$, see e.g.  
\cite[Prop.~2.71]{bahouri2011fourier}.

\item C will denote a constant that may change from line to line. It does not depend on any parameter other than those specified in the associated result in which it is stated.

 \end{compactitem}

\subsection{Definition of solutions and the numerical scheme}\label{subsec:defsol&schema}
We explain now what we mean by a solution to \eqref{eq:spde} and define the numerical scheme.
\begin{definition} The white noise $\xi$ on $\T \times [0, 1]$ is defined as a mapping from the Borel sets $\mathscr{B}(\T\times [0, 1])$ to $L^{2}(\Omega)$ such that for any elements $A_{1}, \ldots, A_{k}$ of $\mathscr{B}(\T\times [0, 1])$, the vector $\left(\xi\left(A_{1}\right), \ldots, \xi\left(A_{k}\right)\right)$ has a Gaussian distribution with mean 0 and covariance $\mathbb{E}\left[\xi(A_{i})\, \xi(A_{j})\right]= \Leb(A_{i} \cap A_{j})$. We denote by $\mathbb{F}^\xi= (\mathcal{F}_t^\xi)_{t \in [0,1]}$ the filtration generated by $\xi$, i.e. $\mathcal{F}_t^{\xi}$ is the $\sigma$-algebra generated by $\{\xi^{-1}(A),~A \in \mathscr{B}(\T\times [0, t))\}$. 
\end{definition}
Denote by $O$ the space-time Ornstein-Uhlenbeck process and by $Q$ its variance:
\begin{align}\label{def:O-Q}
\begin{split}
O_t(x) & = \int_0^t \int_{\T} p_{t-r}(x,y) \, \xi(dy, dr) \\
Q(t) & = \EE O_t(x)^2 = \int_0^t \int_\T | p_{t-r}(x,y)|^2 dy dr = \int_0^t \int_\T |p_r(y,0)|^2 dy dr .
\end{split}
\end{align}
The \emph{mild} form associated to \eqref{eq:spde} is
\begin{align}\label{eq:spde-mild}
\forall t \in [0,1],~\forall x \in \T,~u_t(x) = P_t \psi_0 (x) + \int_0^t \int_{\T} p_{t-r}(x,y) b(u_r(y)) \, dy dr + O_t(x).
\end{align}
The mild form is not clearly defined when $b$ is a genuine distribution in some Besov space. To define a solution of \eqref{eq:spde-mild}, we approximate $b$ via a smooth sequence. 
\begin{definition}\label{def:conv-gamma-}
A sequence $(b^k)_{k \in \mathbb{N}}$ in the space $\mathcal{C}_{b}^\infty(\R)$ of smooth and bounded functions is said to converge to $b$ in $\mathcal{B}_p^{\gamma-}$ as $k$ goes to infinity if
\begin{align}\label{eq:conv-in-gamma-}
\left\{ 
\begin{array}{ll}
\sup_{k \in \mathbb{N}} \|b^k\|_{\B_p^\gamma} < \|b\|_{\B_p^\gamma} < \infty \\
\lim_{k \rightarrow \infty} \|b- b^k\|_{\B_p^{\gamma'}} = 0 & \forall \gamma' < \gamma.
\end{array}
\right.
\end{align}
\end{definition}
We now recall the definition a solution of \eqref{eq:spde} from \cite[Definition 2.3]{athreya2020well}.
\begin{definition}\label{def:true-solution}
A measurable process $(u_{t}(x) )_{t \in [0,1],\, x \in \mathbb{T}}$ is a strong solution to \eqref{eq:spde} if there exists a bounded measurable function $\psi_0: \T \to \R$ and a measurable process $v : [0,1] \times \T \times \Omega \rightarrow \mathbb{R}$ such that:
\begin{itemize}
\item[(1)] $u$ is adapted to $\mathbb{F}^\xi$;
\item[(2)] $u_t(x) = P_t \psi_0(x) + v_t(x) + O_t(x) \ \text{a.s},~\forall x \in \mathbb{T},~\forall t \in [0,1]$;
\item[(3)] For any sequence $(b^k)_{k \in \mathbb{N}}$ in $\mathcal{C}^\infty_{b}$ converging to $b$ in $\B_p^{\gamma-}$, we have 
\begin{align*}
\sup_{\substack{t \in [0,1] \\ x \in \mathbb{T}}} \left| \int_0^t \int_{\T} p_{t-r}(x,y) b^k(u_r(y)) \diff y \diff r - v_t(x) \right| \rightarrow 0 \ \text{in probability as } k \rightarrow \infty \ .
\end{align*}
\item[(4)] Almost surely, the process $u$ is continuous on $[0,1] \times \mathbb{T}$.
\end{itemize}
\end{definition}
When $b$ is a regular function, say $b \in \mathcal{C}^\gamma$ for some $\gamma>0$, we can choose $b^k$ to converge uniformly to $b$. Then the solution $u$ is equivalent to the usual notion of a mild solution. The numerical approximation we consider is an Euler in time and a finite-difference in space scheme.

\begin{definition}\label{def:scheme} Recall that $h=c(2n)^{-2}$ and the definitions of $\T_n$ and $\Lambda_h$ in \eqref{eq:time-space-grids}. For $k \in \mathbb{N}$, the numerical scheme is defined for $x \in \mathbb{T}_n$ and $t \in \Lambda_h$ as
\begin{align}\label{eq:numerical-scheme}
\left\{
\begin{array}{ll}
u_0^{n,k}(x) = \psi_0(x) \\
u_{t+h}^{n,k}(x)=u_{t}^{n,k}(x)+ h \Delta_{n} u_{t}^{n,k}(x)+ h b^k(u_{t}^{n,k}(x))+h \xi_{n}(x,t) ,
\end{array}
\right.
\end{align}
where $\Delta_n$ is the discrete Laplacian defined as
\begin{align*}%
\Delta_{n} f(x)=(2n)^2 \left( f(x+(2n)^{-1} )-2 f(x)+f(x-(2n)^{-1} ) \right),
\end{align*}
and the discrete noise is given by
\begin{align*}%
\xi_{n}(x,t)= (2n) h^{-1} \xi\left(\left[x, x+(2n)^{-1} \right]\times [t, t+h] \right) .
\end{align*}
\end{definition}

As for the continuous equation, we will rely on the mild formulation of the scheme. To do so, let $p^n_{t}$ be the discrete analogue of $p_{t}$ defined in \cite[Equation (2.11)]{butkovsky2021optimal} and let $P^n_{t}$ denote the discrete convolution $P^n_{t} f := \int_{\T} p^n_{t}(x,y) f(y_n) \, d y $. 
\begin{lemma}\label{def:discrete-kernel}
For $t \in [0,1]$ and $x \in \mathbb{T}$, denote by $t_h$ and $y_n$ the leftmost gridpoint from $t$ in $\Lambda_h$ and from $y$ in $\mathbb{T}_n$ respectively. For $k,n\in\N$, the process $u^{n,k}$ defined in \eqref{eq:numerical-scheme} on the domain $\T_{n}\times \Lambda_{h}$ extends to $[0,1]\times \T$ by the following mild formula:
\begin{align}\label{eq:discrete-mild}
u_{t}^{n,k}(x)=P_{t}^{n} \psi_0 (x)+\int_{0}^{t} \int_{\T} p_{(t-r)_h}^{n}(x,y) \, b^k\left(u_{r_h}^{n,k}(y_n)\right) dy dr+\int_{0}^{t} \int_{\T} p_{(t-r)_h}^{n}(x, y) \, \xi(dy, dr).
\end{align}
\end{lemma}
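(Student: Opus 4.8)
The plan is to verify the mild formula \eqref{eq:discrete-mild} by induction on the gridpoints of $\Lambda_h$, checking that the right-hand side of \eqref{eq:discrete-mild}, when restricted to $(t,x)\in\Lambda_h\times\T_n$, satisfies the recursion \eqref{eq:numerical-scheme}, and then observing that both the recursion and the extension off the grid are deterministic once the driving noise increments are fixed. First I would pin down the definition of $p^n_t$ from \cite[Equation (2.11)]{butkovsky2021optimal}: it is the fundamental solution of the semi-discrete heat equation $\partial_t p^n_t = \Delta_n p^n_t$ with $p^n_0$ the (rescaled) Kronecker delta on $\T_n$, so that on the grid $P^n_{t+h}f(x) = P^n_t f(x) + \int_t^{t+h}\Delta_n P^n_r f(x)\,dr$, but in the scheme we only ever evaluate $p^n$ at times in $\Lambda_h$, where the relevant identity degenerates to the \emph{discrete-in-time} relation $p^n_{(t+h)}(x,y) = p^n_t(x,y) + h\Delta_n p^n_t(x,y)$ for $t\in\Lambda_h$ — this is exactly the one-step Euler propagator, and it is the algebraic fact that makes the induction close.

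The core computation: assume \eqref{eq:discrete-mild} holds at time $t\in\Lambda_h$ for all $x\in\T_n$. Write the right-hand side at time $t+h$ and split each of the three integrals as $\int_0^t + \int_t^{t+h}$. On $[0,t]$ the integrands carry the factor $p^n_{(t+h-r)_h}(x,y)$; since $r<t$ and $t,h\in\Lambda_h$ we have $(t+h-r)_h = (t-r)_h + h$, so by the discrete propagator identity $p^n_{(t+h-r)_h} = p^n_{(t-r)_h} + h\Delta_n p^n_{(t-r)_h}$, and likewise $P^n_{t+h}\psi_0 = P^n_t\psi_0 + h\Delta_n P^n_t\psi_0$. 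Collecting the $p^n_{(t-r)_h}$-parts reconstitutes $u^{n,k}_t(x)$ by the induction hypothesis, and collecting the $h\Delta_n(\cdots)$-parts — using that $\Delta_n$ is a bounded linear operator that commutes with the spatial integrals — yields $h\Delta_n u^{n,k}_t(x)$. On $[t,t+h]$ one has $(t+h-r)_h = 0$ for $r\in(t,t+h)$, so $p^n_{(t+h-r)_h}(x,y) = p^n_0(x,y)$; integrating the drift term gives $\int_t^{t+h}\int_\T p^n_0(x,y)b^k(u^{n,k}_{r_h}(y_n))\,dy\,dr = h\,b^k(u^{n,k}_t(x))$ because $r_h = t$ on this interval and $\int_\T p^n_0(x,y)f(y_n)\,dy = f(x)$ for $x\in\T_n$ by the delta property; the stochastic term gives $\int_t^{t+h}\int_\T p^n_0(x,y)\,\xi(dy,dr)$, which by the definition of $p^n_0$ as a normalized indicator over the cell of width $(2n)^{-1}$ equals $h\,\xi_n(x,t)$ with $\xi_n$ as in Definition~\ref{def:scheme}. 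Summing the three pieces gives precisely the recursion \eqref{eq:numerical-scheme}, completing the induction; the base case $t=0$ is immediate since all three integrals vanish and $P^n_0\psi_0(x) = \psi_0(x)$ on $\T_n$. Finally, for $(t,x)\notin\Lambda_h\times\T_n$ the formula \eqref{eq:discrete-mild} \emph{is} the definition of the extension, so there is nothing further to prove — though I would remark that it is consistent in the sense that it only uses the already-computed grid values $u^{n,k}_{r_h}(y_n)$ in the drift integrand.

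The main obstacle is bookkeeping rather than conceptual: one must be careful that $(t+h-r)_h = (t-r)_h + h$ genuinely holds for all $r\in[0,t)$ when $t\in\Lambda_h$ (it does, since the floor function is translation-covariant along the lattice $h\Z$), and one must confirm that the normalization constants in $\xi_n$ and in $p^n_0$ match — i.e. that $\int_\T p^n_0(x,y)\,\xi(dy,dr)$ integrated over a time-slab of length $h$ reproduces the $h\,\xi_n(x,t)$ appearing in \eqref{eq:numerical-scheme}, which is where the precise form of $p^n$ from \cite{butkovsky2021optimal} and the factor $(2n)h^{-1}$ in the discrete noise have to be reconciled. I would also double-check that the CFL condition $c<\tfrac12$ is not needed here (it is needed for stability estimates later, but the mild representation is a purely algebraic identity valid for any $c$). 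Everything else is a routine linearity-and-Fubini argument, legitimate because all integrals are over bounded domains with square-integrable integrands.
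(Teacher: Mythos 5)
Your proof is correct. The paper does not actually reproduce an argument for this lemma --- it defers to \cite[Section 2]{butkovsky2021optimal} --- and your induction on the gridpoints of $\Lambda_h$, using the one-step propagator identity $p^n_{s+h}=p^n_s+h\Delta_n p^n_s$ together with the splitting $\int_0^{t+h}=\int_0^t+\int_t^{t+h}$ and the delta/normalisation properties of $p^n_0$, is precisely the standard verification carried out there (with $b$ replaced by the mollified $b^k$, which changes nothing in the algebra). Your two flagged checkpoints --- the lattice-covariance of $(\cdot)_h$ and the matching of the $(2n)h^{-1}$ normalisation of $\xi_n$ against $\int_t^{t+h}\int_\T p^n_0(x,y)\,\xi(dy,dr)$ --- are exactly the points that need care, and both work out; the CFL condition is indeed not used here.
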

We refer to \cite[Section 2]{butkovsky2021optimal} for a proof of the previous lemma, as we follow the same discretisation, except that we further mollify the drift.
The following CFL condition is imposed in the whole paper:
\begin{align*}
c = h (2n)^2 \equiv \frac{\delta t}{(\delta x)^2} < \frac{1}{2}.
\end{align*}
This determines the behaviour of the eigenvalues of $(\text{Id} - \delta t \Delta_{n})$ (see \cite[Lemma 2.1.5]{butkovsky2021optimal}), and ensures good stability properties of $p^n$ (see Lemma \ref{lem:reg-Pnh}) and its convergence towards $p$ (see Lemma \ref{lem:P-Pn}).

Let also define $O^n$ the discrete Ornstein-Uhlenbeck process and $Q^n$ its variance:
\begin{align}\label{def:discrete-OU}
\begin{split}
O_{t}^{n}(x) &  :=\int_{0}^{t} \int_{\T} p_{(t-r)_h}^{n}(x, y)\, \xi(d y, d r) \\
Q^n(t) & := \int_0^t \int_\T | p^n_{r_h}(x,y) |^2 dy dr.
\end{split}
\end{align}
To see that $Q^n$ does not depend on $x$ we refer to its definition in \cite[page 1121]{butkovsky2021optimal}.

\subsection{Main results}\label{sec:main-results}
In \cite[Theorem 2.6$(ii)$]{athreya2020well}, the authors prove that if $\gamma-1/p \geq -1$ and $\gamma>-1$, there exists a unique strong solution to \eqref{eq:spde} such that for any $m \ge 2$,
\begin{align}\label{eq:holder-reg-sol}
\sup_{\substack{(s,t) \in \Delta_{[0,1]} \\  x \in \T}} \frac{\| \left( \EE^s | u_t(x)-O_t(x) - P_{t-s}(u_s(x)-O_s(x)) |^m  \right)^{\frac{1}{m}}\|_{L^\infty}}{|t-s|^{\frac{3}{4}}} < \infty .
\end{align}
The following proposition completes the pathwise uniqueness to \eqref{eq:spde} proven in \cite[Proposition 3.6]{athreya2020well}, in the sub-critical regime.

\begin{prop}\label{cor:extend-uniqueness}
Let $\gamma \in \R$, $p \in [1, \infty]$ such that $\gamma-1/p > -1$. Let $\eta \in (0,1)$, then pathwise uniqueness for \eqref{eq:spde} holds in the class of solutions $u$ such that
\begin{align}\label{eq:weak-reg-sol}
\sup_{\substack{(s,t) \in \Delta_{[0,1]} \\  x \in \T}} \frac{\| \left( \EE^s | u_t(x)-O_t(x) - P_{t-s}(u_s(x)-O_s(x)) |^2  \right)^{\frac{1}{2}}\|_{L^\infty}}{|t-s|^{\frac{1}{4}(1-\gamma+\frac{1}{p})+\eta}} < \infty .
\end{align}
\end{prop}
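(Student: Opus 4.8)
The plan is to mirror the uniqueness proof of \cite[Proposition 3.6]{athreya2020well}, but to track carefully which Hölder exponent in time is actually needed for the stochastic sewing argument to close, and to verify that the weaker exponent $\frac14(1-\gamma+\frac1p)+\eta$ suffices in the sub-critical regime $\gamma-\frac1p>-1$. Concretely, suppose $u$ and $\bar u$ are two solutions in the class \eqref{eq:weak-reg-sol} with the same noise $\xi$ and same initial condition $\psi_0$. Since both have the form $P_t\psi_0+v_\cdot+O_\cdot$ with the same $P_t\psi_0$ and the same $O$, the difference $w_t(x):=u_t(x)-\bar u_t(x)=v_t(x)-\bar v_t(x)$ is a continuous, adapted process, vanishing at $t=0$. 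The goal is to show $w\equiv 0$. Write $w$ via the mild formulation as the space-time convolution of the heat kernel against $b(u_r(y))-b(\bar u_r(y))$, which a priori only makes sense as the limit of the mollified expressions from Definition~\ref{def:true-solution}(3).

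First I would fix a mollifying sequence $(b^k)$ converging to $b$ in $\B_p^{\gamma-}$ and study, for each $k$, the two-parameter process
\begin{align*}
A_{s,t} = \int_s^t \int_{\T} p_{t-r}(x,y)\,\bigl(b^k(u_r(y))-b^k(\bar u_r(y))\bigr)\,dy\,dr,
\end{align*}
with the natural germ $\EE^s A_{s,t}$, and pass to the limit $k\to\infty$ using the regularisation lemmas of Section~\ref{sec:reg-O}. The key point is the quantitative bound for this germ: exploiting that $u_r(y)-O_r(y)$ and $\bar u_r(y)-O_r(y)$ are, conditionally on $\mathcal F_s$, controlled in the norm of \eqref{eq:weak-reg-sol} on intervals of length $|r-s|$, one gains a factor of $|t-s|^{\frac12+\varepsilon}$ together with $\|b^k\|_{\B_p^\gamma}$, times the conditional $L^\infty$-norm of $w$ over $[s,t]$; the H\"older regularity \eqref{eq:weak-reg-sol} is precisely what is needed to estimate the $z$-dependence (the difference $u_r-\bar u_r$ enters as the "$z-z'$" variable in the regularisation lemmas) with the worse-than-$\frac34$ but still summable exponent. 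The standard Stochastic Sewing Lemma with a remainder term then gives, for $|t-s|$ small,
\begin{align*}
\Bigl\| \EE^s |w_t(x)-w_s(x)|^2 \Bigr\|_{L^\infty}^{1/2} \lesssim |t-s|^{\frac14(1-\gamma+\frac1p)+\eta}\,\sup_{\substack{r\in[s,t]\\ y\in\T}} \|w_r(y)\|_{L^\infty(\Omega)} + (\text{lower order}),
\end{align*}
i.e. one recovers that the \emph{difference} $w$ itself lies in the class \eqref{eq:weak-reg-sol} with a constant proportional to its own sup-norm and a small-time prefactor.

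From there the argument is a Grönwall-type bootstrap. Dividing $[0,1]$ into finitely many subintervals of length $\tau$ small enough that the prefactor above is $\le \tfrac12$, one gets $\sup_{[0,\tau]}\|w_r(y)\|_{L^\infty(\Omega)}\le \tfrac12\sup_{[0,\tau]}\|w_r(y)\|_{L^\infty(\Omega)}+C\|w_0\|_\infty=0$ since $w_0=0$; iterating over the successive subintervals (using the value at the left endpoint as the new "initial condition", which is $0$ by induction) yields $w\equiv 0$ on $[0,1]$, hence pathwise uniqueness. I expect the main obstacle to be the bookkeeping in the germ estimate: one must show that the $|t-s|$-power coming out of the combined use of (i) the regularising effect of $O$ via stochastic sewing and (ii) the a priori regularity \eqref{eq:weak-reg-sol} of the solutions is strictly larger than $1$ inside the sewing (so the remainder term is controllable) while producing exactly the exponent $\frac14(1-\gamma+\frac1p)+\eta$ on the output side; this is where the sub-criticality $\gamma-\frac1p>-1$ is used, and where one must be careful that the constant $\eta>0$ (not just $\eta=0$) is what makes the relevant exponents strict. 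A secondary technical point is justifying the $k\to\infty$ limit uniformly enough to transfer the bound from $A_{s,t}$ to the genuine $w$, which follows from Definition~\ref{def:true-solution}(3) together with the uniform-in-$k$ Besov bound in Definition~\ref{def:conv-gamma-}.
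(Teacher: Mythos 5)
Your overall strategy (take two solutions in the class \eqref{eq:weak-reg-sol}, form the difference, and close a stochastic-sewing/Gr\"onwall loop directly at the weak regularity level) is not the route the paper takes, and as written it does not close for all $\eta\in(0,1)$. The paper's (omitted) proof is a two-step bootstrap: first show that any solution satisfying \eqref{eq:weak-reg-sol} automatically satisfies the stronger bound \eqref{eq:holder-reg-sol} with exponent $\frac34$, by applying the regularisation estimate of Proposition~\ref{prop:drift-approx}\ref{item:4.1(a)} with $\beta=\gamma$ and $\tau=\frac14(1-\gamma+\frac1p)+\eta$ (the hypothesis $\frac14(\gamma-\frac1p-1)+\tau>0$ there is exactly $\eta>0$, and the output exponent is $\min\bigl(1+\frac14(\gamma-\frac1p),\,1+\eta\bigr)>\frac34$ in the sub-critical regime); then invoke the already-known pathwise uniqueness in the class \eqref{eq:holder-reg-sol}.

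The gap in your direct argument is in the germ estimate. In the key bound on $\EE^s\delta A_{s,u,t}$ for the difference of two solutions (cf.\ the term $J_1$ in the proof of Proposition~\ref{prop:ssl-o-on-2}), the second-order difference of $b^k$ at the four shifted arguments is controlled via Lemma~\ref{lem:besov-spaces}$(iii)$ by $\|b^k\|_{\mathcal{B}_p^\gamma}\,|\EE^u\psi_r-\EE^s\psi_r|\,|\EE^s\psi_r-\EE^s\phi_r|$ together with the factor $(r-u)^{\frac14(\gamma-2-\frac1p)}$ from Lemma~\ref{lem:reg-O}; the factor $|\EE^u\psi_r-\EE^s\psi_r|$ must be estimated using the H\"older regularity of one solution \emph{individually}. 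If that solution is only known to lie in the class \eqref{eq:weak-reg-sol}, the resulting time exponent is $\frac12+\tau+\frac14(\gamma-\frac1p)=\frac34+\eta$, which exceeds $1$ only when $\eta>\frac14$. (Using only first-order differences of $b^k$ gives the even worse exponent $\frac34+\frac14(\gamma-\frac1p)$, and lowering the power on $|\psi-\phi|$ below $1$ to compensate destroys the Gr\"onwall step, since an Osgood modulus $x^{\alpha_1}$ with $\alpha_1<1$ does not yield uniqueness.) So for $\eta\le\frac14$ your sewing input does not satisfy \eqref{eq:condsew1}: the role of $\eta>0$ is to make the \emph{bootstrap} work, not the direct uniqueness estimate. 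Once \eqref{eq:weak-reg-sol} has been promoted to \eqref{eq:holder-reg-sol}, your difference-plus-Gr\"onwall argument is the standard one and does close; that preliminary upgrade is the missing ingredient.
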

The proof is based on showing that \eqref{eq:weak-reg-sol} implies \eqref{eq:holder-reg-sol}, and then concludes by uniqueness under \eqref{eq:holder-reg-sol}. We omit it as it follows the same lines as \cite[Appendix B]{GHR2023}.

\smallskip

Our main result is the convergence of the tamed Euler finite-difference scheme $u^{n,k}$ towards this solution $u$, see Theorem \ref{thm:main-SPDE}. It provides  an extension of \cite[Theorem 1.3.2]{butkovsky2021optimal}, which covered drifts in $L^\infty(\R)$, to drifts with negative regularity, and in particular to distributions.

\begin{theorem}\label{thm:main-SPDE} 
Let $\gamma \in \mathbb{R}$, $p \in [1,\infty]$ be such that
\begin{align}\label{eq:cond-gamma-p-H}
0 > \gamma - \frac{1}{p}\geq -1~ \text{ and } ~ \gamma > -1. \tag{H1}
\end{align}
Let $b \in \mathcal{B}_p^\gamma$, $m \in [2, \infty)$, $\varepsilon \in (0,1/2)$ and let $\psi_0 \in \mathcal{C}^{\frac{1}{2}-\varepsilon} (\T, \mathbb{R})$. Let $u$ be the strong solution to \eqref{eq:spde} that satisfies \eqref{eq:holder-reg-sol}. Let $(b^k)_{k\in \N}$ be a sequence of smooth functions that converges to $b$ in $\mathcal{B}_p^{\gamma-}$ (in the sense of Definition~\ref{def:conv-gamma-}) and $(u^{n,k})_{n \in \N, k \in \mathbb{N}}$ be the tamed Euler finite-difference scheme defined in \eqref{def:scheme}, on the same probability space and with the same space-time white noise $\xi$ as $u$.

\begin{enumerate}[label=(\alph*)]
\item[(a)] \underline{The sub-critical case}: Let $0>\gamma-1/p>-1$. Then there exists a constant $C$ that depends on $m, p, \gamma, \varepsilon,  \|b\|_{\mathcal{B}_p^{\gamma}}, \| \psi_0 \|_{\mathcal{C}^{\frac{1}{2}-\varepsilon}}$ such that for all $n\in \N^*$ and $k \in \mathbb{N}$, the following bound holds: 
\begin{align}\label{eq:main-result-SDE}
\begin{split}
& \sup_{t \in [0,1], x \in \mathbb{T}} \| u_t(x)-u^{n,k}_t(x) \|_{L^m} \\ 
&  \leq C  \left( \| b-b^k \|_{\mathcal{B}_p^{\gamma-1}} + (1+\|b^k\|_\infty) n^{-\frac{1}{2}+\varepsilon} +  (1+\|b^k\|_\infty)  \|b^k\|_{\mathcal{C}^1} n^{-1+\varepsilon} \right).
\end{split}
\end{align}

\item[(b)] \underline{The limit case}: Let $\gamma-1/p=-1$ and $p<+\infty$. Let $\M$ be the constant given by Proposition~\ref{prop:bound-E1-critic} and  $\delta \in (0, e^{-\M})$.  Let $\mathcal{D}$ be a subset of $\mathbb{N}^2$ such that 
\begin{align}\label{eq:assump-bn-bounded}
\sup_{(n,k) \in \mathcal{D}} \| b^{k} \|_\infty n^{-\frac{1}{2}+\varepsilon} < \infty ~\text{and}~ \sup_{(n,k) \in \mathcal{D}} \| b^{k} \|_{\mathcal{C}^1} n^{-1} < \infty. \tag{H2}
\end{align}
Then there exists a constant $C$ that depends on $m, p, \gamma, \delta, \varepsilon, \|b\|_{\mathcal{B}_p^{\gamma}}, \| \psi_0 \|_{\mathcal{C}^{\frac{1}{2}-\varepsilon}}$ such that for all $(n,k) \in \mathcal{D}$, the following bound holds:
\begin{equation*}%
\begin{split}
& \sup_{t \in [0,1], x \in \mathbb{T}} \| u_t(x)-u^{n,k}_t(x) \|_{L^m} \\ 
&  \leq C \Big(  \| b - b^k \|_{\mathcal{B}_p^{\gamma-1}} (1+ |\log \| b - b^k \|_{\mathcal{B}_p^{\gamma-1}} |) + (1+\| b^k \|_{\infty}) n^{-\frac{1}{2}+\varepsilon} \\ 
& \quad + (1+\| b^k \|_{\infty} ) \| b^k \|_{\mathcal{C}^1}   n^{-1+\varepsilon}  \Big)^{e^{-\M}-\delta}.
\end{split}
\end{equation*}

\end{enumerate}

\end{theorem}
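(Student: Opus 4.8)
## Proof Strategy for Theorem \ref{thm:main-SPDE}

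The plan is to treat both cases (a) and (b) together up to a point: compare $u$ and $u^{n,k}$ via their mild formulations \eqref{eq:spde-mild} and \eqref{eq:discrete-mild}, and set up a self-referential (Gr\"onwall-type) inequality for the two-parameter quantity measuring the error. First I would write, for $(s,t)\in\Delta_{[0,1]}$ and $x\in\T$,
\begin{align*}
A_{s,t}(x) := \int_s^t \int_{\T} p_{t-r}(x,y)\, b(u_r(y))\,dy\,dr - \int_s^t \int_{\T} p^n_{(t-r)_h}(x,y)\, b^k\big(u^{n,k}_{r_h}(y_n)\big)\,dy\,dr,
\end{align*}
so that $u_t(x) - u^{n,k}_t(x) = \big(P_t\psi_0(x) - P^n_t\psi_0(x)\big) + A_{0,t}(x)$, since the white-noise integrals coincide in neither case — wait, they differ by $O_t(x) - O^n_t(x)$, so the error is $\big(P_t\psi_0 - P^n_t\psi_0\big)(x) + A_{0,t}(x) + \big(O_t(x) - O^n_t(x)\big)$. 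The deterministic semigroup discrepancy is controlled by Lemma \ref{lem:P-Pn} (it contributes an $n^{-1/2+\varepsilon}$-type term, using $\psi_0 \in \mathcal{C}^{1/2-\varepsilon}$), and $\|O_t - O^n_t\|_{L^m}$ is handled by the regularisation lemmas of Sections \ref{sec:reg-O}--\ref{sec:reg-On} (this is where the $n^{-1/2+\varepsilon}$ and the $\|b^k\|_{\mathcal{C}^1} n^{-1+\varepsilon}$ losses from the coarse discrete O–U oscillations enter). The heart of the matter is bounding $A_{s,t}$.

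To bound $A_{s,t}$ I would apply the Stochastic Sewing Lemma. Split $A$ into three pieces: (i) the taming/stability term, replacing $b$ by $b^k$ in the continuous integral, whose contribution is governed by $\|b-b^k\|_{\mathcal{B}_p^{\gamma-1}}$ — this uses the regularisation estimate controlling $\int_s^t \int_\T p_{t-r}(x,y)\,(b-b^k)(z+O_r(y))\,dy\,dr$ by $(t-s)^{1/2+\varepsilon}\|b-b^k\|_{\mathcal{B}_p^{\gamma-1}}$ after freezing $z = u_r(y) - O_r(y)$ and using the Hölder regularity \eqref{eq:holder-reg-sol}; (ii) the discretisation term, replacing the continuous heat kernel/continuous O–U by their discrete analogues with the same drift $b^k$ and argument $z + O^n_{r_h}(y_n)$, controlled by the Section \ref{sec:reg-On} lemma, producing the $n^{-1/2+\varepsilon}$, $n^{-1+\varepsilon}\|b^k\|_{\mathcal{C}^1}$ and $\|b^k\|_{\mathcal{B}_p^\gamma}$ contributions; (iii) the "genuine error propagation" term, where the remaining difference is $b^k(z+O^n_{r_h}(y_n)) - b^k(z'+O^n_{r_h}(y_n))$ with $z' = u^{n,k}_{r_h}(y_n) - O^n_{r_h}(y_n)$, controlled Lipschitz-style in $z-z'$ by the same lemma — and $z-z'$ is itself bounded in terms of $A$ plus the already-estimated semigroup/O–U discrepancies, closing the loop. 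Applying SSL to the process $r\mapsto A_{s,r}$ requires checking the two SSL hypotheses: the conditional-expectation bound on $\mathbb{E}^s[\delta A_{s,u,t}]$ (which gains the crucial extra exponent via the martingale structure of the O–U increments) and the $L^m$ bound on $\delta A_{s,u,t}$ itself; both reduce to the regularisation lemmas applied with the frozen variables.

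This yields an inequality of the schematic form
\begin{align*}
\Psi_{s,t} := \sup_{x}\|u_t(x) - u^{n,k}_t(x)\|_{L^m}\Big|_{\text{localized on }[s,t]} \;\le\; C\,\mathcal{E}_{n,k} + C\int_s^t (t-r)^{-1/2}\,\Psi_{s,r}\,\text{(sup over }r\text{)}\,dr \;+\;\dots,
\end{align*}
where $\mathcal{E}_{n,k}$ denotes the bracketed right-hand side of \eqref{eq:main-result-SDE}. In the sub-critical case the time-exponent gain $1/2+\varepsilon$ (strictly above $1/2$, since $\gamma - 1/p > -1$) makes the kernel $(t-r)^{-1/2+\varepsilon}$ integrable with a genuinely contractive constant on short intervals, so a standard Gr\"onwall/iteration over a partition of $[0,1]$ of fixed mesh closes the argument and gives \eqref{eq:main-result-SDE} directly. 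In the limit case $\gamma - 1/p = -1$ the gain degenerates to exactly $1/2$ (up to $\varepsilon$), the self-referential constant $\M$ from Proposition \ref{prop:bound-E1-critic} is no longer $<1$, and the naive Gr\"onwall fails; instead I would invoke the critical two-parameter Gr\"onwall lemma of Appendix \ref{app:B}, which converts an inequality with a borderline-non-integrable feedback into a bound of the form $(\mathcal{E}_{n,k})^{e^{-\M}-\delta}$ with the logarithmic correction on the stability term, exactly as stated — here the role of assumption \eqref{eq:assump-bn-bounded} is to guarantee the relevant quantities stay bounded so that the log-correction and the exponent manipulation are legitimate. The main obstacle, and the step requiring the most care, is the SSL application in piece (iii): one must verify that the frozen-variable substitution is justified (measurability/adaptedness of $z = u_r(y) - O_r(y)$ and $z' = u^{n,k}_{r_h}(y_n) - O^n_{r_h}(y_n)$ with respect to $\mathcal{F}_s$ up to a controllable remainder), and track precisely how the $n$-dependent losses from the discrete O–U regularisation lemma propagate through the sewing bound without being amplified by the Gr\"onwall iteration — in the limit case this propagation is what forces the final exponent $e^{-\M}-\delta$ rather than $1$.
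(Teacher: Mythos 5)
Your strategy is essentially the paper's own proof: your three-way split of the drift-integral difference into a taming term, a discretisation term, and an error-propagation term corresponds exactly to the paper's decomposition $\mathcal{E}^{n,k}=V^k+\mathcal{E}^{1,n,k}+\mathcal{E}^{2,n,k}+\mathcal{E}^{3,n,k}$, each handled by stochastic sewing with frozen variables, closed by a short-interval Gr\"onwall iteration in the sub-critical case and by the critical Gr\"onwall lemma of Appendix~\ref{app:B} in the limit case. The only bookkeeping difference is that the paper runs the self-referential estimate on the H\"older seminorm $[\mathcal{E}^{n,k}]_{\mathcal{C}^{1/2,0}_{[S,T],x}L^m}$ rather than on a supremum with a singular kernel, since the sewing step genuinely requires the $1/2$-H\"older time regularity of the error; this is a technical refinement of, not a departure from, your plan.
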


\begin{remark}
The constant $\M$ that appears in the limit case is not explicitly known. Since it determines the rate of convergence in the limit case, we name it so as to keep track of it in the paper.
\end{remark}

As we have in mind that $b$ is a distribution, we expect $\|b^k\|_\infty$ and $\|b^k\|_{\mathcal{C}^1}$ to diverge as $k\to \infty$. 
Hence we are careful when choosing $(b^k)_{k \in \mathbb{N}}$ in order to get a rate of convergence.

Choosing $b^k=G_{\frac{1}{k}}b$ for $k \in \mathbb{N}^*$, owing to Lemma \ref{eq:reg-S} it comes that for $\gamma-1/p<0$,
\begin{align}
&\| b - b^k \|_{\mathcal{B}_p^{\gamma-1}}  \leq C\, \|b\|_{\mathcal{B}_p^\gamma} \  k^{-\frac{1}{2}},  \label{eq:bn-b} \\
&\|b^k\|_\infty  \leq C\, \|b\|_{\mathcal{B}_p^\gamma}  \ k^{-\frac{1}{2}(\gamma-\frac{1}{p} )}, \label{eq:bn-inf} \\
&\| b^k \|_{\mathcal{C}^1}  \leq C\, \|b\|_{\mathcal{B}_p^\gamma}  \, k^{\frac{1}{2}} \ k^{-\frac{1}{2}(\gamma-\frac{1}{p})} \label{eq:bn-C1} .
\end{align}
Using these results in \eqref{eq:main-result-SDE} and optimising over $n$ and $k$, we deduce the following corollary. The optimality of this choice is discussed in Section~\ref{sec:optimality}.

\begin{corollary}\label{cor:bn-choice}
Let the same assumptions and notations as in Theorem \ref{thm:main-SPDE} hold. Let $n \in \N^*$ and define
 \begin{equation*}
k_n = \lfloor n^{\frac{1}{1-\gamma+\frac{1}{p}}}\rfloor  ~~\mbox{and}~~ b^{k_n}=G_{\frac{1}{k_n}} b. 
 \end{equation*}
\begin{enumerate}[label=(\alph*)]
\item \underline{The sub-critical case}: assume that $0>\gamma-1/p>-1$. 
Then there exists a constant $C$ that depends on $ m, p, \gamma, \varepsilon,  \|b\|_{\mathcal{B}_p^{\gamma}},\| \psi_0 \|_{\mathcal{C}^{\frac{1}{2}-\varepsilon}}$ such that the following bound holds:
\begin{align}
 \sup_{t \in [0,1], x \in \mathbb{T}} \| u_t(x)-u^{n,k_n}_t(x) \|_{L^m} &  \leq C \, n^{-\frac{1}{2(1-\gamma+\frac{1}{p})}-\varepsilon}, \label{eq:rate1}
\end{align}

\item \underline{The limit case}: assume that $\gamma-1/p=-1$ and $p<+\infty$. Let $\M$ be the constant given by Proposition \ref{prop:bound-E1-critic} and let $\delta \in (0, e^{-\M})$. Then there exists a constant $C$ that depends on $m, p, \gamma, \|b\|_{\mathcal{B}_p^{\gamma}}, \| \psi_0 \|_{\mathcal{C}^{\frac{1}{2}-\varepsilon}}, \delta$ such that the following bound holds: 
\begin{align}
\begin{split}
\sup_{t \in [0,1], x \in \mathbb{T}} \| u_t(x)-u^{n,k_n}_t(x) \|_{L^m}  &  \leq C \, n^{-\frac{1}{4}(e^{-\M}-\delta)} . \label{eq:rate1-critic}
\end{split}
\end{align}

\end{enumerate}
\end{corollary}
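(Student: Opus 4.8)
The plan is to obtain Corollary~\ref{cor:bn-choice} as a direct consequence of Theorem~\ref{thm:main-SPDE}: one substitutes the concrete mollification $b^{k_n}=G_{1/k_n}b$ into the bound \eqref{eq:main-result-SDE} (resp.\ its limit-case analogue), estimates every norm of $b^{k_n}$ that appears through the heat-smoothing inequalities \eqref{eq:bn-b}--\eqref{eq:bn-C1}, and then checks that the exponent $\tfrac{1}{1-\gamma+1/p}$ in $k_n=\lfloor n^{1/(1-\gamma+1/p)}\rfloor$ is precisely the one equalising the three terms on the right-hand side. The structural fact that makes this work is that the constant $C$ of Theorem~\ref{thm:main-SPDE} depends on $b$ only through $\|b\|_{\mathcal{B}_p^\gamma}$, and not on $\|b^k\|_\infty$, $\|b^k\|_{\mathcal{C}^1}$, $k$ or $n$, so that letting $k=k_n\to\infty$ does not degrade $C$.

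\emph{Sub-critical case.} I would set $\kappa:=1-\gamma+\tfrac1p$, which lies in $(1,\infty)$ under \eqref{eq:cond-gamma-p-H} and $\gamma-1/p>-1$, so that $k_n\asymp n^{1/\kappa}$. Feeding $b^{k_n}=G_{1/k_n}b$ into \eqref{eq:main-result-SDE} and using \eqref{eq:bn-b}, \eqref{eq:bn-inf}, \eqref{eq:bn-C1} together with the identity $-\tfrac12(\gamma-\tfrac1p)=\tfrac12(\kappa-1)$, the three summands become, up to a constant depending only on the parameters in the statement,
\[
k_n^{-1/2},\qquad \bigl(1+k_n^{(\kappa-1)/2}\bigr)\,n^{-\frac12+\varepsilon},\qquad \bigl(1+k_n^{(\kappa-1)/2}\bigr)\bigl(1+k_n^{\kappa/2}\bigr)\,n^{-1+\varepsilon}.
\]
Inserting $k_n\asymp n^{1/\kappa}$ and using the elementary identities $\tfrac{\kappa-1}{2\kappa}-\tfrac12=-\tfrac1{2\kappa}$ and $\tfrac{\kappa-1/2}{\kappa}-1=-\tfrac1{2\kappa}$, all three are $\lesssim n^{-1/(2\kappa)+\varepsilon}$; since $\varepsilon\in(0,1/2)$ is arbitrary this gives \eqref{eq:rate1}. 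I would also record, with Section~\ref{sec:optimality} in mind, that this exponent is forced: raising the exponent in $k_n$ makes the second and third summands diverge, while lowering it makes the stability term $\|b-b^{k_n}\|_{\mathcal{B}_p^{\gamma-1}}\asymp k_n^{-1/2}$ dominate.

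\emph{Limit case.} Here $\gamma-1/p=-1$, so $\kappa=2$ and $k_n\asymp n^{1/2}$. The only new point relative to the sub-critical case is that Theorem~\ref{thm:main-SPDE}(b) is conditional on \eqref{eq:assump-bn-bounded}, so I would first verify that the diagonal family $\mathcal{D}:=\{(n,k_n):n\in\N^*\}$ is admissible: from \eqref{eq:bn-inf}--\eqref{eq:bn-C1} with $-\tfrac12(\gamma-\tfrac1p)=\tfrac12$ one gets $\|b^{k_n}\|_\infty\lesssim k_n^{1/2}\lesssim n^{1/4}$ and $\|b^{k_n}\|_{\mathcal{C}^1}\lesssim k_n\lesssim n^{1/2}$, hence $\|b^{k_n}\|_\infty n^{-\frac12+\varepsilon}\lesssim n^{-\frac14+\varepsilon}$ and $\|b^{k_n}\|_{\mathcal{C}^1}n^{-1}\lesssim n^{-1/2}$ are bounded uniformly in $n$ as soon as $\varepsilon<1/4$. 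With \eqref{eq:assump-bn-bounded} in hand I would bound each of the three terms inside the parenthesis of Theorem~\ref{thm:main-SPDE}(b) by $n^{-1/4+\varepsilon}$, the only extra care being that $\|b-b^{k_n}\|_{\mathcal{B}_p^{\gamma-1}}\lesssim k_n^{-1/2}\lesssim n^{-1/4}$ makes the logarithmic factor $1+|\log\|b-b^{k_n}\|_{\mathcal{B}_p^{\gamma-1}}|$ of size $O(\log n)$, which is absorbed into the $\varepsilon$. Raising $n^{-1/4+\varepsilon}$ to the power $e^{-\M}-\delta\in(0,1)$ yields a bound of order $n^{-(\frac14-\varepsilon)(e^{-\M}-\delta)}$, and then, given the target $\delta\in(0,e^{-\M})$, I would pick an auxiliary $\delta'\in(0,\delta)$ and $\varepsilon>0$ small enough that $(\tfrac14-\varepsilon)(e^{-\M}-\delta')\ge\tfrac14(e^{-\M}-\delta)$, which produces \eqref{eq:rate1-critic}.

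I expect this last $\varepsilon$/$\delta$ bookkeeping in the limit case to be the only mildly delicate point — one must ensure that the logarithmic correction and the arbitrarily small losses $\varepsilon$ inherited from the discrete Ornstein--Uhlenbeck estimates in Theorem~\ref{thm:main-SPDE} can all be buried inside the (also arbitrarily small) slack $\delta$ in the exponent without the constant $C$ degenerating. Everything else reduces to a routine substitution and an exponent count, and I anticipate no serious obstacle.
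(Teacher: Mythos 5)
Your proposal is correct and follows essentially the same route as the paper: substitute $b^{k_n}=G_{1/k_n}b$ into Theorem~\ref{thm:main-SPDE}, control the norms of $b^{k_n}$ via \eqref{eq:bn-b}--\eqref{eq:bn-C1}, verify \eqref{eq:assump-bn-bounded} on the diagonal $\{(n,k_n)\}$ in the limit case, and balance exponents (the paper routes this through Proposition~\ref{prop:main-SPDE} and \eqref{eq:lien-u-E}, which is the same content). The only remark is that your computation yields $n^{-\frac{1}{2(1-\gamma+1/p)}+\varepsilon}$, which matches what the paper itself derives in \eqref{eq:rate2}; the sign $-\varepsilon$ in the displayed \eqref{eq:rate1} appears to be a typo, since the stability term alone is of order $k_n^{-1/2}\asymp n^{-\frac{1}{2(1-\gamma+1/p)}}$ and cannot be improved upon.
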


~~

Theorem \ref{thm:main-SPDE} and Corollary \ref{cor:bn-choice} can be extended to $\gamma- 1/p \geq 0$. In that case, by embedding, $b \in \mathcal{B}_\infty^{\gamma-1/p}$. Then $\mathcal{B}_\infty^{\gamma-1/p} \subset \mathcal{B}_\infty^{\eta}$,  for any $\eta<0$, thus one can apply Theorem \ref{thm:main-SPDE} and Corollary \ref{cor:bn-choice} in $ \mathcal{B}_\infty^{\eta}$. Since we exhibit no gain if $b$ has positive regularity, we express the following corollary in $\mathcal{B}_\infty^0$, but it obviously holds also if $b$ is assumed with more regularity.

\begin{corollary}\label{cor:gama=d/p}
Let the assumptions of Theorem \ref{thm:main-SPDE} hold and assume further that $\psi_0 \in \mathcal{C}^{\frac{1}{2}}(\T,\R)$. Let $b \in \mathcal{B}_\infty^0$ and $m \ge 2$. Let $u$ be a strong solution to \eqref{eq:spde} that satisfies \eqref{eq:holder-reg-sol}. Let $\varepsilon \in (0,1/2)$.
Then there exists $C$ depending on $m, \varepsilon,  \|b\|_{\mathcal{B}_\infty^{0}}, \| \psi_0 \|_{\mathcal{C}^{\frac{1}{2}}}$ only and such that for any $n\in \N^*$ and $k \in \N$, the following bound holds:
\begin{align*}%
\begin{split}
&\sup_{t \in [0,1], x \in \mathbb{T}} \| u_t(x)-u^{n,k}_t(x) \|_{L^m}  \\&  \leq C \left( \| b-b^k \|_{\mathcal{B}_{\infty}^{-1}} + (1+\|b^k \|_\infty) n^{-\frac{1}{2}+\varepsilon} +  (1+\|b^k\|_\infty) \|b^k\|_{\mathcal{C}^1}\, n^{-1+\varepsilon} \right).
\end{split}
\end{align*}
Moreover, for $k_n=n$ and $b^{k_n}=G_{\frac{1}{k_n}} b$, we have
\begin{align*}
 \sup_{t \in [0,1], x \in \mathbb{T}} \| u_t(x)-u^{n,k_n}_t(x) \|_{L^m} &  \leq C n^{-\frac{1}{2}+\varepsilon} .
\end{align*}
\end{corollary}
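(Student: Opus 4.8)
The plan is to deduce Corollary~\ref{cor:gama=d/p} from Theorem~\ref{thm:main-SPDE}(a) by a change of functional index: the pair $(\gamma,p)=(0,\infty)$ lies exactly on the boundary $\gamma-1/p=0$ and is therefore excluded by the strict inequality in \eqref{eq:cond-gamma-p-H}, but $b\in\mathcal{B}_\infty^0$ is \emph{a fortiori} in $\mathcal{B}_\infty^{\eta}$ for every $\eta<0$, so I would apply Theorem~\ref{thm:main-SPDE}(a) with $(\gamma,p)=(\eta,\infty)$ for a fixed $\eta\in(-1,0)$ and then translate the resulting error estimate back into the $\mathcal{B}_\infty^0$ setting via Besov embeddings. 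The explicit rate would then be extracted, much as in Corollary~\ref{cor:bn-choice}, by specialising to $b^{k_n}=G_{1/k_n}b$, $k_n=n$, and inserting the mollification bounds while using the \emph{full} $\mathcal{B}_\infty^0$-regularity of $b$.

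First I would fix $\eta\in(-1,0)$, say $\eta=-1/2$. Since $1/\infty=0$, the pair $(\eta,\infty)$ satisfies $0>\eta\ge-1$ and $\eta>-1$, i.e.\ \eqref{eq:cond-gamma-p-H}. Using $\mathcal{B}_\infty^0\hookrightarrow\mathcal{B}_\infty^{\eta}$ and $\mathcal{C}^{1/2}\hookrightarrow\mathcal{C}^{1/2-\varepsilon}$ one gets $b\in\mathcal{B}_\infty^{\eta}$ with $\|b\|_{\mathcal{B}_\infty^{\eta}}\le C\|b\|_{\mathcal{B}_\infty^0}$ and $\psi_0\in\mathcal{C}^{1/2-\varepsilon}$; moreover any sequence $(b^k)$ converging to $b$ in $\mathcal{B}_\infty^{0-}$ also converges to $b$ in $\mathcal{B}_\infty^{\eta-}$ (the convergence $\|b-b^k\|_{\mathcal{B}_\infty^{\gamma'}}\to0$ for $\gamma'<\eta$ is inherited since then $\gamma'<0$, and $\sup_k\|b^k\|_{\mathcal{B}_\infty^{\eta}}\le C\sup_k\|b^k\|_{\mathcal{B}_\infty^0}<\infty$, which is what is needed to keep the constants depending only on $\|b\|_{\mathcal{B}_\infty^0}$). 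The process $u$ is by hypothesis a strong solution satisfying \eqref{eq:holder-reg-sol} — the unique one, by \cite[Theorem 2.6$(ii)$]{athreya2020well} applied to the drift $b\in\mathcal{B}_\infty^{\eta}$. Theorem~\ref{thm:main-SPDE}(a) with $(\gamma,p)=(\eta,\infty)$ then gives, for all $n\in\N^*$, $k\in\N$,
\[
\sup_{t,x}\|u_t(x)-u^{n,k}_t(x)\|_{L^m}\le C\Big(\|b-b^k\|_{\mathcal{B}_\infty^{\eta-1}}+(1+\|b^k\|_\infty)\,n^{-1/2+\varepsilon}+(1+\|b^k\|_\infty)\|b^k\|_{\mathcal{C}^1}\,n^{-1+\varepsilon}\Big),
\]
with $C=C(m,\varepsilon,\eta,\|b\|_{\mathcal{B}_\infty^0},\|\psi_0\|_{\mathcal{C}^{1/2}})$, hence — $\eta$ being a fixed number — with $C=C(m,\varepsilon,\|b\|_{\mathcal{B}_\infty^0},\|\psi_0\|_{\mathcal{C}^{1/2}})$. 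Since $-1\ge\eta-1$, the embedding $\mathcal{B}_\infty^{-1}\hookrightarrow\mathcal{B}_\infty^{\eta-1}$ gives $\|b-b^k\|_{\mathcal{B}_\infty^{\eta-1}}\le C\|b-b^k\|_{\mathcal{B}_\infty^{-1}}$, which is precisely the first displayed inequality of the corollary.

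For the explicit rate I would take $b^{k_n}=G_{1/k_n}b$ with $k_n=n$. The heat-smoothing estimate of Lemma~\ref{eq:reg-S}, at regularity $\gamma=0$, $p=\infty$, yields $\|b-b^{k_n}\|_{\mathcal{B}_\infty^{-1}}\le C\|b\|_{\mathcal{B}_\infty^0}\,n^{-1/2}$ (this is \eqref{eq:bn-b}; the restriction $\gamma-1/p<0$ is needed only for \eqref{eq:bn-inf}--\eqref{eq:bn-C1}, through the $L^\infty$ and $\mathcal{C}^1$ embeddings). For an auxiliary $\varepsilon'>0$, embedding $\mathcal{B}_\infty^0\hookrightarrow\mathcal{B}_\infty^{-\varepsilon'}$ and then invoking \eqref{eq:bn-inf}--\eqref{eq:bn-C1} give $\|b^{k_n}\|_\infty\le C\|b\|_{\mathcal{B}_\infty^0}\,n^{\varepsilon'}$ and $\|b^{k_n}\|_{\mathcal{C}^1}\le C\|b\|_{\mathcal{B}_\infty^0}\,n^{1/2+\varepsilon'}$. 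Plugging these three bounds into the inequality of the previous paragraph controls each of its terms by $C\,n^{-1/2+\tilde\varepsilon+2\varepsilon'}$, where $\tilde\varepsilon\in(0,1/2)$ is the free exponent there; choosing $\tilde\varepsilon$ and $\varepsilon'$ both $\le\varepsilon/3$ gives $\sup_{t,x}\|u_t(x)-u^{n,k_n}_t(x)\|_{L^m}\le C\,n^{-1/2+\varepsilon}$.

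I do not expect a real obstacle here: the statement is a soft corollary of Theorem~\ref{thm:main-SPDE}, and the only two points deserving attention are (i) choosing $\eta\in(-1,0)$ so that \eqref{eq:cond-gamma-p-H} holds \emph{strictly} rather than at the endpoint $\gamma-1/p=0$, and (ii) the unavoidable — but arbitrarily small — polynomial losses $n^{\varepsilon'}$ in $\|G_{1/n}b\|_\infty$ and $\|G_{1/n}b\|_{\mathcal{C}^1}$ for $b\in\mathcal{B}_\infty^0$, which are harmless precisely because the error bound of Theorem~\ref{thm:main-SPDE} already carries the free exponent $\varepsilon$.
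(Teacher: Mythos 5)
Your proposal is correct and follows essentially the same route as the paper: reduce to the sub-critical case of Theorem~\ref{thm:main-SPDE}(a) by embedding $\mathcal{B}_\infty^0$ into $\mathcal{B}_\infty^{\eta}$ for a negative auxiliary regularity $\eta$, recover the $\mathcal{B}_\infty^{-1}$ stability term by a further embedding, and then exploit the full $\mathcal{B}_\infty^0$-regularity of $b$ in the mollification bounds so that $\|G_{1/n}b\|_\infty$ and $\|G_{1/n}b\|_{\mathcal{C}^1}$ lose only arbitrarily small powers of $n$. The only (immaterial) difference is that the paper ties the auxiliary index to $\varepsilon$ by taking $\gamma=-\varepsilon/2$, whereas you fix $\eta=-1/2$ and introduce a separate small parameter $\varepsilon'$ for the smoothing estimates; both choices yield the stated rate $n^{-1/2+\varepsilon}$.
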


The above results are proven in Section \ref{sec:convergence}.

\begin{remark} 
\begin{itemize}
\item  If $b$ is a signed measure, for instance a Dirac distribution, then $b \in \mathcal{B}_1^0$ and one can apply Corollary \ref{cor:bn-choice}$(b)$. This quantifies the approximation of a skew-type stochastic heat equation, see \cite{BounebacheZambotti} for a related skew equation.

\item The $n^{-\frac{1}{2}+\varepsilon}$ rate from Corollary~\ref{cor:gama=d/p} is the optimal strong error rate, which coincides with the discretisation of the Ornstein-Uhlenbeck process, see Equation~\eqref{eq:additional-error-terms}.
\item 
In \cite{butkovsky2021optimal}, the authors also found a rate $n^{-\frac{1}{2}+\varepsilon}$ for drifts in the space of bounded measurable functions, which is strictly included in $ \mathcal{B}_\infty^0$ (see e.g. \cite[Section 2.2.2, Eq. (8) and Section 2.2.4, Eq. (4)]{runst2011sobolev}).

\end{itemize}
\end{remark}

\subsection{Discussion on the optimality of the rate of convergence}\label{sec:optimality}

In the sub-critical regime, the bound \eqref{eq:main-result-SDE} consists of  two terms of different nature. The first term $\|b-b^k\|_{\mathcal{B}_p^{\gamma-1}}$ represents a stability error which appears due to the taming of the Euler scheme and the $\mathcal{B}_p^{\gamma-1}$ norm is the natural scale to control such error (see also  \cite[Theorem 3.2]{GaleatiGerencser}).
As for the second term $\|b^k \|_{\infty} n^{-1/2+\varepsilon} + \| b^k\|_{\infty} \| b^k \|_{\mathcal{C}^1} n^{-1+\varepsilon}$, it is of the same order as $\|b^k \|_{\infty} n^{-1/2+\varepsilon}$ for some simple examples (see below). The latter expression represents the optimal error of the Euler scheme for bounded drifts so we also expect such a term to arise in our context. Hence we believe that the bound \eqref{eq:main-result-SDE} cannot be improved.

\smallskip

In the following paragraph, we show that given the bound \eqref{eq:main-result-SDE}, given any sequence $(b_{k})_{k\in \N}$ that approximates $b$ and any sequence $(k_{n})_{n\in \N}$ such that $k_{n}\to +\infty$, one cannot improve the rate from \eqref{eq:rate1}. Stated otherwise, the choice of $b^k=G_{\frac{1}{k}}b$ and $k_n = n^{-\frac{\gamma}{2(1-\gamma)}}$ yields the optimal rate.

\smallskip

The definition of the Littlewood-Paley blocks $\Delta_{j}$ and of the Besov norm are recalled in Section~\ref{subsec:LPblocks} (see also \cite[Sections 2.2 and 2.7]{bahouri2011fourier}).
Let $\gamma\in (-1,0)$ and without loss of generality, let $p=+\infty$. From Lemma~\ref{lem:example} (up to changing slightly $\gamma$), there exists $b\in \mathcal{B}^\gamma_{\infty}$ such that for some $J \in \N$ and $C>0$, 
\begin{align}\label{eq:delta_j-0}
\forall j\geq J,\quad \| \Delta_j b \|_{\infty} \ge C\, 2^{-j\gamma} .
\end{align}
In particular, $b\notin \bigcup_{\gamma'>\gamma} \mathcal{B}^{\gamma'}_{\infty}$. 
Let $(b^k)_{k \in \N}$ be a sequence of smooth functions converging to $b$ in $\mathcal{B}_{\infty}^{\gamma-}$.
If we had $\sup_{k} \| b^k \|_{\infty}<\infty$, then by a convolution inequality it would hold that  $\|\Delta_j b^k \|_{\infty} \leq  \| b^k \|_{\infty} $ and by the fact that $\| \Delta_j b^k \|_{\infty} $ converges to $\|\Delta_j b \|_{\infty} $ as $k \to \infty$, we would get that $\| b \|_{\mathcal{B}_\infty^0} = \sup_{j\geq -1} \|\Delta_j b \|_{\infty} < \infty$. This would be a contradiction with $b\notin \mathcal{B}_{\infty}^{0}$.
Hence up to relabelling the sequence, assume that $\| b^k \|_{\infty}\in [k-1,k]$. This implies in particular that
\begin{align*}
\|\Delta_j b^k \|_{\infty} \leq  \| b^k \|_{\infty} \leq k.
\end{align*} 
Using \eqref{eq:delta_j-0}, we have
\begin{align*}
\| b- b^k \|_{\mathcal{B}_\infty^{\gamma-1}} & \ge \sup_{j \ge 0} 2^{j (\gamma-1)}  | \| \Delta_j b\|_{\infty} - \| \Delta_j b^k \|_{\infty} | \\
&  \ge \sup_{j \ge J, C 2^{- j\gamma}  \ge k} 2^{j (\gamma-1)}  ( C2^{- j\gamma} -k)  .
\end{align*}
The quantity $2^{j (\gamma-1)}  ( C2^{- j\gamma} -k)$ is decreasing in $j$ when $j \ge -\frac{1}{\gamma} \frac{\log(k(1-\gamma)/C)}{\log 2}$. Hence letting $j^\star = \lfloor -\frac{1}{\gamma} \frac{\log(k(1-\gamma)/C)}{\log 2}\rfloor$, and taking $k$ large enough so that $j^\star \ge J$, we  have
\begin{align*}
\| b- b^k \|_{\mathcal{B}_{\infty}^{\gamma-1}} 
 \ge C 2^{-j^\star} -2^{j^\star (\gamma-1)} k  = (1-\gamma) 2^{j^\star(\gamma-1)} k - 2^{j^\star(\gamma-1)}k 
& = -\gamma 2^{j^\star(\gamma-1)} k \\
& \geq -\gamma \left( k(1-\gamma)/C \right)^{-\frac{\gamma-1}{\gamma}} k \\
& \gtrsim k^{\frac{1-\gamma}{\gamma}+1} = k^{\frac{1}{\gamma}} .
\end{align*}
Finally, the bound \eqref{eq:main-result-SDE} now reads
\begin{align*}
\| b-b^k \|_{\mathcal{B}_{\infty}^{\gamma-1}} + \|b^k\|_\infty n^{-\frac{1}{2}+\varepsilon} +  \|b^k\|_\infty  \|b^k\|_{\mathcal{C}^1} n^{-1+\varepsilon}  \ge C  k^{\frac{1}{\gamma}} + k n^{-\frac{1}{2}+\varepsilon} .
\end{align*}
The right-hand side is minimised when 
$k_n = n^{-\frac{\gamma}{2(1-\gamma)}}$. Plugging this into the previous lower bound, we obtain the rate of convergence $\frac{1}{2(1-\gamma)}$ up to an $\varepsilon$ power, which is the same rate as in \eqref{eq:rate1}.

\section{Convergence of the tamed Euler finite-difference scheme}\label{sec:convergence}

The goal of this section is to prove the main results of the paper, stated in Section~\ref{sec:main-results}. First, we define notations that will be useful throughout the proof and in Section~\ref{sec:reg-O}, Section~\ref{sec:reg-On-1} and Section~\ref{sec:reg-On}.  For $m \in [1,\infty]$, $\alpha > 0$, $I$ an interval of $[0,1]$ and $W: \Delta_{[0,1]} \times \mathbb{T} \times \Omega \to \R$, define
\begin{align}\label{def:holder-norm-W}
[W]_{\mathcal{C}^{\alpha,0}_{I,x} L^{m}} := \sup_{\substack{ (s,t) \in \Delta_{[0,1]} \\ x \in \mathbb{T} }}\frac{\left\| W_{s,t}(x) \right\|_{L^m}}{|t-s|^{\alpha}} .
\end{align}
Moreover, 
for $Z: [0,1] \times \mathbb{T} \times \Omega \to \R$ we write 
\begin{align*}%
\|Z \|_{L^{\infty,\infty}_{I,x}  L^m} := \sup_{\substack{ t \in I \,  \\ x \in \mathbb{T}  }} \left\|Z_{t}(x) \right\|_{L^{m}}   .
\end{align*}
In order to bound $\sup_{t \in [0,1], x \in \mathbb{T}} \| u_t(x)-u^{n,k}_t(x) \|_{L^m}$, we introduce an intermediate term denoted by $\mathcal{E}^{n,k}$, defined for $(s,t) \in \Delta_{[0,1]}$ and $x \in \mathbb{T}$ by
\begin{align}\label{def:error}
\mathcal{E}^{n,k}_{s,t}(x) = u_t(x)-O_t(x) - P_{t-s} (u_s-O_s)(x) - \int_s^t P^n_{(t-r)_h} b^k(u^{n,k}_{r_h})(x)\, dr ,
\end{align}
where it is recalled that $r_h$ denotes the leftmost gridpoint from $r$ in $\Lambda_h$, defined in \eqref{eq:time-space-grids}. 
Then recalling the mild form \eqref{eq:spde-mild} of $u$ and the mild form \eqref{eq:discrete-mild} of the numerical scheme $u^{n,k}$, 
\begin{align*}
\sup_{t \in [0,1], x \in \mathbb{T}} \| u_t(x)-u^{n,k}_t(x) \|_{L^m} & \leq \| \mathcal{E}^{n,k}_{0,\cdot} \|_{L^{\infty,\infty}_{[0,1],x} L^m } + \sup_{t \in [0,1], x \in \mathbb{T}} \| O_t(x) - O^n_t(x) \|_{L^m}  \\ & \quad + \sup_{t \in [0,1], x \in \mathbb{T}} | P_t \psi_0 (x) - P^n_t \psi_0(x) | .
\end{align*}
Applying \cite[Corollary 2.3.2]{butkovsky2021optimal} to the term $ \| O_t(x) - O^n_t(x) \|_{L^m}$, and Lemma~\ref{lem:P-Pn}$(ii)$ to the term $| P_t \psi_0 (x) - P^n_t \psi_0(x) |$, one gets
\begin{equation}
\label{eq:additional-error-terms}
\begin{split}
 \sup_{t \in [0,1], x \in \mathbb{T}} \| O_t(x) - O^n_t(x) \|_{L^m} & \leq C n^{-\frac{1}{2}+\varepsilon} \\
 \sup_{t \in [0,1], x \in \mathbb{T}} | P_t \psi_0 (x) - P^n_t \psi_0(x) | & \leq C \| \psi_0 \|_{\mathcal{C}^{\frac{1}{2}-\varepsilon}} n^{-\frac{1}{2}+\varepsilon} ,
\end{split}
\end{equation}
from which it follows that
\begin{align}\label{eq:lien-u-E}
\sup_{t \in [0,1], x \in \mathbb{T}} \| u_t(x)-u^{n,k}_t(x) \|_{L^m} \leq  \| \mathcal{E}^{n,k}_{0,\cdot} \|_{L^{\infty,\infty}_{[0,1],x} L^m }  + C (\| \psi_0 \|_{\mathcal{C}^{\frac{1}{2}-\varepsilon}}+1)\, n^{-\frac{1}{2}+\varepsilon} .
\end{align}
Therefore, it suffices to bound $[\mathcal{E}^{n,k}]_{L^{\infty,\infty}_{[0,1],x} L^m }$ to deduce a bound on $\sup_{t \in [0,1], x \in \mathbb{T}} \| u_t(x)-u^{n,k}_t(x) \|_{L^m}$. In the sub-critical case, when $\gamma-1/p>-1$, we will bound $\| \mathcal{E}^{n,k}_{0,\cdot} \|_{L^{\infty,\infty}_{[0,1],x} L^m}$ by $[\mathcal{E}^{n,k}]_{\mathcal{C}^{1/2,0}_{[0,1],x} L^m }$ to get the final bound on $\sup_{t \in [0,1], x \in \mathbb{T}} \| u_t(x)-u^{n,k}_t(x) \|_{L^m}$. In Proposition \ref{prop:main-SPDE}, we present bounds on $\mathcal{E}^{n,k}$ which combined with the previous analysis allow to deduce the results of Theorem \ref{thm:main-SPDE}, Corollary \ref{cor:bn-choice} and Corollary \ref{cor:gama=d/p}.

\begin{remark}
Bounding the supremum norm of $\mathcal{E}^{n,k}_{0,\cdot}$ by its H\"older norm may seem sub-optimal, but we explain here why the two are linked. Roughly, we use the Stochastic Sewing Lemma to bound $\sup_{x} \|\mathcal{E}^{1,n,k}_{s,t}(x)\|_{L^m}$ (a quantity closely related to $\mathcal{E}^{n,k}$) by a quantity of the form $C ( [\mathcal{E}^{n,k}]_{\mathcal{C}^{1/2,0}_{[s,t],x} L^{m}} + \epsilon(n,k)) (t-s)^{1/2+\varepsilon}$, with some error term $\epsilon(n,k)$ that will be specified later (see Proposition~\ref{prop:ssl-o-on-2} for a precise statement). The power $1/2+\varepsilon$ on $(t-s)$ comes directly from the sewing and to obtain it, the time regularity of $\mathcal{E}^{n,k}$ is needed, which explains the presence of its H\"older norm. Besides, it is not possible to require less than this $1/2+\varepsilon$ time regularity in the sewing step, which also explains why we consider precisely the $1/2$-H\"older norm. Then choosing $(t-s)$ small enough permits to close the aforementioned estimate and deduce a bound in H\"older norm.

From thereon, one could try to go further and obtain error bounds in H\"older norm on $u-u^{n,k}$. This would require to study additional terms and would lower the rate of convergence that is obtained in supremum norm. Hence we choose not to pursue in this direction.
\end{remark}

\subsection{Organisation of the proofs of the main results}\label{sec:proof-main}

In this section, we state intermediate results on $\mathcal{E}^{n,k}$ and also study the regularity of the numerical scheme $u^{n,k}$ which provides a useful \emph{a priori} estimate. The latter is given in terms of the following pseudo-norm defined by  
\begin{align}\label{eq:discrete-seminorm}
\big\{ u^{n,k} \big\}_{n,k,m, \tau} := \sup_{\substack{(s,t) \in \Delta_{[0,1]} \\ x \in [0,1]}}  \frac{\Big\| \EE^s \bigg[ \Big| {\displaystyle \int_{s}^t }  P^n_{(t-r)_h} b^k( u_{r_h}^{n,k}) (x) \, dr \Big|^m \bigg]^{\frac{1}{m}} \Big\|_{L^{\infty}}}{|t-s|^{\tau}} .
\end{align}

\begin{remark}
Formally, we expect $u^{n,k}$ to converge to $u$ as $n,k \to \infty$ and thus $\big\{ u^{n,k} \big\}_{n,k,m, 3/4}$ to converge to 
\begin{align*}
\sup_{\substack{(s,t) \in \Delta_{[0,1]} \\ x \in [0,1]}}  \frac{\| \left( \EE^s | u_t(x)-O_t(x) - P_{t-s}(u_s(x)-O_s(x)) |^m  \right)^{\frac{1}{m}}\|_{L^\infty}}{|t-s|^{\frac{3}{4}}} ,
\end{align*}
which is finite by \eqref{eq:holder-reg-sol}. This suggests why $\big\{ u^{n,k} \big\}_{n,k,m, \tau}$ will arise in the computations below, but to optimise the rate of convergence, we shall rather choose $\tau$ close to $1/2$ rather than $3/4$.
\end{remark}

\begin{prop}\label{prop:main-SPDE} 
~
\begin{enumerate}[label=\upshape(\Roman*)]
\item \label{MainPropI} Let the same assumptions and notations as in Theorem \ref{thm:main-SPDE} hold.%
\begin{enumerate}[label=(\alph*)]
\item \underline{Regularity of the tamed Euler scheme}: First, for $\mathcal{D}$ a subset of $\N^2$ such that \eqref{eq:assump-bn-bounded} holds, we have
\begin{align}\label{eq:unifschemev0}
\sup_{(n,k) \in \mathcal{D}} \big\{ u^{n,k} \big\}_{n,k,m, \frac{1}{2} + \frac{\varepsilon}{2}}  < \infty .
\end{align}
In particular for $k_n=\lfloor n^{\frac{1}{1-\gamma+1/p}} \rfloor$ and $b^{k_n}=G_{\frac{1}{k_n}} b$, we have
 \begin{align}
 &\sup_{n\in \N} \big\{ u^{n,k_{n}} \big\}_{n,k_{n},m, \frac{1}{2} + \frac{\varepsilon}{2}} < \infty \label{eq:unifscheme} .
 \end{align}
\end{enumerate}
\begin{enumerate}
\item[(b)] \underline{The sub-critical case}: If $0>\gamma-1/p>-1$, then there exists a constant $C$ that depends on $m, p, \gamma, \varepsilon,  \|b\|_{\mathcal{B}_p^{\gamma}}, \| \psi_0 \|_{\mathcal{C}^{\frac{1}{2}-\varepsilon}}$ such that for all $n \in \N^*$ and $k \in \mathbb{N} $, the following bound holds:
\begin{align}\label{eq:main-result-SPDE2}
[\mathcal{E}^{n,k}]_{\mathcal{C}^{\frac{1}{2},0}_{[0,1],x} L^{m}} &  \leq C  \left( \| b-b^k \|_{\mathcal{B}_p^{\gamma-1}} + (1+\|b^k\|_\infty) n^{-\frac{1}{2}+\varepsilon} +  (1+\|b^k\|_\infty) \|b^k\|_{\mathcal{C}^1}\, n^{-1+\varepsilon} \right).
\end{align}
In particular for $k_n=\lfloor n^{\frac{1}{1-\gamma+1/p}} \rfloor$ and $b^{k_n}=G_{\frac{1}{k_n}} b$, we have
\begin{align}
&  [\mathcal{E}^{n,k_n}]_{\mathcal{C}^{\frac{1}{2},0}_{[0,1],x} L^{m}}  \leq C \, n^{-\frac{1}{2(1-\gamma+\frac{1}{p})}+\varepsilon}. \label{eq:rate2}
\end{align}
\item[(c)] \underline{The limit case}: Let $\gamma-1/p=-1$ and $p<+\infty$. Let $\M$ be the constant given by Proposition \ref{prop:bound-E1-critic}, let $\delta \in (0, e^{-\M})$ and $\zeta \in (0,1/2)$. For $\mathcal{D}$ a subset of $\N^2$ such that \eqref{eq:assump-bn-bounded} holds, there exists a constant $C$ that depends on $m, p, \gamma, \delta, \zeta, \varepsilon, \|b\|_{\mathcal{B}_p^{\gamma}}, \| \psi_0 \|_{\mathcal{C}^{\frac{1}{2}-\varepsilon}}$ such that for all $(n,k) \in \mathcal{D}$, the following bound holds:
\begin{equation}\label{eq:main-result-SPDE-critic2}
\begin{split}
\left[\mathcal{E}^{n, k}\right]_{\mathcal{C}_{[0,1],x}^{\frac{1}{2}-\zeta,0} L^m} &  \leq C \Big(  \| b - b^k \|_{\mathcal{B}_p^{\gamma-1}} (1+ |\log \| b - b^k \|_{\mathcal{B}_p^{\gamma-1}} |) + (1+\| b^k \|_{\infty}) n^{-\frac{1}{2}+\varepsilon} \\ 
& \quad + (1+\| b^k \|_{\infty} ) \| b^k \|_{\mathcal{C}^1}\,  n^{-1+\varepsilon}  \Big)^{e^{-\M}-\delta}.
\end{split}
\end{equation}
In particular for $k_n=\lfloor \sqrt{n} \rfloor$ and $b^{k_n}=G_{\frac{1}{k_n}} b$, we have
\begin{align}
&\left[\mathcal{E}^{n, k_{n}}\right]_{\mathcal{C}_{[0,1],x}^{\frac{1}{2}-\zeta,0} L^m}  \leq C \, n^{-\frac{1}{4}(e^{-\M}-\delta)} . \label{eq:rate2-critic}
\end{align}
\end{enumerate}
\item\label{MainPropII} Let the assumptions of Corollary \ref{cor:gama=d/p} hold. Then \ref{MainPropI}$(a)$ and \ref{MainPropI}$(b)$ hold with $\gamma=0$ and $p=\infty$.
\end{enumerate}

\end{prop}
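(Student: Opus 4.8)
The plan is to prove Proposition~\ref{prop:main-SPDE} by setting up a stochastic-sewing estimate for $\mathcal{E}^{n,k}$ and closing it via a (critical) Grönwall argument. Throughout I would rely on the regularisation lemmas for $O$, $O^n$ and the mixed quantities announced for Sections~\ref{sec:reg-O}--\ref{sec:reg-On}, whose role is to bound moments of integrals of the singular drift against the continuous and discrete heat kernels by $(t-s)^{1/2+\varepsilon}$ times Besov norms of $b^k$ plus negative powers of $n$.

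\textbf{Step 1: the \emph{a priori} regularity estimate \eqref{eq:unifschemev0}.} First I would establish that the pseudo-norm $\big\{ u^{n,k} \big\}_{n,k,m,1/2+\varepsilon/2}$ is bounded uniformly over $\mathcal{D}$. Writing $u^{n,k}_{r_h} = z + O^n_{r_h}$ with $z = u^{n,k}_{r_h} - O^n_{r_h}$ (which is $\mathcal{F}_{r_h}$-measurable), I would apply the discrete regularisation lemma (Section~\ref{sec:reg-On-1}) conditionally on $\mathcal{F}_s$ to the increment $\int_s^t P^n_{(t-r)_h} b^k(u^{n,k}_{r_h})(x)\,dr$. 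This yields a bound of the form $C(t-s)^{1/2+\varepsilon}\big(\|b^k\|_{\mathcal{B}^\gamma_p} + \|b^k\|_\infty n^{-1/2+\varepsilon} + \|b^k\|_{\mathcal{C}^1} n^{-1+\varepsilon}\big)$ up to a factor depending on $\big\{u^{n,k}\big\}$ itself (coming from the Hölder control of $z$), which one closes by choosing the time window small and iterating over $[0,1]$; the hypothesis \eqref{eq:assump-bn-bounded} is exactly what makes the right-hand side finite. Then \eqref{eq:unifscheme} follows since the choice $k_n=\lfloor n^{1/(1-\gamma+1/p)}\rfloor$, $b^{k_n}=G_{1/k_n}b$ with \eqref{eq:bn-inf}--\eqref{eq:bn-C1} makes $\|b^{k_n}\|_\infty n^{-1/2}$ and $\|b^{k_n}\|_{\mathcal{C}^1} n^{-1}$ bounded.

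\textbf{Step 2: the sewing bound on $\mathcal{E}^{n,k}$.} Next I would decompose $\mathcal{E}^{n,k}_{s,t}(x)$ using the mild forms \eqref{eq:spde-mild}, \eqref{eq:discrete-mild}: it equals $\int_s^t P_{t-r} b(u_r)(x)\,dr - \int_s^t P^n_{(t-r)_h} b^k(u^{n,k}_{r_h})(x)\,dr$ (after accounting for the semigroup property of $P$ and $P^n$, which contributes the other error terms already absorbed into \eqref{eq:additional-error-terms}). I would introduce the germ $A_{s,t}(x) := \mathbb{E}^s\big[\int_s^t P_{t-r} b(u_r)(x) - P^n_{(t-r)_h} b^k(u^{n,k}_{r_h})(x)\,dr\big]$ and verify the Stochastic Sewing hypotheses: the conditional-expectation bound comes from the mixed regularisation lemma (Section~\ref{sec:reg-On}), giving $\|\mathbb{E}^s A_{s,t}(x)\|_{L^m}\lesssim (t-s)^{1/2+\varepsilon}\big(\|b-b^k\|_{\mathcal{B}^{\gamma-1}_p} + [\mathcal{E}^{n,k}]_{\mathcal{C}^{1/2,0}_{[s,t],x}L^m} + (1+\|b^k\|_\infty)n^{-1/2+\varepsilon} + (1+\|b^k\|_\infty)\|b^k\|_{\mathcal{C}^1}n^{-1+\varepsilon}\big)$, while the $\delta A_{s,u,t}$ bound uses Hölder continuity of $u$ (via \eqref{eq:holder-reg-sol}), of $u^{n,k}$ (Step 1), and Lipschitz-in-$z$ estimates from the regularisation lemmas. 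Applying the SSL (Appendix~\ref{app:general-lemmas}) and using $[\mathcal{E}^{n,k}]_{\mathcal{C}^{1/2,0}_{[s,t],x}L^m}$ on a short window, one absorbs that term into the left-hand side for $|t-s|$ small, then chains sub-intervals to get the global bound \eqref{eq:main-result-SPDE2}; plugging in \eqref{eq:bn-b}--\eqref{eq:bn-C1} gives \eqref{eq:rate2}.

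\textbf{Step 3: the limit case and Part~\ref{MainPropII}.} When $\gamma-1/p=-1$, the regularisation estimate degenerates logarithmically: the analogue of the sewing bound carries an extra factor $1+|\log(\cdot)|$, so closing the estimate is no longer a plain linear Grönwall but the critical two-parameter Grönwall lemma of Appendix~\ref{app:B}, which produces the exponent $e^{-\M}-\delta$ and forces a slight loss of Hölder regularity ($1/2-\zeta$ instead of $1/2$); this yields \eqref{eq:main-result-SPDE-critic2} and, with $k_n=\lfloor\sqrt n\rfloor$, the rate \eqref{eq:rate2-critic}. Finally, Part~\ref{MainPropII} is immediate: under the assumptions of Corollary~\ref{cor:gama=d/p} one has $b\in\mathcal{B}^0_\infty$, so $(\gamma,p)=(0,\infty)$ satisfies \eqref{eq:cond-gamma-p-H} in the (closed) sub-critical sense with $\gamma-1/p=0$, and the same chain of arguments as in Steps~1--2 applies verbatim since $\gamma=0>-1$ keeps all regularisation lemmas in force. \textbf{The main obstacle} I anticipate is getting the mixed continuous/discrete regularisation estimate (Step~2's conditional bound) with the sharp $(t-s)^{1/2+\varepsilon}$ power and the correct dependence on $n$: one must carefully separate the large-scale behaviour, where $O^n_{r_h}$ mimics $O_r$ and the sewing gains oscillation, from the small scales below the mesh, where the discrete OU process is too smooth and one pays the $\|b^k\|_\infty n^{-1/2+\varepsilon}$ and $\|b^k\|_{\mathcal{C}^1} n^{-1+\varepsilon}$ penalties — reconciling these two regimes in a single sewing argument is the delicate point.
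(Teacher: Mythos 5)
Your high-level skeleton (a priori bound on the scheme, sewing plus a Gr\"onwall/critical-Gr\"onwall closure, short-window absorption) matches the paper, and Step~1 is essentially the paper's Proposition~\ref{cor:bound-Khn}. However, there are two genuine gaps. The first is structural: you propose a \emph{single} sewing argument on the germ $A_{s,t}=\EE^s\int_s^t\big(P_{t-r}b(u_r)-P^n_{(t-r)_h}b^k(u^{n,k}_{r_h})\big)dr$ and then assert the conditional-expectation bound with all four error contributions on the right-hand side. This is precisely the step you flag as ``the delicate point,'' and you do not resolve it; the paper's resolution is to \emph{not} do a single sewing argument but to decompose $\mathcal{E}^{n,k}=V^k+\mathcal{E}^{1,n,k}+\mathcal{E}^{2,n,k}+\mathcal{E}^{3,n,k}$ (see \eqref{def:Vk}--\eqref{def:E4}) via the pivot $v^k_t(x)=\int_0^t P_{t-r}b^k(u_r)(x)\,dr$, changing one ingredient at a time: drift $b\to b^k$ ($V^k$, regularisation by $O$ in $\mathcal{B}_p^{\gamma-1}$), argument $u\to u^{n,k}$ with continuous kernel and continuous $O$ ($\mathcal{E}^{1,n,k}$, the only term where the unknown $[\mathcal{E}^{n,k}]$ reappears), continuous $O\to$ discrete $O^n$ and gridpoint evaluation ($\mathcal{E}^{2,n,k}$), and kernel $p\to p^n$ ($\mathcal{E}^{3,n,k}$). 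This decomposition is not cosmetic: the conditional-expectation mechanisms for $O$ (via $G_{Q(t-s)}f(P_{t-s}O_s)$, Lemma~\ref{lem:reg-O}) and for $O^n$ (via $G_{Q^n(t-s)}f(\widehat{O}^n_{s,t})$, Lemma~\ref{lem:reg-On}) are different, each applies only to terms involving one of the two processes, and a germ mixing both does not admit the $(t-s)^{1+\varepsilon_1}$ bound you claim without first separating the contributions. Without the decomposition your Step~2 estimate is unjustified.

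The second gap is in Part~\ref{MainPropII}. You claim the arguments apply ``verbatim'' with $\gamma=0$, $p=\infty$ because $\gamma-1/p=0$ lies in the ``closed sub-critical'' range. But \eqref{eq:cond-gamma-p-H} and case $(b)$ require the \emph{strict} inequality $0>\gamma-1/p$, and several regularisation lemmas genuinely need negative regularity (e.g.\ Lemma~\ref{lem:reg-O} and Lemma~\ref{eq:reg-S}$(i)$ require $\beta-1/p<0$), so the chain of arguments does not run at $\gamma-1/p=0$. The correct route, as in Section~\ref{sec:proofgamma=d/p}, is to embed $\mathcal{B}_\infty^0\hookrightarrow\mathcal{B}_\infty^{-\varepsilon/2}$, apply Part~\ref{MainPropI}$(b)$ with $\gamma\equiv-\varepsilon/2$ and $\varepsilon/2$ in place of $\varepsilon$, and then use $\|b-b^k\|_{\mathcal{B}_\infty^{-\varepsilon/2-1}}\leq C\|b-b^k\|_{\mathcal{B}_\infty^{-1}}$ to recover the stated bound. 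Aside from these two points, your treatment of the limit case (critical sewing with logarithmic factor plus the two-parameter Gr\"onwall lemma of Appendix~\ref{app:B} yielding the exponent $e^{-\M}-\delta$) is consistent with the paper.
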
 
The proof of Proposition~\ref{prop:main-SPDE} is given in Section~\ref{sec:error-analysis}.

~

The proofs of Theorem \ref{thm:main-SPDE}, Corollary \ref{cor:bn-choice} and Corollary \ref{cor:gama=d/p} follow by using Proposition \ref{prop:main-SPDE} and Equation \eqref{eq:lien-u-E}.

~

For the rest of Section \ref{sec:convergence}, let $\gamma$ and $p$ which satisfy \eqref{eq:cond-gamma-p-H} and $b \in \mathcal{B}_p^\gamma$. There is $\mathcal{B}_p^\gamma \hookrightarrow \mathcal{B}_q^{\gamma-\frac{1}{p}-\frac{1}{q}}$ for any $q \geq p$, by Besov embedding. Setting $\tilde{\gamma}=\gamma-1/p+1/q$ and $\tilde{p}=q$, we have $b \in \mathcal{B}_{\tilde{p}}^{\tilde{\gamma}}$ and $\gamma-1 / p=\tilde{\gamma}-1 / \tilde{p}$, so that \eqref{eq:cond-gamma-p-H} is still satisfied in $\mathcal{B}_{\tilde{p}}^{\tilde{\gamma}}$. Thus there is no loss of generality in assuming that $p$ is as large as needed. We always assume that $p \geq m$. This allows us to apply the regularisation lemmas given in Section~\ref{sec:reg-O}, Section~\ref{sec:reg-On-1} and Section~\ref{sec:reg-On}.

\subsection{Proof of Proposition \ref{prop:main-SPDE}}\label{sec:error-analysis}

The proof of Proposition \ref{prop:main-SPDE} is split into three parts. First, in Section~\ref{subsubsec:prop31-}, we prove \eqref{eq:unifschemev0}, \eqref{eq:main-result-SPDE2} and \eqref{eq:main-result-SPDE-critic2}. Then in Section \ref{subsubsec:prop31}, we fix $k_n = \lfloor n^{1/(1-\gamma+1/p)} \rfloor $ and deduce \eqref{eq:unifscheme} and the rates of convergence \eqref{eq:rate2} and \eqref{eq:rate2-critic}, in the sub-critical case and the limit case respectively. Finally, the proof of Proposition \ref{prop:main-SPDE}\ref{MainPropII} is given in Section \ref{sec:proofgamma=d/p}.

\subsubsection{Proof of \eqref{eq:unifschemev0}, \eqref{eq:main-result-SPDE2} and \eqref{eq:main-result-SPDE-critic2}}\label{subsubsec:prop31-}

First, apply Proposition~\ref{cor:bound-Khn} to obtain directly \eqref{eq:unifschemev0}. 

\smallskip

Now for $t \in [0,1]$, $x \in \mathbb{T}$, recall that $v_t(x)$ is introduced in Definition \ref{def:true-solution}, and for $n \in \N^*$, $k\in \N$, define further
\begin{align}
v^{n,k}_t(x)  & = u^{n,k}_t(x) - O^n_t(x)- P^n_t \psi_0(x)  = \int_0^t P^n_{(t-r)_h} b^k(u^{n,k}_{r_h}) (x)\, dr , \label{def:v-hk}\\
v^k_t(x) & = \int_0^t P_{t-r} b^k(u_r)(x)\, dr \label{def:v-k} .
\end{align}
Recall that $\mathcal{E}^{n,k}$ was defined in \eqref{def:error}. For all $(s,t) \in \Delta_{[0,1]}$ and $x \in \mathbb{T}$, write
\begin{align*}
\mathcal{E}^{n,k}_{s,t}(x) & = v_t(x) - P_{t-s}v_s(x) - \int_s^t  P^n_{(t-r)_h} b^k (v^{n,k}_{r_h}+ P^n_{r_h} \psi_0+ O^{n}_{r_h})(x) \, dr  \\
& = v_t(x)- v^{k}_t(x) - P_{t-s}v_s(x) + P_{t-s}v^k_s(x) \\
&\quad + \int_s^t  \Big( P_{(t-r)} b^k (v_{r}+ P_r \psi_0 + O_r)(x)  -  P^n_{(t-r)_h} b^k (v^{n,k}_{r_h}+ P^n_{r_h} \psi_0 + O^{n}_{r_h})(x) \Big) dr \\
& = V^k_{s,t}(x)  + \mathcal{E}^{1,n,k}_{s,t}(x)+ \mathcal{E}^{2,n,k}_{s,t}(x)+ \mathcal{E}^{3,n,k}_{s,t} (x),
\end{align*}
where
\begin{align}
V^k_{s,t}(x) & := v_t(x)-v_t^k(x)-P_{t-s}(v_s-v^k_s)(x) , \label{def:Vk} \\
\mathcal{E}^{1,n,k}_{s,t}(x)  & :=  \int_s^t  \int_{\T} p_{t-r}(x,y) \Big( b^k(v_r(y)+ P_{r} \psi_0(y) +O_r(y)) -b^k(v^{n,k}_r(y) +P^n_{r} \psi_0(y)+ O_r(y)) \Big) \, dy dr  , \label{def:E1}\\
\mathcal{E}^{2,n,k}_{s,t}(x)  & := \int_{s}^t \int_{\T} p_{t-r}(x,y) \Big( b^k(v_r^{n,k}(y) +P^n_{r} \psi_0(y)+ O_r(y)) \nonumber \\ & \hspace{1.5cm}  - b^k(v_{r_h}^{n,k}(y_n) +P^n_{r_h} \psi_0(y_n)+ O^n_{r_h}(y_n)) \Big) \, dy dr ,\label{def:E2} \\
\mathcal{E}^{3,n,k}_{s,t} (x)& = \int_s^t \int_{\T} (p_{t-r}-p^n_{(t-r)_h})(x,y)\, b^k(v_{r_h}^{n,k}(y_n) + P^n_{r_h} \psi_0(y_n)+ O^n_{r_h}(y_n))\, dy dr \label{def:E4} .
\end{align}
Let $(S,T) \in \Delta_{[0,1]}$. For any $\zeta \in [0, \frac{1}{2})$, we have the upper bound
\begin{align}\label{eq:error-decomp}
[\mathcal{E}^{n,k}]_{\mathcal{C}^{\frac{1}{2}-\zeta,0}_{[S,T],x} L^m}  \leq [V^k]_{\mathcal{C}^{\frac{1}{2}-\zeta,0}_{[S,T],x} L^m} + [\mathcal{E}^{1,n,k}]_{\mathcal{C}^{\frac{1}{2}-\zeta,0}_{[S,T],x} L^m}
+[\mathcal{E}^{2,n,k}]_{\mathcal{C}^{\frac{1}{2}-\zeta,0}_{[S,T],x} L^m} 
+[\mathcal{E}^{3,n,k}]_{\mathcal{C}^{\frac{1}{2}-\zeta,0}_{[S,T],x} L^m}   .
\end{align}

We will bound the quantities that appear in the right-hand side of \eqref{eq:error-decomp}. The bound on $V^k$ is proven in Section~\ref{sec:reg-O-1}, see Corollary \ref{cor:bound-Vk}. The bound on $\mathcal{E}^{1,n,k}$ is proven in Section~\ref{sec:reg-E1}, see Corollary~\ref{cor:bound-E1} (sub-critical case) and Proposition~\ref{prop:bound-E1-critic} (limit case). The bound on $\mathcal{E}^{2,n,k}$ is proven in Section~\ref{subsec:reg-E2}, see Corollary~\ref{cor:bound-E2}. The bound on $\mathcal{E}^{3,n,k}$ is proven in Section~\ref{app:E3}, see Lemma~\ref{lem:bound-E4}.

\paragraph{Bound on $V^k$.}
\textit{In the sub-critical case}, we apply Corollary \ref{cor:bound-Vk}$(a)$ to get
\begin{align}\label{eq:proba-conv-SDE}
[V^{k}]_{\mathcal{C}^{\frac{1}{2},0}_{[S,T],x} L^{m}} & \leq  C \| b - b^k \|_{\mathcal{B}_p^{\gamma-1}}.
\end{align}
\textit{In the limit case}, we apply Corollary \ref{cor:bound-Vk}$(b)$ to get
\begin{align}\label{eq:proba-conv-SDE-critic}
[V^k]_{\mathcal{C}^{\frac{1}{2},0}_{[S,T],x} L^{m}} & \leq  C \| b - b^k \|_{\mathcal{B}_p^{\gamma-1}} (1+ |\log \| b - b^k \|_{\mathcal{B}_p^{\gamma-1}} |) .
\end{align}

\paragraph{Bound on $\mathcal{E}^{1,n,k}$.}

Define
\begin{align}\label{eq:defepsilonhn}
\epsilon(n,k) :=  [V^k]_{\mathcal{C}^{\frac{1}{2},0}_{[0,1],x} L^m}  +[\mathcal{E}^{2,n,k}]_{\mathcal{C}^{\frac{1}{2},0}_{[0,1],x} L^m} +[\mathcal{E}^{3,n,k}]_{\mathcal{C}^{\frac{1}{2},0}_{[0,1],x} L^m}  +  \sup_{\substack{t \in [0,1] \\x \in \T}} | P_t \psi_0(x) - P^n_t \psi_0(x) | .
\end{align}
\textit{In the sub-critical case}, Corollary~\ref{cor:bound-E1} gives the existence of $C>0$ such that for any $n\in \N^*$, $k\in\N$,
\begin{align*}
[\mathcal{E}^{1,n,k}]_{\mathcal{C}^{\frac{1}{2},0}_{[S,T],x} L^m}  & \leq C  \Big( \| \mathcal{E}^{n,k}_{0,\cdot} \|_{L^{\infty,\infty}_{[S,T],x} L^m}  + [\mathcal{E}^{n,k}]_{\mathcal{C}^{\frac{1}{2},0}_{[S,T],x} L^m}   + \epsilon(n,k) \Big) (T-S)^{\frac{1}{4}(1+\gamma-\frac{1}{p})} .
\end{align*}
Note that $ \| \mathcal{E}^{n,k}_{0,\cdot} \|_{L^{\infty,\infty}_{[S,T],x} L^m} \leq [\mathcal{E}^{n,k}]_{\mathcal{C}^{\frac{1}{2},0}_{[S,T],x} L^{m}}  + [\mathcal{E}^{n,k}]_{\mathcal{C}^{\frac{1}{2},0}_{[0,S],x} L^{m}}$. Therefore,
\begin{align}\label{eq:bound-E1-SDE}
[ \mathcal{E}^{1,n,k} ]_{\mathcal{C}^{\frac{1}{2},0}_{[S,T],x} L^m} \leq C \Big( [\mathcal{E}^{n,k}]_{\mathcal{C}^{\frac{1}{2},0}_{[S,T],x} L^{m}} + [\mathcal{E}^{n,k}]_{\mathcal{C}^{\frac{1}{2},0}_{[0,S],x} L^{m}}+ \epsilon(n,k) \Big) (T-S)^{\frac{1}{4}(1+\gamma-\frac{1}{p})} .
\end{align}
\textit{In the limit case}, Proposition~\ref{prop:bound-E1-critic}, applied with $\eta=\frac{\varepsilon}{2}$, gives the existence of $\ell_0$ such that for $\zeta \in (0,1/2)$ and $T-S \leq \ell_0$,
\begin{align*}
\sup_{x \in \mathbb{T}} \|\mathcal{E}^{1,n,k}_{s,t}(x) \|_{L^m} 
& \leq \mathbf{M}\Bigg(1+\bigg|\log \frac{T^{\frac{\varepsilon}{2}} \Big(1+ \big\{ u^{n,k} \big\}_{n,k,m, \frac{1}{2}+\frac{\varepsilon}{2}} \Big) }{\| \mathcal{E}^{1,n,k}_{0,\cdot} \|_{L_{[S, T],x}^{\infty,\infty} L^m}+\epsilon(n, k)}\bigg|\Bigg)\left(\| \mathcal{E}^{1,n,k}_{0,\cdot}  \|_{L_{[S, T],x}^{\infty,\infty} L^m}+\epsilon(n, k)\right)(t-s)
\\ 
& \quad +\mathbf{M}\left(\| \mathcal{E}^{1,n,k}_{0,\cdot} \|_{L_{[S, T],x}^{\infty,\infty} L^m}+\left[ \mathcal{E}^{1,n,k} \right]_{\mathcal{C}_{[S, T]}^{\frac{1}{2}-\zeta,0} L^m} + \epsilon(n,k) \right)(t-s)^{\frac{1}{2}}.
\end{align*} 
Since in the limit case $\mathcal{D}$ satisfies \eqref{eq:assump-bn-bounded}, Proposition~\ref{cor:bound-Khn} yields that
$\sup_{(n,k) \in \mathcal{D}} \big\{ u^{n,k} \big\}_{n,k,m, \frac{1}{2}+\frac{\varepsilon}{2}}< \infty$.
It follows that
\begin{align*}
\Bigg|\log \frac{T^{\frac{\varepsilon}{2}} \Big(1+\big\{ u^{n,k} \big\}_{n,k,m,\frac{1}{2}+\frac{\varepsilon}{2}}\Big)}{\| \mathcal{E}^{1,n,k} _{0,\cdot} \|_{L_{[S, T],x}^{\infty,\infty} L^m}+\epsilon(n, k)}\Bigg| 
\leq C+ {\frac{\varepsilon}{2}} |\log(T)| + \left| \log\left( \| \mathcal{E}^{1,n,k}_{0,\cdot} \|_{L_{[S, T],x}^{\infty,\infty} L^m}+\epsilon(n, k) \right) \right|.
\end{align*}
Therefore, we have
\begin{align*}
\sup_{x \in \mathbb{T}} \| \mathcal{E}^{1,n,k}_{s,t} (x) \|_{L^{m}}  &\leq  \M \, \Big(1+ \left| \log \Big( \|\mathcal{E}^{1,n,k}_{0,\cdot} \|_{L^{\infty,\infty}_{[S,T],x} L^{m}} +\epsilon(n,k)\Big) \right| \Big) \, \Big( \|\mathcal{E}^{1,n,k}_{0,\cdot} \|_{L^{\infty,\infty}_{[S,T],x} L^{m}} + \epsilon(n,k)\Big) \, (t-s) \\
 &\quad+ C \, \big( 1+ | \log(T) | (t-s)^{\frac{1}{2}} \big) \Big(\|\mathcal{E}^{1,n,k}_{0,\cdot} \|_{L^{\infty,\infty}_{[S,T],x} L^{m}} + [\mathcal{E}^{1,n,k} ]_{\mathcal{C}^{\frac{1}{2}-\zeta,0}_{[S,T],x} L^{m}} + \epsilon(n,k) \Big)\, (t-s)^{\frac{1}{2}}.
\end{align*}
Since $1 \geq T \geq t-s$, we get that $(s,t,T)\mapsto |\log T|(t-s)^{\frac{1}{2}}$ is a bounded mapping on the domain $\{(s, t, T): T \in$ $(0,1]$,\ $s<t \leq T\}$. It follows that
\begin{equation}\label{eq:bound-E1-SDE-critic}
\begin{split}
\sup_{x \in \mathbb{T}} \| \mathcal{E}^{1,n,k}_{s,t} (x) \|_{L^{m}} &\leq  \M \,  \left|\log \Big( \|\mathcal{E}^{1,n,k}_{0,\cdot} \|_{L^{\infty,\infty}_{[S,T],x} L^{m}} +\epsilon(n,k)\Big) \right| \, \Big(  \|\mathcal{E}^{1,n,k}_{0,\cdot} \|_{L^{\infty,\infty}_{[S,T],x} L^{m}} + \epsilon(n,k)\Big) \, (t-s) \\
 &\quad+ C \, \Big( \|\mathcal{E}^{1,n,k}_{0,\cdot} \|_{L^{\infty,\infty}_{[S,T],x} L^{m}} + [\mathcal{E}^{1,n,k} ]_{\mathcal{C}^{\frac{1}{2}-\zeta,0}_{[S,T],x} L^{m}} + \epsilon(n,k) \Big)\, (t-s)^{\frac{1}{2}}.
\end{split}
\end{equation}

\paragraph{Bounds on $\mathcal{E}^{2,n,k}$ and $\mathcal{E}^{3,n,k}$.}
The quantities $\mathcal{E}^{2,n,k}$ and $\mathcal{E}^{3,n,k}$ are respectively bounded using Corollary \ref{cor:bound-E2} and Lemma \ref{lem:bound-E4}. We get that for $\varepsilon \in (0, 1/2)$ and any $(s,t) \in \Delta_{[S,T]}$,
\begin{align*}
\sup_{x\in \mathbb{T}} \| \mathcal{E}^{2,n,k}_{s,t}(x) \|_{  L^{m}}  
& \leq C (1+  \| \psi_0 \|_{\mathcal{C}^{\frac{1}{2}-\varepsilon}}) \Big( (1+ \| b^k \|_{\infty}) n^{-\frac{1}{2}+\varepsilon} + (1+\| b^k \|_{\infty}) \| b^k \|_{\mathcal{C}^1}  n^{-1+\varepsilon} \Big) (t-s)^{\frac{1}{2}} 
\end{align*}
and
\begin{align*}
\sup_{x\in \mathbb{T}} \| \mathcal{E}^{3,n,k}_{s,t}(x)\|_{L^{m}}  \leq & C \|b^k\|_\infty n^{-\frac{1}{2}+\varepsilon} (t-s)^{\frac{1}{2}+\frac{\varepsilon}{2}} .
\end{align*}
Recall that $\| \psi_0 \|_{\mathcal{C}^{\frac{1}{2}-\varepsilon}}  < \infty$. Dividing by $(t-s)^{\frac{1}{2}}$ and taking the supremum over $(s,t) \in \Delta_{[S,T]}$ leads to
\begin{align}\label{eq:bound-E2-E3}
[ \mathcal{E}^{2,n,k} ] _{\mathcal{C}^{\frac{1}{2},0}_{[S,T],x} L^{m} } +   [\mathcal{E}^{3,n,k}]_{\mathcal{C}^{\frac{1}{2},0}_{[S,T],x} L^{m} } & \leq C \Big( (1+\| b^k \|_{\infty}) n^{-\frac{1}{2}+\varepsilon} + (1+\| b^k \|_{\infty} ) \| b^k \|_{\mathcal{C}^1}  n^{-1+\varepsilon} \Big) .
\end{align}
One could also have bounded the term $\mathcal{E}^{2,n,k}$ using Girsanov's theorem as this is done in \cite[Corollary 3.3.2]{butkovsky2021optimal}). But this leads to an exponential dependence on the norm $\|b^k\|_\infty$ which here would yield a poor rate of convergence as $\|b^k\|_\infty$ might go to infinity when $k\to \infty$.

\paragraph{Conclusion in the sub-critical case.} 
We start from \eqref{eq:error-decomp} with $\zeta=0$ and bound the terms $V^k$, $\mathcal{E}^{2,n,k}$ and $\mathcal{E}^{3,n,k}$ by the quantity $\epsilon(n,k)$ defined in \eqref{eq:defepsilonhn}, then bound $\mathcal{E}^{1,n,k}$ using \eqref{eq:bound-E1-SDE}:
\begin{align*}
[\mathcal{E}^{n,k}]_{\mathcal{C}^{\frac{1}{2},0}_{[S,T],x} L^{m}} &\leq \epsilon(n,k) + [\mathcal{E}^{1,n,k}]_{\mathcal{C}^{\frac{1}{2}-\zeta,0}_{[S,T],x} L^m} \\
&\leq (1+C)\, \epsilon(n,k) +C  \Big( [\mathcal{E}^{n,k}]_{\mathcal{C}^{\frac{1}{2},0}_{[S,T],x} L^{m}} + [\mathcal{E}^{n,k}]_{\mathcal{C}^{\frac{1}{2},0}_{[0,S],x} L^{m}}\Big) (T-S)^{\frac{1}{4}(1+\gamma-\frac{1}{p})}.
\end{align*}
Hence for $T-S\leq (2C)^{-(\frac{1}{4}(1+\gamma-\frac{1}{p}))^{-1}} =: \tilde{\ell}$, there is
\begin{align}\label{eq:boundseminorm-}
[\mathcal{E}^{n,k}]_{\mathcal{C}^{\frac{1}{2},0}_{[S,T],x} L^{m}} \leq C\epsilon(n,k) + [\mathcal{E}^{n,k}]_{\mathcal{C}^{\frac{1}{2},0}_{[0,S],x} L^m} .
\end{align}
Then the inequality 
\begin{align*}
 [\mathcal{E}^{n,k}]_{\mathcal{C}^{\frac{1}{2},0}_{[0,S],x} L^m} \leq  [\mathcal{E}^{n,k}]_{\mathcal{C}^{\frac{1}{2},0}_{[0,S-\tilde{\ell}],x} L^m}+ [\mathcal{E}^{n,k}]_{\mathcal{C}^{\frac{1}{2},0}_{[S-\tilde{\ell},S],x} L^m}
\end{align*}
can be plugged in \eqref{eq:boundseminorm-} and iterated until $S-m\tilde{\ell}$ is smaller than $0$ for  some $m\in \N$. It follows that 
\begin{align*}
[\mathcal{E}^{n,k}]_{\mathcal{C}^{\frac{1}{2},0}_{[0,1],x} L^{m}} \leq (m+1)\, C\, \epsilon(n,k) .
\end{align*}
In view of \eqref{eq:additional-error-terms}, \eqref{eq:proba-conv-SDE}, \eqref{eq:defepsilonhn} and \eqref{eq:bound-E2-E3}, we obtain the inequality \eqref{eq:main-result-SPDE2}.

\paragraph{Conclusion in the limit case.}
Let $C$ be the constant appearing in \eqref{eq:bound-E1-SDE-critic} and let ${\ell \in (0, C^{-\frac{1}{\zeta}} \wedge \ell_0)}$.
Fix $\zeta\in (0,1/2)$, divide by $(t-s)^{1/2-\zeta}$ in \eqref{eq:bound-E1-SDE-critic} and take the supremum for $(s,t) \in \Delta_{[S,T]}$ with $T-S \leq \ell$ to get
\begin{equation}\label{eq:bound-E1-SDE-critic-1}
\begin{split}
 [\mathcal{E}^{1,n,k} ]_{\mathcal{C}^{\frac{1}{2}-\zeta,0}_{[S,T],x} L^{m}} & \leq  \frac{\M}{1-C\ell^\zeta} \Big( \big|\log \big( \|\mathcal{E}^{1,n,k}_{0,\cdot} \|_{L^{\infty,\infty}_{[S,T],x} L^{m}}  +\epsilon(n,k)\big)\big| \Big) \, \Big( \|\mathcal{E}^{1,n,k}_{0,\cdot} \|_{L^{\infty,\infty}_{[S,T],x} L^{m}}  + \epsilon(n,k) \Big) \, (T-S)^{\frac{1}{2}+\zeta}\\ 
 & \quad + \frac{C}{1-C\ell^\zeta} \Big(\|\mathcal{E}^{1,n,k}_{0,\cdot} \|_{L^{\infty,\infty}_{[S,T],x} L^{m}} + \epsilon(n,k)\Big)\, (T-S)^{\zeta} .
\end{split}
\end{equation}
Denote $C_{1} =  \frac{C}{1-C\ell^\zeta}$ and $\mathbf{C}_{2} = \frac{\M}{1-C\ell^\zeta}$. For any $(s,t) \in \Delta_{[S,T]}$, there is
\begin{align*}
\sup_{x \in \mathbb{T}} \| \mathcal{E}^{1,n,k}_{0,t} (x) \|_{L^m} - \sup_{x \in \mathbb{T}} \| P_{t-s} \mathcal{E}^{1,n,k}_{0,s} (x) \|_{L^m}  
& \leq  \sup_{x \in \mathbb{T}} \| \mathcal{E}^{1,n,k}_{0,t} (x)-P_{t-s}\mathcal{E}^{1,n,k}_{0,s}(x) \|_{L^{m}}   \\
& = \sup_{x \in \mathbb{T}} \| \mathcal{E}^{1,n,k}_{s,t} (x) \|_{L^{m}}  \\
& \leq [\mathcal{E}^{1,n,k} ]_{\mathcal{C}^{\frac{1}{2}-\zeta,0}_{[S,T],x} L^{m}} (T-S)^{\frac{1}{2}-\zeta} .
\end{align*}
Moreover, for $x \in \mathbb{T}$ we have $\| P_{t-s} \mathcal{E}^{1,n,k}_{0,s} (x) \|_{L^m}  \leq P_{t-s}\big[\|\mathcal{E}^{1,n,k}_{0,s}\|_{L^m}\big](x)$. 
Hence the contraction property of the semigroup $P$ yields $\| P_{t-s} \mathcal{E}^{1,n,k}_{0,s} (x) \|_{L^m}  \leq \sup_{x \in \mathbb{T}}  \|\mathcal{E}^{1,n,k}_{0,s}(x) \|_{L^m}$. In particular for $s=S$, it follows that
\begin{align*}
\begin{split}
&  \sup_{x \in \mathbb{T}} \| \mathcal{E}^{1,n,k}_{0,t}(x) \|_{L^m} - \sup_{x \in \mathbb{T}} \| \mathcal{E}^{1,n,k}_{0,S}(x) \|_{L^m}  \\ 
& \leq \mathbf{C}_{2} \Big( \big|\log \big( \|\mathcal{E}^{1,n,k}_{0,\cdot} \|_{L^{\infty,\infty}_{[S,T],x} L^{m}}  +\epsilon(n,k)\big)\big| \Big) \, \Big( \|\mathcal{E}^{1,n,k}_{0,\cdot} \|_{L^{\infty,\infty}_{[S,T],x} L^{m}}  + \epsilon(n,k) \Big) \, (T-S) \\ 
& \quad+ C_{1} \Big(\|\mathcal{E}^{1,n,k}_{0,\cdot} \|_{L^{\infty,\infty}_{[S,T],x} L^{m}} + \epsilon(n,k)\Big)\, (T-S)^{\frac{1}{2}}.
\end{split}
\end{align*}
Now in the space $E = L^\infty_{x}L^m$, apply Lemma~\ref{lem:rate-critical} to the $E$-valued functions $f^{n,k}_{s,t} = \mathcal{E}^{1,n,k}_{s,t}$ which satisfy $f^{n,k}_{s,t} = f^{n,k}_{0,t} - P_{t-s}f^{n,k}_{0,s}$ and by what precedes, $ \|f^{n,k}_{0,t}\|_{L^\infty_{x}L^m} - \|f^{n,k}_{0,s}\|_{L^\infty_{x}L^m} \leq \|f^{n,k}_{s,t}\|_{L^\infty_{x}L^m}$.
Hence it comes that for any $\delta \in (0, e^{-\M/4})$, there exists $\bar{\epsilon}$ such that whenever $\epsilon(n,k) \leq \bar{\epsilon}$,
\begin{align*}
\| \mathcal{E}^{1,n,k}_{0,\cdot} \|_{L^{\infty,\infty}_{[0,1],x} L^m} \leq \epsilon(n,k)^{e^{-\mathbf{C}_{2}}-\delta} .
\end{align*}
Choosing $\ell$ small enough, we get $e^{-\mathbf{C}_{2}}  = e^{-\frac{\M}{1-C\ell^\zeta}} \geq e^{-\M}-\delta$ and thus
\begin{align*}
\| \mathcal{E}^{1,n,k}_{0,\cdot} \|_{L^{\infty,\infty}_{[0,1],x} L^m} \leq \epsilon(n,k)^{e^{-\M}-2\delta} ,
\end{align*}
whenever $\epsilon(n,k) \leq \bar{\epsilon}$.

Define $\epsilon <\bar{\epsilon} \wedge 1$ so that $\epsilon^{e^{-\mathbf{M}}-2 \delta}<\frac{e^{-1}}{2}$. Now if $\epsilon(n, k)<\epsilon$, it comes that
\begin{align*}
\| \mathcal{E}^{1,n,k}_{0,\cdot} \|_{L^{\infty,\infty}_{[0,1],x} L^m}+\epsilon(n,k) \leq \epsilon(n,k)^{e^{-\M}-2\delta}+\epsilon(n,k)  \leq 2 \epsilon(n,k)^{e^{-\M}-2 \delta} < e^{-1}.
\end{align*}
The mapping $x \mapsto x|\log (x)|$ is increasing on the interval $\left(0, e^{-1}\right)$, thus by \eqref{eq:bound-E1-SDE-critic-1}, it follows that on any interval $I$ of size $\ell$, there is
\begin{align*}
\left[\mathcal{E}^{1,n, k}\right]_{\mathcal{C}_{I,x}^{\frac{1}{2}-\zeta,0} L^m} \leq C \epsilon(n, k)^{e^{-\M}-2 \delta}(1+|\log (\epsilon(h, n))|) .
\end{align*}
Since the definition of $\ell$  does not depend on $\epsilon(n, k)$, one needs to sum at most $\frac{1}{\ell}$ of these bounds, so that if $\epsilon(n, k)<\epsilon$, it comes that
\begin{align}\label{eq:boundE1-critic-epsilon}
\left[\mathcal{E}^{1,n, k}\right]_{\mathcal{C}_{[0,1],x}^{\frac{1}{2}-\zeta,0} L^m} \leq C \epsilon(n, k)^{e^{-\M}-2 \delta}(1+|\log (\epsilon(n, k))|) \leq C \epsilon(n, k)^{e^{-\M}-4 \delta} .
\end{align}

Now to prove that \eqref{eq:boundE1-critic-epsilon} holds even for $\epsilon(n, k)$ larger than $\epsilon$, it suffices to establish that ${\sup_{n,k\in \mathcal{D}} [ \mathcal{E}^{1,n,k}]_{\mathcal{C}_{[0,1],x}^{\frac{1}{2}-\zeta,0} L^m} < \infty}$.
Recall the definitions \eqref{def:E1}, \eqref{def:Vk}, \eqref{def:error} and \eqref{eq:discrete-seminorm} and observe that
\begin{align*}
\sup_{n,k\in \mathcal{D}} [ \mathcal{E}^{1,n,k} ]_{\mathcal{C}_{[0,1],x}^{\frac{1}{2}-\zeta,0} L^m}  
& \leq \sup_{n,k\in \mathcal{D}} [ \mathcal{E}^{n,k} ]_{\mathcal{C}_{[0,1],x}^{\frac{1}{2},0} L^m} + \sup_{n,k\in \mathcal{D}} [ V^{k}]_{\mathcal{C}_{[0,1],x}^{\frac{1}{2},0} L^m}  \\
& \leq \sup_{n,k\in \mathcal{D}}[ u-O  ]_{\mathcal{C}_{[0,1],x}^{\frac{1}{2},0} L^m} +\sup_{n,k\in \mathcal{D}}\big\{ u^{n,k} \big\}_{n,k,m, \frac{1}{2}+\frac{\varepsilon}{2}}+ \sup_{ k\in \N}  [V^{k} ]_{\mathcal{C}^{\frac{1}{2},0}_{[0,1],x}L^m}.
\end{align*}
From \eqref{eq:proba-conv-SDE}, \eqref{eq:proba-conv-SDE-critic}, Proposition~\ref{cor:bound-Khn} and the property $[u-O]_{\mathcal{C}^{3/4,0}_{[0,1],x} L^{m,\infty}} < \infty$, we have  
\begin{align*}
\sup_{n,k\in \mathcal{D}} [ \mathcal{E}^{1,n,k} ]_{\mathcal{C}_{[0,1],x}^{\frac{1}{2}-\zeta,0} L^m}   & \leq  C + C\sup_{ k\in \N} \| b - b^k \|_{\mathcal{B}_p^{\gamma-1}} (1+ |\log \| b - b^k \|_{\mathcal{B}_p^{\gamma-1}} |) ,
\end{align*}
which is finite since $\| b - b^k \|_{\mathcal{B}_p^{\gamma-1}} \leq \| b \|_{\mathcal{B}_p^{\gamma}} +  \| b^k \|_{\mathcal{B}_p^{\gamma}}  \leq 2 \| b \|_{\mathcal{B}_p^{\gamma}}$. 
Using \eqref{eq:boundE1-critic-epsilon}, we conclude that there exists $C$ such that for all $(n, k) \in \mathcal{D}$,
\begin{align*}
\left[\mathcal{E}^{1,n,k}\right]_{\mathcal{C}_{[0,1],x}^{\frac{1}{2}-\zeta,0} L^m} \leq C \epsilon(n, k)^{e^{-\M}-4 \delta} .
\end{align*}
Thus in view of the previous inequality and of the inequalities \eqref{eq:additional-error-terms}, \eqref{eq:proba-conv-SDE-critic}, \eqref{eq:defepsilonhn} and \eqref{eq:bound-E2-E3}, we obtain the inequality~\eqref{eq:main-result-SPDE-critic2}.

\subsubsection{Proof of \eqref{eq:unifscheme}, \eqref{eq:rate2} and \eqref{eq:rate2-critic}} \label{subsubsec:prop31}%
Let $b^{k_n}=G_{\frac{1}{k_n}}b$ with $k_n$ given by $k_n = \lfloor n^{\frac{1}{1-\gamma+1/p}} \rfloor $. The inequalities \eqref{eq:bn-inf}-\eqref{eq:bn-C1} imply that $(b^{k_n})_{n\in \N}$ satisfies \eqref{eq:assump-bn-bounded} on the set $\mathcal{D}=\{(n,k_{n}),\ n\in \N^*\}$. Thus we deduce \eqref{eq:unifscheme} directly from \eqref{eq:unifschemev0}.

\paragraph{The sub-critical case.}
Let $b^{k_{n}}=G_{\frac{1}{k_{n}}}b$ with $k_{n}$ given by $k_n = \lfloor n^{\alpha^\star} \rfloor $ and $\alpha^\star =1/(1-\gamma+1/p)$. In view of Lemma~\ref{eq:reg-S}, $b^{k_{n}}$ satisfies \eqref{eq:bn-b}, \eqref{eq:bn-inf}, \eqref{eq:bn-C1}. Thus in view of \eqref{eq:main-result-SPDE2}, we get
\begin{align*}
[\mathcal{E}^{n,k_n}]_{\mathcal{C}^{\frac{1}{2},0}_{[0,1],x} L^m} &
 \leq C \Big( \lfloor n^{\alpha^\star} \rfloor ^{-\frac{1}{2}}  + (1+ \lfloor n^{\alpha^\star} \rfloor ^{-\frac{1}{2}(\gamma-\frac{1}{p})} ) n^{-\frac{1}{2}+\varepsilon} + (1+\lfloor n^{\alpha^\star} \rfloor^{-\frac{1}{2}(\gamma-\frac{1}{p})}) ( 1+\lfloor n^{\alpha^\star} \rfloor^{\frac{1}{2}-\frac{1}{2}(\gamma-\frac{1}{p} )} ) n^{-1+\varepsilon}  \Big) .
\end{align*}
Using $-1/2(\gamma-1/p)>0$, we get
\begin{align*}
[\mathcal{E}^{n,k_n}]_{\mathcal{C}^{\frac{1}{2},0}_{[0,1],x} L^m} & \leq C \Big(n^{-\frac{\alpha^\star}{2}} +  n^{- \frac{\alpha^\star}{2}(\gamma-\frac{1}{p})} n^{-\frac{1}{2}+\varepsilon} +  n^{\frac{\alpha^\star}{2}} n^{-\alpha^\star (\gamma-\frac{1}{p} )} n^{-1+\varepsilon}\Big) \\
& \leq C n^{-\frac{1}{2(1-\gamma+\frac{1}{p})}+\varepsilon} .
\end{align*}
This proves \eqref{eq:rate2}. We remark that $\alpha^\star$ has been chosen to minimise the error $[\mathcal{E}^{n,k_n}]_{\mathcal{C}^{1/2,0}_{[0,1],x} L^m}$, as soon as $b^{k_{n}}$ is of the form $G_{\frac{1}{k_{n}}}b$ and $k_n$ is of the form $\lfloor n^{\alpha} \rfloor$.

\paragraph{The limit case.} 
As before, we get using \eqref{eq:main-result-SPDE-critic2} that for $\delta \in (0, e^{-\M})$, with $\M$ the constant given by Proposition \ref{prop:bound-E1-critic},
\begin{align*}
\left[\mathcal{E}^{1,n,k}\right]_{\mathcal{C}_{[0,1],x}^{\frac{1}{2}-\zeta,0} L^m} & \leq C \Big( (1+\| b^k \|_\infty) n^{-\frac{1}{2}+\varepsilon} + (1+\| b^k \|_\infty) \| b^k \|_{\mathcal{C}^1} n^{-1+\varepsilon}  \\ & \quad + \| b^k - b \|_{\mathcal{B}_p^{\gamma-1}} (1+| \log(\| b^k - b \|_{\mathcal{B}_p^{\gamma-1}}) | )  \Big)^{e^{-\M}-\delta} .
\end{align*}
Then letting $b^{k_{n}}=G_{\frac{1}{k_{n}}} b$ with $k_{n}=\lfloor n^{\frac{1}{2}} \rfloor$ and using \eqref{eq:bn-b}, \eqref{eq:bn-inf} and \eqref{eq:bn-C1}, 
\begin{align*}
\left[\mathcal{E}^{1,n,k}\right]_{\mathcal{C}_{[0,1],x}^{\frac{1}{2}-\zeta,0} L^m} & \leq C \Big( \lfloor n^{\frac{1}{2}} \rfloor ^{-\frac{1}{2}} (1+|\log( \lfloor n^{\frac{1}{2}} \rfloor ^{-\frac{1}{2}})|)  +  \lfloor n^{\frac{1}{2}} \rfloor ^{-\frac{1}{2}(\gamma-\frac{1}{p})}  n^{-\frac{1}{2}+\varepsilon} + \lfloor n^{\frac{1}{2}} \rfloor ^{\frac{1}{2}-(\gamma-\frac{1}{p} )} n^{-1+\varepsilon}  \Big)^{e^{-\M}-\delta} \\
& \leq C \left( n^{-\frac{1}{4}+\varepsilon} |\log(n)|  \right)^{e^{-\M}-\delta},
\end{align*}
which proves \eqref{eq:rate2-critic}. 
We remark again that $k_{n}=\lfloor n^{\frac{1}{2}} \rfloor$ has been chosen to minimise the error $\left[\mathcal{E}^{1,n,k}\right]_{\mathcal{C}_{[0,1],x}^{\frac{1}{2}-\zeta,0} L^m}$, as soon as $b^{k_{n}}$ is of the form $G_{\frac{1}{k_{n}}}b$ and $k_n$ is of the form $\lfloor n^{\alpha} \rfloor$.

\subsubsection{Proof of Proposition \ref{prop:main-SPDE}\ref{MainPropII}}\label{sec:proofgamma=d/p}

Assume that $b \in \mathcal{B}_\infty^0$. Let $\varepsilon \in (0, 1/2)$ be such that \eqref{eq:assump-bn-bounded} holds. Then $b$ also belongs to $\mathcal{B}_\infty^{-\varepsilon/2}$ and \eqref{eq:cond-gamma-p-H} holds for $\gamma \equiv -\varepsilon/2$ and $p\equiv \infty$. Thus \eqref{eq:unifschemev0} and \eqref{eq:unifscheme} hold by Proposition~\ref{prop:main-SPDE}\ref{MainPropI}$(a)$.

 To prove \eqref{eq:main-result-SPDE2} and \eqref{eq:rate2}, we aim to apply Proposition~\ref{prop:main-SPDE}\ref{MainPropI}$(b)$ with $\gamma\equiv -\varepsilon/2$ and $p\equiv\infty$. Since \eqref{eq:assump-bn-bounded} holds for $\varepsilon$, it also holds for $\varepsilon/2$. Besides, $\psi_{0}$ is assumed to be in $\mathcal{C}^{1/2}$, so it is also in $\mathcal{C}^{1/2-\varepsilon/2}$, thus we can apply Proposition~\ref{prop:main-SPDE}\ref{MainPropI}$(b)$ with $\varepsilon/2$ instead of $\varepsilon$ in the original statement to get that
\begin{align*}
 [\mathcal{E}^{n,k}]_{\mathcal{C}^{\frac{1}{2},0}_{[0,1],x} L^{m}} 
 & \leq C \left( \| b^k-b \|_{\mathcal{B}_\infty^{-\frac{\varepsilon}{2}-1}} + (1+\|b^k\|_\infty )n^{-\frac{1}{2}+\frac{\varepsilon}{2}} + (1+\|b^k\|_\infty) \|b^k \|_{\mathcal{C}^1}\,  n^{-1+\frac{\varepsilon}{2}}  \right) .
\end{align*}
 Then use finally that $\| b^k-b \|_{\mathcal{B}_\infty^{-\varepsilon/2-1}}\leq C\| b^k-b \|_{\mathcal{B}_\infty^{-1}}$ to get \eqref{eq:main-result-SPDE2} with $\gamma=0$ and $p=\infty$.
  
Now  define $b^{k_{n}}= G_{\frac{1}{k_{n}}} b$ and $k_{n}=\lfloor n^{-\frac{1}{1+\varepsilon/2}}\rfloor$. Using \eqref{eq:bn-b}, \eqref{eq:bn-inf} and \eqref{eq:bn-C1} as in Section~\ref{subsubsec:prop31} (in the sub-critical case) leads to 
\begin{align*}
 [\mathcal{E}^{n,k_{n}}]_{\mathcal{C}^{\frac{1}{2},0}_{[0,1],x} L^{m}} 
 &  \leq C \left( n^{-\frac{1}{2(1+\varepsilon/2)}} + n^{-\frac{1}{2}+\frac{\varepsilon}{2}+\frac{\varepsilon/2}{2(1+\varepsilon/2)}} +  n^{-1+\frac{\varepsilon}{2}+\frac{1}{2(1+\varepsilon/2)}+\frac{\varepsilon/2}{1+\varepsilon/2}} \right) \\
 & \leq C\, n^{-\frac{1}{2}+\varepsilon}. %
\end{align*}

\section{Regularisation effect of the Ornstein-Uhlenbeck process, and H\"older regularity of $V^k$ and $\mathcal{E}^{1,n,k}$}\label{sec:reg-O}

In this section, the regularisation properties of the process $O$ are proven. We define some notations that will be useful throughout the whole section. Let $m\in [1,\infty)$, $\alpha > 0$, $I$ an interval of $[0,1]$, and let $w: [0,1] \times \T \times \Omega \to \R$. 
The process $\delta_P w : \Delta_{[0,1]} \times \T \times \Omega \to \R$ is defined by
\begin{align*}
\delta_P w_{s,t}(x) = w_t(x) - P_{t-s}w_s(x),~\forall (s,t) \in \Delta_{[0,1]},~\forall x \in \T .
\end{align*}
In view of the definition \eqref{def:holder-norm-W} of $[\delta_P w]_{\mathcal{C}^{\alpha,0}_{I,x} L^{m}}$, we write
\begin{align}\label{def:holder-norm}
[w]_{\mathcal{C}^{\alpha,0}_{I,x} L^{m}}  \equiv [\delta_P w]_{\mathcal{C}^{\alpha,0}_{I,x} L^{m}} = \sup_{\substack{(s,t)\in \Delta_{I}\\  x \in \T}} \frac{\|  w_t(x) - P_{t-s}w_s(x)\|_{L^m}}{|t-s|^{\alpha}} .
\end{align}
For $q \in [1,\infty]$, define also
\begin{align*}%
[w]_{\mathcal{C}^{\alpha,0}_{I,x} L^{m,q}}  
= \sup_{\substack{(s,t)\in \Delta_{I} \\  x \in \T}} \frac{\| \left( \EE^s | w_t(x) - P_{t-s}w_s(x) |^m  \right)^{\frac{1}{m}}\|_{L^q}}{|t-s|^{\alpha}} .
\end{align*}
For $\phi : [0,1] \times \T \times \Omega \rightarrow \mathbb{R}$, the natural quantity that will appear in the regularisation lemmas of this section are the following pseudo-norms:
\begin{equation}\label{def:holder-norm-phi}
\begin{split}
&\big( \phi \big)_{\mathcal{C}^{\alpha,0}_{I,x} L^{m,q}} = \sup_{\substack{s,u,r \in I \\ s \leq u \leq r,\, s\neq r \\ x \in \T}}  \frac{\|\left( \EE^s| \EE^u \phi_r(x) - \EE^s \phi_r(x)|^m \right)^{\frac{1}{m}} \|_{L^q}}{|r-s|^\alpha}, \\
&\big( \phi \big)_{\mathcal{C}^{\alpha,0}_{I,x} L^{m}} = \big( \phi \big)_{\mathcal{C}^{\alpha,0}_{I,x} L^{m,m}} = \sup_{\substack{ s \leq u \leq r \in I,\, s\neq r \\ x \in \T}}  \frac{\| \EE^u \phi_r(x) - \EE^s \phi_r(x) \|_{L^m}}{|r-s|^\alpha} .
\end{split}
\end{equation}

The following simple inequalities will be frequently used in the sequel:
\begin{equation}
[w ]_{\mathcal{C}_{I,x}^{\alpha,0} L^m}= [w ]_{\mathcal{C}_{I,x}^{\alpha,0} L^{m,m}}\leq [w ]_{\mathcal{C}_{I,x}^{\alpha,0} L^{m,\infty}} \quad \text{and} \quad (\phi)_{\mathcal{C}_{I,x}^{\alpha,0} L^m}= (\phi)_{\mathcal{C}_{I,x}^{\alpha,0} L^{m,m}}\leq (\phi)_{\mathcal{C}_{I,x}^{\alpha,0} L^{m,\infty}}.
\end{equation}

~

In the general regularisation results of this section, namely Proposition~\ref{prop:drift-approx}, Lemma~\ref{lem:ssl-o-on}, Lemma~\ref{cor:ssl-4} and Proposition~\ref{prop:ssl-o-on-2}, upper bounds will mostly be given in terms of the pseudo-norm $(\cdot)_{\mathcal{C}_{I,x}^{\alpha,0} L^{m,q}}$. As we shall see in the next paragraph, $(\cdot)_{\mathcal{C}_{I,x}^{\alpha,0} L^{m,\infty}} \lesssim [\cdot]_{\mathcal{C}_{I,x}^{\alpha,0} L^{m,\infty}}$, see \eqref{eq:comp-seminorms1}. Hence the aforementioned results are sharper when expressed in terms of $(\cdot)_{\mathcal{C}_{I,x}^{\alpha,0} L^{m,q}}$. However, as seen in Section~\ref{sec:convergence}, we are able to develop a Gr\"onwall argument on $[\mathcal{E}^{k,n}]_{\mathcal{C}^{1/2,0}_{[0,1],x} L^{m}}$, which provides therefore a stronger control than if we had bounded $(\mathcal{E}^{k,n})_{\mathcal{C}^{1/2,0}_{[0,1],x} L^{m}}$, due to the inequality \eqref{eq:comp-seminorms1}. Hence the latter inequality will be used in Section~\ref{sec:reg-E1}, combined with the general regularisation results, to bound $[\mathcal{E}^{1,n,k}]_{\mathcal{C}^{1/2,0}_{[S,T],x} L^m}$ in terms of $[\cdot]_{\mathcal{C}_{I,x}^{\alpha,0} L^{m,\infty}}$ pseudo-norms.

 Let $s \in [0,1]$ and $Y$ be an $\mathcal{F}_s$-measurable random variable. Then for any integrable random variable $X$, the conditional Jensen's inequality yields $\|( \EE^s| X - \EE^s X|^m )^{\frac{1}{m}} \|_{L^q} \leq 2\|( \EE^s| X - Y |^m )^{\frac{1}{m}} \|_{L^q}$.
  It follows that for any $\mathbb{F}$-adapted process $\rho : [0,1] \times \T \times \Omega \rightarrow \mathbb{R}$, any $s\leq u \leq r$ (with $s<r$),
\begin{align*}
 \frac{\|\left( \EE^s| \EE^u \rho_r(x) - \EE^s \rho_r(x)|^m \right)^{\frac{1}{m}} \|_{L^q}}{|r-s|^\alpha}  
 & =  \frac{\|\left( \EE^s| \EE^u \rho_r(x) - \EE^s \EE^u \rho_r(x)|^m \right)^{\frac{1}{m}} \|_{L^q}}{|r-s|^\alpha}   \\
& \leq  2  \frac{\|\left( \EE^s| \EE^u \rho_r(x) -  Y |^m \right)^{\frac{1}{m}} \|_{L^q}}{|r-s|^\alpha} ,
\end{align*}
and by the conditional Jensen's inequality again,
\begin{align}
\frac{\|\left( \EE^s| \EE^u \rho_r(x) - \EE^s \rho_r(x)|^m \right)^{\frac{1}{m}} \|_{L^q}}{|r-s|^\alpha}  &  \leq 2  \frac{\|\left( \EE^s\EE^u |  \rho_r(x) -  Y  |^m \right)^{\frac{1}{m}} \|_{L^q}}{|r-s|^\alpha} \nonumber \\
& \leq 2 \frac{\|\left( \EE^s |  \rho_r(x) -  Y  |^m \right)^{\frac{1}{m}} \|_{L^q}}{|r-s|^\alpha} \label{eq:comp-seminorms} .
\end{align}
Applying \eqref{eq:comp-seminorms} to  $Y=P_{r-s}\rho_s$ and taking supremums gives
\begin{align}
\big( \rho \big)_{\mathcal{C}^{\alpha,0}_{I,x} L^{m,q}} \leq 2 [\rho]_{\mathcal{C}^{\alpha,0}_{I,x} L^{m,q}}  \label{eq:comp-seminorms1} .
\end{align}
Moreover, if $\rho= u^{n,k}-O^n$, then letting $m=q$, $Y = \int_0^s P^n_{(r-\theta)_h} b^k(u^{n,k}_{\theta_h})(x) d \theta$ in \eqref{eq:comp-seminorms} and taking supremums gives
\begin{align}\label{eq:comp-seminorms2-0}
\big( u^{n,k}-O^n \big)_{\mathcal{C}^{\alpha,0}_{I,x} L^{m}} 
\leq 2 \sup_{\substack{(s,r)\in \Delta_{I}\\  x \in \T}}  \frac{\Big\|  { \displaystyle \int_s^{r} } P^n_{(r-\theta)_h} b^k(u^{n,k}_{\theta_h})(x) \, d\theta  \Big\|_{L^m}}{|r-s|^{\alpha}} .
\end{align}
In particular, for the pseudo-norm $\{ u^{n,k} \}_{n,k,m,\alpha}$ introduced in \eqref{eq:discrete-seminorm}, this reads as
\begin{align}\label{eq:comp-seminorms2}
\big( u^{n,k}-O^n \big)_{\mathcal{C}^{\alpha,0}_{[0,1],x} L^{m}} \leq 2 \{ u^{n,k} \}_{n,k,m,\alpha}.  
\end{align}
Finally, for $\psi = u-O$, $\phi=u^{n,k}-O^n$ and $\rho=\psi-\phi$, then letting $Y = P_{r-s}(u_s-O_s)(x)-\int_0^s P^n_{(r-\theta)_h} b^k(u^{n,k}_{\theta_h}) (x) d \theta$ in \eqref{eq:comp-seminorms} with $q=m$ and recalling the definition of $\mathcal{E}^{n,k}$ in \eqref{def:error}, one gets
\begin{align}
\big( \psi-\phi \big)_{\mathcal{C}^{\alpha,0}_{I,x} L^{m}} 
& \leq  2\sup_{\substack{(s,r)\in \Delta_{I}\\  x \in \T}}  \frac{\Big\|  { u_r(x)-O_r(x)-P_{r-s}(u_s-O_s)(x) -\displaystyle \int_s^{r} } P^n_{(r-\theta)_h} b^k(u^{n,k}_{\theta_h})(x) \, d\theta  \Big\|_{L^m}}{|r-s|^{\alpha}} \nonumber \\
& = 2 [ \mathcal{E}^{n,k}]_{\mathcal{C}^{\alpha,0}_{I,x} L^{m}} . \label{eq:comp-seminorms3}
\end{align}

\subsection{Regularisation properties of the Ornstein-Uhlenbeck process}\label{sec:reg-O-1}

The OU process regularises quantities of the form $\int_0^t P_{t-r}f(\phi_r + O_r)(x) \, dr$. This will be useful in Section~\ref{sec:reg-V}.

\begin{prop}\label{prop:drift-approx}
Let $m \in[2, \infty), q \in [m,+\infty]$ and $p \in[q,+\infty]$.

\begin{enumerate}[label=(\alph*)]
 \item\label{item:4.1(a)} \underline{The sub-critical case}:
let $\beta \in\left(-2, 0\right)$ such that $\beta-1 / p>-2$. Let $\tau \in(0,1)$ be such that $\frac{1}{4}(\beta-1 / p-1)+\tau>0$. There exists a constant $C>0$ such that for any $0 \leq S < T\leq 1$ and any $(s,t) \in \Delta_{[S,T]}$, any $f \in \mathcal{C}_b^{\infty}\left(\mathbb{R}, \mathbb{R}\right) \cap \mathcal{B}_p^\beta$, and any $\psi : [0,1] \times \T \times \Omega \rightarrow \mathbb{R}$ such that $\psi$ is adapted to $\mathbb{F}$, there is
\begin{align*}
& \sup_{x \in \mathbb{T}} \left\|\left(\mathbb{E}^s\left|\int_s^t \int_{\T} p_{T-r}(x,y) f\left(O_r(y)+\psi_r(y) \right) d y d r\right|^m\right)^{\frac{1}{m}}\right\|_{L^{q}} \\ 
&\quad  \leq  C\|f\|_{\mathcal{B}_p^\beta}(t-s)^{1+\frac{1}{4}\left(\beta-\frac{1}{p}\right)} 
+C\|f\|_{\mathcal{B}_p^\beta}[\psi]_{\mathcal{C}_{[S, T],x}^{\tau,0} L^{m, q}}(t-s)^{1+\frac{1}{4}\left(\beta-\frac{1}{p}-1\right)+\tau} .
\end{align*}

\item\label{item:4.1(b)} \underline{The limit case}: let $\beta-1 / p=-2$ and $p<+\infty$. There exists a constant $C>0$ such that for any $0 \leq S < T\leq 1$ and any $(s,t) \in \Delta_{[S,T]}$, any $f \in \mathcal{C}_b^{\infty}\left(\mathbb{R}, \mathbb{R} \right) \cap \mathcal{B}_p^{\beta+1}$, and any $\psi : [0,1] \times \T \times \Omega \rightarrow \mathbb{R}$ such that $\psi$ is adapted to $\mathbb{F}$, there is
\begin{align}\label{eq:3.5b}
& \sup_{x \in \mathbb{T}} \left\|\int_{s}^t \int_{\T} p_{T-r}(x,y) f\left(O_r(y)+\psi_r(y) \right) dy d r\right\|_{L^m} \nonumber \\ 
&\quad  \leq C \|f\|_{\mathcal{B}_p^\beta}\Big(1+\Big|\log \frac{\|f\|_{\mathcal{B}_p^\beta}}{\|f\|_{\mathcal{B}_p^{\beta+1}}}\Big|\Big) \Big(1+(\psi)_{\mathcal{C}_{[S,T],x}^{\frac{3}{4},0} L^{m}} \Big) (t-s)^{\frac{1}{2}} .
\end{align}
\end{enumerate}
\end{prop}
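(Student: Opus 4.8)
The plan is to derive both parts from the Stochastic Sewing Lemma (SSL): part (a) from its standard form and part (b) from a critical variant. Fix $x\in\T$ and $(s,t)\in\Delta_{[S,T]}$, and write $\mathcal{A}_\theta(x)=\int_S^\theta\int_\T p_{T-r}(x,y)\,f(O_r(y)+\psi_r(y))\,dy\,dr$, so that the quantity to be bounded is $\|(\EE^s|\mathcal{A}_t(x)-\mathcal{A}_s(x)|^m)^{1/m}\|_{L^q}$. The germ is obtained by exploiting the Gaussian structure of $O$: writing $O_r=P_{r-s}O_s+\widetilde O^s_r$ with $\widetilde O^s_r=\int_s^r\int_\T p_{r-\theta}(\cdot,z)\,\xi(dz,d\theta)$ independent of $\mathcal{F}_s$ and centred Gaussian with variance $Q(r-s)$ (a decomposition following from the semigroup property of $p$ and the independence of the increments of $\xi$), one has $\EE^s[f(O_r(y)+P_{r-s}\psi_s(y))]=G_{Q(r-s)}f\big(P_{r-s}(O_s+\psi_s)(y)\big)$, and I set
\[
A_{s,t}(x):=\int_s^t\int_\T p_{T-r}(x,y)\,G_{Q(r-s)}f\big(P_{r-s}(O_s+\psi_s)(y)\big)\,dy\,dr .
\]
The advantage of this germ is that $G_{Q(r-s)}f$ is smooth: by the heat-semigroup smoothing estimates for Besov functions (Lemma~\ref{eq:reg-S}), the embedding $\B_p^\beta\hookrightarrow\mathcal{C}^{\beta-1/p}$ and the bound $Q(\sigma)\asymp\sqrt\sigma$, one has $\|G_\sigma f\|_\infty\lesssim\sigma^{(\beta-1/p)/2}\|f\|_{\B_p^\beta}$ and $\|G_\sigma f\|_{\mathcal{C}^1}\lesssim\sigma^{(\beta-1/p-1)/2}\|f\|_{\B_p^\beta}$.

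For part (a), I would check the hypotheses of the conditional ($L^{m,q}$) version of the SSL recalled in Appendix~\ref{app:general-lemmas}. First, $A_{s,t}(x)$ is $\mathcal{F}_s$-measurable and $\int_\T p_{T-r}(x,y)\,dy=1$, whence the deterministic bound $|A_{s,t}(x)|\le\int_s^t\|G_{Q(r-s)}f\|_\infty\,dr\lesssim\|f\|_{\B_p^\beta}(t-s)^{1+\frac14(\beta-1/p)}$, the exponent being $>\frac12$ since $\beta-1/p>-2$; this produces the first term of the claimed bound. Second, the tower property gives $\EE^s\delta A_{s,u,t}(x)=\int_u^t\int_\T p_{T-r}(x,y)\,\EE^s\big[f(O_r(y)+P_{r-s}\psi_s(y))-f(O_r(y)+P_{r-u}\psi_u(y))\big]\,dy\,dr$, and conditioning on $\mathcal{F}_u$ replaces $f$ by $G_{Q(r-u)}f$ evaluated at $P_{r-u}O_u(y)$ plus the respective drift terms, whose difference is $P_{r-s}\psi_s-P_{r-u}\psi_u=-P_{r-u}\delta_P\psi_{s,u}$; a mean-value estimate and the identity $\int_\T p_{T-r}(x,y)P_{r-u}h(y)\,dy=P_{T-u}h(x)$ then give
\[
|\EE^s\delta A_{s,u,t}(x)|\le\Big(\int_u^t\|G_{Q(r-u)}f\|_{\mathcal{C}^1}\,dr\Big)P_{T-u}\big(\EE^s|\delta_P\psi_{s,u}|\big)(x)\lesssim\|f\|_{\B_p^\beta}(t-s)^{1+\frac14(\beta-1/p-1)}P_{T-u}\big(\EE^s|\delta_P\psi_{s,u}|\big)(x),
\]
and taking $\|(\EE^s|\cdot|^m)^{1/m}\|_{L^q}$, followed by conditional Jensen and Minkowski against $p_{T-u}(x,\cdot)$, bounds this by $\|f\|_{\B_p^\beta}[\psi]_{\mathcal{C}^{\tau,0}_{[S,T],x}L^{m,q}}(t-s)^{1+\frac14(\beta-1/p-1)+\tau}$, whose exponent exceeds $1$ exactly when $\frac14(\beta-1/p-1)+\tau>0$. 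Third, $\|\delta A_{s,u,t}(x)\|_{L^\infty(\Omega)}\le|A_{s,t}(x)|+|A_{s,u}(x)|+|A_{u,t}(x)|\lesssim\|f\|_{\B_p^\beta}(t-s)^{1+\frac14(\beta-1/p)}$ has exponent $>\frac12$, again by $\beta-1/p>-2$, serving as the second SSL hypothesis. One also checks that the process sewn from $A$ coincides with $\mathcal{A}$: since $f$ is bounded and $Q(0)=0$, dominated convergence makes the Riemann sums of $A$ converge to $\mathcal{A}_t-\mathcal{A}_s$. Assembling these estimates through the conditional SSL gives part (a), with $\sup_x$ passing through because all bounds are uniform in $x$.

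For part (b) one has $\beta-1/p=-2$, so with $\tau=\frac34$ the two exponents above collapse to $1+\frac14(\beta-1/p)=\frac12$ and $1+\frac14(\beta-1/p-1)+\tau=1$: both SSL hypotheses sit at their critical values and the standard lemma does not apply. The extra assumption $f\in\B_p^{\beta+1}$ (so $(\beta+1)-1/p=-1$) supplies the missing room, namely $\|G_\sigma f\|_\infty\lesssim\sigma^{-1/2}\|f\|_{\B_p^{\beta+1}}$ and $\|G_\sigma f\|_{\mathcal{C}^1}\lesssim\sigma^{-1}\|f\|_{\B_p^{\beta+1}}$, which upgrade the germ bound to $|A_{s,t}(x)|\lesssim\|f\|_{\B_p^{\beta+1}}(t-s)^{3/4}$ and the $\EE^s\delta A$ bound to $\lesssim\|f\|_{\B_p^{\beta+1}}[\psi](t-s)^{5/4}$, both strictly supercritical. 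The proof then runs a critical version of the SSL; equivalently, it decomposes the $r$-integral dyadically and interpolates between the $\B_p^\beta$- and $\B_p^{\beta+1}$-bounds at the crossover scale $\sigma\sim(\|f\|_{\B_p^\beta}/\|f\|_{\B_p^{\beta+1}})^4$, the number of relevant dyadic scales being $\sim|\log(\|f\|_{\B_p^\beta}/\|f\|_{\B_p^{\beta+1}})|$; this is exactly what yields the logarithmic factor $1+|\log(\|f\|_{\B_p^\beta}/\|f\|_{\B_p^{\beta+1}})|$ while the surviving time power is $(t-s)^{1/2}$. Finally, arranging the drift terms so that they are seen only through $\EE^u\psi_r-\EE^s\psi_r$ (i.e. through $(\psi)_{\mathcal{C}^{3/4,0}_{[S,T],x}L^m}$, which by conditional Jensen is weaker than the $\delta_P$-increment norm) gives the factor $1+(\psi)_{\mathcal{C}^{3/4,0}_{[S,T],x}L^m}$. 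I expect this critical-case sewing — reconciling the two families of bounds so as to extract precisely the $1+|\log(\cdot)|$ correction while keeping the $(t-s)^{1/2}$ scaling and the sharp pseudo-norm on $\psi$ — to be the main obstacle; once the germ and the smoothing estimates are in place, the sub-critical verifications in part (a) are routine.
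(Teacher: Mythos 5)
Your part~(a) is correct and, unlike the paper (which simply cites \cite[Lemma 5.2$(i)$]{athreya2020well} for this point), self-contained: the fully conditioned germ $A_{s,t}=\int_s^t\int_\T p_{T-r}(x,y)\,G_{Q(r-s)}f\big(P_{r-s}(O_s+\psi_s)(y)\big)\,dy\,dr$, together with the identity $P_{r-s}\psi_s-P_{r-u}\psi_u=-P_{r-u}\delta_P\psi_{s,u}$, the semigroup property $\int_\T p_{T-r}(x,y)P_{r-u}h(y)\,dy=P_{T-u}h(x)$ and the smoothing bounds of Lemma~\ref{eq:reg-S}, verifies both sewing conditions with strictly supercritical exponents under $\beta-1/p>-2$ and $\frac14(\beta-1/p-1)+\tau>0$, and lands exactly on the norm $[\psi]_{\mathcal{C}^{\tau,0}_{[S,T],x}L^{m,q}}$ appearing in the statement.

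Part~(b), however, has a genuine gap in the verification of the second sewing condition. The critical Stochastic Sewing Lemma you invoke (Remark~\ref{rmk:critical-sewing}) relaxes only the $\EE^s\delta A$ condition; it still requires $\|\delta A_{s,u,t}\|_{L^m}\leq\Gamma_2(t-s)^{1/2+\varepsilon_2}$ with $\varepsilon_2>0$. With your germ and the triangle inequality, the bound measured in $\|f\|_{\mathcal{B}_p^\beta}$ sits exactly at the critical exponent $(t-s)^{1/2}$ (since $1+\frac14(\beta-\frac1p)=\frac12$ when $\beta-\frac1p=-2$), and the supercritical alternative $\|f\|_{\mathcal{B}_p^{\beta+1}}(t-s)^{3/4}$ cannot serve as $\Gamma_2$: in the intended application $f=b-b^k$ with $\beta=\gamma-1$, the norm $\|f\|_{\mathcal{B}_p^{\beta+1}}=\|b-b^k\|_{\mathcal{B}_p^{\gamma}}$ does not tend to $0$, so it may only appear inside the logarithm, never as a multiplicative prefactor --- which is precisely what \eqref{eq:3.5b} encodes. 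Your ``dyadic decomposition and interpolation at the crossover scale'' would amount to proving a new, doubly critical sewing lemma (and to checking that it produces only one logarithm); it is not supplied. The paper avoids the issue by taking the germ $A_{s,t}=\int_s^t\int_\T p_{T-r}(x,y)\,f(O_r(y)+\EE^s\psi_r(y))\,dy\,dr$, so that $\delta A_{s,u,t}$ integrates the \emph{difference} $f(\cdot+\EE^s\psi_r)-f(\cdot+\EE^u\psi_r)$; Lemma~\ref{lem:besov-spaces}$(ii)$ applied in $\mathcal{B}_p^{\beta-\varepsilon}$ bounds this difference by $|\EE^s\psi_r-\EE^u\psi_r|^{\varepsilon}\|f\|_{\mathcal{B}_p^{\beta}}$, and Lemma~\ref{lem:6.1athreya} then yields $\|\delta A_{s,u,t}\|_{L^m}\lesssim\|f\|_{\mathcal{B}_p^\beta}\big(1+(\psi)_{\mathcal{C}^{3/4,0}_{[S,T],x}L^m}\big)(t-s)^{1/2+\varepsilon/2}$, i.e.\ strict supercriticality in the weak norm at the harmless cost of the factor $1+(\psi)$. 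This change of germ is also what makes the drift enter through $\EE^u\psi_r-\EE^s\psi_r$, hence through the pseudo-norm $(\psi)_{\mathcal{C}^{3/4,0}}$ claimed in the statement, rather than through the larger increment norm $[\psi]$ that your germ produces.
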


\begin{proof}
We refer to \cite[Lemma 5.2$(i)$]{athreya2020well} for a proof of $(a)$. Let us provide the proof of $(b)$.
Let $0 \leq S<T\leq 1$. For $x \in \mathbb{T}$ and $(s,t) \in \Delta_{[S,T]}$, let 
\begin{align} \label{eq:A}
    A_{s,t}:=\int_{s}^{t} \int_{\T} p_{T-r}(x,y) f(O_r(y)+\EE^{s} \psi_{r} (y)) dy dr \ \text{and } \mathcal{A}_{t}:=\int_S^{t} \int_{\T} p_{T-r}(x,y) f(O_r(y)+\psi_r(y)) dy dr.
\end{align}

Assume that $(\psi)_{\mathcal{C}^{3/4,0}_{[S,T],x}L^m}<\infty$, otherwise 
\eqref{eq:3.5b} immediately holds. We aim to apply the Stochastic Sewing Lemma with critical exponent (sse \cite[Theorem 4.5]{athreya2020well}), which is recalled in Remark~\ref{rmk:critical-sewing}. Let $\varepsilon \in (0,1)$ small enough to be specified later.
To show that \eqref{eq:condsew1} and \eqref{eq:condsew2} hold true respectively with $\varepsilon_1=1/4 >0$, and $\varepsilon_2=\varepsilon/2>0$, and that condition~\eqref{eq:sts4} (corresponding to (4.11) in \cite{athreya2020well}) holds with $\Gamma_{4}=0$, we prove that there exists $C>0$ which does not depend on $S,T$ such that for any $(s,t) \in \Delta_{[S,T]}$ and $u=\frac{s+t}{2}$, there is
\begin{enumerate}[label=(\roman*)]
\item\label{en:(1b)}  $\|\EE^{s} \delta A_{{s},u,t}\|_{L^m}\leq C\, \| f \|_{\mathcal{B}_p^{\beta+1}}\, (\psi)_{\mathcal{C}^{\frac{3}{4},0}_{[S,T],x}L^m}\, (t-{s})^{\frac{5}{4}}$;

\myitem{(i$^\prime$)}\label{en:(1'b)} $\|\EE^{s} \delta A_{{s},u,t}\|_{L^m} \leq C\, \|f\|_{\mathcal{B}_p^{\beta}}\, (\psi)_{\mathcal{C}^{\frac{3}{4},0}_{[S,T],x} L^m}\, (t-{s})$ ;

\item\label{en:(2b)} $\| \delta A_{{s},u,t}\|_{L^m} \leq C\, \| f \|_{\mathcal{B}_p^{\beta}}  \Big( 1+  (\psi)_{\mathcal{C}^{\frac{3}{4},0}_{[S,T],x} L^m} \Big) (t-s)^{\frac{1}{2}+ \frac{\varepsilon}{2}}$; 

\item\label{en:(3b)}  If \ref{en:(1b)} and \ref{en:(2b)} are satisfied, \eqref{eq:convAt} gives the 
convergence in probability of $\sum_{i=1}^{N_k-1} A_{t^k_i,t^k_{i+1}}$ on any sequence $\Pi_k=\{t_i^k\}_{i=1}^{N_k}$ of partitions of $[S,t]$ with a mesh that converges to $0$. We will show that the limit is $\mathcal{A}$, the process given in \eqref{eq:A}.
\end{enumerate}
If \ref{en:(1b)}, \ref{en:(1'b)}, \ref{en:(2b)} and \ref{en:(3b)} hold, then the Stochastic Sewing Lemma with critical exponent, whose conclusion is recalled in Remark~\ref{rmk:critical-sewing}, gives that
\begin{align*}
    \Big\| \int_{s}^{t} \int_{\T} p_{T-r}(x,y) f(O_r(y)+\psi_r(y)) \, dy dr \Big\|_{L^m} 
    &\leq C\, \|f\|_{\mathcal{B}_p^{\beta}}\, (\psi)_{\mathcal{C}^{\frac{3}{4},0}_{[S,T],x}L^m} \Big(1+ \Big|\log\frac{\| f \|_{\mathcal{B}_p^{\beta+1}} t^{\varepsilon_1}}{\| f \|_{\mathcal{B}_p^{\beta}}}\Big| \Big)\, (t-{s})  \\ 
    & \quad  + C\, \| f \|_{\mathcal{B}_p^{\beta}}  \Big( 1+  (\psi)_{\mathcal{C}^{\frac{3}{4},0}_{[S,T],x} L^m} \Big) (t-s)^{\frac{1}{2} + \frac{\varepsilon}{2}} \\  
    & \quad + \| A_{{s},t} \|_{L^m} .
\end{align*}
To bound $\| A_{{s},t} \|_{L^m}$, we apply Lemma \ref{lem:6.1athreya} (originally Lemma 6.1 in \cite{athreya2020well}) for $h(\cdot,r,y) = {f(\cdot + \EE^{s} \psi_{r} (y))}$.
Recall that $f$ is smooth and bounded, and by Lemma~\ref{lem:besov-spaces}$(i)$, $ \|f(\cdot +\EE^{s} \psi_{r}(y)) \|_{\mathcal{B}^{\beta}_{p}} \leq \| f\|_{\mathcal{B}^{\beta}_{p}}$, hence the assumptions of Lemma \ref{lem:6.1athreya} are verified. It follows that
\begin{align*}%
\| A_{{s},t} \|_{L^m}
&\leq C\, \|f\|_{\mathcal{B}^{\beta}_p} \, (t-s)^{1+\frac{1}{4}(\beta-\frac{1}{p})} = C\, \|f\|_{\mathcal{B}^{\beta}_p} \, (t-s)^{\frac{1}{2}}.
\end{align*}

~

Let us now verify that \ref{en:(1b)}, \ref{en:(1'b)}, \ref{en:(2b)} and \ref{en:(3b)} are satisfied.

\paragraph{Proof of \ref{en:(1b)} and \ref{en:(1'b)}.} For 
$S\leq s < u < t \leq T$, there is
    $$\delta A_{{s},u,t}= \int_u^{t} \int_{\T} p_{T-r}(x,y) \Big(f(O_{r}(y)+\EE^{s} \psi_{r}(y) )-f(O_{r}(y)+ \EE^{u} \psi_{r}(y)) \Big) \, dy dr.$$ 
Hence the tower property and Fubini's 
Theorem yield
\begin{align*}
    |\EE^{s} \delta A_{{s},u,t}| & =
    \Big|\EE^{s} \int_u^{t} \int_{\T} p_{T-r}(x,y) \EE^u [f(O_{r}(y)+\EE^{s} \psi_{r}(y))-f(O_{r}(y)+\EE^{u} \psi_{r}(y))] \, dy dr \Big| .
    \end{align*}
Consider the $\mathscr{B}(\R)\otimes \mathcal{F}_{u}$-measurable random function $F_{s,u,r,y}(\cdot) := f(\cdot+\EE^{s} \psi_{r}(y))-f(\cdot+\EE^{u} \psi_{r}(y))$. Then in view of Lemma~\ref{lem:reg-O}, one has for $\lambda\in [0,1]$ that
   \begin{align*}
      | \EE^{s} \delta A_{{s},u,t} |
      & \leq  \int_u^{t} \int_{\T} p_{T-r}(x,y) \, \Big| \EE^s \Big[ \| f(\cdot+ \EE^{s} \psi_r(y))- f(\cdot+\EE^{u} \psi_{r}(y)) \|_{\mathcal{B}^{\beta-\lambda}_{p}}\Big] \Big |\,  (r-u)^{\frac{1}{4}(\beta-\lambda-\frac{1}{p})} \, dy dr .
 \end{align*}
   It follows by applying
    Lemma \ref{lem:besov-spaces}$(ii)$ that 
   \begin{align*}
      \| \EE^{s} \delta A_{{s},u,t} \|_{L^m}
      & \leq C \|f\|_{\mathcal{B}_p^{\beta-\lambda+1}} \int_u^{t} \int_{\T} p_{T-r}(x,y) \| \mathbb{E}^{s}[|\EE^{s} \psi_{r}(y) - \EE^u \psi_r(y) |]\|_{L^m}  (r-u)^{\frac{1}{4}(\beta-\lambda-\frac{1}{p})} \, dy dr.
 \end{align*}
By the conditional Jensen's inequality, $\| \EE^{s} X \|_{L^m} \leq \| X \|_{L^m}$ for any integrable random variable $X$. Thus, using $\beta-1/p=-2$, we get
   \begin{align*}
      \| \EE^{s} \delta A_{{s},u,t} \|_{L^m} 
      & \leq C \|f\|_{\mathcal{B}_p^{\beta-\lambda+1}} \int_u^t \int_{\T} p_{T-r}(x,y) \| \EE^{s} \psi_{r}(y) - \EE^u \psi_r(y) \|_{L^m}  (r-u)^{\frac{1}{4}(\beta-\lambda-\frac{1}{p})} \, dy dr \\
      & \leq  C \|f\|_{\mathcal{B}_p^{\beta-\lambda+1}} \ (\psi)_{\mathcal{C}^{\frac{3}{4},0}_{[S,T],x} L^{m}}  (t-s)^{\frac{5}{4}-\frac{\lambda}{4}} .
 \end{align*}
Choosing $\lambda=0$ in the previous inequality yields \ref{en:(1b)}, while choosing $\lambda=0$ yields \ref{en:(1'b)}.

\paragraph{Proof of \ref{en:(2b)}.} Let $\varepsilon\in (0,1)$.
We aim to apply Lemma \ref{lem:6.1athreya} in the interval $[u,T]$ with $\beta-\varepsilon$ and $h(\cdot,r,y) ={f(\cdot + \EE^{s}\psi_{r}(y) )-f(\cdot + \EE^u \psi_{r}(y))}$. Since $f$ is smooth and bounded, Lemma~\ref{lem:besov-spaces}$(i)$ yields
$$ {\|f(\cdot +\EE^{s}\psi_{r}(y))-f(\cdot + \EE^u \psi_{r}(y)) \|_{\mathcal{B}^{\beta-\varepsilon}_{p}}} \leq 2\| f\|_{\mathcal{B}^{\beta-\varepsilon}_{p}}.$$
Hence the assumptions of Lemma \ref{lem:6.1athreya} are verified. It follows that
\begin{align*}
\| \delta A_{s,u,t} \|_{L^m}  
& \leq C\, \sup_{\substack{r \in [s,t] \\ y \in [0,1]}} \| \|f(\cdot +\EE^{s}\psi_{r}(y))-f(\cdot + \EE^u \psi_{r}(y)) \|_{\mathcal{B}^{\beta-\varepsilon}_{p}} \|_{L^m} (t-u)^{1+\frac{1}{4}(\beta-\varepsilon-\frac{1}{p})} \\
& \leq C\, \| f \|_{\mathcal{B}_p^{\beta}} \, \sup_{\substack{r \in [s,t] \\ y \in [0,1]}} \| | \EE^{s}\psi_{r}(y)- \EE^u \psi_{r}(y) |^{\varepsilon} \|_{L^m}  (t-u)^{\frac{1}{2}-\frac{\varepsilon}{4}} ,
\end{align*}
where the last inequality comes from Lemma~\ref{lem:besov-spaces}$(ii)$. Hence by Jensen's inequality,%
\begin{align*}
\| \delta A_{s,u,t} \|_{L^m}   
& \leq C\, \| f \|_{\mathcal{B}_p^{\beta}} \, \sup_{\substack{r \in [s,t] \\ y \in [0,1]}} \| \EE^{s}\psi_{r}(y)- \EE^u \psi_{r}(y) \|_{L^m}^\varepsilon (t-u)^{\frac{1}{2}-\frac{\varepsilon}{4}} \\
& \leq  C\, \| f \|_{\mathcal{B}_p^{\beta}} \,  (\psi)_{\mathcal{C}^{\frac{3}{4},0}_{[S,T],x} L^{m}}^\varepsilon (t-u)^{\frac{1}{2}+\frac{\varepsilon}{2}} \\
 & \leq C\, \| f \|_{\mathcal{B}_p^{\beta}}  \Big( 1+  (\psi)_{\mathcal{C}^{\frac{3}{4},0}_{[S,T],x} L^{m}} \Big) (t-s)^{\frac{1}{2} + \frac{\varepsilon}{2}} . 
\end{align*}
\paragraph{Proof of \ref{en:(3b)}.} Let $(\Pi_k)_{k \in 
\mathbb{N}}$ be  partitions of $[S,t]$ with $\Pi_k=\{t_i^k\}_{i=1}^{N_k}$ and mesh size that 
converges to zero. Then there is
\begin{align*}
    \Big\|\mathcal{A}_{t}-\sum_{i=1}^{N_k-1} A_{t_i^k,t_{i+1}^k}\Big\|_{L^m}
    &\leq \sum_{i=1}^{N_{k}-1} \int_{t_i^k}^{t_{i+1}^k} \int_{\T} p_{T-r}(x,y) \| f(O_r(y) + \psi_r(y))-f(O_r(y)+\EE^{t_i^k} \psi_{r}(y))\|_{L^m} \, dy dr\\
    &\leq \sum_{i=1}^{N_{k}-1} \int_{t_i^k}^{t_{i+1}^k} \|f\|_{\mathcal{C}^1} \|\psi_r(y)-\EE^{t_i^k} \psi_{r}(y) \|_{L^m} \, dy dr.
\end{align*}
Hence in view of \eqref{def:holder-norm-phi}, $\|\psi_r(y)-\EE^{t_i^k} \psi_{r}(y) \|_{L^m} \leq (r-t_{i}^k)^{3/4} (\psi)_{\mathcal{C}^{3/4,0}_{[S,T],x}L^m}$, where the last term was assumed to be finite at the beginning of the proof. Hence $\|\mathcal{A}_{t}-\sum_{i=1}^{N_k-1} A_{t_i^k,t_{i+1}^k}\|_{L^m} \underset{k \rightarrow \infty}{\longrightarrow} 0$.
\end{proof}

\subsection{H\"older regularity of $V^k$}\label{sec:reg-V}

In this section, the regularisation result of Section~\ref{sec:reg-O-1} is used to provide a bound on the H\"older semi-norm of $V^k$ defined in \eqref{def:Vk}.

\begin{corollary}\label{cor:bound-Vk}
Let $m \in[2, \infty)$ and $p \in [m+\infty]$.

\begin{enumerate}[label=(\alph*)]
 \item\label{item:3.5(a)-} \underline{The sub-critical case}:
let $\gamma \in \left(-1, 0\right)$ such that $\gamma-1 / p>-1$. There exists  $C>0$ such that for any $(s, t) \in \Delta_{[0,1]}$ and $k \in \mathbb{N}$, there is
\begin{align*}
 \left[V^k \right]_{\mathcal{C}_{[s, t],x}^{\frac{1}{2},0} L^m} \leq C\left\|b-b^k\right\|_{\mathcal{B}_p^{\gamma-1}} .
\end{align*}

\item\label{item:3.5(b)-} \underline{The limit case}: let $\gamma-1 / p=-1$ and $p<+\infty$. There is $C>0$ such that for any $(s, t) \in \Delta_{[0,1]}$ and $k \in \mathbb{N}$, there is
\begin{align*}
\left[V^k\right]_{\mathcal{C}_{[s, t],x}^{\frac{1}{2},0} L^m} \leq C\left\|b-b^k \right\|_{\mathcal{B}_p^{\gamma-1}}\left(1+\left|\log \left(\left\|b-b^k\right\|_{\mathcal{B}_p^{\gamma-1}}\right)\right|\right) . 
\end{align*}
\end{enumerate}
\end{corollary}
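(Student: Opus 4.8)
The plan is to apply the regularisation estimate of Proposition~\ref{prop:drift-approx} to the difference $b - b^k$. Recall that
\[
V^k_{s,t}(x) = \big(v_t(x)-v^k_t(x)\big) - P_{t-s}\big(v_s(x)-v^k_s(x)\big),
\]
where $v_t(x) = \int_0^t\int_\T p_{t-r}(x,y)\, b(u_r(y))\,dy\,dr$ (in the distributional sense provided by Definition~\ref{def:true-solution}) and $v^k_t(x) = \int_0^t P_{t-r}b^k(u_r)(x)\,dr$ as in \eqref{def:v-k}. Writing out the semigroup identity, the key algebraic observation is that
\[
V^k_{s,t}(x) = \int_s^t \int_\T p_{t-r}(x,y)\,\big(b - b^k\big)(u_r(y))\,dy\,dr,
\]
i.e.\ the contributions from $[0,s]$ telescope against $P_{t-s}$ applied to the $[0,s]$ integral, leaving only the integral over $[s,t]$. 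This reduces the problem to bounding $\sup_x\big\|\int_s^t\int_\T p_{t-r}(x,y)(b-b^k)(u_r(y))\,dy\,dr\big\|_{L^m}$. Strictly speaking one works with a smooth approximating sequence $b^j - (b^j)^k$ (or mollifications of $b-b^k$) and passes to the limit using Definition~\ref{def:true-solution}(3), so that the estimate is first proven for the smooth objects to which Proposition~\ref{prop:drift-approx} applies directly, then the convergence in probability together with the uniform bound gives the result for $b-b^k$ itself.

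Next I would invoke Proposition~\ref{prop:drift-approx} with $f = b-b^k$ (smoothed), $\psi_r(y) = u_r(y)-O_r(y) = P_r\psi_0(y) + v_r(y)$, which is $\mathbb F$-adapted, and $\beta = \gamma-1$. In case (a), since $\gamma \in (-1,0)$ and $\gamma-1/p > -1$, we have $\beta = \gamma-1 \in (-2,-1) \subset (-2,0)$ and $\beta - 1/p = \gamma-1-1/p > -2$; the regularity of $\psi$ is controlled by $[\psi]_{\mathcal C^{3/4,0}_{[0,1],x}L^{m,\infty}} < \infty$, which follows from \eqref{eq:holder-reg-sol} (recall $\psi = u-O$ and the H\"older bound on $u-O - P_{t-s}(u_s-O_s)$), and we choose $\tau = 3/4$, which satisfies $\frac14(\beta - 1/p - 1) + \tau = \frac14(\gamma - 1/p - 2) + \frac34 = \frac14(\gamma - 1/p + 1) > 0$ precisely in the sub-critical regime. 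Proposition~\ref{prop:drift-approx}\ref{item:4.1(a)} then yields a bound of the form $C\|b-b^k\|_{\mathcal B_p^{\gamma-1}}\big((t-s)^{1+\frac14(\gamma-1-1/p)} + (t-s)^{1+\frac14(\gamma-1/p-2)+3/4}\big)$, and both exponents on $(t-s)$ are at least $\tfrac12$ (indeed $1+\frac14(\gamma-1-1/p) = \frac14(\gamma-1/p)+\frac34 > \frac12$ and $\frac74 + \frac14(\gamma-1/p-2) = \frac14(\gamma-1/p)+\frac54 > \frac12$ as well since $\gamma-1/p > -1$). Dividing by $(t-s)^{1/2}$ and taking the supremum over $(s,t)\in\Delta_{[s,t]}$ (on $[0,1]$, $(t-s)^{\text{something}-1/2}$ is bounded) gives exactly $[V^k]_{\mathcal C^{1/2,0}_{[s,t],x}L^m} \le C\|b-b^k\|_{\mathcal B_p^{\gamma-1}}$.

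For case (b), the limit case $\gamma-1/p = -1$, I would instead apply Proposition~\ref{prop:drift-approx}\ref{item:4.1(b)} with $\beta = \gamma-1$, so that $\beta - 1/p = \gamma-1-1/p = -2$, which is exactly the critical relation required there, and with $f = b-b^k$, noting that then $\|f\|_{\mathcal B_p^{\beta+1}} = \|b-b^k\|_{\mathcal B_p^\gamma} \le 2\|b\|_{\mathcal B_p^\gamma}$ is bounded while $\|f\|_{\mathcal B_p^\beta} = \|b-b^k\|_{\mathcal B_p^{\gamma-1}}$ is the small quantity. The logarithmic factor $1 + \big|\log\frac{\|f\|_{\mathcal B_p^\beta}}{\|f\|_{\mathcal B_p^{\beta+1}}}\big|$ is then comparable to $1 + |\log\|b-b^k\|_{\mathcal B_p^{\gamma-1}}|$ up to the bounded ratio, and $(\psi)_{\mathcal C^{3/4,0}_{[0,1],x}L^m} \le 2[\psi]_{\mathcal C^{3/4,0}_{[0,1],x}L^m} < \infty$ by \eqref{eq:comp-seminorms1} and \eqref{eq:holder-reg-sol}. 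This produces a bound $C\|b-b^k\|_{\mathcal B_p^{\gamma-1}}\big(1+|\log\|b-b^k\|_{\mathcal B_p^{\gamma-1}}|\big)(t-s)^{1/2}$, and dividing by $(t-s)^{1/2}$ gives the claimed estimate. The main obstacle — really the only non-routine point — is the justification of the telescoping identity and the limiting argument at the level of the genuine distribution $b$: one must be careful that $v_t$ is only defined as a limit of smooth approximations (Definition~\ref{def:true-solution}(3)), so the identity $V^k_{s,t} = \int_s^t\int_\T p_{t-r}(b-b^k)(u_r)\,dy\,dr$ should be understood as a limit of the analogous identities for the smooth truncations, and the $L^m$ bound is inherited through lower semicontinuity of the norm under convergence in probability combined with the uniform-in-$j$ control coming from $\sup_j\|b^j\|_{\mathcal B_p^\gamma} \le \|b\|_{\mathcal B_p^\gamma}$; once this is set up the estimates above apply verbatim.
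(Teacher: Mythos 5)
Your proposal is correct and follows essentially the same route as the paper: the paper's proof consists precisely of applying Proposition~\ref{prop:drift-approx} with $\beta=\gamma-1$ and $\psi = v+P\psi_0 = u-O$, to smooth approximations $b^j-b^k$ of $b-b^k$ before passing to the limit $j\to\infty$, which is exactly your telescoped identity $V^k_{s,t}=\int_s^t\int_\T p_{t-r}(x,y)(b-b^k)(u_r(y))\,dy\,dr$ fed into parts \ref{item:4.1(a)} and \ref{item:4.1(b)} respectively, with the same exponent checks. The only cosmetic caveat is that when $p<\infty$ you should take $q=m$ (not $q=\infty$) in Proposition~\ref{prop:drift-approx}\ref{item:4.1(a)} and then bound $[\psi]_{\mathcal C^{3/4,0}L^{m,m}}\le[\psi]_{\mathcal C^{3/4,0}L^{m,\infty}}$, which is finite by \eqref{eq:holder-reg-sol}; this does not affect the argument.
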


\begin{proof}
The proof follows by applying Proposition \ref{prop:drift-approx} with $f=b^j-b^k$, $\beta=\gamma-1$ and $\psi=v + P \psi_0$. It goes along the same lines as the paragraph ``Bound on $K-K^n$'' in Section 3.2 of \cite{GHR2023}, where one can formally replace $K-K^n$ in \cite{GHR2023} by $V^k$ here. 
\end{proof}

\subsection{Regularisation properties of the Ornstein-Uhlenbeck process on increments}\label{sec:reg-O-2}

In this section, regularisation properties of the process $O$ are proven on increment quantities of the form 
\begin{align}\label{eq:reg-prop-lip}
\int_0^t P_{t-r}\left( f(\phi_r+O_r)- f(\psi_r+O_r) \right) dr,
\end{align} 
which will be bounded by norms of $\phi-\psi$ and the appropriate Besov norm of $f$.

\begin{lemma}\label{lem:ssl-o-on} 
Let $m \in [2,\infty)$, $p\in [m,+\infty]$ and $\beta \in (-2, 0)$. There exists $C>0$ such that for any $0 \leq S < T\leq 1$ and any $(s,t) \in \Delta_{[S,T]}$,  for any $\phi, \psi : [0,1] \times \T \times \Omega \rightarrow \mathbb{R}$ such that for all $r \in [0,1]$ and $x \in \mathbb{T}$, $\phi_r(x)$ and $\psi_r(x)$ are $\mathcal{F}_S$-measurable, and for any $f  \in \mathcal{C}_b^\infty (\R, \R) \cap \mathcal{B}_p^{\beta+1}$, there is
\begin{align*}%
& \sup_{x \in \mathbb{T}} \left\| \int_s^t \int_{\T} p_{T-r}(x,y) \left( f(\psi_r(y)+O_r(y)) - f(\phi_r(y) + O_r(y)) \right) dy dr \right\|_{L^m} \\
&\quad  \leq C \| f \|_{\mathcal{B}_p^{\beta+1}} \  \|\psi-\phi\|_{L^{\infty,\infty}_{[S,T],x} L^m}  (t-s)^{1+ \frac{1}{4} (\beta-\frac{1}{p})} .
\end{align*}
\end{lemma}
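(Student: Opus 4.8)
The plan is to apply the Stochastic Sewing Lemma to the two-parameter process obtained by freezing the conditional expectation, exactly as in the proof of Proposition~\ref{prop:drift-approx}$(b)$, but now in the simpler (non-critical) regime $\beta \in (-2,0)$. Fix $x \in \mathbb{T}$ and $(S,T) \in \Delta_{[0,1]}$. For $(s,t) \in \Delta_{[S,T]}$, set
\begin{align*}
A_{s,t} := \int_s^t \int_{\T} p_{T-r}(x,y) \, \mathbb{E}^s\!\left[ f(\psi_r(y)+O_r(y)) - f(\phi_r(y)+O_r(y)) \right] dy\, dr,
\end{align*}
and let $\mathcal{A}_t$ denote the target integral over $[S,t]$. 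First I would note that since $\phi_r(y), \psi_r(y)$ are $\mathcal{F}_S$-measurable, the conditional expectation $\mathbb{E}^s$ with $s \geq S$ acts only on $O_r(y)$; this is what makes the drift-freezing trivial here (no $(\psi)_{\mathcal{C}^{\alpha,0}}$-type terms appear). Since $f$ is smooth and bounded, $\mathcal{A}$ is well-defined and $\delta\mathcal{A} = 0$, so $A$ is a genuine candidate for sewing with $\mathcal{A}$ as its limit.

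Next I would check the two sewing bounds. For the first bound: write $\delta A_{s,u,t} = \int_u^t \int_{\T} p_{T-r}(x,y)\, (\mathbb{E}^s - \mathbb{E}^u) F_{r,y}(O_r(y))\, dy\, dr$ where $F_{r,y}(\cdot) = f(\psi_r(y)+\cdot) - f(\phi_r(y)+\cdot)$ is $\mathcal{F}_S$-measurable (hence $\mathcal{F}_u$-measurable). Taking $\mathbb{E}^s$, using the tower property and Fubini, then applying the regularisation estimate for $O$ (Lemma~\ref{lem:reg-O}, as used in the proof of Proposition~\ref{prop:drift-approx}) together with the Besov estimate $\|f(\psi_r(y)+\cdot) - f(\phi_r(y)+\cdot)\|_{\mathcal{B}_p^{\beta}} \leq C\|f\|_{\mathcal{B}_p^{\beta+1}} |\psi_r(y)-\phi_r(y)|$ from Lemma~\ref{lem:besov-spaces}$(ii)$, one gets
\begin{align*}
\|\mathbb{E}^s \delta A_{s,u,t}\|_{L^m} \leq C \|f\|_{\mathcal{B}_p^{\beta+1}} \|\psi-\phi\|_{L^{\infty,\infty}_{[S,T],x}L^m}\, (t-s)^{1 + \frac{1}{4}(\beta - \frac{1}{p})},
\end{align*}
and since $1 + \frac14(\beta - \frac1p) > \frac12$ (because $\beta - 1/p > -2$), this is of the form $(t-s)^{1+\varepsilon_1}$ with $\varepsilon_1 > 0$. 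For the second bound, I would apply Lemma~\ref{lem:6.1athreya} (Lemma 6.1 of \cite{athreya2020well}) directly to $A_{s,t}$ itself on the interval, with $h(\cdot,r,y) = \mathbb{E}^s F_{r,y}(\cdot)$ — noting $\|\mathbb{E}^s F_{r,y}\|_{\mathcal{B}_p^\beta} \leq \|F_{r,y}\|_{\mathcal{B}_p^\beta} \leq 2\|f\|_{\mathcal{B}_p^\beta}$, which is finite since $f \in \mathcal{B}_p^{\beta+1} \hookrightarrow \mathcal{B}_p^\beta$ — to obtain $\|A_{s,t}\|_{L^m} \leq C\|f\|_{\mathcal{B}_p^\beta}(t-s)^{1+\frac14(\beta-\frac1p)}$, which also controls $\|\delta A_{s,u,t}\|_{L^m}$. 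Actually it is cleaner to bound $\|A_{s,t}\|_{L^m}$ by Lemma~\ref{lem:6.1athreya} using $\|f\|_{\mathcal{B}_p^{\beta+1}}\|\psi-\phi\|$ as the relevant constant via the Lipschitz-in-Besov estimate, so that the final constant carries the right norm; then $\|\delta A_{s,u,t}\|_{L^m} \leq 3\sup\|A\|_{L^m}$ supplies the $(t-s)^{1/2 + \varepsilon_2}$ bound required by SSL.

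The Stochastic Sewing Lemma (Theorem/Lemma recalled in the Appendix) then yields $\|\mathcal{A}_t - A_{s,t}\|_{L^m} \lesssim \|f\|_{\mathcal{B}_p^{\beta+1}}\|\psi-\phi\|_{L^{\infty,\infty}_{[S,T],x}L^m}(t-s)^{1+\frac14(\beta-\frac1p)}$, and combining with the bound on $\|A_{s,t}\|_{L^m}$ gives the claimed estimate for $\mathcal{A}_t - \mathcal{A}_s$ evaluated on $[s,t]$; taking $S=s$ gives the statement. The convergence of the Riemann-type sums $\sum_i A_{t_i,t_{i+1}}$ to $\mathcal{A}_t$ follows exactly as in part \ref{en:(3b)} of the proof of Proposition~\ref{prop:drift-approx}, using the smoothness of $f$ and $\mathcal{F}_S$-measurability of $\phi,\psi$. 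The main obstacle is purely bookkeeping: making sure the Besov Lipschitz estimate is applied at the right regularity level ($\mathcal{B}_p^{\beta+1}$ to produce the factor $|\psi-\phi|$, versus $\mathcal{B}_p^\beta$ inside $O$-regularisation) so that all exponents and norms line up — there is no genuine analytic difficulty here beyond what is already in Proposition~\ref{prop:drift-approx}, since the $\mathcal{F}_S$-measurability assumption removes the critical-exponent complications.
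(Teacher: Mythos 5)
Your argument is correct in substance and the two key ingredients are the same as the paper's (the regularisation Lemma~\ref{lem:6.1athreya} and the Besov--Lipschitz estimate of Lemma~\ref{lem:besov-spaces}$(ii)$ at level $\beta+1\to\beta$), but the stochastic-sewing scaffold you erect around them is superfluous. Because $\phi_r(y)$ and $\psi_r(y)$ are $\mathcal{F}_S$-measurable, the random function $h(\cdot,r,y)=f(\psi_r(y)+\cdot)-f(\phi_r(y)+\cdot)$ already satisfies the measurability hypothesis of Lemma~\ref{lem:6.1athreya}, and Lemma~\ref{lem:besov-spaces}$(ii)$ gives $\|\,\|h(\cdot,r,y)\|_{\mathcal{B}_p^{\beta}}\|_{L^m}\leq \|f\|_{\mathcal{B}_p^{\beta+1}}\,\|\psi-\phi\|_{L^{\infty,\infty}_{[S,T],x}L^m}=:\Gamma_h$; applying Lemma~\ref{lem:6.1athreya} on $[s,T]$ (legitimate since $\mathcal{F}_S\subset\mathcal{F}_s$) yields the claim in one step --- that is the paper's entire proof. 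Two remarks on your extra layer. First, your germ $A_{s,t}=\mathbb{E}^s[\mathcal{A}_t-\mathcal{A}_s]$ satisfies $\mathbb{E}^s\delta A_{s,u,t}=0$ exactly by the tower property, so the nontrivial bound you derive for \emph{(i)} is vacuous. Second, $A_{s,t}$ is \emph{not} of the form $\int\!\!\int p_{T-r}(x,y)\,\tilde h(O_r(y),r,y)\,dy\,dr$ for an adapted $\tilde h$ (by Lemma~\ref{lem:reg-O} the conditioning produces $G_{Q(r-s)}h(P_{r-s}O_s(y))$, a function of $O_s$, not of $O_r$), so Lemma~\ref{lem:6.1athreya} cannot be applied to it verbatim as you suggest; the clean fix is conditional Jensen, $\|A_{s,t}\|_{L^m}\leq\|\mathcal{A}_t-\mathcal{A}_s\|_{L^m}$, followed by Lemma~\ref{lem:6.1athreya} applied to $\mathcal{A}_t-\mathcal{A}_s$ itself --- at which point the target quantity is already bounded and the sewing has nothing left to prove. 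Your route also imports the constraint $1+\frac14(\beta-\frac1p)>\frac12$ from \eqref{eq:condsew2}, i.e.\ $\beta-1/p>-2$; this is the same condition implicitly required by the direct application (it is the one stated in Proposition~\ref{prop:drift-approx}$(a)$ and inherited from \cite{athreya2020well}), so no harm is done, but the detour buys nothing.
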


\begin{proof}
Let $y \in \mathbb{T}$ and $0 \leq S \leq T \leq 1$. We aim to apply
 Lemma \ref{lem:6.1athreya} with $h(O_r(y),r,y)= f(\psi_r(y)+O_r(y))-f(\phi_r(y)+O_r(y))$, $X_r(y) = p_{T-r}(x,y)$. 
By Lemma~\ref{lem:besov-spaces}$(ii)$, one has
 \begin{align*}
\| \| h(\cdot,r,y) \|_{\mathcal{B}_p^{\beta}} \|_{L^m} & \leq \| f \|_{\mathcal{B}_p^{\beta+1}} \| \psi_r(y)-\phi_r(y) \|_{L^m} \\
&\leq \| f \|_{\mathcal{B}_p^{\beta+1}}  \|\psi-\phi\|_{L^{\infty,\infty}_{[S,T],x} L^m} .
\end{align*}
Thus the assumptions of Lemma \ref{lem:6.1athreya} are verified, which gives the expected result.
\end{proof}

The following lemma will be useful in the proof of Proposition \ref{prop:ssl-o-on-2}, which extends Lemma \ref{lem:ssl-o-on} to the case where the processes $\phi$ and $\psi$ are adapted.

\begin{lemma}\label{cor:ssl-4}
Let $\beta \in\left(-2, 0\right)$, $m \in[2, \infty)$ and $p \in [m,+\infty]$. Let $\lambda, \lambda_1, \lambda_2 \in(0,1]$ be such that $\beta>\max (-2+\lambda,-2+\lambda_1+\lambda_2)$. 
There exists $C>0$ such that
for any $f \in \mathcal{C}_b^{\infty}\left(\mathbb{R}, \mathbb{R}\right) \cap \mathcal{B}_p^\beta$, any $0 \leq s \leq t \leq T\leq 1$, $u=\frac{t+s}{2}$, and any $\mathbb{F}$-adapted processes $\phi, \psi : [0,1] \times \T \times \Omega \rightarrow \mathbb{R}$, there is
\begin{align*}
&\Big\| \int_u^t \int_{\T} p_{T-r}(x,y) \\
&\quad \times\Big(f(\EE^u \psi_r(y)+ O_r(y) )-f(\EE^u \phi_r(y)+O_r(y) )-f(\EE^s \psi_r(y)+ O_r(y))  +f(\EE^s \phi_r(y)+O_r(y))\Big) dydr \Big\|_{L^m} \\
&\quad \leq  C \| f \|_{\mathcal{B}_p^\beta} \, (\psi)_{\mathcal{C}^{\frac{3}{4},0}_{[s,t],x} L^{m,\infty}}^{ \lambda_2} (t-s)^{\frac{3  \lambda_2}{4}} \sup_{\substack{y \in \T\\ r \in [u,T] }}   \| \EE^u \psi_r(y) - \EE^u \phi_r(y) \|_{L^m}^{\lambda_1}  (t-u)^{1+\frac{1}{4}(\beta-\lambda_1-\lambda_2-\frac{1}{p})} \\ 
&\quad  \quad +C  \| f \|_{\mathcal{B}_p^\beta} \sup_{\substack{y \in \T \\ r \in [u,T]}} \| \EE^s \psi_r(y)+\EE^u \phi_r(y)-\EE^u \psi_r(y)-\EE^s \phi_r(y) \|_{L^m}  (t-u)^{1+\frac{1}{4}(\beta-\lambda-\frac{1}{p})} .
\end{align*}

\end{lemma}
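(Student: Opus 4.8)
The plan is to split the integrand into two pieces matching the two terms on the right-hand side, and then to apply the basic heat-kernel-regularisation estimate Lemma~\ref{lem:6.1athreya} to each piece after a Taylor-type decomposition of the ``double difference'' of $f$. Write, for $r\in[u,T]$ and $y\in\T$,
\begin{align*}
D_r(y) &:= f(\EE^u \psi_r(y)+O_r(y))-f(\EE^u \phi_r(y)+O_r(y))-f(\EE^s \psi_r(y)+O_r(y))+f(\EE^s \phi_r(y)+O_r(y)) \\
&= \Big[ f(\EE^u \psi_r(y)+O_r(y))-f(\EE^u \phi_r(y)+O_r(y))-f(\EE^u \psi_r(y)-(\EE^u\phi_r(y)-\EE^s\phi_r(y))+O_r(y)) \\
&\qquad + f(\EE^s\phi_r(y)+O_r(y)) \Big] + \Big[ \text{remainder} \Big],
\end{align*}
the idea being to regroup so that the first bracket is a second-order difference of $f$ in which one of the two increments is $\EE^u\psi_r(y)-\EE^u\phi_r(y)$ (controlled by the $\lambda_1$-term) and the other is $\EE^u\psi_r(y)-\EE^s\psi_r(y)$ (controlled by $(\psi)_{\mathcal C^{3/4,0}}$, giving the $\lambda_2$-factor and the power $(t-s)^{3\lambda_2/4}$), while the remaining term is a \emph{first-order} difference of $f$ with increment $\EE^s\psi_r(y)+\EE^u\phi_r(y)-\EE^u\psi_r(y)-\EE^s\phi_r(y)$, which is exactly the quantity appearing in the $\lambda$-term. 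Concretely, one uses the algebraic identity
\[
(f(a)-f(b)) - (f(a')-f(b')) = \big[(f(a)-f(b))-(f(a-c)-f(b-c))\big] + \big[(f(a-c)-f(a'))-(f(b-c)-f(b'))\big]
\]
with $a=\EE^u\psi_r(y)+O_r(y)$, $b=\EE^u\phi_r(y)+O_r(y)$, $a'=\EE^s\psi_r(y)+O_r(y)$, $b'=\EE^s\phi_r(y)+O_r(y)$, and $c=\EE^u\phi_r(y)-\EE^s\phi_r(y)$, so that $a-c-a'=b-c-b'=\EE^u\psi_r(y)-\EE^s\psi_r(y)-(\EE^u\phi_r(y)-\EE^s\phi_r(y))$, which makes the second bracket a first-order difference of $f$ evaluated at two points differing by $\EE^u\psi_r(y)-\EE^u\phi_r(y)$, shifted by a common amount.

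For the first bracket, I would view it as $g(\EE^u\psi_r(y)+O_r(y))-g(\EE^u\phi_r(y)+O_r(y))$ where $g(\cdot)=f(\cdot)-f(\cdot-c)$, and apply Lemma~\ref{lem:6.1athreya} on $[u,T]$ with exponent $\beta-\lambda_1-\lambda_2$: by Lemma~\ref{lem:besov-spaces}$(i)$--$(ii)$ the relevant random Besov norm is bounded by $\|f(\cdot)-f(\cdot-c)\|_{\mathcal B_p^{\beta-\lambda_2}}\cdot|\EE^u\psi_r(y)-\EE^u\phi_r(y)|^{\lambda_1} \le \|f\|_{\mathcal B_p^\beta}\,|c|^{\lambda_2}\,|\EE^u\psi_r(y)-\EE^u\phi_r(y)|^{\lambda_1}$, using the Hölder-type embedding bound $\|f(\cdot)-f(\cdot-c)\|_{\mathcal B_p^{\beta-\lambda_2}}\lesssim \|f\|_{\mathcal B_p^\beta}|c|^{\lambda_2}$; since $c=\EE^u\phi_r(y)-\EE^s\phi_r(y)$, bounding $\|\,\cdot\,\|_{L^m}$ and using $(\phi)_{\mathcal C^{3/4,0}}\le (\psi)_{\mathcal C^{3/4,0}}$ (or more carefully keeping $(\psi)$ throughout, as in the statement) and $|u-s|\le (t-s)$ produces the factor $(\psi)_{\mathcal C^{3/4,0}_{[s,t],x}L^{m,\infty}}^{\lambda_2}(t-s)^{3\lambda_2/4}$ and the time power $(t-u)^{1+\frac14(\beta-\lambda_1-\lambda_2-1/p)}$; Hölder's inequality in $\omega$ is used to split the product of the $\lambda_1$- and $\lambda_2$-powers. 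For the second bracket, I would apply Lemma~\ref{lem:6.1athreya} on $[u,T]$ with exponent $\beta-\lambda$ to $f(\cdot+e_1)-f(\cdot+e_2)$ where $e_1-e_2$ equals the ``defect'' $\EE^s\psi_r(y)+\EE^u\phi_r(y)-\EE^u\psi_r(y)-\EE^s\phi_r(y)$; again Lemma~\ref{lem:besov-spaces} gives $\|f(\cdot+e_1)-f(\cdot+e_2)\|_{\mathcal B_p^{\beta-\lambda}}\lesssim \|f\|_{\mathcal B_p^\beta}|e_1-e_2|^{\lambda}$, but here I only want the first power of the defect, so I would instead estimate $\|f(\cdot+e_1)-f(\cdot+e_2)\|_{\mathcal B_p^{\beta}}\le \mathrm{const}\cdot\|f\|_{\mathcal B_p^\beta}$ in $L^\infty$ and pull out one factor of the defect via $\|f(\cdot+e_1)-f(\cdot+e_2)\|_{\mathcal B_p^{\beta-1}}\lesssim \|f\|_{\mathcal B_p^\beta}|e_1-e_2|$ — adjusting $\lambda$ as in the hypothesis $\beta>-2+\lambda$ — to land on $\sup_{y,r}\|\text{defect}\|_{L^m}(t-u)^{1+\frac14(\beta-\lambda-1/p)}$.

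The main obstacle I anticipate is bookkeeping: one must choose the regrouping so that \emph{each} of the two brackets, after taking conditional expectations $\EE^u$ where needed and $L^m$-norms, falls exactly under the hypotheses of Lemma~\ref{lem:6.1athreya} (in particular the measurability requirement — all shifts are $\mathcal F_u$-measurable on $[u,T]$, which is fine — and the exponent positivity conditions $\beta-\lambda_1-\lambda_2>-2$ and $\beta-\lambda>-2$, which are exactly the stated assumptions), and so that the powers of $(t-s)$, $(t-u)$ combine correctly given $u=\frac{s+t}{2}$ (so $t-u=\frac{t-s}{2}$ and $u-s=\frac{t-s}{2}$). A secondary point is that Lemma~\ref{cor:ssl-4} is a deterministic-in-time estimate at the level of a single triple $(s,u,t)$ — it is not itself a sewing step — so no Stochastic Sewing Lemma is invoked here; the lemma is designed precisely to feed into the verification of the sewing hypotheses in Proposition~\ref{prop:ssl-o-on-2}. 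I would also double-check that the $L^{m,\infty}$ (rather than $L^{m,m}$) pseudo-norm of $\psi$ is what naturally comes out, which it does because the inner estimate from Lemma~\ref{lem:6.1athreya} is applied pathwise in $\omega$ after conditioning, leaving an essential supremum in the conditioning variable.
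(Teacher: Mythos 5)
Your strategy is the paper's: split the quadruple difference into a genuine second-order difference of $f$ (estimated via Lemma~\ref{lem:besov-spaces}$(iii)$ and then Lemma~\ref{lem:6.1athreya} at Besov level $\beta-\lambda_1-\lambda_2$ on $[u,T]$) plus a first-order difference whose increment is exactly the defect $\EE^s\psi+\EE^u\phi-\EE^u\psi-\EE^s\phi$ (estimated via Lemma~\ref{lem:besov-spaces}$(ii)$ and Lemma~\ref{lem:6.1athreya} at level $\beta-\lambda$), with no sewing involved — all of which you identify correctly, including the measurability and exponent-positivity checks and the reason the $L^{m,\infty}$ pseudo-norm appears.

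However, your choice of pivot $c=\EE^u\phi_r(y)-\EE^s\phi_r(y)$ is the wrong one, and your two justifications for it do not hold. First, with this $c$ one has $b-c=b'$, so $b-c-b'=0$ while $a-c-a'$ equals the defect; your claim that $a-c-a'=b-c-b'$ is false (the regrouping still closes, since the second bracket degenerates to the single first-order difference $f(a-c)-f(a')$, but the identity as stated is wrong). Second, and more importantly, applying Lemma~\ref{lem:besov-spaces}$(iii)$ to your first bracket produces the factor $|\EE^u\phi_r(y)-\EE^s\phi_r(y)|^{\lambda_2}$, i.e.\ the time-increment of $\phi$, and the inequality $(\phi)_{\mathcal{C}^{3/4,0}}\leq(\psi)_{\mathcal{C}^{3/4,0}}$ you invoke to convert this is false for generic adapted $\phi,\psi$. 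This is not cosmetic: in the application (Proposition~\ref{prop:ssl-o-on-2} and then Corollary~\ref{cor:bound-E1}) one takes $\psi=u-O$, whose $\tfrac34$-H\"older pseudo-norm is finite by \eqref{eq:holder-reg-sol}, while $\phi$ is the drift part of the scheme, for which only a $\tfrac12+\tfrac{\varepsilon}{2}$ regularity is available; a bound in terms of $(\phi)_{\mathcal{C}^{3/4,0}}$ would not close the argument. The fix is simply to pivot on $\psi$'s time increment instead, i.e.\ take $c=\EE^u\psi_r(y)-\EE^s\psi_r(y)$ (equivalently, use the paper's pivot point $\EE^u\psi_r(y)+\EE^s\phi_r(y)-\EE^s\psi_r(y)$), so that Lemma~\ref{lem:besov-spaces}$(iii)$ yields $|\EE^u\psi_r(y)-\EE^u\phi_r(y)|^{\lambda_1}\,|\EE^u\psi_r(y)-\EE^s\psi_r(y)|^{\lambda_2}$ exactly as in the statement. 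With that correction the rest of your plan goes through as in the paper.
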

\begin{proof}
Let $x,y \in \mathbb{T}$ and $0 \leq s \leq t \leq T \leq 1$. Decompose 
\begin{align*}
&f(\EE^u \psi_r(y)+ O_r(y) )-f(\EE^u \phi_r(y)+O_r(y) )-f(\EE^s \psi_r(y)+ O_r(y))  +f(\EE^s \phi_r(y)+O_r(y)) \\
&\quad = h^1(O_r(y),r,y) - h^2(O_r(y),r,y),
\end{align*}
where 
\begin{align*}
h^1(z,r,y) & := -f\left(\EE^s \psi_r(y)+ z \right)+f\left(\EE^s \phi_r(y)+z \right)+f\left(\EE^u \psi_r(y)+ z \right)  +f\left(\EE^u \psi_r(y)+\EE^s \phi_r(y)-\EE^s \psi_r(y)+z \right),\\
h^2(z,r,y) & := -f\left(\EE^u \psi_r(y)+\EE^s \phi_r(y)-\EE^s \psi_r(y)+z \right) + f\left(\EE^u \phi_r(y)+ z \right).
\end{align*}
We aim to apply again Lemma \ref{lem:6.1athreya} twice in the interval $[u,T]$. First with 
$h^1(O_r(y),r,y)$, and $\beta-\lambda_1-\lambda_2$. Then with $h^2(O_r(y),r,y)$
and $\beta-\lambda$. For the bound on the moment of $\| h^1(\cdot,r,y) \|_{\mathcal{B}_p^{\beta-\lambda_1-\lambda_2}}$,  use Lemma~\ref{lem:besov-spaces}$(iii)$ and Jensen's inequality to get
\begin{align*}
\EE \| h^1(\cdot,r,y) \|_{\mathcal{B}_p^{\beta-\lambda_1-\lambda_2}}^m & =\EE \EE^s \| h^1(\cdot,r,y) \|_{\mathcal{B}_p^{\beta-\lambda_1-\lambda_2}}^m  \\
& \leq \| f \|_{\mathcal{B}_p^\beta}^m\, \EE \Big[ | \EE^s \psi_r(y) - \EE^s \phi_r(y) |^{m \lambda_1}  \EE^s |\EE^u \psi_r(y)-\EE^s \psi_r(y)|^{m\lambda_2} \Big] \\
& \leq \| f \|_{\mathcal{B}_p^\beta}^m\, \| \EE^s | \EE^u \psi_r(y)-\EE^s \psi_r(y)|^{m \lambda_2} \|_{L^\infty} \EE \Big[  | \EE^s \psi_r(y) - \EE^s \phi_r(y)|^{m \lambda_1}  \Big] \\
& \leq  \| f \|_{\mathcal{B}_p^\beta}^m\, \sup_{y \in \T} \left\| \big( \EE^s | \EE^u \psi_r(y) - \EE^s  \psi_r(y) |^m \big)^{\frac{1}{m}} \right\|_{L^\infty}^{m \lambda_2} \sup_{\substack{y \in \T\\ r \in [u,T] }}  \EE \Big[  | \EE^s \psi_r(y) - \EE^s \phi_r(y) |^{m} \Big]^{\lambda_1} \\
& \leq \| f \|_{\mathcal{B}_p^\beta}^m\, (\psi)_{\mathcal{C}^{\frac{3}{4},0}_{[s,t],x} L^{m,\infty}}^{m \lambda_2}  (t-s)^{\frac{3 m \lambda_2}{4}} \sup_{\substack{y \in \T\\ r \in [u,T] }}  \EE \Big[  | \EE^s \psi_r(y) - \EE^s \phi_r(y) |^{m} \Big]^{\lambda_1} . 
\end{align*}
So Lemma \ref{lem:6.1athreya} yields
\begin{align}\label{eq:boundh1}
\Big\|& \int_u^t \int_{\T} p_{T-r}(x,y) h^{1}(O_{r}(y),r,y) \, dydr \Big\|_{L^m} \nonumber\\
&\leq  C \| f \|_{\mathcal{B}_p^\beta}  (\psi)_{\mathcal{C}^{\frac{3}{4},0}_{[s,t],x} L^{m,\infty}}^{ \lambda_2} (t-s)^{\frac{3  \lambda_2}{4}} \sup_{\substack{y \in \T\\ r \in [u,T] }}   \| \EE^s \psi_r(y) - \EE^s \phi_r(y) \|_{L^m}^{\lambda_1}  (t-u)^{1+\frac{1}{4}(\beta-\lambda_1-\lambda_2)} .
\end{align}

Similarly, using Lemma~\ref{lem:besov-spaces}$(ii)$, one gets
\begin{align*}
\| \| h^2(\cdot,r,y) \|_{\mathcal{B}_p^{\beta-\lambda}} \|_{L^m} 
& \leq \| f \|_{\mathcal{B}_p^\beta} \sup_{\substack{y \in \T \\ r \in [u,T]}} \| \EE^s \psi_r(y)+\EE^u \phi_r(y)-\EE^u \psi_r(y)-\EE^s \phi_r(y)\|_{L^m} 
\end{align*}
and applying Lemma \ref{lem:6.1athreya},
\begin{align}\label{eq:boundh2}
\Big\| \int_u^t \int_{\T}& p_{T-r}(x,y) h^{2}(O_{r}(y),r,y) \, dydr \Big\|_{L^m} \nonumber\\
&\leq  C  \| f \|_{\mathcal{B}_p^\beta} \sup_{\substack{y \in \T \\ r \in [u,T]}} \| \EE^s \psi_r(y)+\EE^u \phi_r(y)-\EE^u \psi_r(y)-\EE^s \phi_r(y) \|_{L^m}  (t-u)^{1+\frac{1}{4}(\beta-\lambda-\frac{1}{p})} .
\end{align}
The result follows by summing \eqref{eq:boundh1} and \eqref{eq:boundh2}.
\end{proof}

Finally, we state a general regularisation property for the quantity \eqref{eq:reg-prop-lip}, which will be used in Section~\ref{sec:reg-E1} to bound the error term $\mathcal{E}^{1,n,k}$ in the sub-critical case.

\begin{prop}\label{prop:ssl-o-on-2}
Let $m \in [2,\infty)$, and assume that $p\in [m,\infty]$ and $\gamma-1/p > -1$. There exists a constant $C$ such that for any $f  \in \mathcal{C}_b^\infty (\R, \R) \cap \mathcal{B}_p^\gamma$, any $\mathbb{F}$-adapted processes $\phi, \psi : [0,1] \times \T \times \Omega \rightarrow \mathbb{R}$, any $0 \leq S < T \leq 1$ and any $(s,t) \in \Delta_{[S,T]}$, we have 
\begin{align}\label{eq:ssl-o-on-2}
\begin{split}
& \sup_{x \in \mathbb{T}} \left\| \int_s^t \int_{\T} p_{T-r}(x,y) \left( f(\psi_r(y)+O_r(y)) - f(\phi_r(y) + O_r(y)) \right) dy dr \right\|_{L^m}  \\
&\quad  \leq C  \| f \|_{\mathcal{B}_p^\gamma}  ((\psi)_{\mathcal{C}^{\frac{3}{4},0}_{[S,T],x} L^{m,\infty}}+1) \Big( \| \psi - \phi \|_{L^{\infty,\infty}_{[S,T],x} L^m} + \big( \psi-\phi \big)_{\mathcal{C}^{\frac{1}{2},0}_{[S,T],x} L^m}  \Big) (t-s)^{\frac{3}{4}+\frac{1}{4}(\gamma -\frac{1}{p})}   \\ 
& \quad\quad + C  \| f \|_{\mathcal{B}_p^\gamma} \Big( \big( \psi-\phi \big)_{\mathcal{C}^{\frac{1}{2},0}_{[S,T],x} L^m}   + (\psi)_{\mathcal{C}^{\frac{3}{4},0}_{[S,T],x} L^{m,\infty}}  \| \psi - \phi \|_{L^{\infty,\infty}_{[S,T],x} L^m}  \Big) (t-s)^{\frac{5}{4}+\frac{1}{4}(\gamma-\frac{1}{p})} .
\end{split}
\end{align}
\end{prop}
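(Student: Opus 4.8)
\textbf{Setup via Stochastic Sewing.} The plan is to apply the Stochastic Sewing Lemma (from the Appendix) to the germ
\[
A_{s,t} := \int_s^t \int_{\T} p_{T-r}(x,y)\Big( f(\EE^s\psi_r(y)+O_r(y)) - f(\EE^s\phi_r(y)+O_r(y)) \Big)\, dy\, dr,
\]
for fixed $x\in\T$ and fixed $(S,T)\in\Delta_{[0,1]}$, with the candidate limit process
\[
\mathcal A_t := \int_S^t \int_\T p_{T-r}(x,y)\big(f(\psi_r(y)+O_r(y)) - f(\phi_r(y)+O_r(y))\big)\, dy\, dr.
\]
The two ingredients the SSL needs are: a bound on $\|\EE^s \delta A_{s,u,t}\|_{L^m}$ with a supra-$1$ time exponent, and a bound on $\|\delta A_{s,u,t}\|_{L^m}$ with a supra-$1/2$ time exponent; then $\|A_{s,t} - \mathcal A_{t} + \mathcal A_s\|_{L^m}$ is controlled by these, and $\|A_{s,t}\|_{L^m}$ itself is estimated directly via Lemma~\ref{lem:ssl-o-on} (applied with $\beta+1=\gamma$, i.e. $\beta=\gamma-1$, noting $\gamma-1\in(-2,0)$ and the freezing makes $\EE^s\psi,\EE^s\phi$ both $\mathcal F_s$-measurable), which yields the term $\|\psi-\phi\|_{L^{\infty,\infty}} (t-s)^{1+\frac14(\gamma-1-\frac1p)} = \|\psi-\phi\|_{L^{\infty,\infty}}(t-s)^{\frac34+\frac14(\gamma-\frac1p)}$ — precisely the leading term on the right-hand side of \eqref{eq:ssl-o-on-2}.

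\textbf{The two sewing estimates.} With $u=\frac{s+t}{2}$, the increment $\delta A_{s,u,t}$ is
\[
\int_u^t\int_\T p_{T-r}(x,y)\Big( f(\EE^s\psi_r+O_r) - f(\EE^s\phi_r+O_r) - f(\EE^u\psi_r+O_r) + f(\EE^u\phi_r+O_r)\Big)\,dy\,dr,
\]
which is exactly the object bounded in Lemma~\ref{cor:ssl-4}. Applying that lemma with $\beta=\gamma-1$ and suitable choices of $\lambda,\lambda_1,\lambda_2\in(0,1]$ produces two families of bounds. For the first term of Lemma~\ref{cor:ssl-4}, I use $\sup_{y,r}\|\EE^u\psi_r-\EE^u\phi_r\|_{L^m}\le \|\psi-\phi\|_{L^{\infty,\infty}}$ (Jensen) and $(\psi)_{\mathcal C^{3/4}L^{m,\infty}}$; for the second, I estimate $\|\EE^s\psi_r+\EE^u\phi_r-\EE^u\psi_r-\EE^s\phi_r\|_{L^m} \le \|\EE^u(\psi-\phi)_r - \EE^s(\psi-\phi)_r\|_{L^m} \lesssim (\psi-\phi)_{\mathcal C^{1/2}L^m}(t-s)^{1/2}$ directly from definition \eqref{def:holder-norm-phi}. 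To get the $\EE^s$-conditional bound \ref{en:(1b)}-type estimate (time exponent $>1$) versus the unconditional bound \ref{en:(2b)}-type estimate (time exponent $>1/2$), I exploit the freedom in $\lambda,\lambda_1,\lambda_2$ near $0$ versus the constraint $\beta>-2+\lambda_1+\lambda_2$: choosing these exponents small gives exponent close to $1+\frac14(\gamma-1-\frac1p) = \frac34+\frac14(\gamma-\frac1p)>\frac12$ for the unconditional bound, while for the conditional bound one picks $\lambda_1=1$ (the $\|\psi-\phi\|$ factor becomes first-order, giving a gain) to push the exponent above $1$, using $\gamma-\frac1p>-1$. Summing the resulting terms and matching powers, the larger time-exponent $(t-s)^{\frac54+\frac14(\gamma-\frac1p)}$ collects the $(\psi-\phi)_{\mathcal C^{1/2}}$ and $(\psi)_{\mathcal C^{3/4}}\|\psi-\phi\|_{L^\infty}$ contributions (the second line of \eqref{eq:ssl-o-on-2}), while the $(t-s)^{\frac34+\frac14(\gamma-\frac1p)}$ term collects everything else (the first line), after absorbing constants and using $1\le(t-s)^{-\text{something}}$ only where the exponent inequalities permit.

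\textbf{Identifying the limit and the main obstacle.} For the SSL to deliver $\mathcal A$ as the limit of the Riemann-type sums $\sum_i A_{t_i,t_{i+1}}$, I argue as in the proof of Proposition~\ref{prop:drift-approx}\ref{item:4.1(b)}: along partitions of mesh $\to 0$, $\|\mathcal A_t - \sum_i A_{t_i,t_{i+1}}\|_{L^m} \le \sum_i \int_{t_i}^{t_{i+1}}\int_\T p_{T-r}(x,y)\|f\|_{\mathcal C^1}\big(\|\psi_r(y)-\EE^{t_i}\psi_r(y)\|_{L^m} + \|\phi_r(y)-\EE^{t_i}\phi_r(y)\|_{L^m}\big)\,dy\,dr$, which vanishes provided $(\psi)_{\mathcal C^{3/4}}$ and $(\phi)_{\mathcal C^{3/4}}$ are finite — and if either of the controlling pseudo-norms on the right of \eqref{eq:ssl-o-on-2} is infinite the inequality is trivial, so we may assume finiteness (a short remark needed: $(\phi)_{\mathcal C^{3/4}}\le (\psi)_{\mathcal C^{3/4}} + (\psi-\phi)_{\mathcal C^{3/4}}$, and the latter is dominated by $(\psi-\phi)_{\mathcal C^{1/2}}$ on $[S,T]$ up to a $(T-S)^{1/4}$ factor). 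The main obstacle I anticipate is the bookkeeping in the sewing step: one must verify that \emph{both} required exponents ($>1$ conditionally, $>1/2$ unconditionally) are simultaneously achievable from Lemma~\ref{cor:ssl-4} under the single standing hypothesis $\gamma-\frac1p>-1$, which forces a careful, case-free choice of $(\lambda,\lambda_1,\lambda_2)$ — in particular checking that the constraint $\gamma-1>\max(-2+\lambda,-2+\lambda_1+\lambda_2)$ is compatible with $\lambda_1$ close to $1$ exactly when $\gamma-\frac1p>-1$ — and then routing each of the four product-terms that emerge into one of the two target time-powers without losing the structure $(\psi)_{\mathcal C^{3/4}}$, $\|\psi-\phi\|_{L^\infty}$, $(\psi-\phi)_{\mathcal C^{1/2}}$ displayed in \eqref{eq:ssl-o-on-2}.
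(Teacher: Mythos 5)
Your setup — the germ $A_{s,t}$ with $\EE^s$-frozen arguments, Lemma~\ref{lem:ssl-o-on} (with $\beta=\gamma-1$) for $\|A_{s,t}\|_{L^m}$, Lemma~\ref{cor:ssl-4} with $\lambda_1=1$, $\lambda_2=\varepsilon$ for the unconditional increment bound, and the $\mathcal{C}^1$-argument identifying the limit — coincides with the paper's proof for three of the four steps. The gap is in the conditional estimate $\|\EE^s\delta A_{s,u,t}\|_{L^m}\lesssim(t-s)^{1+\varepsilon_1}$, which you propose to extract from Lemma~\ref{cor:ssl-4} by tuning $(\lambda,\lambda_1,\lambda_2)$. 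This cannot work under the sole hypothesis $\gamma-1/p>-1$. The first term of Lemma~\ref{cor:ssl-4} (applied with $\beta=\gamma$) carries the total time exponent $1+\tfrac{\lambda_2}{2}-\tfrac{\lambda_1}{4}+\tfrac14(\gamma-\tfrac1p)$, and to keep the factor $\|\psi-\phi\|_{L^{\infty,\infty}}$ at first order (as the structure of \eqref{eq:ssl-o-on-2} requires) you must take $\lambda_1=1$; exceeding exponent $1$ then forces $\lambda_2>\tfrac12\big(1-(\gamma-\tfrac1p)\big)$, while the hypothesis of Lemma~\ref{cor:ssl-4} forces $\lambda_2<\gamma+2-\lambda_1=\gamma+1$. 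These two constraints are compatible only when $\gamma-\tfrac{1}{3p}>-\tfrac13$; for instance with $p=\infty$ and $\gamma=-0.9$ the admissible interval for $\lambda_2$ is $(0.95,\,0.1)=\emptyset$. The obstruction is structural: Lemma~\ref{cor:ssl-4} rests on the unconditional bound of Lemma~\ref{lem:6.1athreya}, which cannot see below Besov regularity $-2$ in the shifted function, whereas the conditional bound must exploit the full Gaussian smoothing of $\EE^u f(O_r(y))$.

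The paper's conditional estimate therefore does not go through Lemma~\ref{cor:ssl-4} at all. It splits $\EE^s\delta A_{s,u,t}$ into two pieces, applies Lemma~\ref{lem:reg-O} pointwise — giving $|\EE^u F(O_r(y))|\lesssim\|F\|_{\mathcal{B}_p^{\gamma-2}}(r-u)^{\frac14(\gamma-2-\frac1p)}$ with no lower restriction on the regularity index — and controls the $\mathcal{B}_p^{\gamma-2}$ norm of the double difference by Lemma~\ref{lem:besov-spaces}$(iii)$ with $\alpha_1=\alpha_2=1$, producing the product $|\EE^s\psi_r-\EE^s\phi_r|\cdot|\EE^u\psi_r-\EE^s\psi_r|$ and hence the exponent $1+\tfrac34+\tfrac14(\gamma-2-\tfrac1p)=\tfrac54+\tfrac14(\gamma-\tfrac1p)>1$ exactly when $\gamma-1/p>-1$. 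This two-derivative conditional gain is the idea missing from your plan. A secondary slip: in the convergence-in-probability step you claim $(\psi-\phi)_{\mathcal{C}^{3/4,0}}$ is dominated by $(\psi-\phi)_{\mathcal{C}^{1/2,0}}$ up to a factor $(T-S)^{1/4}$, which is backwards (a higher H\"older seminorm is not controlled by a lower one); the correct route is to bound $\|\psi_r-\phi_r-\EE^{t_i^k}\psi_r+\EE^{t_i^k}\phi_r\|_{L^m}$ directly by $(\psi-\phi)_{\mathcal{C}^{1/2,0}_{[S,T],x}L^m}\,(r-t_i^k)^{1/2}$, which only uses quantities already present on the right-hand side of \eqref{eq:ssl-o-on-2}.
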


\begin{proof}
Fix $x \in \mathbb{T}$ and $S \leq T$. Define for $(s,t) \in \Delta_{[S,T]}$,
\begin{align*}
A_{s,t} = \int_s^t \int_{\T} p_{T-r}(x,y) \left( f(\EE^s \psi_r(y) + O_r(y)) - f(\EE^s \phi_r(y) + O_r(y)) \right) dy dr ,
\end{align*}
and
\begin{align}\label{eq:defA-4.5}
\mathcal{A}_t = \int_S^t \int_{\T} p_{T-r}(x,y) \left( f(\psi_r(y)+ O_r(y)) - f(\phi_r(y) + O_r(y)) \right) dy dr.
\end{align}
Assume without loss of generality that the quantities that appear in the right-hand side of \eqref{eq:ssl-o-on-2} are all finite, otherwise the result is obvious. We check the assumptions in order to apply the Stochastic Sewing Lemma (Lemma \ref{lem:SSL} with $q=m$). To prove that \eqref{eq:condsew1} and \eqref{eq:condsew2} hold true with $\varepsilon_1=\varepsilon_2=\frac{1}{4}+\frac{1}{4}(\gamma-1/p) >0$, we show that there exists $C>0$ which does not depend on $s,t,S$ and $T$ such that
\begin{enumerate}[label=(\roman*)]
\item\label{en:(1b-)}  
$\| \delta A_{s,u,t} \|_{L^m} \leq  C  \| f \|_{\mathcal{B}_p^\gamma}  ((\psi)_{\mathcal{C}^{\frac{3}{4},0}_{[S,T],x} L^{m,\infty}}+1) \Big( \| \psi - \phi \|_{L^{\infty,\infty}_{[S,T],x} L^m} + \big( \psi-\phi \big)_{\mathcal{C}^{\frac{1}{2},0}_{[S,T],x} L^m}  \Big) (t-s)^{\frac{3}{4}+\frac{1}{4}(\gamma-\frac{1}{p})}$;

\item\label{en:(2b-)} $\| \EE^s \delta A_{s,u,t} \|_{L^m} \leq C  \| f \|_{\mathcal{B}_p^\gamma} \Big(  \big( \psi-\phi \big)_{\mathcal{C}^{\frac{1}{2},0}_{[S,T],x} L^m} + (\psi)_{\mathcal{C}^{\frac{3}{4},0}_{[S,T],x} L^{m,\infty}}  \| \psi - \phi \|_{L^{\infty,\infty}_{[S,T],x} L^m}  \Big) (t-s)^{\frac{5}{4}+\frac{1}{4}(\gamma-\frac{1}{p})}$; 

\item\label{en:(3b-)}  If \ref{en:(1b-)} and \ref{en:(2b-)} are satisfied, \eqref{eq:convAt} gives the 
convergence in probability of $\sum_{i=1}^{N_k-1} A_{t^k_i,t^k_{i+1}}$ on any sequence $\Pi_k=\{t_i^k\}_{i=1}^{N_k}$ of partitions of $[S,t]$ with a mesh that converges to $0$. We will show 
that the limit is $\mathcal{A}$, as given in \eqref{eq:defA-4.5}.
\end{enumerate}

If \ref{en:(1b-)}, \ref{en:(2b-)} and \ref{en:(3b-)} are satisfied, then Lemma \ref{lem:SSL} implies that
\begin{align*}
& \Big\| \int_s^t \int_{\T} p_{T-r}(x,y) \left( f(\psi_r(y)+O_r(y)) - f(\phi_r(y) + O_r(y)) \right) dy dr \Big\|_{L^m}  \\
& \leq C  \| f \|_{\mathcal{B}_p^\gamma}  ((\psi)_{\mathcal{C}^{\frac{3}{4},0}_{[S,T],x} L^{m,\infty}}+1) \Big( \| \psi - \phi \|_{L^{\infty,\infty}_{[S,T],x} L^m}       +   \big( \psi-\phi \big)_{\mathcal{C}^{\frac{1}{2},0}_{[S,T],x} L^m}  \Big) (t-s)^{\frac{3}{4}+\frac{1}{4}(\gamma-\frac{1}{p})} \\ 
& \quad + C \| f \|_{\mathcal{B}_p^\gamma} \Big( ( \psi-\phi )_{\mathcal{C}^{\frac{1}{2},0}_{[S,T],x} L^m} + (\psi)_{\mathcal{C}^{\frac{3}{4},0}_{[S,T],x} L^{m,\infty}}  \| \psi - \phi \|_{L^{\infty,\infty}_{[S,T],x} L^m}  \Big) (t-s)^{\frac{5}{4}+\frac{1}{4}(\gamma-\frac{1}{p})}  + \| A_{s,t} \|_{L^m}.
\end{align*}
To bound $\| A_{{s},{t}} \|_{L^m}$, apply Lemma~\ref{lem:ssl-o-on} for $\EE^s \psi_r $ and $ \EE^s \phi_r$ in place of $\psi_r$ and $\phi_r$, to get that
\begin{align*}
\| A_{s,t} \|_{L^m} & \leq C \| f \|_{\mathcal{B}_p^\gamma}  \|\EE^s \psi_\cdot - \EE^s \phi_\cdot \|_{L^{\infty,\infty}_{[S,T],x} L^m}  (t-s)^{\frac{3}{4}+\frac{1}{4}(\gamma-\frac{1}{p})} \\
&\leq C \| f \|_{\mathcal{B}_p^\gamma} \| \psi - \phi \|_{L^{\infty,\infty}_{[S,T],x} L^m}  (t-s)^{\frac{3}{4}+\frac{1}{4}(\gamma-\frac{1}{p})} ,
\end{align*}
and taking the supremum over $x \in \T$ finally yields \eqref{eq:ssl-o-on-2}.

~

Let us now verify that  \ref{en:(1b-)}, \ref{en:(2b-)} and \ref{en:(3b-)} are satisfied.
\paragraph{Proof of \ref{en:(1b-)}:}
For $u = \frac{s+t}{2}$, there is
\begin{align*}
\| \delta A_{s,u,t} \|_{L^m} & = \Big\| \int_u^t \int_{\T}  p_{T-r}(x,y) \Big( f( \EE^s \psi_r(y) + O_r(y)) - f(\EE^s \phi_r(y)  + O_r(y)) \\ 
&\quad - f(\EE^u \psi_r(y) +O_r(y)) + f(\EE^u \phi_r(y) +O_r (y)) \Big)\, dy dr \Big\|_{L^m}.
\end{align*}
Let $\varepsilon>0$. By the Lemma \ref{cor:ssl-4} applied with $\lambda_1=1,~\lambda_2=\varepsilon$, one has
\begin{align*}
\| \delta A_{s,u,t} \|_{L^m}  
& \leq C \| f \|_{\mathcal{B}_p^\gamma}  (\psi)_{\mathcal{C}^{\frac{3}{4},0}_{[s,t],x} L^{m,\infty}}^{ \varepsilon} (t-s)^{\frac{3  \varepsilon}{4}} \sup_{\substack{y \in \T\\ r \in [u,T] }}   \|  \EE^u \psi_r(y) - \EE^u \phi_r(y) \|_{L^m}  (t-u)^{1+\frac{1}{4}(\gamma-1-\varepsilon-\frac{1}{p})}  \\ 
& \quad +C  \| f \|_{\mathcal{B}_p^\gamma} \sup_{\substack{y \in \T \\ r \in [u,T]}} \frac{\| \EE^u( \psi_r(y) -\phi_r(y)) -\EE^s (\psi_r(y) - \phi_r(y)) \|_{L^m}}{|r-s|^{\frac{1}{2}}} (u-s)^{\frac{1}{2}} (t-u)^{1+\frac{1}{4}(\gamma-1-\frac{1}{p})}  \\
& \leq C  \| f \|_{\mathcal{B}_p^\gamma}  ((\psi)_{\mathcal{C}^{\frac{3}{4},0}_{[s,t],x} L^{m,\infty}}+1) \Big( \| \psi - \phi \|_{L^{\infty,\infty}_{[S,T],x} L^m} (t-s)^{1+\frac{1}{4}(\gamma-1+2\varepsilon-\frac{1}{p})}   \\ 
& \quad +  \| f \|_{\mathcal{B}_p^\gamma} \big( \psi-\phi \big)_{\mathcal{C}^{\frac{1}{2},0}_{[S,T],x} L^m} (t-s)^{1+\frac{1}{4}(\gamma+1-\frac{1}{p})}  \Big) .
\end{align*}
Therefore,
\begin{align}
\label{eq:bound-delta} 
\| \delta A_{s,u,t} \|_{L^m}  &\leq  C  \| f \|_{\mathcal{B}_p^\gamma}  ((\psi)_{\mathcal{C}^{\frac{3}{4},0}_{[s,t],x} L^{m,\infty}}+1) \Big( \| \psi - \phi \|_{L^{\infty,\infty}_{[S,T],x} L^m}       +  \big( \psi-\phi \big)_{\mathcal{C}^{\frac{1}{2},0}_{[S,T],x} L^m}  \Big) (t-s)^{\frac{3}{4}+\frac{1}{4}(\gamma-\frac{1}{p})} .
\end{align}

\paragraph{Proof of \ref{en:(2b-)}:}
 Let us now bound $\mathbb{E}^s \delta A_{s,u,t}$ for $u=\frac{s+t}{2}$. There is
\begin{align*}
| \mathbb{E}^s \delta A_{s,u,t} | 
& \leq \mathbb{E}^s\int_u^t \int_{\T} p_{T-r}(x,y) \Big| \mathbb{E}^u \Big(f(\EE^s \psi_r(y)+ O_r(y)) - f(\EE^s \phi_r(y) + O_r(y)) \\ 
&\hspace{1cm} - f(\EE^u \psi_r(y)+O_r(y)) + f(\EE^u \phi_r(y) +O_r (y)) \Big)\Big| dy dr\\
& \leq J_1 + J_2 ,
\end{align*}
where 
\begin{align*}
J_1 & = \mathbb{E}^s\int_u^t \int_{\T} p_{T-r}(x,y) \Big| \EE^u f(\EE^s \psi_r(y) + O_r(y)) 
- \EE^u f(\EE^s \phi_r(y)  + P_{r-u}O_u(y)) \\  
& \quad - \EE^u f(\EE^u \psi_r(y)+O_r(y)) 
+ \EE^u f(\EE^s \phi_r(y) +\EE^u \psi_r(y) -\EE^s \psi_r(y) +O_r(y)) \Big| dy dr,
\end{align*}
and
\begin{align*}
J_2 & = \mathbb{E}^s\int_u^t \int_{\T} p_{T-r}(x,y) \Big| \EE^u f(\EE^u \phi_r(y)+O_r(y)) \\ 
& \quad - \EE^u f(\EE^s \phi_r(y) +\EE^u \psi_r(y) -\EE^s \psi_r(y) +O_r(y)) \Big| dy dr.
\end{align*}
Using Lemma \ref{lem:reg-O}, it comes
\begin{align*}
J_1 & \leq \EE^s \int_u^t \int_{\T} p_{T-r}(x,y) \, \Big\| f(\EE^s \psi_r(y) + \cdot) - f(\EE^s \phi_r(y)  + \cdot) - f(\EE^u \psi_r(y)+\cdot) \\ 
& \hspace{1cm} + f(\EE^s \phi_r(y) +\EE^u \psi_r(y) -\EE^s \psi_r(y) +\cdot) \Big\|_{\mathcal{B}_p^{\gamma-2}} (r-u)^{\frac{1}{4}(\gamma-2-\frac{1}{p})} dy dr.
\end{align*}
Moreover, by Lemma \ref{lem:besov-spaces}$(iii)$,
\begin{align*}
 J_1 & \leq C \| f \|_{\mathcal{B}_p^\gamma} \int_u^t \int_{\T} p_{T-r}(x,y) \, \EE^s \Big[  | \EE^s \psi_r(y)  - \EE^s \phi_r(y) |   |\EE^u \psi_r(y) -\EE^s \psi_r(y) | \Big] (r-u)^{\frac{1}{4}(\gamma-2-\frac{1}{p})} dy dr \\
 & \leq C \| f \|_{\mathcal{B}_p^\gamma} \int_u^t \int_{\T} p_{T-r}(x,y) \, (\psi)_{\mathcal{C}^{\frac{3}{4}}_{[S,T]} L^{1,\infty}} (r-s)^{\frac{3}{4}} \, | \EE^s \psi_r(y)  - \EE^s \phi_r(y) | \, (r-u)^{\frac{1}{4}(\gamma-2-\frac{1}{p})} dy dr .
\end{align*}
Taking the $L^m$ norm and using the conditional Jensen's inequality gives
\begin{align}\label{eq:bound-cond-exp-1}
\|  J_1 \|_{L^m} & \leq C  \| f \|_{\mathcal{B}_p^\gamma} (\psi)_{\mathcal{C}^{\frac{3}{4}}_{[S,T]} L^{m,\infty}}  \| \psi - \phi \|_{L^{\infty,\infty}_{[S,T],x} L^m}   (t-s)^{1+\frac{3}{4}+\frac{1}{4}(\gamma-2-\frac{1}{p})}  .
\end{align}
With similar arguments (involving Lemma~\ref{lem:reg-O} and Lemma~\ref{lem:besov-spaces}$(ii)$), we have for $J_2$ that
\begin{align}
| J_2 |
& \leq C  \| f \|_{\mathcal{B}_p^\gamma} \int_u^t \int_{\T} p_{T-r}(x,y) | \EE^u [\psi_r(y)-\phi_r(y)] - \EE^s [\psi_r(y)-\phi_r(y)] |\, (r-u)^{\frac{1}{4}(\gamma-1-\frac{1}{p})} dy dr \nonumber \\ 
\| J_2 \|_{L^m} & \leq C \| f \|_{\mathcal{B}_p^\gamma} \big( \psi-\phi \big)_{\mathcal{C}^{\frac{1}{2},0}_{[S,T],x} L^m}  (t-s)^{1+\frac{1}{2}+\frac{1}{4}(\gamma-1-\frac{1}{p})} .\label{eq:bound-cond-exp-2}
\end{align}
Combining \eqref{eq:bound-cond-exp-1} and \eqref{eq:bound-cond-exp-2} yields \ref{en:(2b-)}.

\paragraph{Proof of \ref{en:(3b-)}:} Finally, for $(\Pi_k)_{k \in 
\mathbb{N}}$ partitions of $[S,t]$ with $\Pi_k=\{t_i^k\}_{i=1}^{N_k}$ and mesh size that 
converges to zero, we have using the $\mathcal{C}^1$ norm of $f$ that

\begin{align*}
\Big\| \mathcal{A}_t - \sum_{i=1}^{N_k-1} A_{t_i^k,t_{i+1}^k} \Big\|_{L^1} 
& \leq \sum_{i=1}^{N_k-1} \int_{t_i^k}^{t_{i+1}^k} \int_{\T} p_{T-r}(x,y) \, \| f \|_{\mathcal{C}^1} \Big( \| \psi_r(y)-\EE^{t_i^k} \psi_r(y) \|_{L^m}\\ 
&\quad + \| \psi_r(y) - \phi_r(y) - \EE^{t_i^k} \psi_r(y) + \EE^{t_i^k} \phi_r(y) \|_{L^m} \Big) dy dr\\
&\leq  C \| f \|_{\mathcal{C}^1} | \Pi_{k} |^{\frac{1}{2}} \Big( (\psi)_{\mathcal{C}^{\frac{3}{4},0}_{[S,T],x} L^m} +\big( \psi-\phi \big)_{\mathcal{C}^{\frac{1}{2},0}_{[S,T],x} L^m} \Big).
\end{align*}
Thus $\mathcal{A}_t$ is the limit in probability of $\sum_{i=1}^{N_k-1} A_{t_i^k,t_{i+1}^k} $ as the mesh size goes to zero. 
\end{proof}

\subsection{H\"older regularity of $\mathcal{E}^{1,n,k}$}\label{sec:reg-E1}

In this section, we prove H\"older bounds on the process $\mathcal{E}^{1,n,k}$ defined in \eqref{def:E1}. First, in the sub-critical case, the regularisation result of Proposition \ref{prop:ssl-o-on-2} is used to quantify the H\"older semi-norm of $\mathcal{E}^{1,n,k}$, see Corollary \ref{cor:bound-E1}. Then in the limit case, a H\"older estimate is established in Proposition \ref{prop:bound-E1-critic} using the critical-exponent version of the Stochastic Sewing Lemma recalled in Remark \ref{rmk:critical-sewing}.

\begin{corollary}\label{cor:bound-E1}
Recall the definition of $\epsilon(n,k)$ in \eqref{eq:defepsilonhn}.
Let $0 \leq S < T \leq 1$, $\gamma >-1$, $m \in [2,+\infty)$ and $p \in [m, +\infty]$ such that $\gamma-1/p \in (-1,0)$. 
There exists $C$ which depends only on $\gamma$, $m$ and $p$ such that for any $n\in \N^*$ and $k \in \mathbb{N}$,
\begin{align*}%
[\mathcal{E}^{1,n,k}]_{\mathcal{C}^{\frac{1}{2},0}_{[S,T],x} L^m}  
& \leq C  \| b \|_{\mathcal{B}_p^\gamma} \Big( \| \mathcal{E}^{n,k}_{0,\cdot} \|_{L^{\infty,\infty}_{[S,T],x} L^m}  + [\mathcal{E}^{n,k}]_{\mathcal{C}^{\frac{1}{2},0}_{[S,T],x} L^m} \nonumber+ \epsilon(n,k) \Big) (T-S)^{\frac{1}{4}+\frac{1}{4}(\gamma-\frac{1}{p})} ,
\end{align*}
where we recall the process $\mathcal{E}^{n,k}$ from \eqref{def:error}.
\end{corollary}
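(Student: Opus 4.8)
The plan is to derive the estimate directly from the general stochastic-sewing result of Proposition~\ref{prop:ssl-o-on-2}, applied to a suitable choice of data, and then to translate the pseudo-norms that appear there into the quantities $\|\mathcal{E}^{n,k}_{0,\cdot}\|$, $[\mathcal{E}^{n,k}]$ and $\epsilon(n,k)$ of the statement; the only real work is bookkeeping.

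First I would fix $(s,t)\in\Delta_{[S,T]}$ and $x\in\T$ and rewrite, using \eqref{def:E1} together with the identities $v_r+P_r\psi_0=u_r-O_r$ (from Definition~\ref{def:true-solution}) and $v^{n,k}_r+P^n_r\psi_0=u^{n,k}_r-O^n_r$ (from \eqref{def:v-hk}),
\[
\mathcal{E}^{1,n,k}_{s,t}(x)=\int_s^t\!\!\int_\T p_{t-r}(x,y)\Big(b^k(\psi_r(y)+O_r(y))-b^k(\phi_r(y)+O_r(y))\Big)\,dy\,dr,
\]
where $\psi:=u-O$ and $\phi:=u^{n,k}-O^n$ are $\mathbb{F}$-adapted, and $b^k\in\mathcal{C}_b^\infty(\R,\R)\cap\mathcal{B}_p^\gamma$ with $\|b^k\|_{\mathcal{B}_p^\gamma}\le\|b\|_{\mathcal{B}_p^\gamma}$ by Definition~\ref{def:conv-gamma-}. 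Since $\gamma-1/p\in(-1,0)$ and $p\ge m$, the hypotheses of Proposition~\ref{prop:ssl-o-on-2} are met, and I apply it with $f=b^k$, these $\psi,\phi$, and its parameter ``$T$'' set equal to $t$, over the interval $[S,t]\subseteq[S,T]$ (so that all pseudo-norms over $[S,t]$ are dominated by those over $[S,T]$). This bounds $\sup_x\|\mathcal{E}^{1,n,k}_{s,t}(x)\|_{L^m}$ by $C\|b\|_{\mathcal{B}_p^\gamma}$ times sums of $((\psi)_{\mathcal{C}^{3/4,0}_{[S,T],x}L^{m,\infty}}+1)$, $\|\psi-\phi\|_{L^{\infty,\infty}_{[S,T],x}L^m}$ and $(\psi-\phi)_{\mathcal{C}^{1/2,0}_{[S,T],x}L^m}$, multiplied by $(t-s)^{\frac34+\frac14(\gamma-\frac1p)}$ or $(t-s)^{\frac54+\frac14(\gamma-\frac1p)}$.

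Next I would control the three pseudo-norms. By \eqref{eq:comp-seminorms1} and the a priori bound \eqref{eq:holder-reg-sol}, $(\psi)_{\mathcal{C}^{3/4,0}_{[S,T],x}L^{m,\infty}}=(u-O)_{\mathcal{C}^{3/4,0}_{[S,T],x}L^{m,\infty}}\le 2[u-O]_{\mathcal{C}^{3/4,0}_{[0,1],x}L^{m,\infty}}<\infty$, a fixed constant absorbed into $C$. Since $\psi-\phi=(u-O)-(u^{n,k}-O^n)$, the inequality \eqref{eq:comp-seminorms3} gives $(\psi-\phi)_{\mathcal{C}^{1/2,0}_{[S,T],x}L^m}\le 2[\mathcal{E}^{n,k}]_{\mathcal{C}^{1/2,0}_{[S,T],x}L^m}$. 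For the sup norm, I use $u_0-O_0=\psi_0$ and $\int_0^r P^n_{(r-\theta)_h}b^k(u^{n,k}_{\theta_h})(x)\,d\theta=v^{n,k}_r(x)$ in \eqref{def:error} to get $\mathcal{E}^{n,k}_{0,r}(x)=(\psi-\phi)_r(x)-\big(P_r\psi_0(x)-P^n_r\psi_0(x)\big)$, whence $\|\psi-\phi\|_{L^{\infty,\infty}_{[S,T],x}L^m}\le\|\mathcal{E}^{n,k}_{0,\cdot}\|_{L^{\infty,\infty}_{[S,T],x}L^m}+\sup_{t,x}|P_t\psi_0(x)-P^n_t\psi_0(x)|\le\|\mathcal{E}^{n,k}_{0,\cdot}\|_{L^{\infty,\infty}_{[S,T],x}L^m}+\epsilon(n,k)$, the last step by \eqref{eq:defepsilonhn} (a sum of nonnegative terms).

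Finally I would divide by $(t-s)^{1/2}$ and take the supremum over $(s,t)\in\Delta_{[S,T]}$. Because $\gamma-1/p>-1$, both exponents $\tfrac14+\tfrac14(\gamma-\tfrac1p)$ and $\tfrac34+\tfrac14(\gamma-\tfrac1p)$ are positive, so using $t-s\le T-S\le1$ one has $(t-s)^{\frac34+\frac14(\gamma-\frac1p)-\frac12}\le(T-S)^{\frac14+\frac14(\gamma-\frac1p)}$ and $(t-s)^{\frac54+\frac14(\gamma-\frac1p)-\frac12}\le(T-S)^{\frac14+\frac14(\gamma-\frac1p)}$. Collecting the three contributions and absorbing the numerical constants (including $2[u-O]_{\mathcal{C}^{3/4,0}_{[0,1],x}L^{m,\infty}}+1$) yields exactly
\[
[\mathcal{E}^{1,n,k}]_{\mathcal{C}^{\frac12,0}_{[S,T],x}L^m}\le C\,\|b\|_{\mathcal{B}_p^\gamma}\Big(\|\mathcal{E}^{n,k}_{0,\cdot}\|_{L^{\infty,\infty}_{[S,T],x}L^m}+[\mathcal{E}^{n,k}]_{\mathcal{C}^{\frac12,0}_{[S,T],x}L^m}+\epsilon(n,k)\Big)(T-S)^{\frac14+\frac14(\gamma-\frac1p)}.
\]
There is no genuine analytic difficulty here: the substantive stochastic-sewing argument is entirely contained in Proposition~\ref{prop:ssl-o-on-2}, and the only point requiring care is the bookkeeping identifying $\psi-\phi$ with $(u-O)-(u^{n,k}-O^n)$ so as to recover $[\mathcal{E}^{n,k}]$-seminorms and $\epsilon(n,k)$, together with the absorption of the a priori $\mathcal{C}^{3/4}$-regularity of $u-O$ into the constant.
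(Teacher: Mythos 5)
Your proposal is correct and follows essentially the same route as the paper: apply Proposition~\ref{prop:ssl-o-on-2} with $f=b^k$, $\psi=u-O$, $\phi=u^{n,k}-O^n$ (the paper writes $\phi=P^n\psi_0+v^{n,k}$, which is the same process by \eqref{def:v-hk}), then convert the pseudo-norms via \eqref{eq:holder-reg-sol}, \eqref{eq:comp-seminorms1} and \eqref{eq:comp-seminorms3}, bound $\|\psi-\phi\|_{L^{\infty,\infty}_{[S,T],x}L^m}$ by $\|\mathcal{E}^{n,k}_{0,\cdot}\|_{L^{\infty,\infty}_{[S,T],x}L^m}+\epsilon(n,k)$, use $\|b^k\|_{\mathcal{B}_p^\gamma}\le\|b\|_{\mathcal{B}_p^\gamma}$, and divide by $(t-s)^{1/2}$. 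Your extra check that the higher-order term $(t-s)^{5/4+\frac14(\gamma-1/p)}$ is dominated is a detail the paper leaves implicit, but nothing differs in substance.
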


\begin{proof}
Let $0<S\leq T\leq 1$ and $(s,t)\in \Delta_{[S,T]}$. Applying Proposition~\ref{prop:ssl-o-on-2} with $\psi=  P \psi_0+v = u-O$, $\phi = P^n \psi_0 + v^{n,k}$ (for $v^{n,k}$ defined in \eqref{def:v-hk}) and $f=b^k$ reads
\begin{align}\label{eq:boundE1nk}
\sup_{x\in \T} \|\mathcal{E}^{1,n,k}_{s,t}\|_{L^m} \leq C  \| b^k\|_{\mathcal{B}_p^\gamma}  \Big((\psi)_{\mathcal{C}^{\frac{3}{4},0}_{[S,T],x} L^{m,\infty}}+1\Big) \Big( \| \psi - \phi \|_{L^{\infty,\infty}_{[S,T],x} L^m} + \big( \psi-\phi \big)_{\mathcal{C}^{\frac{1}{2},0}_{[S,T],x} L^m}  \Big) (t-s)^{\frac{3}{4}+\frac{1}{4}(\gamma-\frac{1}{p})}.
\end{align}
In view of \eqref{eq:comp-seminorms1}, the term $(\psi)_{\mathcal{C}^{3/4,0}_{[S,T],x} L^{m,\infty}}$ is bounded by $2 [\psi]_{\mathcal{C}^{3/4,0}_{[S,T],x} L^{m,\infty}} = 2[u-O]_{\mathcal{C}^{3/4,0}_{[S,T],x} L^{m,\infty}}$, which is exactly what appears in \eqref{eq:holder-reg-sol} and is thus finite. Moreover, we bound $\| \psi - \phi \|_{L^{\infty,\infty}_{[S,T],x} L^m}$ by
\begin{align*}
 \| \psi -  \phi  \|_{L^{\infty,\infty}_{[S,T],x} L^m} & \leq \| \mathcal{E}^{n,k}_{0,\cdot} \|_{L^{\infty,\infty}_{[S,T],x} L^m} + \sup_{\substack{t \in [0,1] \\x \in \T}} | P_t \psi_0(x) - P^n_t \psi_0(x) | \nonumber \\
 & \leq \| \mathcal{E}^{n,k}_{0,\cdot} \|_{L^{\infty,\infty}_{[S,T],x} L^m} + \epsilon(n,k) ,
\end{align*}
and use \eqref{eq:comp-seminorms3} to get $\big( \psi-\phi \big)_{\mathcal{C}^{1/2,0}_{[S,T],x} L^m} \leq 2 [\mathcal{E}^{n,k}]_{\mathcal{C}^{1/2,0}_{[S,T],x} L^m}$.
Hence, plugging the previous inequalities in \eqref{eq:boundE1nk}, and using that $ \|b^k\|_{\mathcal{B}^\gamma_{p}}\leq  \|b\|_{\mathcal{B}^\gamma_{p}}$, one gets
\begin{align*}
\sup_{x\in \T} \|\mathcal{E}^{1,n,k}_{s,t}\|_{L^m} 
\leq C  \|b\|_{\mathcal{B}_p^\gamma} \Big( \| \mathcal{E}^{n,k}_{0,\cdot} \|_{L^{\infty,\infty}_{[S,T],x} L^m} + \epsilon(n,k)
+  [\mathcal{E}^{n,k}]_{\mathcal{C}^{\frac{1}{2},0}_{[S,T],x} L^m} \Big) (t-s)^{\frac{3}{4}+\frac{1}{4}(\gamma-\frac{1}{p})}.
\end{align*}
Divide now by $(t-s)^{1/2}$ and take the supremum over $(s,t) \in \Delta_{[S,T]}$ to get the desired result.
\end{proof}

We now proceed with the H\"older bound on $\mathcal{E}^{1,n,k}$ in the limit case.
\begin{prop}\label{prop:bound-E1-critic}
Recall that $\{ u^{n,k} \}_{n,k,m, \alpha}$ was defined in \eqref{eq:discrete-seminorm}. Let $m\in [2,+\infty)$, $p\in [m,+\infty)$, $\gamma-1 / p=-1$ and $\eta \in (0,(\gamma+1) \wedge \frac{1}{4})$. 
There are constants $\mathbf{M}>0$ and $\ell_0 > 0$ such that for any $\zeta \in (0,1 / 2)$, any $0 \leq S<T \leq 1$ satisfying $T-S \leq \ell_0$, any $(s,t) \in \Delta_{[S,T]}$, any $n\in \N^*$ and $k \in \mathbb{N}$,
\begin{align*}
\sup_{x \in \mathbb{T}} \|\mathcal{E}^{1,n,k}_{s,t}(x) \|_{L^m} 
& \leq \mathbf{M}\left(1+\Big|\log \frac{T^\eta (1+\big\{ u^{n,k} \big\}_{n,k,m, \frac{1}{2}+\eta}}{\big\| \mathcal{E}^{1,n,k}_{0,\cdot}  \big\|_{L_{[S, T],x}^{\infty,\infty} L^m}+\epsilon(n, k)}\Big|\right)
\left(\big\| \mathcal{E}^{1,n,k}_{0,\cdot} \big\|_{L_{[S, T],x}^{\infty,\infty} L^m}+\epsilon(n, k)\right)(t-s)
\\ & \quad +\mathbf{M}\left(\big\| \mathcal{E}^{1,n,k}_{0,\cdot} \big\|_{L_{[S, T],x}^{\infty,\infty} L^m}+ \left[ \mathcal{E}^{1,n,k} \right]_{\mathcal{C}_{[S, T],x}^{\frac{1}{2}-\zeta,0} L^m} + \epsilon(n, k) \right)(t-s)^{\frac{1}{2}}.
\end{align*}
\end{prop}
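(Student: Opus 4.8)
Here the plan is to run the Stochastic Sewing Lemma with critical exponent (Remark~\ref{rmk:critical-sewing}, i.e.\ \cite[Theorem 4.5]{athreya2020well}) on a frozen version of $\mathcal{E}^{1,n,k}$, in the spirit of the sub-critical Proposition~\ref{prop:ssl-o-on-2} but accepting the logarithmic loss typical of the regime $\gamma-1/p=-1$. Set $\psi:=u-O=P\psi_0+v$ and $\phi:=u^{n,k}-O^n=P^n\psi_0+v^{n,k}$; both are $\mathbb{F}$-adapted, and by the mild forms \eqref{eq:spde-mild} and \eqref{eq:discrete-mild}, $\mathcal{E}^{1,n,k}_{s,t}(x)=\int_s^t\int_{\T}p_{t-r}(x,y)\big(b^k(\psi_r(y)+O_r(y))-b^k(\phi_r(y)+O_r(y))\big)\,dy\,dr$. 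Fix $x\in\T$ and a target pair $(s,t)\in\Delta_{[S,T]}$, and on $[S,t]$ introduce
\[
\mathcal{A}_{t'}:=\int_S^{t'}\!\!\int_{\T}p_{t-r}(x,y)\big(b^k(\psi_r+O_r)-b^k(\phi_r+O_r)\big)\,dy\,dr,\qquad A_{s',t'}:=\int_{s'}^{t'}\!\!\int_{\T}p_{t-r}(x,y)\big(b^k(\EE^{s'}\psi_r+O_r)-b^k(\EE^{s'}\phi_r+O_r)\big)\,dy\,dr,
\]
so that $\mathcal{E}^{1,n,k}_{s,t}(x)=\mathcal{A}_t-\mathcal{A}_s$, and it suffices to estimate $\|\mathcal{A}_t-\mathcal{A}_s-A_{s,t}\|_{L^m}$ (via the sewing lemma) and $\|A_{s,t}\|_{L^m}$ separately.

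Before sewing I record two translations converting the output into the stated form. From \eqref{def:E1}--\eqref{def:E4}, \eqref{def:Vk}, \eqref{def:v-hk}, \eqref{def:v-k} one reads off $\psi_t-\phi_t=(P_t-P^n_t)\psi_0+V^k_{0,t}+\mathcal{E}^{1,n,k}_{0,t}+\mathcal{E}^{2,n,k}_{0,t}+\mathcal{E}^{3,n,k}_{0,t}$; bounding the H\"older increments and using the definition of $\epsilon(n,k)$ in \eqref{eq:defepsilonhn} gives $\|\psi-\phi\|_{L^{\infty,\infty}_{[S,T],x} L^m}\le \|\mathcal{E}^{1,n,k}_{0,\cdot}\|_{L^{\infty,\infty}_{[S,T],x} L^m}+\epsilon(n,k)$, while combining the same identity with \eqref{eq:comp-seminorms3} gives $\big(\psi-\phi\big)_{\mathcal{C}^{1/2-\zeta,0}_{[S,T],x} L^m}\le 2[\mathcal{E}^{1,n,k}]_{\mathcal{C}^{1/2-\zeta,0}_{[S,T],x} L^m}+2\epsilon(n,k)$. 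Moreover, since $P^n\psi_0$ is deterministic, \eqref{eq:comp-seminorms2} yields $\big(\phi\big)_{\mathcal{C}^{1/2+\eta,0}_{[S,T],x} L^m}\le 2\{u^{n,k}\}_{n,k,m,1/2+\eta}$, whereas \eqref{eq:comp-seminorms1} together with \eqref{eq:holder-reg-sol} yields $\big(\psi\big)_{\mathcal{C}^{3/4,0}_{[S,T],x} L^{m,\infty}}\le C$. These last two are the only a priori regularities that enter, and the first of them is what will appear in the logarithm.

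For the sewing, write $u'=\tfrac{s'+t'}{2}$, $\delta A_{s',u',t'}=\int_{u'}^{t'}\!\int_{\T}p_{t-r}(x,y)D_r(O_r(y))\,dy\,dr$ with $D_r(z)=b^k(\EE^{s'}\psi_r+z)-b^k(\EE^{s'}\phi_r+z)-b^k(\EE^{u'}\psi_r+z)+b^k(\EE^{u'}\phi_r+z)$, and split $D_r=h^1_r-h^2_r$ as in the proof of Lemma~\ref{cor:ssl-4}, where $h^1_r$ is the cross second difference driven by the $\psi$-increment $\EE^{u'}\psi_r-\EE^{s'}\psi_r$ and the displacement $\EE^{s'}(\psi_r-\phi_r)$, and $h^2_r$ is the single difference driven by $(\EE^{u'}-\EE^{s'})(\phi_r-\psi_r)$. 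For the $\EE^{s'}\delta A$ estimates, the tower property, the regularisation estimate Lemma~\ref{lem:reg-O} for $O$ and the Besov difference estimates Lemma~\ref{lem:besov-spaces}$(ii)$--$(iii)$ (applied exactly as in the proof of Proposition~\ref{prop:ssl-o-on-2}, part~\ref{en:(2b-)}) bound the $h^1$-contribution, via $\|\EE^{s'}(\psi_r-\phi_r)\|_{L^m}\le\|\psi-\phi\|_{L^{\infty,\infty}_{[S,T],x} L^m}$ and $\big(\psi\big)_{\mathcal{C}^{3/4,0} L^{m,\infty}}\le C$, by $C\|b\|_{\B_p^\gamma}\|\psi-\phi\|_{L^{\infty,\infty}_{[S,T],x} L^m}(t'-s')$ (a critical, $\varepsilon_1=0$, term), and bound the $h^2$-contribution, after splitting $(\EE^{u'}-\EE^{s'})(\phi_r-\psi_r)$ into a $\phi$-increment ($\le\big(\phi\big)_{\mathcal{C}^{1/2+\eta,0}}|r-s'|^{1/2+\eta}$) and a $\psi$-increment ($\le\big(\psi\big)_{\mathcal{C}^{3/4,0}}|r-s'|^{3/4}$, dominated by the former since $\eta<1/4$), by $C\|b\|_{\B_p^\gamma}(1+\{u^{n,k}\}_{n,k,m,1/2+\eta})(t'-s')^{1+\eta}$ (a good, $\varepsilon_1=\eta$, term). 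The estimate on $\|\delta A_{s',u',t'}\|_{L^m}$ follows by applying Lemma~\ref{cor:ssl-4} with $f=b^k$, $\beta=\gamma$, $\lambda=\lambda_1=1$ and $\lambda_2>0$ small, exactly as in the proof of Proposition~\ref{prop:ssl-o-on-2}$(i)$; in the limit regime it produces $\|\delta A_{s',u',t'}\|_{L^m}\le C\|b\|_{\B_p^\gamma}(1+\big(\psi\big)_{\mathcal{C}^{3/4,0} L^{m,\infty}})\big(\|\psi-\phi\|_{L^{\infty,\infty}_{[S,T],x} L^m}+\big(\psi-\phi\big)_{\mathcal{C}^{1/2-\zeta,0}_{[S,T],x} L^m}\big)(t'-s')^{1/2+\varepsilon_2}$ for some $\varepsilon_2>0$, and the convergence condition \eqref{eq:convAt} is checked as in the proof of Proposition~\ref{prop:ssl-o-on-2}$(iii)$ using $b^k\in\mathcal{C}^1$ and the finiteness of $\big(\psi\big)_{\mathcal{C}^{3/4,0}}$, $\big(\psi-\phi\big)_{\mathcal{C}^{1/2-\zeta,0}}$. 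Thus conditions \eqref{eq:condsew1}, \eqref{eq:sts4} and \eqref{eq:condsew2} hold with $\varepsilon_1=\eta$; together with the bound $\|A_{s,t}\|_{L^m}\le C\|b\|_{\B_p^\gamma}\|\psi-\phi\|_{L^{\infty,\infty}_{[S,T],x} L^m}(t-s)^{1/2}$ from Lemma~\ref{lem:ssl-o-on} (with $\beta=\gamma-1\in(-2,0)$, $f=b^k\in\B_p^\gamma$ and the $\mathcal{F}_s$-measurable processes $\EE^s\psi,\EE^s\phi$), the critical sewing lemma then bounds $\|\mathcal{E}^{1,n,k}_{s,t}(x)\|_{L^m}$ by $C\Gamma_1'\big(1+\big|\log(\Gamma_1 t^{\eta}/\Gamma_1')\big|\big)(t-s)+C\Gamma_2(t-s)^{1/2+\varepsilon_2}+\|A_{s,t}\|_{L^m}$ with $\Gamma_1=C\|b\|_{\B_p^\gamma}(1+\{u^{n,k}\}_{n,k,m,1/2+\eta})$, $\Gamma_1'=C\|b\|_{\B_p^\gamma}\|\psi-\phi\|_{L^{\infty,\infty}_{[S,T],x} L^m}$, and $\Gamma_2$ as above. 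Substituting the two translations of the previous paragraph, using $t\le T$, requiring $T-S\le\ell_0$ small (this is where $\ell_0$, the threshold imposed by the critical sewing lemma, is fixed), using $\eta<1/4$, absorbing $\|b\|_{\B_p^\gamma}$, $\big(\psi\big)_{\mathcal{C}^{3/4,0}}$ and universal constants into a single $\mathbf{M}>0$, and absorbing the elementary inequality $|\log(t-s)|(t-s)\le C(t-s)^{1/2}$ into the $(t-s)^{1/2}$-term, yields the claimed bound.

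The main obstacle is the ``good'' $\EE^{s'}\delta A$ estimate: one must gain the extra power $(t'-s')^{\eta}$ over the critical bound while keeping the coefficient of the critical part \emph{proportional to} $\|\psi-\phi\|_{L^{\infty,\infty}}$, so that the $(t-s)$-term in the end carries $\|\mathcal{E}^{1,n,k}_{0,\cdot}\|_{L^{\infty,\infty}}+\epsilon(n,k)$ rather than a fixed constant (this is precisely what allows the Gr\"onwall iteration in Section~\ref{subsubsec:prop31-} to yield a genuine rate). This dictates the bookkeeping above: the displacement part of the second difference must carry the $\|\psi-\phi\|$-smallness in the critical piece, while the extra $\eta$-regularity in the good piece comes from combining the oscillation of $O$ with the $(\tfrac12+\eta)$-H\"older regularity of $\phi=u^{n,k}-O^n$ --- measured by $\{u^{n,k}\}_{n,k,m,1/2+\eta}$, which is exactly why that pseudo-norm (and not a Besov norm of $b^k$) is what appears in the logarithm. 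A secondary but delicate point is rewriting the logarithmic factor produced by the sewing lemma into the clean form $1+\big|\log\big(T^\eta(1+\{u^{n,k}\}_{n,k,m,1/2+\eta})/(\|\mathcal{E}^{1,n,k}_{0,\cdot}\|_{L^{\infty,\infty}}+\epsilon(n,k))\big)\big|$, which relies on the monotonicity of $x\mapsto x|\log x|$ near $0$ and on the smallness of $T-S$.
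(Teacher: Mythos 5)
Your overall strategy (critical stochastic sewing applied to the germ $A_{s',t'}$ obtained by freezing $\psi,\phi$ at time $s'$, with the second difference split into a cross term $h^1$ and a single-difference term $h^2$) is the same as the paper's, and your bounds on $h^1$, on $\|A_{s,t}\|_{L^m}$ via Lemma~\ref{lem:ssl-o-on}, and the translations $\|\psi-\phi\|_{L^{\infty,\infty}_{[S,T],x}L^m}\le \|\mathcal{E}^{1,n,k}_{0,\cdot}\|_{L^{\infty,\infty}_{[S,T],x}L^m}+\epsilon(n,k)$ all match. But there is a genuine gap in your treatment of $h^2$ in the critical estimate. You bound $\|(\EE^{u'}-\EE^{s'})(\psi_r-\phi_r)\|_{L^m}$ by the triangle inequality, $(\phi)_{\mathcal{C}^{1/2+\eta,0}}|r-s'|^{1/2+\eta}+(\psi)_{\mathcal{C}^{3/4,0}}|r-s'|^{3/4}$, which destroys the cancellation between $\psi$ and $\phi$ and produces a contribution $C(1+\{u^{n,k}\}_{n,k,m,1/2+\eta})(t'-s')^{1+\eta}$. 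In the critical sewing lemma this lands in $\Gamma_4(t-s)^{1+\varepsilon_4}$ (or inflates $\Gamma_3$ by $C(1+\{u^{n,k}\})\ell_0^\eta$), and since $1+\{u^{n,k}\}$ is $O(1)$ and does \emph{not} vanish as $n,k\to\infty$, your final bound contains a parasitic term with no smallness factor. That term is not of the form claimed in the Proposition and would cap the convergence rate, breaking the Gr\"onwall iteration of Section~\ref{subsubsec:prop31-}. This bound on $h^2$ is exactly the one the paper uses in its estimate \ref{item47(1)} --- i.e.\ for the numerator $\Gamma_1$ inside the logarithm --- but it cannot double as the critical estimate.

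The missing idea is that in the critical estimate, $(\EE^{u'}-\EE^{s'})(\psi_r-\phi_r)$ must itself be recognised as (up to the pivots $V^k$, $\mathcal{E}^{2,n,k}$, $\mathcal{E}^{3,n,k}$) a conditional increment of the quantity being estimated. The paper's \eqref{eq:general-delta-critic2} bounds it by $C(\|\mathcal{E}^{n,k}_{0,\cdot}\|_{L^{\infty,\infty}_{[S,T],x}L^m}+\epsilon(n,k))(r-s)^{1/2}+[R]_{\mathcal{C}^{\tau,0}_{[S,T],x}L^m}(r-s)^{\tau}$, where $R$ is the sewing remainder itself. This makes the estimate self-referential, which is why the paper first derives a H\"older bound on $R$ uniformly over subintervals, absorbs the $C[R](\tilde T-\tilde S)^{1/2}$ term into the left-hand side using $T-S\le\ell_0$ (this is where $\ell_0$ actually comes from --- the critical sewing lemma itself imposes no such threshold, contrary to what you assert), and only then injects the result back into the bound for $\mathcal{E}^{1,n,k}_{s,t}$. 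A secondary, more easily repaired omission: condition \eqref{eq:condsew1} must hold for the full $\EE^{s'}\delta A_{s',u',t'}$, so you also need a $(t'-s')^{1+\varepsilon_1}$ bound on the $h^1$-contribution with bounded coefficient; the paper obtains $(t'-s')^{5/4}$ by taking $\lambda=0$ in Lemma~\ref{lem:besov-spaces}$(iii)$, whereas you only record the critical ($\lambda=1$) bound proportional to $\|\psi-\phi\|$.
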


\begin{proof} 
Let us denote $\psi=P \psi_0 + v$ and $\phi= P^n \psi_0 + v^{n,k}$.
For any $(s,t)\in \Delta_{[s,t]}$ and $x\in \T$, let 
\begin{equation*} %
\begin{split}
R_{s,t}(x) &=\int_s^{t} \int_{\T} p_{t-r}(x,y) \left(b^k(\psi_r(y)+ O_r(y)) - b^k(\phi_r(y)+ O_r(y))\right) dy dr  \\ 
&\quad - \int_{s}^{t} \int_{\T} p_{t-r}(x,y) \left( b^k(\EE^s \psi_r(y)+ O_r(y)) - b^k(\EE^s \phi_r(y)+ O_r(y))\right) dy dr .
\end{split}
\end{equation*}
Notice that for $\tau>0$ (to be precised later),
\begin{align*}
\| \mathcal{E}^{1,n,k}_{s,t}(x) \|_{L^m} & =  \Big\| \int_{s}^{t} \int_{\T} p_{t-r}(x,y) f( \psi_r(y)+ O_r(y)) - f( \phi_r(y)+  O_r(y)) \, dy dr \Big\|_{L^m}  \\
& \leq \| R_{s,t}(x) \|_{L^m}  + \Big\| \int_{s}^{t} \int_{\T} p_{t-r}(x,y) f(\EE^s \psi_r(y)+ O_r(y)) - f(\EE^s \phi_r(y)+  O_r(y)) \, dy dr \Big\|_{L^m} \\
& \leq  [R]_{\mathcal{C}^{\tau,0}_{[s,t],x} L^{m}} (t-s)^{\tau} +  \Big\| \int_{s}^{t} \int_{\T} p_{t-r}(x,y) f(\EE^s \psi_r(y)+ O_r(y)) - f(\EE^s \phi_r(y)+  O_r(y)) \, dy dr\Big\|_{L^m}.
\end{align*} 
Apply Lemma \ref{lem:ssl-o-on} with $\beta=\gamma-1$ to the second quantity on the right-hand side of the previous inequality to get
\begin{align*}
\| \mathcal{E}^{1,n,k}_{s,t}(x) \|_{L^m}  &\leq  [R]_{\mathcal{C}^{\tau,0}_{[s,t],x} L^{m}} (t-s)^{\tau}  +  C \| f \|_{\mathcal{B}_p^{\gamma}} \,  \|\EE^s \psi- \EE^s \phi\|_{L^{\infty,\infty}_{[s,t],x} L^m}  (t-s)^{\frac{1}{2}} .
\end{align*}
By Jensen's inequality for conditional expectation, one gets $ \|\EE^s \psi- \EE^s \phi\|_{L^{\infty,\infty}_{[s,t],x} L^m} \leq \| \psi- \phi\|_{L^{\infty,\infty}_{[s,t],x} L^m} \leq \| \psi- \phi\|_{L^{\infty,\infty}_{[S,T],x} L^m}$. Moreover
\begin{align*}
\|  \psi -\phi \|_{L_{[S,T],x}^{\infty,\infty} L^m}  = \| v -v^{n,k} + P \psi_0 -P^n \psi_0 \|_{L_{[S,T],x}^{\infty,\infty} L^m} 
  &\leq \| \mathcal{E}^{n,k} \|_{L_{[S,T],x}^{\infty,\infty} L^m} + \sup_{\substack{t \in [0,1] \\ x \in \T}}| P_t \psi_0(x) - P^n_t \psi_0(x) |  \\
& \leq \| \mathcal{E}^{1,n,k} \|_{L_{[S,T],x}^{\infty,\infty} L^m} +\epsilon(n,k) .%
\end{align*}
Hence
\begin{align}\label{eq:E1-R-link}
\| \mathcal{E}^{1,n,k}_{s,t}(x) \|_{L^m} 
& \leq  [R]_{\mathcal{C}^{\tau,0}_{[S,T],x} L^{m}} (t-s)^{\tau} +  C \| f \|_{\mathcal{B}_p^{\gamma}} \left(\| \mathcal{E}^{1,n,k} \|_{L_{[S,T],x}^{\infty,\infty} L^m} +\epsilon(n,k) \right) (t-s)^{\frac{1}{2}} .
\end{align}

~

The goal now is to use stochastic sewing techniques to bound the term $ [R]_{\mathcal{C}^{\tau,0}_{[S,T],x} L^{m}} $. 
Fix $x \in \mathbb{T}$ and $(\tilde{S},\tilde{T}) \in \Delta_{[S,T]}$. For any $(s,t) \in \Delta_{\tilde{S}, \tilde{T}}$, let $A_{s,t}$ and $\mathcal{A}_{t}$ be defined as follows:
\begin{equation} \label{eq:Prop51A-critic}
\begin{split}
A_{s,t} &= \int_{s}^{t} \int_{\T} p_{\tilde{T}-r}(x,y) \left( b^k(\EE^s \psi_r(y)+ O_r(y)) - b^k(\EE^s \phi_r(y)+  O_r(y)) \right) dy dr ,\\
\mathcal{A}_{t} &= \int_{\tilde{S}}^{t} \int_{\T} p_{\tilde{T}-r}(x,y) \left( b^k(\psi_r(y)+ O_r(y)) - b^k( \phi_r(y)+ O_r(y))\right) dy dr  .
\end{split}
\end{equation}
Let $\eta \in (0,(\gamma+1) \wedge \frac{1}{4})$ and set $\tau=1/2+\eta/2 \wedge (1/2-\zeta)$. In the following, we aim to apply the Stochastic Sewing Lemma with critical exponent \cite[Theorem 4.5]{athreya2020well} (see Remark \ref{rmk:critical-sewing}). To show that \eqref{eq:condsew1}, \eqref{eq:condsew2} and \eqref{eq:sts4} hold true with $\varepsilon_{1} =\eta$, $\varepsilon_2 = \eta/2 \wedge (1/2-\zeta)>0$ and $\varepsilon_4 = \eta/2 \wedge (1/2-\zeta)$, we prove that there exists $C>0$ which does not depend on $s,t,S, \tilde{S}, \tilde{T}$ and $T$ such that for $u = (s+t)/2$,
\begin{enumerate}[label=(\roman*)]
\item \label{item47(1)} 
$\|\EE^{s} \delta A_{{s},u,{t}} \|_{L^m} \leq  C\, ( 1+ \big\{ u^{n,k} \big\}_{n,k,m,\frac{1}{2}+\eta} )\, (t-s)^{1+\eta}$;

\myitem{(i')}\label{item47(1')}
$
\|\EE^{s} \delta A_{{s},u,{t}}\|_{L^m}  \leq  C \, [R]_{\mathcal{C}^{\tau,0}_{[S,T],x} L^{m}} \, (t-s)^{\frac{1}{2}+\tau} +C \Big( \| \mathcal{E}^{1,n,k}_{0,\cdot} \|_{L^{\infty,\infty}_{[S,T],x} L^m}  + \epsilon(n,k)\Big) {(t-s)};$

\item \label{item47(2)} 
$\| \delta A_{{s},u,{t}}\|_{L^m} \leq C\, \Big( \| \mathcal{E}^{1,n,k}_{0,\cdot} \|_{L^{\infty,\infty}_{[S,T],x} L^m}  + [\mathcal{E}^{1,n,k}]_{\mathcal{C}^{\frac{1}{2}-\zeta,0}_{[S,T],x} L^m}+\epsilon
(n,k)  \Big)   \,   (t-s)^{\frac{1}{2}+\varepsilon_2} $;

\item \label{item47(3)} If \ref{item47(1)} and \ref{item47(2)} are satisfied, \eqref{eq:convAt} implies the 
convergence in probability of $\sum_{i=1}^{N_k-1} A_{t^k_i,t^k_{i+1}}$ on any sequence $\Pi_k=\{t_i^k\}_{i=1}^{N_k}$ of partitions of $[\tilde{S},t]$ with a mesh that converges to $0$. We will show 
that the limit is $\mathcal{A}$, given in \eqref{eq:Prop51A-critic}.
\end{enumerate}

Then from \eqref{eq:sts-critic}, it comes
\begin{align*}%
 \| \mathcal{A}_{\tilde{T}} - \mathcal{A}_{\tilde{S}}- A_{\tilde{S},\tilde{T}} \|_{L^m} 
 &\leq  C\, \bigg(1+ \bigg|\log \frac{T^\eta (1+ \big\{ u^{n,k} \big\}_{n,k,m,\frac{1}{2}+\eta})}{ \| \mathcal{E}^{1,n,k}_{0,\cdot} \|_{L^{\infty,\infty}_{[S,T],x} L^m}  +\epsilon(n,k)} \bigg| \bigg) \, \Big( \| \mathcal{E}^{1,n,k}_{0,\cdot} \|_{L^{\infty,\infty}_{[S,T],x} L^m}  + \epsilon(n,k)\Big) \, (\tilde{T}-\tilde{S}) \nonumber\\
 &\quad+ C\, \Big( \| \mathcal{E}^{1,n,k}_{0,\cdot} \|_{L^{\infty,\infty}_{[S,T],x} L^m}+ [\mathcal{E}^{1,n,k}]_{\mathcal{C}^{\frac{1}{2}-\zeta,0}_{[S,T],x} L^m} + \epsilon(n,k) \Big) \,   (\tilde{T}-\tilde{S})^{\frac{1}{2}+\varepsilon_2} \\ 
 & \quad +C \, [R]_{\mathcal{C}^{\tau,0}_{[S,T],x} L^{m}} \, (\tilde{T}-\tilde{S})^{\frac{1}{2}+\tau} .
\end{align*}
Notice that $\mathcal{A}_{\tilde{S}}=0$ and $ \mathcal{A}_{\tilde{T}} - \mathcal{A}_{\tilde{S}}- A_{\tilde{S},\tilde{T}}=R_{\tilde{S},\tilde{T}}(x)$. Now divide on both sides by $(\tilde{T}-\tilde{S})^\tau$, and take the supremum over $(\tilde{S},\tilde{T}) \in \Delta_{[S,T]}$ and $x \in \T$. For $S\leq T$ such that $T-S\leq (2C)^{-2}=:\ell_0$, we get that $C \, [R]_{\mathcal{C}^{\tau,0}_{[S,T],x} L^{m}} \, (\tilde{T}-\tilde{S})^{\frac{1}{2}} \leq \frac{1}{2} [R]_{\mathcal{C}^{\tau,0}_{[S,T],x} L^{m}}$. Then subtract by this quantity on both sides to get
\begin{align}\label{eq:seminorm-R}
 [R]_{\mathcal{C}^{\tau,0}_{[S,T],x} L^{m}}
 &\leq  C\,  \bigg(1+ \bigg|\log \frac{T^\eta (1+ \big\{ u^{n,k} \big\}_{n,k,m,\frac{1}{2}+\eta})}{ \| \mathcal{E}^{1,n,k}_{0,\cdot} \|_{L^{\infty,\infty}_{[S,T],x} L^m}  +\epsilon(n,k)}\bigg| \bigg) \, \Big( \| \mathcal{E}^{1,n,k}_{0,\cdot} \|_{L^{\infty,\infty}_{[S,T],x} L^m}  + \epsilon(n,k) \Big) (T-S)^{1-\tau} \nonumber\\
 &\quad + C\, \Big( \| \mathcal{E}^{1,n,k}_{0,\cdot} \|_{L^{\infty,\infty}_{[S,T],x} L^m}  + [\mathcal{E}^{1,n,k}]_{\mathcal{C}^{\frac{1}{2}-\zeta,0}_{[S,T],x} L^m} +\epsilon(n,k)  \Big) .
\end{align}
We conclude by injecting \eqref{eq:seminorm-R} in \eqref{eq:E1-R-link}.

Finally, call $\M$ the constant $C$ that appears in the previous bound on $ \|\mathcal{E}^{1,n,k}_{s,t}(x) \|_{L^m}$, so as to keep track of it in Theorem \ref{thm:main-SPDE}$(b)$.

~

Let us now verify that \ref{item47(1)}, \ref{item47(1')}, \ref{item47(2)} and \ref{item47(3)} are satisfied.

\paragraph{Proof of \ref{item47(1)}:}  For $u \in [s,t]$, the tower property and Fubini's theorem give
\begin{align}\label{eq:deltaAst-decomp-SDE-critic}
\mathbb{E}^{s} \delta A_{s,u,t} & =  \int_u^{t} \int_{\T} p_{\tilde{T}-r}(x,y)\, \mathbb{E}^{s}\, \mathbb{E}^u \Big[ b^k(\EE^s \psi_r(y) +O_r(y)) - b^k(\EE^s \phi_r(y)+ O_r(y)) \nonumber \\ 
& \hspace{0.8cm} - b^k(\EE^u \psi_r(y) + O_r(y)) + b^k(\EE^u \phi_r(y)+ O_r(y)) \Big] \, dy dr  \nonumber \\ 
 & =: \int_u^{t} \int_{\T} p_{\tilde{T}-r}(x,y)\, \mathbb{E}^{s}\, \mathbb{E}^u \left[ F_{s,u,r,y}(O_r(y)) + \tilde{F}_{s,u,r,y}(O_r(y)) \right]  dy dr  , 
\end{align}
where $F_{s,u,r,y}, \tilde{F}_{s,u,r,y}: \R \times \Omega \rightarrow \R$ are defined by
\begin{align*}
F_{s,u,r,y}(\cdot) & =b^k(\EE^s \psi_r(y) +\cdot) - b^k(\EE^s \phi_r(y)+ \cdot) - b^k(\EE^u \psi_r(y) + \cdot) + b^k(\EE^u \psi_r(y) + \EE^s \phi_r(y) - \EE^s \psi_r(y)+ \cdot)  , \\
\tilde{F}_{s,u,r,y}(\cdot)& = b^k(\EE^u \phi_r(y) +\cdot) - b^k\left(\EE^u \psi_r(y) + \EE^s \phi_r(y) - \EE^s \psi_r(y)+ \cdot\right).
\end{align*}
Since $F_{s,u,r,y}$ is an $\mathcal{F}_u$-measurable function, Lemma \ref{lem:reg-O} gives that for $\lambda\in [0,1]$,
\begin{align*}
 |\mathbb{E}^u F_{s,u,r,y}(O_r(y)) | \leq C\, \| F_{s,u,r,y} \|_{\mathcal{B}_p^{\gamma-1-\lambda}} \, (r-u)^{\frac{1}{4}(\gamma-1-\lambda-\frac{1}{p})} .
\end{align*}
Moreover, by Lemma \ref{lem:besov-spaces}$(iii)$, it comes that
\begin{align*}
\| F_{s,u,r,y} \|_{\mathcal{B}_p^{\gamma-1-\lambda}} 
& \leq C\, \|b^k \|_{\mathcal{B}_p^\gamma} | \EE^s \psi_r(y)-\EE^u \psi_r(y) |\, | \EE^s \psi_r(y)-\EE^s \phi_r(y) |^\lambda .
\end{align*}
Therefore, using that  $(\psi)_{\mathcal{C}^{3/4,0}_{[S,T],x} L^{1,\infty}} \leq  (\psi)_{\mathcal{C}^{3/4,0}_{[S,T],x} L^{m,\infty}}$ yields
\begin{align*}
 | \EE^s \mathbb{E}^u F_{s,u,r,y}(O_r(y)) |
& \leq C\, \|b^k\|_{\mathcal{B}_p^\gamma}\,  (\psi)_{\mathcal{C}^{\frac{3}{4},0}_{[S,T],x}L^{m,\infty}}  | \EE^s \psi_r(y)-\EE^s \phi_r(y)|^\lambda \,  (t-s)^{\frac{3}{4}}\, (r-u)^{\frac{1}{4}(\gamma-1-\lambda-\frac{1}{p})}.
\end{align*}
Then from Jensen's inequality, \eqref{eq:comp-seminorms1} with $q=\infty$ and using $\gamma-1/p=-1$,
\begin{align}\label{eq:boundcondF}
 \| \EE^s \mathbb{E}^u F_{s,u,r,y}(O_r(y)) \|_{L^m}
& \leq C\, \|b^k\|_{\mathcal{B}_p^\gamma}\, [\psi]_{\mathcal{C}^{\frac{3}{4},0}_{[S,T],x} L^{m,\infty}}   \| \EE^s \psi_r(y)-\EE^s \phi_r(y) \|_{L^m}^\lambda\,  \,  (t-s)^{\frac{3}{4}}\, (r-u)^{\frac{1}{4}(-2-\lambda)}.
\end{align}
As for $\tilde{F}_{s,u,r,y}$, since it is also an $\mathcal{F}_u$-measurable function, we obtain, in the same fashion using Lemma~\ref{lem:reg-O} and Lemma~\ref{lem:besov-spaces}$(ii)$, that
\begin{align}
 \| \EE^s \mathbb{E}^u \tilde{F}_{s,u,r,y}(O_r(y)) \|_{L^m} 
&\leq C\, \|b^k\|_{\mathcal{B}_p^\gamma}\, \| \EE^u \psi_r(y) + \EE^s \phi_r(y) - \EE^s \psi_r(y)- \EE^u \phi_r(y) \|_{L^m}\, (r-u)^{-\frac{1}{2}}  \label{eq:tildeF-bound} \\
  &\leq C\, \|b^k\|_{\mathcal{B}_p^\gamma}\, (\psi-\phi)_{\mathcal{C}^{\frac{1}{2}+\eta,0}_{[S,T],x}L^m} (u-s)^{\frac{1}{2}+\eta} \, (r-u)^{-\frac{1}{2}} \nonumber \\
 & \leq C\, \|b^k\|_{\mathcal{B}_p^\gamma}\, ( (\phi)_{\mathcal{C}^{\frac{1}{2}+\eta,0}_{[S,T],x}L^m} +  (\psi)_{\mathcal{C}^{\frac{1}{2}+\eta,0}_{[S,T],x}L^m} ) \, (u-s)^{\frac{1}{2}+\eta} (r-u)^{-\frac{1}{2}}  \nonumber .
 \end{align}
 Using \eqref{eq:comp-seminorms1} with $m=q$, it follows that
 \begin{align}\label{eq:boundFtilde}
  \| \EE^s \mathbb{E}^u \tilde{F}_{s,u,r,y}(O_r(y)) \|_{L^m}
  &\leq C\, \|b^k \|_{\mathcal{B}_p^\gamma}\, ( (\phi)_{\mathcal{C}^{\frac{1}{2}+\eta,0}_{[S,T],x}L^m} + [\psi]_{\mathcal{C}^{\frac{1}{2}+\eta,0}_{[S,T],x}L^m} ) (u-s)^{\frac{1}{2}+\eta}\, (r-u)^{-\frac{1}{2}} .
\end{align}
Choosing $\lambda=0$ and recalling that $\eta < 1/4$, plug \eqref{eq:boundcondF} and \eqref{eq:boundFtilde} in \eqref{eq:deltaAst-decomp-SDE-critic} to obtain 
\begin{align*}
\|\EE^s  \delta A_{s,u,t} \|_{L^m} 
&\leq C\, \|b^k \|_{\mathcal{B}_p^\gamma}\,  [\psi]_{\mathcal{C}^{\frac{3}{4},0}_{[S,T],x} L^{m,\infty}} \, (t-s)^{\frac{3}{4}}\, (t-u)^{\frac{1}{2}} \\
&\quad +  C\, \|b^k \|_{\mathcal{B}_p^\gamma}\, ( (\phi)_{\mathcal{C}^{\frac{1}{2}+\eta,0}_{[S,T],x}L^m} + [\psi]_{\mathcal{C}^{\frac{3}{4},0}_{[S,T],x}L^m} ) (u-s)^{\frac{1}{2}+\eta}\, (t-u)^{\frac{1}{2}}\\
&\leq C\, \|b^k\|_{\mathcal{B}_p^\gamma}\, ( (\phi)_{\mathcal{C}^{\frac{1}{2}+\eta,0}_{[S,T],x}L^m} + [\psi]_{\mathcal{C}^{\frac{3}{4},0}_{[S,T],x}L^{m,\infty}} ) (t-s)^{1+\eta}  .
\end{align*}
Notice that $(\phi)_{\mathcal{C}^{\frac{1}{2}+\eta,0}_{[S,T],x}L^m}  = (u^{n,k}-O^n)_{\mathcal{C}^{\frac{1}{2}+\eta,0}_{[S,T],x}L^m}$, use \eqref{eq:comp-seminorms2} and \eqref{eq:holder-reg-sol} to deduce~\ref{item47(1)}.

\paragraph{Proof of \ref{item47(1')}:}  Relying on \eqref{eq:deltaAst-decomp-SDE-critic},
now use \eqref{eq:boundcondF} with $\lambda=1$ and \eqref{eq:tildeF-bound} to get
\begin{align}
\begin{split}
\|\mathbb{E}^{s} \delta A_{s,u,t} \|_{L^m} 
&\leq  C\, \|b^k\|_{\mathcal{B}_p^\gamma}\,  \| \EE^s \psi_r(y)-\EE^s \phi_r(y) \|_{L^m}\,  [\psi]_{\mathcal{C}^{\frac{3}{4},0}_{[S,T],x} L^{m,\infty}} \,  (t-s) \,  \\ & \quad + C\, \|b^k\|_{\mathcal{B}_p^\gamma}\, \| \EE^u \psi_r(y) + \EE^s \phi_r(y) - \EE^s \psi_r(y)- \EE^u \phi_r(y) \|_{L^m} \, (t-s)^{\frac{1}{2}}  . \label{eq:general-delta-critic}
\end{split}
\end{align}
Let us now bound the terms that appear in the right-hand side. Using the conditional Jensen's inequality and recalling \eqref{def:error}, 
\begin{align}
\| \EE^s \psi_r(y) - \EE^s \phi_r(y) \|_{L^m}  \leq \| \psi_r - \phi_r \|_{L^m} 
& \leq \| v_r(y) - v^{n,k}_r(y)  + P_r \psi_0(y) - P^n \psi_0(y) \|_{L^m} \nonumber \\ & \leq   \| \mathcal{E}^{n,k}_{0,\cdot} \|_{L^{\infty,\infty}_{[S,T],x} L^m}  + \sup_{\substack{t \in [0,1] \\ x \in \T}}| P_t \psi_0(x) - P^n_t \psi_0(x) | \nonumber \\
& \leq   \| \mathcal{E}^{n,k}_{0,\cdot} \|_{L^{\infty,\infty}_{[S,T],x} L^m}  + \epsilon(n,k) .  \label{eq:general-delta-critic1}
\end{align}
Consider now $\| \EE^u \psi_r(y) + \EE^s \phi_r(y) - \EE^s \psi_r(y)- \EE^u \phi_r(y) \|_{L^m}$. Introduce the pivot term $v^k_{u}-v^k_{s}$ to get 
\begin{align*}
&\| \EE^u \psi_r(y) + \EE^s \phi_r(y) - \EE^s \psi_r(y)- \EE^u \phi_r(y)\|_{L^m} \\
&\quad  \leq (v-v^k)_{\mathcal{C}^{\frac{1}{2},0}_{[S,T],x}L^m} (u-s)^{\frac{1}{2}} + \|\EE^u [v^k_r(y)-\phi_r(y)] - \EE^s [v^k_r(y) -\phi_r(y)] \|_{L^m}.
\end{align*}
Now use \eqref{eq:comp-seminorms1} with $m=q$,  \eqref{eq:comp-seminorms} with $\rho=v^k-\phi$ and $Y=\int_0^s P_{r-\theta} b^k (u_\theta)(y) - P^n_{(r-\theta)_h} b^k(u^{n,k}_{\theta_h})(y) \, d \theta $ and recall the definitions of $\mathcal{E}^{2,n,k},\mathcal{E}^{3,n,k}, V^k$ in \eqref{def:E2}, \eqref{def:E4} and \eqref{def:Vk} to get
\begin{align*}
&\| \EE^u \psi_r(y) + \EE^s \phi_r(y) - \EE^s \psi_r(y)- \EE^u \phi_r(y)\|_{L^m} \\
&\quad  \leq  (v-v^k)_{\mathcal{C}^{\frac{1}{2},0}_{[S,T],x}L^m} (r-s)^{\frac{1}{2}} + 2\Big\| \int_s^r P_{r-\theta} b^k (u_\theta)(y) - P^n_{(r-\theta)_h} b^k(u^{n,k}_{\theta_h})(y) \, d \theta \Big\|_{L^m} \\
& \leq [V^k]_{\mathcal{C}^{\frac{1}{2},0}_{[S,T],x}L^m} (r-s)^{\frac{1}{2}}  + 2 \| \mathcal{E}^{2,n,k}_{s,r} + \mathcal{E}^{3,n,k}_{s,r} \|_{L^m} \\ & \quad +  2 \Big\| \int_s^r P_{r-\theta} \left(b^k (P_{\theta} \psi_0 + v_\theta +O_\theta ) -b^k(P^n_{\theta} \psi_0+v^{n,k}_\theta +O^n_{\theta}) \right)(y) \, d\theta \Big\|_{L^m} \\
& \leq\epsilon(n,k) (r-s)^{\frac{1}{2}} +[R]_{\mathcal{C}^{\tau,0}_{[S,T],x} L^m} (r-s)^{\tau}  \\ & \quad +  2 \Big\| \int_s^r P_{r-\theta} \left(b^k (\EE^s [P_{\theta} \psi_0 + v_\theta] +O_\theta ) -b^k(\EE^s[P^n_{\theta} \psi_0+v^{n,k}_\theta] +O^n_{\theta}) \right)(y) \, d\theta \Big\|_{L^m} .
\end{align*}
Apply Lemma \ref{lem:ssl-o-on} for the last term in the right-hand side of the previous inequality (with $\beta = \gamma-1$), use that 
$\| \EE^s [P_{\theta} \psi_0(y) + v_\theta(y)]- \EE^s[P^n_{\theta} \psi_0(y)+v^{n,k}_\theta(y)]\|_{L^m} \leq   \| \mathcal{E}^{n,k}_{0,\cdot} \|_{L^{\infty,\infty}_{[S,T],x} L^m}  +\epsilon(n,k)$ and that $ \|b^k\|_{\mathcal{B}^\gamma_{p}} \leq \|b\|_{\mathcal{B}^\gamma_{p}}<\infty$ to get
\begin{align}
\begin{split}
&\| \EE^u \psi_r(y) + \EE^s \phi_r(y) - \EE^s \psi_r(y)- \EE^u \phi_r(y)\|_{L^m} \\ 
 & \leq C \Big(  \| \mathcal{E}^{n,k}_{0,\cdot} \|_{L^{\infty,\infty}_{[S,T],x} L^m}  + \epsilon(n,k) \Big) (r-s)^{\frac{1}{2}} +  [R]_{\mathcal{C}^{\tau,0}_{[S,T],x} L^m} (r-s)^{\tau}  . \label{eq:general-delta-critic2}
 \end{split}
\end{align}
Plugging \eqref{eq:general-delta-critic1} and \eqref{eq:general-delta-critic2} in \eqref{eq:general-delta-critic}, it comes
\begin{align}\label{eq:bound-E-delta-critic}
\|\mathbb{E}^{s} \delta A_{s,u,t} \|_{L^m} 
&\leq C\, (1+ [\psi]_{\mathcal{C}^{\frac{3}{4},0}_{[S,T],x} L^{m,\infty}})  \Big( \| \mathcal{E}^{n,k}_{0,\cdot} \|_{L^{\infty,\infty}_{[S,T],x} L^m}  + \epsilon(n,k) \Big)\, (t-s)   + C [R]_{\mathcal{C}^{\tau,0}_{[S,T],x} L^m} (t-s)^{\frac{1}{2}+\tau} .
\end{align}
Now we wish to recover an upper bound with $\mathcal{E}^{1,n,k}$ instead of $\mathcal{E}^{n,k}$, so recalling \eqref{eq:defepsilonhn}, we write
\begin{align} \label{eq:comparEandE1}
\|\mathcal{E}^{n,k}_{0,\cdot} \|_{L^{\infty,\infty}_{[S,T],x} L^{m}}  & \leq \|\mathcal{E}^{1,n,k}_{0,\cdot} \|_{L^{\infty,\infty}_{[S,T],x} L^{m}}  +\|V^k_{0,\cdot} \|_{L^{\infty,\infty}_{[S,T],x} L^{m}} + \|\mathcal{E}^{2,n,k}_{0,\cdot} \|_{L^{\infty,\infty}_{[S,T],x} L^{m}}  + \|\mathcal{E}^{3,n,k}_{0,\cdot} \|_{L^{\infty,\infty}_{[S,T],x} L^{m}}  \nonumber \\
& \leq \|\mathcal{E}^{1,n,k}_{0,\cdot} \|_{L^{\infty,\infty}_{[S,T],x} L^{m}}   + \epsilon(n,k).
\end{align}
Using \eqref{eq:comparEandE1} and that $[\psi]_{\mathcal{C}^{3/4,0}_{[S,T],x} L^{,\infty}}<\infty$ in \eqref{eq:bound-E-delta-critic} permits to deduce~\ref{item47(1')}.

\paragraph{Proof of \ref{item47(2)}:} Apply Lemma \ref{cor:ssl-4} with $\lambda_2=\eta$, $\lambda_1=\lambda=1$ and $\beta=\gamma$, which satisfies $\gamma>-1+\eta$ by definition of $\eta$, to get
\begin{align*}
 \| \delta A_{s,u,t} \|_{L^m}  
 & \leq C  \| b^k \|_{\mathcal{B}_p^\gamma} \, (\psi)_{\mathcal{C}^{\frac{3}{4},0}_{[s,t],x} L^{m,\infty}}^\eta (t-s)^{\frac{3\eta}{4}}  \sup_{\substack{y \in \T \\ r \in [u,T]}} \| \EE^u \psi_r(y) - \EE^u \phi_r(y) \|_{L^m} (t-u)^{\frac{1}{2}-\frac{\eta}{4}} \\ 
  & \quad + C \| b^k\|_{\mathcal{B}_p^\gamma} \sup_{\substack{y \in \T \\ r \in [u,T]}} \| \EE^s \psi_r(y)+\EE^u \phi_r(y)-\EE^u \psi_r(y)-\EE^s \phi_r(y) \|_{L^m} (t-u)^{\frac{1}{2}} .
\end{align*}
Then use \eqref{eq:comp-seminorms1} with $q=\infty$ and \eqref{eq:general-delta-critic1} to get
 \begin{align*}
 \| \delta A_{s,u,t} \|_{L^m}  & \leq C  \| b^k \|_{\mathcal{B}_p^\gamma}  ([\psi]_{\mathcal{C}^{\frac{3}{4},0}_{[s,t],x} L^{m,\infty}}+1) \Big(  \| \mathcal{E}^{n,k}_{0,\cdot} \|_{L^{\infty,\infty}_{[S,T],x} L^m} + \epsilon(n,k) \Big)   (t-u)^{\frac{1}{2}+\frac{\eta}{2}}   \\ 
  & \quad + C \| b^k\|_{\mathcal{B}_p^\gamma} (\psi-\phi)_{\mathcal{C}^{\frac{1}{2}-\zeta,0}_{[S,T],x} L^m} (t-u)^{1-\zeta} .
  \end{align*}
  Using \eqref{eq:comp-seminorms3} gives
   \begin{align}\label{eq:bound-delta-critic} 
   \begin{split}
  \| \delta A_{s,u,t} \|_{L^m} & \leq C  \| b^k\|_{\mathcal{B}_p^\gamma}  ([\psi]_{\mathcal{C}^{\frac{3}{4},0}_{[s,t],x} L^{m,\infty}}+1) \Big(  \| \mathcal{E}^{n,k}_{0,\cdot} \|_{L^{\infty,\infty}_{[S,T],x} L^m} + \epsilon(n,k) \Big)   (t-u)^{\frac{1}{2}+\frac{\eta}{2}} \\ 
  & \quad + C \|b^k \|_{\mathcal{B}_p^\gamma} [\mathcal{E}^{n,k}]_{\mathcal{C}^{\frac{1}{2}-\zeta,0}_{[S,T],x} L^m} (t-u)^{1-\zeta}  .
  \end{split}
  \end{align}
As in \eqref{eq:comparEandE1}, it holds that $[\mathcal{E}^{n,k} ]_{\mathcal{C}^{1/2-\zeta,0}_{[S,T],x} L^m} \leq  [\mathcal{E}^{1,n,k} ]_{\mathcal{C}^{1/2-\zeta,0}_{[S,T],x} L^m}   + \epsilon(n,k)$. Hence,
using \eqref{eq:comparEandE1} and that $[\psi]_{\mathcal{C}^{3/4,0}_{[S,T],x}L^{m,\infty}}$ is finite, we obtain \ref{item47(2)} from \eqref{eq:bound-delta-critic}.

\paragraph{Proof of \ref{item47(3)}:} The proof is identical to the last point in the proof of Proposition~\ref{prop:ssl-o-on-2}.
\end{proof}

\section{Regularisation effect of the discrete Ornstein-Uhlenbeck process, and H\"older regularity of $u^{n,k}$ and $\mathcal{E}^{3,n,k}$}\label{sec:reg-On-1}

In Section~\ref{sec:reg-O}, it was crucial in the sewing strategy to compute precisely the conditional expectation of functions of the OU process, which is achieved through the relation \eqref{eq:condexpf(OU)}. In this equality, the conditional expectation of the OU process is involved, which satisfies $\EE^{u} O_{t}(x) = \int_{0}^u\int_{\T} p_{t-s}(x,y)\, \xi(ds,dy) = P_{t-u} O_{u}(x)$. Similar tools will be needed for the discrete OU process $O^n$ (recall its definition and its variance $Q^n$ from \eqref{def:discrete-OU}) to establish regularisation properties by such a process. But due to the discretisation in time, $\EE^{u} O^n_{t}(x)$ cannot be written using $P^n$. Instead, for $(u,t) \in \Delta_{[0,1]}$ and $x \in \T$,  we introduce:
\begin{align}\label{def:Onst}
\widehat{O}^n_{u,t}(x) := \int_0^u \int_\T p^n_{(t-r)_h}(x,y) \xi(dy,dr) .
\end{align}
A relation similar to \eqref{eq:condexpf(OU)} is achieved using $\widehat{O}^n$, namely that $\EE^s f\left(O^n_t(x)\right)=G_{Q^n(t-s)} f\big(\widehat{O}^n_{s,t}(x)\big)$, see Lemma~\ref{lem:reg-On}. This will be particularly useful in this section.

Because of the discretisation in time, upper bounds on quantities such as $\int_0^t f(\phi_r + O^n_r)(x)\, dr$ will involve at best the supremum norm of $f$. But herein, $f$ plays the role of a sequence that converges to a distribution. So while having $\| f \|_\infty$ in an upper bound could be troublesome, such quantities will always be compensated by negative powers of $n$. We use these properties to study the regularity of the numerical scheme $u^{n,k}$.

\subsection{Regularisation properties of the discrete Ornstein-Uhlenbeck process}\label{subsec:reg-discreteOU}

In this subsection, we state general regularisation properties of $O^n$. First, we proceed as before by freezing the process $\psi$ into an $\mathcal{F}_{S}$-measurable process (see Lemma~\ref{lem:bound-Khn}). Then the main result, which plays the same role as Lemma~\ref{lem:6.1athreya} in the case of the continuous OU process, is given in Proposition~\ref{prop:bound-Khn}.

\begin{lemma}\label{lem:bound-Khn}
Let $m \in [2, \infty)$, $p \in [m,\infty]$, and assume that $\gamma-1/p \ge -1$. There exists a constant $C>0$ such that for any $0 \leq S < T \leq 1$, any $f \in \mathcal{C}^\infty_{b}(\mathbb{R}, \mathbb{R}) \cap \mathcal{B}_p^\gamma$, any $n \in \mathbb{N}^*$, any
 $(s, t)\in \Delta_{[S,T]}$ and any $\psi : [0,1] \times \T \times \Omega \to \mathbb{R}$ such that for all $r \in [0,1]$ and $x \in \mathbb{T}$, $\psi_r(x)$ is an $\mathbb{R}$-valued $\mathcal{F}_S$-measurable random variable, there is
\begin{align*}%
\sup_{x \in \mathbb{T}} \Big\| \Big( \EE^S \Big| \int_s^t \int_{\T} p_{T-r}(x,y)  f(\psi_{r_h}(y_n) + O^n_{r_h}(y_n)) dy d r  \Big|^m \Big)^{\frac{1}{m}} \Big\|_{L^\infty} \nonumber \leq C\, \Big( \|f\|_\infty\,  n^{-\frac{1}{2}} + \| f \|_{\mathcal{B}_p^\gamma} \Big) (t-s)^{\frac{3}{4}}  .
\end{align*}
\end{lemma}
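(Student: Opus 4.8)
The plan is to use the Stochastic Sewing Lemma (Lemma~\ref{lem:SSL}) applied conditionally on $\mathcal{F}_S$, in the spirit of the proofs already carried out in Section~\ref{sec:reg-O}, but now with the discrete Ornstein-Uhlenbeck process $O^n$ replacing $O$. The key identity that makes this feasible is the analogue of \eqref{eq:condexpf(OU)} for $O^n$, namely $\EE^s f(O^n_t(x)) = G_{Q^n(t-s)} f(\widehat O^n_{s,t}(x))$ with $\widehat O^n$ defined in \eqref{def:Onst}; I would invoke Lemma~\ref{lem:reg-On} freely. First I would fix $x\in\T$ and $0\le S<T\le 1$, and since $\psi$ is $\mathcal{F}_S$-measurable and we condition on $\mathcal{F}_S$, set for $(s,t)\in\Delta_{[S,T]}$
\begin{align*}
A_{s,t} := \EE^s \int_s^t \int_{\T} p_{T-r}(x,y)\, f(\psi_{r_h}(y_n) + O^n_{r_h}(y_n))\, dy\, dr,
\end{align*}
and let $\mathcal{A}_t$ be the genuine (unconditioned) double integral from $S$ to $t$. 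I would then verify the two sewing bounds: an estimate on $\|\delta A_{s,u,t}\|_{L^m}$ of order $(t-s)^{1/2+\varepsilon_1}$ and an estimate on $\|\EE^s\delta A_{s,u,t}\|_{L^m}$ of order $(t-s)^{1+\varepsilon_2}$, plus the identification of the sewn process as $\mathcal{A}$, all done under the conditional expectation $\EE^S$ so that the conclusion is in the $\|(\EE^S|\cdot|^m)^{1/m}\|_{L^\infty}$ norm.

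For the germ bound itself (the one-scale term $\|A_{s,t}\|_{L^m}$, which appears on the right side of the sewing conclusion), the idea is to split $f$ according to scale: using Lemma~\ref{lem:reg-On} and the fact that $Q^n(t-s)$ is comparable to $Q(t-s)\asymp (t-s)^{1/2}$ down to the discretisation scale but saturates at scale $\sim n^{-1}$ below it. More precisely, on the region where $r-s$ is large compared to the time step $h\sim n^{-2}$, the discrete heat semigroup regularises like the continuous one and one recovers a bound involving $\|f\|_{\mathcal{B}_p^\gamma}(t-s)^{3/4+\frac14(\gamma-1/p)}$, which under $\gamma-1/p\ge -1$ is at least $(t-s)^{1/2}$; on the complementary small-scale region, the discrete OU process cannot oscillate, so one can only use $\|f\|_\infty$, but there the time integration contributes an extra factor that, after using $h\le n^{-2}$ and the CFL condition, produces the prefactor $\|f\|_\infty n^{-1/2}$ multiplied by a power of $(t-s)$ no worse than $3/4$. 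I would use the heat kernel estimates of Appendix~\ref{app:general-lemmas} (bounds on $\int_\T |p^n_{r_h}(x,y)|\,dy$ and on $Q^n$) and the Besov duality bound (Lemma~\ref{lem:reg-On}) to make this split quantitative, exactly as the continuous counterpart in Lemma~\ref{lem:6.1athreya} but tracking the extra $n^{-1/2}\|f\|_\infty$ term.

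The increment bounds $\delta A_{s,u,t}$ are handled the same way: since $\psi$ is $\mathcal{F}_S$-measurable, $\delta A_{s,u,t}$ only reflects the re-centering $\EE^s\to\EE^u$ inside $\EE^s$, which for $\mathcal{F}_S$-measurable $\psi$ collapses — there is no $\psi$-dependence in the three-increment beyond what the tower property kills — so $\delta A_{s,u,t}$ is essentially a difference of conditional expectations of $f(O^n)$ at two conditioning times, which Lemma~\ref{lem:reg-On} controls by $\|f\|_{\mathcal{B}_p^{\gamma-\lambda}}$ against a negative power of $(r-u)$, integrable for suitable $\lambda$, again producing the $n^{-1/2}\|f\|_\infty$ term in the regime below the time step. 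I expect the main obstacle to be the bookkeeping of the two regimes (above versus below the discretisation scale $h\sim n^{-2}$) in the germ bound: one must show that the small-scale contribution, where only $\|f\|_\infty$ is available, is genuinely compensated by $n^{-1/2}$ and does not degrade the $(t-s)^{3/4}$ power — this requires a careful use of the facts that $Q^n(\theta)\gtrsim \theta\wedge n^{-2}$ (no, rather $Q^n(\theta)\gtrsim (n^{-2})^{1/2}\wedge\theta^{1/2}$-type lower bound) and that $\int_0^{t}(\theta\wedge n^{-2})^{-1/2}\,d\theta$-type integrals behave well, together with the identity $Q^n(\theta)=\int_0^\theta\int_\T|p^n_{r_h}|^2$. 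Everything else is a routine transcription of the continuous argument.
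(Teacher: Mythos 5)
Your proposal follows essentially the same route as the paper: a conditional Stochastic Sewing argument with germ $A_{s,t}=\EE^s\int_s^t\int_\T p_{T-r}(x,y)f(\psi_{r_h}(y_n)+O^n_{r_h}(y_n))\,dy\,dr$, so that $\EE^s\delta A_{s,u,t}=0$ by the tower property, combined with a split of the time integral at scale $2h\sim n^{-2}$ --- below that scale only $\|f\|_\infty$ is used and the factor $h^{1/4}\sim n^{-1/2}$ appears, while above it Lemma~\ref{lem:reg-On} yields the $\|f\|_{\mathcal{B}_p^\gamma}(t-s)^{3/4}$ contribution. The minor imprecisions in your sketch (the exponent in the large-scale regime is $1+\tfrac{1}{4}(\gamma-\tfrac{1}{p})\ge\tfrac{3}{4}$ rather than $\tfrac{3}{4}+\tfrac{1}{4}(\gamma-\tfrac{1}{p})$, and the relevant lower bound is $Q^n(\theta)\gtrsim\sqrt{\theta}$ for $\theta\ge h$) do not affect the validity of the argument.
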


\begin{proof}
Fix $x \in \mathbb{T}$. We will check the assumptions in order to apply the Stochastic Sewing Lemma (see Lemma~\ref{lem:SSL}). For $(s,t) \in \Delta_{[S,T]}$, let 
\begin{align*}
A_{s,t} & = \mathbb{E}^s \int_s^t \int_{\T} p_{T-r}(x,y)  f(\psi_{r_h}(y_n) + O^n_{r_h}(y_n)) \, d y d r  \\
\mathcal{A}_t & = \int_S^t  \int_{\T} p_{T-r}(x,y)  f(\psi_{r_h}(y_n)+ O^n_{r_h}(y_n)) \, dy d r .
\end{align*}
Let $u\in [s,t]$ and observe that $\mathbb{E}^s \delta A_{s,u,t}=0$, so \eqref{eq:condsew1} holds with $\Gamma_1=0$. We will prove that \eqref{eq:condsew2} holds with
\begin{equation}\label{eq:Gamma2}
\Gamma_2 =  C \|f\|_\infty   n^{-\frac{1}{2}} + C \| f \|_{\mathcal{B}_p^\gamma} .
\end{equation}

\paragraph{The case $t-s\leq 2h$.} In this case one has
\begin{align}\label{eq:Ast<h-Khn}
|A_{s,t}| \leq \|f\|_\infty (t-s) \leq  \|f\|_\infty h^{\frac{1}{4}} (t-s)^{\frac{3}{4}} .
\end{align}

\paragraph{The case $t-s>2h$.} Here  split $A_{s,t}$ in two to get
\begin{align*}
A_{s,t} &= \mathbb{E}^s \int_s^{s+2h}  \int_{\T} p_{T-r}(x,y)  f(\psi_{r_h} (y_n)+ O^n_{r_h}(y_n)) dy d r + \mathbb{E}^s \int_{s+2h}^t \int_{\T} p_{T-r}(x,y)  f(\psi_{r_h} (y_n)+ O^n_{r_h}(y_n)) dy d r\\
&=: I + J .
\end{align*}
For the first part, there is
\begin{equation*}
|I| = \Big|\mathbb{E}^s \int_s^{s+2h}  \int_{\T} p_{T-r}(x,y)  f(\psi_{r_h} (y_n)+ O^n_{r_h}(y_n)) \, dy d r  \Big| \leq 2h\, \|f\|_\infty \leq C\, \|f\|_\infty h^{\frac{1}{4}} (t-s)^{\frac{3}{4}}.
\end{equation*}
Then using Lemma \ref{lem:reg-On}, one gets
\begin{align*}%
\| J \|_{L^m} &  \leq C \int_{s+2h}^t  \| f \|_{\mathcal{B}_p^\gamma} (r_h-s)^{\frac{1}{4}(\gamma-\frac{1}{p})}  \, dr .
\end{align*}
Using that $2(r_h-s) \ge (r-s)$ and $0>\gamma-1/p\geq-1$, we deduce that for $t-s >2h$,
\begin{align}\label{eq:Ast>h-Khn}
\|J \|_{L^m} 
& \leq C \| f \|_{\mathcal{B}_p^\gamma} (t-s)^{\frac{3}{4}} .
\end{align}

Hence, combining \eqref{eq:Ast<h-Khn} and \eqref{eq:Ast>h-Khn}, there is for all $s \leq t$:
\begin{align*}
\| A_{s,t}\|_{L^{m}} & \leq C \Big(   \|f\|_{\infty} h^{\frac{1}{4}} (t-s)^{\frac{3}{4}} + \| f \|_{\mathcal{B}_p^\gamma} (t-s)^{\frac{3}{4}} \Big) .
\end{align*}
Thus for any $u\in [s,t]$,
\begin{align*}
\| \delta A_{s,u,t}\|_{L^m} & \leq \| A_{s,t}\|_{L^{m}}+\| A_{s,u}\|_{L^{m}}+\| A_{u,t}\|_{L^{m}}\leq  C \Big(   \|f\|_{\infty} h^{\frac{1}{4}} + \| f \|_{\mathcal{B}_p^\gamma} \Big)  (t-s)^{\frac{3}{4}} .
\end{align*}
Finally, recall that $h=c(2n)^{-2}$ to deduce that \eqref{eq:condsew2} holds with $\Gamma_{2}$ given by \eqref{eq:Gamma2}.

\paragraph{Convergence in probability.}
For $(\Pi_k)_{k \in \mathbb{N}}$ partitions of $[S,t]$ with $\Pi_k=\{t_i^k\}_{i=1}^{N_k}$ and mesh size which converges to zero, there is
\begin{align*}
\Big\| \mathcal{A}_t -  \sum_{i=1}^{N_{k}-1} A_{t_i^k,t_{i+1}^k} \Big\|_{L^1} & \leq \sum_{i=1}^{N_{k}-1} \int_{t_i^k}^{t_{i+1}^k} \int_{\T} p_{T-r}(x,y) \mathbb{E}   \Big|  f(\psi_{r_h}(y_n)+ O^n_{r_h}(y_n))  -\mathbb{E}^{t_i^k}[  f(\psi_{r_h}(y_n)+ O^n_{r_h}(y_n))] \Big|  \diff r\\
& =: I_2 .
\end{align*}
Note that if $r_h \leq t_i^k$, then $ \mathbb{E} \big|  f(\psi_{r_h}(y_n)+ O^n_{r_h}(y_n))  -\mathbb{E}^{t_i^k}[  f(\psi_{r_h}(y_n) + O^n_{r_h}(y_n))] \big| = 0$. On the other hand when $r_h \in (t_i^k,t_{i+1}^k]$, define the $\mathscr{B}(\R)\otimes \mathcal{F}_{S}$-measurable random function $F_{r_{h},y_{n}}(\cdot) := f(\psi_{r_h}(y_n) +\cdot)$. Then
Lemma~\ref{lem:reg-On} gives that
\begin{align*}
&\mathbb{E} \left|  f(\psi_{r_h}(y_n)+ O^n_{r_h}(y_n)) - \mathbb{E}^{t_i^k}[  f(\psi_{r_h}(y_n) + O^n_{r_h}(y_n))] \right| \\
 &\quad =  \mathbb{E} \left| F_{r_{h},y_{n}}(O^n_{r_h}(y_n)) - G_{Q^n(r_h-t_i^k)}F_{r_{h},y_{n}}(\widehat{O}^n_{t_i^k,r}(y_n)) \right| \\
 &\quad \leq \mathbb{E} \left| F_{r_{h},y_{n}}(O^n_{r_h}(y_n)) - F_{r_{h},y_{n}}(\widehat{O}^n_{t_i^k,r}(y_n)) \right| +  \mathbb{E} \left| (I - G_{Q^n(r_h-t_i^k)})F_{r_{h},y_{n}}(\widehat{O}^n_{t_i^k,r}(y_n)) \right| \\
 &\quad \leq C \| f \|_{\mathcal{C}^1} \EE|O^n_{r_h}(y_n) - \widehat{O}^n_{t_i^k,r}(y_n)| +  C \| f \|_{\mathcal{C}^1} Q^n(r_h-t_i^k)^{\frac{1}{2}},
\end{align*}
where the last inequality comes from Lemma~\ref{lem:reg-Pnh} applied with $\alpha=\beta=1$. Now use $\EE|O^n_{r_h}(y_n) - \widehat{O}^n_{t_i^k,r}(y_n)| \leq (\EE|O^n_{r_h}(y_n) - \widehat{O}^n_{t_i^k,r}(y_n)|^2)^{1/2} = Q^n(r_h-t_i^k)^{\frac{1}{2}}$ and $Q^n(r_h-t_i^k) \lesssim (r_h-t_i^k)^{1/2}$ (Lemma~\ref{lem:bound-Qn}$(iii)$) to deduce that
\begin{align*}
\mathbb{E} \left|  f(\psi_{r_h}(y_n)+ O^n_{r_h}(y_n)) - \mathbb{E}^{t_i^k}[  f(\psi_{r_h}(y_n) + O^n_{r_h}(y_n))] \right| \leq C\, \| f \|_{\mathcal{C}^1}\, (r_h-t_i^k)^{\frac{1}{4}}.
\end{align*}
It follows that $I_2  \leq \sum_{i=1}^{N_{k}-1} \int_{t_i^k}^{t_{i+1}^k} \| f \|_{\mathcal{C}^1} |\Pi_{k}|^{\frac{1}{4}} \, dr$,
and therefore $\sum_{i=1}^{N_{k}-1} A_{t_{i}^k, t_{i+1}^k}$ converges in probability to $\mathcal{A}_{t}$ as $k\to +\infty$.
The conclusion follows by applying Lemma~\ref{lem:SSL} with $\varepsilon_1>0$, $\varepsilon_2 = \frac{1}{4}$.
\end{proof}

\begin{proposition}\label{prop:bound-Khn}
Let $\eta \in (0,\frac{1}{2})$, $m \in [2, \infty)$ and assume that $\gamma-1/p \ge -1$.
There exists a constant $C>0$ such that for any $0 \leq S < T \leq 1$, any  $f \in \mathcal{C}^\infty_{b}(\mathbb{R}, \mathbb{R}) \cap \mathcal{B}_p^\gamma$, any $(s, t)\in \Delta_{[S,T]}$, any $n \in \mathbb{N}^*$ and any $\mathbb{F}$-adapted process $\psi : [0,1] \times \T \times \Omega \to \mathbb{R}$, there is
\begin{align}\label{eq:bound-Khn-general}
\begin{split}
 & \Big\| \Big( \EE^s \Big| \int_s^t \int_\T  p_{T-r}(x,y) f(\psi_{r_h} (y_n)+ O^n_{r_h}(y_n)) \, dy dr \Big|^m \Big)^{\frac{1}{m}}  \Big\|_{L^{\infty}}  \\  
 &\quad \leq  C\, \Big( \|f\|_\infty\,  n^{-\frac{1}{2}} + \| f \|_{\mathcal{B}_p^\gamma} \Big) (t-s)^{\frac{3}{4}}    + C\,  ( \psi )_{\mathcal{C}^{\frac{1}{2}+\eta,0}_{[S,T],x} L^{m,\infty}}  \, \Big( \| f \|_{\mathcal{B}_p^{\gamma}} + \| f \|_{\mathcal{C}^1} n^{-1} \Big)   \, (t-s)^{1+\eta} .
\end{split}
\end{align}
\end{proposition}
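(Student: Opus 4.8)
The plan is to apply the Stochastic Sewing Lemma (Lemma~\ref{lem:SSL}) to the two-parameter process
\[
A_{s,t} = \mathbb{E}^s \int_s^t \int_{\T} p_{T-r}(x,y)\, f\bigl(\psi_{r_h}(y_n)+O^n_{r_h}(y_n)\bigr)\,dy\,dr,
\]
with $\mathcal{A}_t = \int_S^t \int_{\T} p_{T-r}(x,y) f(\psi_{r_h}(y_n)+O^n_{r_h}(y_n))\,dy\,dr$, exactly as in the proof of Lemma~\ref{lem:bound-Khn}, the difference being that $\psi$ is now $\mathbb{F}$-adapted rather than $\mathcal{F}_S$-measurable. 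First I would check the conditions with the anticipated constants $\varepsilon_1 = \eta$, $\varepsilon_2 = \tfrac14$, $\Gamma_1$ coming from the $\delta A$-increment bound and $\Gamma_2$ from the single increment bound, so that the sewing conclusion produces precisely the right-hand side of \eqref{eq:bound-Khn-general} (modulo adding back $\|A_{s,t}\|_{L^m}$, which is controlled by Lemma~\ref{lem:bound-Khn} applied with $\psi$ frozen at time $s$, i.e. to $\mathbb{E}^s\psi$).

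The bound on $\|\delta A_{s,u,t}\|_{L^m}$ for $u=(s+t)/2$ follows from the same splitting into the ``$t-s\le 2h$'' and ``$t-s>2h$'' regimes as in Lemma~\ref{lem:bound-Khn}: in the short-increment case one uses $|A|\le \|f\|_\infty(t-s)$ and pays an $n^{-1/2}$ via $h^{1/4}$; in the long case one writes $A_{u,t}$ as a two-time-step part plus a remainder, on which Lemma~\ref{lem:reg-On} gives the $\|f\|_{\mathcal{B}_p^\gamma}(t-s)^{3/4}$ factor. This yields $\Gamma_2 \lesssim \|f\|_\infty n^{-1/2}+\|f\|_{\mathcal{B}_p^\gamma}$, contributing the $(t-s)^{3/4}$ term in \eqref{eq:bound-Khn-general}. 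The genuinely new ingredient is the conditional-expectation increment $\|\mathbb{E}^s\delta A_{s,u,t}\|_{L^m}$: here $\mathbb{E}^s\delta A_{s,u,t} = -\mathbb{E}^s\int_u^t\int_{\T} p_{T-r}(x,y)\,\mathbb{E}^u[f(\psi_{r_h}(y_n)+O^n_{r_h}(y_n)) - f(\mathbb{E}^s\psi_{r_h}(y_n)+O^n_{r_h}(y_n))]\,dy\,dr$ (using that $A_{s,t}$ is the $\mathbb{E}^s$-projection over $[s,t]$ and the tower property kills the $[s,u]$ part). One now introduces the $\mathscr{B}(\R)\otimes\mathcal{F}_u$-measurable random function $F_{r_h,y_n}(\cdot) = f(\cdot + \mathbb{E}^u\psi_{r_h}(y_n)) - f(\cdot + \mathbb{E}^s\psi_{r_h}(y_n))$ and applies Lemma~\ref{lem:reg-On} to get $|\mathbb{E}^u F_{r_h,y_n}(O^n_{r_h}(y_n))| \le C\|F_{r_h,y_n}\|_{\mathcal{B}_p^{\gamma-\lambda}}(r_h-u)^{\frac14(\gamma-\lambda-1/p)}$, then Lemma~\ref{lem:besov-spaces}$(ii)$ (for $\lambda=1$ against $\|f\|_{\mathcal{B}_p^\gamma}$ and the difference $\mathbb{E}^u\psi-\mathbb{E}^s\psi$, or a $\|f\|_{\mathcal{C}^1}$-type bound without the discrete-kernel regularisation for the part that must be paired with $n^{-1}$). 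Using $\|\mathbb{E}^u\psi_{r_h}(y_n)-\mathbb{E}^s\psi_{r_h}(y_n)\|_{L^m} \le (\psi)_{\mathcal{C}^{1/2+\eta,0}_{[S,T],x}L^{m,\infty}}(r_h-s)^{1/2+\eta}$ and integrating the singular factor $(r_h-u)^{\frac14(\gamma-1-1/p)}$ (respectively $(r_h-u)^{-1/2}$ for the $\mathcal{C}^1$-route), together with $2(r_h-u)\ge r-u$ when $r-u > 2h$ and a crude $h$-bound otherwise, produces $\|\mathbb{E}^s\delta A_{s,u,t}\|_{L^m} \le C(\psi)_{\mathcal{C}^{1/2+\eta,0}_{[S,T],x}L^{m,\infty}}(\|f\|_{\mathcal{B}_p^\gamma} + \|f\|_{\mathcal{C}^1}n^{-1})(t-s)^{1+\eta}$, i.e. $\Gamma_1$ of the required form with power $1+\eta = 1+\varepsilon_1$.

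The convergence in probability of the Riemann-type sums $\sum A_{t_i^k,t_{i+1}^k}$ to $\mathcal{A}_t$ is handled exactly as in Lemma~\ref{lem:bound-Khn}: using Lemma~\ref{lem:reg-On} to replace $\mathbb{E}^{t_i^k}[f(\psi_{r_h}(y_n)+O^n_{r_h}(y_n))]$ by $G_{Q^n(r_h-t_i^k)}F(\widehat O^n_{t_i^k,r}(y_n))$, then Lemma~\ref{lem:reg-Pnh} and the estimate $Q^n(r_h-t_i^k)\lesssim (r_h-t_i^k)^{1/2}$ from Lemma~\ref{lem:bound-Qn}$(iii)$, which gives a bound $\lesssim \|f\|_{\mathcal{C}^1}|\Pi_k|^{1/4}\to 0$; the only extra point is that since $\psi$ is now adapted, the difference $f(\psi_{r_h}(y_n)+O^n_{r_h}(y_n)) - f(\mathbb{E}^{t_i^k}\psi_{r_h}(y_n)+O^n_{r_h}(y_n))$ must also be absorbed, and it is, using $\|f\|_{\mathcal{C}^1}$ and $\|\psi_{r_h}(y_n)-\mathbb{E}^{t_i^k}\psi_{r_h}(y_n)\|_{L^m}\le (\psi)_{\mathcal{C}^{1/2+\eta,0}_{[S,T],x}L^{m,\infty}}|\Pi_k|^{1/2+\eta}$. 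I expect the main obstacle to be the bookkeeping in the $\|\mathbb{E}^s\delta A\|$ estimate: one must carefully arrange the two choices of $\lambda$ (one paired with $\|f\|_{\mathcal{B}_p^\gamma}$, the time-regularity of $\psi$ being spent directly; the other paired with $\|f\|_{\mathcal{C}^1}$ and compensated by $n^{-1}$ coming from the discrete kernel $p^n$) and verify that both singular time-integrals converge given $\gamma-1/p\ge -1$ and $\eta<1/2$, and that the resulting powers of $(t-s)$ and of $h$ combine to the claimed $(t-s)^{1+\eta}$.
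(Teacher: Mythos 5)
Your overall strategy --- stochastic sewing with the two regimes $t-s\le 2h$ / $t-s>2h$, Lemma~\ref{lem:reg-On} combined with the Besov shift estimates of Lemma~\ref{lem:besov-spaces} for the conditional increment, the $(t-s)^{3/4}$ bound via Lemma~\ref{lem:bound-Khn}, and the two routes ($\lambda$ paired with $\|f\|_{\mathcal{B}_p^\gamma}$ versus a Lipschitz bound paired with $n^{-1}$ coming from $h^{1/2}$) --- is exactly the paper's. However, your germ is mis-specified, and as written the central computation is inconsistent. With $A_{s,t}=\EE^s\int_s^t\int_\T p_{T-r}(x,y)\,f(\psi_{r_h}(y_n)+O^n_{r_h}(y_n))\,dy\,dr$ and $\psi$ adapted, additivity of the inner integral gives $\delta A_{s,u,t}=(\EE^s-\EE^u)\int_u^t\int_\T p_{T-r}(x,y)\,f(\psi_{r_h}(y_n)+O^n_{r_h}(y_n))\,dy\,dr$, hence $\EE^s\delta A_{s,u,t}=0$ identically; the formula you display for $\EE^s\delta A_{s,u,t}$ does not follow from this definition, and the entire $\Gamma_1$-estimate you then carry out concerns a different object. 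Worse, for this germ the quantities you actually need --- $\|\delta A_{s,u,t}\|_{L^m}$ and $\|A_{s,t}\|_{L^m}$ --- cannot be controlled by Lemma~\ref{lem:bound-Khn} or Lemma~\ref{lem:reg-On}, because the integrand contains the raw adapted $\psi_{r_h}$, which is not $\mathcal{F}_s$- (resp.\ $\mathcal{F}_u$-) measurable at the relevant left endpoint; in fact $\|A_{s,t}\|_{L^m}$ would then be essentially the quantity the proposition is asserting a bound for, so the argument becomes circular.

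The fix is to take the germ the paper uses, namely $A_{s,t}=\int_s^t\int_\T p_{T-r}(x,y)\,f(\EE^s[\psi_{r_h}(y_n)]+O^n_{r_h}(y_n))\,dy\,dr$ (no outer conditional expectation; $\psi$ frozen at $\EE^s\psi$). Then $\delta A_{s,u,t}=\int_u^t\int_\T p_{T-r}(x,y)\big(f(\EE^s[\psi_{r_h}(y_n)]+O^n_{r_h}(y_n))-f(\EE^u[\psi_{r_h}(y_n)]+O^n_{r_h}(y_n))\big)\,dy\,dr$, which is exactly the increment your random function $F_{r_h,y_n}$ is built for; $\EE^s\delta A_{s,u,t}$ is nonzero and your computation yields the $(\psi)_{\mathcal{C}^{1/2+\eta,0}_{[S,T],x}L^{m,\infty}}\big(\|f\|_{\mathcal{B}_p^{\gamma}}+\|f\|_{\mathcal{C}^1}n^{-1}\big)(t-s)^{1+\eta}$ term; $\|\delta A_{s,u,t}\|_{L^m}$ follows from the triangle inequality and three applications of Lemma~\ref{lem:bound-Khn} (with $\EE^s\psi$, $\EE^s\psi$ and $\EE^u\psi$ in place of $\psi$); and $\|A_{s,t}\|_{L^m}$ is bounded by Lemma~\ref{lem:bound-Khn} applied to $\EE^s\psi$, exactly as your own parenthetical remark already suggests. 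With this correction, the rest of your outline (including the convergence-in-probability step) coincides with the paper's proof.
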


\begin{proof}
Fix $x \in \mathbb{T}$ and $0 \leq S < T \leq 1$. We will check the assumptions in order to apply the Stochastic Sewing Lemma (Lemma~\ref{lem:SSL} with $q=\infty$). For any $(s,t) \in \Delta_{[S,T]}$, define 
\begin{align*}
A_{s,t} & = \int_s^t \int_\T p_{T-r}(x,y) f(\EE^s [\psi_{r_h}(y_n)]+ O^n_{r_h}(y_n)) \, dy dr  \\
\mathcal{A}_t & =\int_S^t  \int_\T p_{T-r}(x,y) f( \psi_{r_h}(y_n)+ O^n_{r_h}(y_n)) \, dy dr.
\end{align*}
Recall that $h=c(2n)^{-2}$. To show that \eqref{eq:condsew1} and \eqref{eq:condsew2} hold true with $\varepsilon_1= \eta$, $\varepsilon_2=\frac{1}{4}$, we prove that there exists $C>0$ which does not depend on $s,t,S$ and $T$ such that for $u = (s+t)/2$,
\begin{enumerate}[label=(\roman*)]
\item \label{item56(1)} 
$\|\EE^{{s}} [\delta A_{s,u,t}]\|_{L^\infty} \leq  C\,  (\psi)_{\mathcal{C}^{\frac{1}{2}+\eta,0}_{[S,T],x} L^{m,\infty}}   \, \Big(   \| f \|_{\mathcal{B}_p^{\gamma}} + \| f \|_{\mathcal{C}^1} n^{-1} \Big)   \, (t-s)^{1+\eta} $;

\item \label{item56(2)} 
$\Big\| \Big( \EE^S | \delta A_{s,u,t} |^m \Big)^{\frac{1}{m}} \Big\|_{L^\infty} \leq C\, \Big( \|f\|_{\infty}\, n^{-\frac{1}{2}} + \| f \|_{\mathcal{B}_p^\gamma} \Big) \, ({t}-{s})^{\frac{3}{4}}$;

\item \label{item56(3)} If \ref{item56(1)} and \ref{item56(2)} are satisfied, \eqref{eq:convAt} gives the 
convergence in probability of $\sum_{i=1}^{N_k-1} A_{t^k_i,t^k_{i+1}}$ on any sequence of 
partitions $\Pi_k=\{t_i^k\}_{i=1}^{N_k}$ of $[S,t]$ with a mesh that converges to $0$. We will show 
that the limit is $\mathcal{A}$.
\end{enumerate}

Assuming for now that \ref{item56(1)}, \ref{item56(2)} and \ref{item56(3)} hold, Lemma~\ref{lem:SSL} gives that
\begin{align*}
 & \Big\| \Big( \EE^s \Big| \int_s^t \int_\T p_{T-r}(x,y) f(\psi_{r_h}(y_n) + O_{r_h}(y_n)) \, dy d r \Big|^m \Big)^{\frac{1}{m}}  \Big\|_{L^{\infty}} \\
   & \leq C\, \Big( \|f\|_{\infty}\, n^{-\frac{1}{2}} + \| f \|_{\mathcal{B}_p^\gamma} \Big) \, ({t}-{s})^{\frac{3}{4}}  + C\,  ( \psi )_{\mathcal{C}^{\frac{1}{2}+\eta,0}_{[S,T],x} L^{m,\infty}}  \, \Big( \| f \|_{\mathcal{B}_p^{\gamma}} + \| f \|_{\mathcal{C}^1} n^{-1} \Big)   \, (t-s)^{1+\eta} +\| A_{s,t}\|_{L^m}.
\end{align*}
Apply Lemma \ref{lem:bound-Khn} for $\EE^s \psi_r $ in place of $\psi_r$ and get that $\|A_{s,t}\|_{L^m} \leq C\, ( \|f\|_{\infty} n^{-\frac{1}{2}} + \| f \|_{\mathcal{B}_p^\gamma}) \, ({t}-{s})^{\frac{3}{4}}$. Taking the supremum over $x \in \T$, one deduces \eqref{eq:bound-Khn-general}.

~

Let us now verify that \ref{item56(1)}, \ref{item56(2)} and \ref{item56(3)}  are satisfied.

\paragraph{Proof of \ref{item56(1)}:}  For $u \in [s,{t}]$, the tower property gives
\begin{align}\label{eq:condexpdeltaA}
\mathbb{E}^s \delta A_{s,u,t} & = \mathbb{E}^s \int_u^t \int_\T  p_{T-r}(x,y) \mathbb{E}^u\Big[ f(\EE^s[\psi_{r_h}(y_n)]+ O^n_{r_h}(y_n))- f(\EE^u[\psi_{r_h}(y_n)]+ O^n_{r_h}(y_n)) \Big]\, dy dr  .
\end{align}

\paragraph{The case $u-s \leq 2h$.}
In this case, the Lipschitz property of $f$ gives
\begin{align*}
\| \mathbb{E}^s \delta A_{s,u,t} \|_{L^\infty} & \leq  \|f\|_{\mathcal{C}^1} \int_u^t \int_{\T}  p_{T-r} (x,y) \, \EE^s|\EE^s \psi_{r_h}(y_n)-\EE^u \psi_{r_h}(y_n)| \, dy dr.
\end{align*}
If $r_h \leq s$, then $\EE^s |\EE^s \psi_{r_h}-\EE^u \psi_{r_h}|=0$. Assume now that $r_h > s$. If $r_h \ge u$, we have
$$\EE^s | \EE^s \psi_{r_h}-\EE^u \psi_{r_h} |\leq ( \psi )_{\mathcal{C}^{\frac{1}{2}+\eta,0}_{[S,T],x} L^{1,\infty}}  (r_h-s)^{\frac{1}{2}+\eta} .$$
If $r_h \in (s,u)$, then 
$\EE^s| \EE^s \psi_{r_h}-\EE^u \psi_{r_h} |  \leq ( \psi )_{\mathcal{C}^{1/2+\eta,0}_{[S,T],x} L^{1,\infty}}  (r_h-s)^{\frac{1}{2}+\eta} \leq ( \psi )_{\mathcal{C}^{1/2+\eta,0}_{[S,T],x} L^{m,\infty}} (r_h-s)^{\eta} h^{\frac{1}{2}} $.
Therefore, using $h=c(2n)^{-2}$,
\begin{align}\label{eq:J0Prop56}
\| \mathbb{E}^s \delta A_{s,u,t} \|_{L^\infty} & \leq 
  C \|f\|_{\mathcal{C}^1}  (\psi )_{\mathcal{C}^{\frac{1}{2}+\eta,0}_{[s,t],x} L^{m,\infty}}  (t-s)^{1+\eta} n^{-1} .
\end{align}

\paragraph{The case $u-s > 2h$.} We split the integral in \eqref{eq:condexpdeltaA} into two parts, the first one which integrates between $u$ and $u+2h$, the second one integrating between $u+2h$ and $t$, so that $\EE^s\delta A_{s,u,t} =: J_1 + J_2$. For $J_1$, we obtain as before that
\begin{align*}
| J_1 |  & \leq  \|f\|_{\mathcal{C}^1} \int_u^{u+2h} \int_\T p_{T-r}(x,y) \EE^s |\EE^s \psi_{r_h}(y_n)-\EE^u \psi_{r_h}(y_n)|\, dy dr,
\end{align*}
and using similar arguments and the fact that $u-s > 2h$, it follows that
\begin{align}\label{eq:J1Prop56}
\| J_{1}\|_{L^\infty} = \|\mathbb{E}^s \delta A_{s,u,u+2h}\|_{L^\infty}
& \leq  C  \|f\|_{\mathcal{C}^1} ( \psi )_{\mathcal{C}^{\frac{1}{2}+\eta,0}_{[S,T],x} L^{m,\infty}} (t-s)^{1+\eta} h^{\frac{1}{2}} \nonumber\\
& \leq  C  \|f\|_{\mathcal{C}^1} (\psi)_{\mathcal{C}^{\frac{1}{2}+\eta,0}_{[S,T],x} L^{m,\infty}} (t-s)^{1+\eta} n^{-1}.
\end{align}
As for $J_2$, write
\begin{align*}
 J_2 &= \mathbb{E}^s \int_{u+2h}^t \int_{\T} p_{T-r}(x,y)\, \mathbb{E}^u \Big[f(\EE^s [\psi_{r_h}(y_n)]+ O^n_{r_h}(y_n)) - f(\EE^u [\psi_{r_h}(y_n)]+ O^n_{r_h}(y_n)) \Big] \, dy dr .
\end{align*}
Lemma \ref{lem:reg-On} gives that%
\begin{align*}
| J_2| & \leq C \mathbb{E}^s \int_{u+2h}^t \int_{\T} p_{T-r}(x,y)  \|  f(\EE^s \psi_{r_h}(y_n)+ \cdot) - f(\EE^u \psi_{r_h}(y_n)+ \cdot)    \|_{\mathcal{B}_p^{\gamma-1}} (r_h-u)^{\frac{1}{4}(\gamma-\frac{1}{p}-1)} dy dr 
\end{align*}
and with the fact that $2(r_h-u) \ge (r-u)$ and $\gamma-1/p\geq -1$, one gets
\begin{align}\label{eq:J2Prop56}
\| J_2 \|_{L^\infty}
& \leq C\, \|f \|_{\mathcal{B}_p^{\gamma}} ( \psi )_{\mathcal{C}^{\frac{1}{2}+\eta,0}_{[S,T],x} L^{m,\infty}} \, \int_{u+2h}^t (r-u)^{-\frac{1}{2}} (r-s)^{\frac{1}{2}+\eta}\, dr \nonumber\\
& \leq C\, \|f \|_{\mathcal{B}_p^{\gamma}} ( \psi )_{\mathcal{C}^{\frac{1}{2}+\eta,0}_{[S,T],x} L^m} (t-s)^{1+\eta} .
\end{align}
In view of the inequalities \eqref{eq:J0Prop56}, \eqref{eq:J1Prop56} and \eqref{eq:J2Prop56}, we deduce \ref{item56(1)}.

\paragraph{Proof of \ref{item56(2)}:} Write
\begin{equation*}
\Big\| \Big( \EE^S | \delta A_{{s},u,{t}} |^m \Big)^{\frac{1}{m}} \Big\|_{L^\infty} \leq \Big\| \Big( \EE^S | \delta A_{{s},{t}} |^m \Big)^{\frac{1}{m}} \Big\|_{L^\infty} + \Big\| \Big( \EE^S | \delta A_{{s},u} |^m \Big)^{\frac{1}{m}} \Big\|_{L^\infty} + \Big\| \Big( \EE^S | \delta A_{u,{t}} |^m \Big)^{\frac{1}{m}} \Big\|_{L^\infty} .
\end{equation*}
To obtain \ref{item56(2)}, it remains to apply Lemma~\ref{lem:bound-Khn} to each of the above terms, for $\EE^s \psi_{r}$, $\EE^s \psi_{r}$ again and $\EE^u \psi_{r}$ in place of $\psi_r$. 

\paragraph{Proof of \ref{item56(3)}:} The proof is identical to the last point in the proof of Proposition~\ref{prop:ssl-o-on-2}.
\end{proof}

\subsection{Regularisation properties of the discrete Ornstein-Uhlenbeck process on increments}\label{ec:reg-On-3}

Similarly to Section~\ref{sec:reg-O-2}, we show here regularisation properties of the process $O^n$ on quantities of the form
\begin{align*}
\int_0^t \int_{\T} p_{t-r}(x,y) \left( f(\phi_r(y)+O^n_{r_{h}}(y))- f(\phi_{r_{h}}(y_n)+O^n_{r_{h}}(y_{n})) \right) dr.
\end{align*} 
We proceed again with a $2$-step approach, first by forcing $\phi$ to be $\mathcal{F}_{S}$-measurable, for some fixed $S$, then proceeding with the general case of an $\mathbb{F}$-adapted process $\phi$.
The main result of this subsection, Proposition~\ref{prop:ssl-onh-final}, will be used in Section~\ref{sec:reg-On}.

For the sake of readability, we introduce the projection operator $\pi_{n}$ on $\Lambda_{h}$ and $\T_{n}$ as follows: for $\phi : [0,1] \times \T \times \Omega \rightarrow \mathbb{R}$,
\begin{equation}\label{eq:defprojpi}
(\pi_{n} \phi)_{r}(y) := \phi_{r_{h}}(y_{n}), \quad r\in [0,1], \ y\in \T.
\end{equation}

\begin{lemma}\label{sec:ssl-onh}
Let $m \in [2,+\infty)$ and $\varepsilon \in (0,\frac{1}{2})$, and assume that $\gamma-1/p \ge -1$. There exists a constant $C$ such that for any $0 \leq S < T\leq 1$, any $(s,t)\in \Delta_{[S,T]}$, any $f  \in \mathcal{C}_b^\infty (\R, \R)\cap \mathcal{B}^\gamma_{p}$, any $n\in \N^*$ and any $\phi : [0,1] \times \T \times \Omega \rightarrow \mathbb{R}$ such that for all $x \in \mathbb{T}$ and $r \in [0,1]$, $\phi_r(x)$ is $\mathcal{F}_S$-measurable, there is
\begin{align*}%
& \sup_{x \in \mathbb{T}} \Big\| \int_s^t \int_{\T} p_{T-r}(x,y) \Big(f(\phi_r(y)+O^n_{r_h}(y)) - f(\phi_{r_h}(y_n) + O_{r_h}^n(y_n)) \Big)\, dy dr \Big\|_{L^m}  \\ 
& \quad \leq C \Big( \| f \|_{\infty} n^{-1+2\varepsilon} + n^{2\varepsilon} \| f \|_{\mathcal{B}^{\gamma}_p} \left( \| \phi-\pi_n\phi \|_{L^{\infty,\infty}_{[h,1],x} L^m} + n^{-\frac{1}{2}} \right) \Big) (t-s)^{\frac{1}{2}+\varepsilon} .
\end{align*}
\end{lemma}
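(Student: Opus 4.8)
The plan is to run the Stochastic Sewing Lemma (Lemma~\ref{lem:SSL}), following the architecture of the proof of Lemma~\ref{lem:bound-Khn}. I would fix $x\in\T$ and $0\le S<T\le1$, abbreviate
\[
g_r(y):=f\bigl(\phi_r(y)+O^n_{r_h}(y)\bigr)-f\bigl(\phi_{r_h}(y_n)+O^n_{r_h}(y_n)\bigr),
\]
and for $(s,t)\in\Delta_{[S,T]}$ set $A_{s,t}:=\EE^s\int_s^t\int_\T p_{T-r}(x,y)\,g_r(y)\,dy\,dr$ and $\mathcal A_t:=\int_S^t\int_\T p_{T-r}(x,y)\,g_r(y)\,dy\,dr$. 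Since $\phi$ is $\mathcal F_S$-measurable and $A$ is built from nested conditional expectations, the tower property gives $\EE^s\delta A_{s,u,t}=0$, so \eqref{eq:condsew1} holds with $\Gamma_1=0$, and only \eqref{eq:condsew2} together with the identification of the sewing limit need to be verified.

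For \eqref{eq:condsew2} it is enough, by subadditivity of $(s,t)\mapsto\|A_{s,t}\|_{L^m}$, to show $\|A_{s,t}\|_{L^m}\le\Gamma_2\,(t-s)^{1/2+\varepsilon}$ for every $(s,t)\in\Delta_{[S,T]}$, where $\Gamma_2$ is the bracketed constant in the statement. In the short-time regime $t-s\le2h$ I would use the trivial bound $|A_{s,t}|\le2\|f\|_\infty(t-s)$ which, since $h=c(2n)^{-2}$, yields $|A_{s,t}|\le C\|f\|_\infty n^{-1+2\varepsilon}(t-s)^{1/2+\varepsilon}$; the same crude estimate handles $\EE^s\int_s^{s+2h}\int_\T p_{T-r}g_r$ when $t-s>2h$. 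The core of the argument is the estimate of $J:=\EE^s\int_{s+2h}^t\int_\T p_{T-r}(x,y)\,g_r(y)\,dy\,dr$. On this range $r_h\ge s+h>s$, so Lemma~\ref{lem:reg-On} applies; since $\phi_r(y)$ and $\phi_{r_h}(y_n)$ are $\mathcal F_S\subseteq\mathcal F_s$-measurable, applying it to each of the two terms of $g_r(y)$ — which carry the \emph{same} smoothing parameter $Q^n(r_h-s)$ by spatial translation invariance of $p^n$ — shows that $\EE^s g_r(y)$ is a difference of the single smoothed function $G_{Q^n(r_h-s)}f$ evaluated at the two $\mathcal F_s$-measurable points $\phi_r(y)+\widehat O^n_{s,r}(y)$ and $\phi_{r_h}(y_n)+\widehat O^n_{s,r}(y_n)$ (with $\widehat O^n$ from \eqref{def:Onst}), which differ by $(\phi-\pi_n\phi)_r(y)+\bigl(\widehat O^n_{s,r}(y)-\widehat O^n_{s,r}(y_n)\bigr)$ (recall $\pi_n$ from \eqref{eq:defprojpi}). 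Invoking the smoothing bound $\|G_\tau f\|_{\mathcal C^1}\le C\|f\|_{\mathcal B_p^\gamma}\tau^{\frac12(\gamma-1/p-1)}$ (Lemma~\ref{eq:reg-S}), the lower bound $Q^n(r_h-s)\gtrsim(r_h-s)^{1/2}$ which holds precisely because $r_h-s\ge h$ (Lemma~\ref{lem:bound-Qn}), and the spatial $\tfrac12$-regularity of the discrete heat kernel giving $\|\widehat O^n_{s,r}(y)-\widehat O^n_{s,r}(y_n)\|_{L^m}\le C n^{-1/2}$ (Lemma~\ref{lem:reg-Pnh} together with $|y-y_n|\le(2n)^{-1}$), I expect
\[
\|\EE^s g_r(y)\|_{L^m}\le C\|f\|_{\mathcal B_p^\gamma}\Bigl(\|\phi-\pi_n\phi\|_{L^{\infty,\infty}_{[h,1],x}L^m}+n^{-1/2}\Bigr)(r_h-s)^{\frac14(\gamma-1/p-1)}.
\]

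Because $\gamma-1/p\ge-1$, the exponent $\tfrac14(\gamma-1/p-1)$ is $\ge-\tfrac12$, and since $2(r_h-s)\ge r-s$ on $[s+2h,t]$, integrating in $r$ (using $\int_\T p_{T-r}(x,y)\,dy=1$) contributes at most a factor $C(t-s)^{1/2}$ to $\|J\|_{L^m}$; this is then upgraded via $t-s>2h$, which gives $(t-s)^{1/2}=(t-s)^{1/2+\varepsilon}(t-s)^{-\varepsilon}\le(2h)^{-\varepsilon}(t-s)^{1/2+\varepsilon}\le Cn^{2\varepsilon}(t-s)^{1/2+\varepsilon}$. Collecting the two regimes yields $\|A_{s,t}\|_{L^m}\le\Gamma_2(t-s)^{1/2+\varepsilon}$ with the announced $\Gamma_2$, hence \eqref{eq:condsew2} with $\varepsilon_2=\varepsilon$. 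The convergence in probability of the Riemann sums $\sum_i A_{t_i^k,t_{i+1}^k}$ to $\mathcal A$ along partitions of $[S,t]$ with vanishing mesh is obtained exactly as in the last step of the proof of Lemma~\ref{lem:bound-Khn} — terms with $r_h\le t_i^k$ vanish, and for $r_h\in(t_i^k,t_{i+1}^k]$ a crude $\|f\|_{\mathcal C^1}$-bound combined with the $L^2$-modulus of continuity of $O^n$ yields a vanishing contribution, which is harmless for this step. Lemma~\ref{lem:SSL} then identifies the limit object as $\mathcal A$ and, since $\Gamma_1=0$, gives
\[
\Bigl\|\int_s^t\int_\T p_{T-r}(x,y)\,g_r(y)\,dy\,dr\Bigr\|_{L^m}=\|\mathcal A_t-\mathcal A_s\|_{L^m}\le\|\mathcal A_t-\mathcal A_s-A_{s,t}\|_{L^m}+\|A_{s,t}\|_{L^m}\le C\Gamma_2(t-s)^{1/2+\varepsilon},
\]
and taking the supremum over $x\in\T$ concludes.

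The main obstacle will be the estimate of $\|\EE^s g_r(y)\|_{L^m}$ on $[s+2h,t]$: one must extract from the discretisation error its two genuinely small contributions — the drift mismatch $\|\phi-\pi_n\phi\|$ and the spatial fluctuation of $O^n$ of order $n^{-1/2}$ — while retaining only the weak norm $\|f\|_{\mathcal B_p^\gamma}$ and \emph{not} $\|f\|_{\mathcal C^1}$, which forces the use of the smoothing identity for $O^n$ (Lemma~\ref{lem:reg-On}) rather than a Lipschitz bound, and forces the use of $Q^n(r_h-s)\gtrsim(r_h-s)^{1/2}$, available only after the scales below $2h$ have been separated off. The remaining work is the bookkeeping of the powers of $h\asymp n^{-2}$, used both to absorb the short-time contributions into $\|f\|_\infty n^{-1+2\varepsilon}(t-s)^{1/2+\varepsilon}$ and to convert the natural exponent $\tfrac12$ into $\tfrac12+\varepsilon$ at the price of the factor $n^{2\varepsilon}$.
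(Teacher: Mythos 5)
Your proposal is correct and follows essentially the same route as the paper: stochastic sewing with $\Gamma_1=0$, a crude $\|f\|_\infty$ bound below the scale $2h$, and above that scale the conditional-expectation identity of Lemma~\ref{lem:reg-On} combined with the heat-smoothing estimate $\|G_\tau f\|_{\mathcal{C}^1}\lesssim \|f\|_{\mathcal{B}_p^\gamma}\tau^{\frac12(\gamma-1/p-1)}$ and the lower bound on $Q^n$, then the $h\asymp n^{-2}$ bookkeeping to trade $(t-s)^{1/2}$ for $n^{2\varepsilon}(t-s)^{1/2+\varepsilon}$. The only (cosmetic) differences are that the paper splits the smoothed difference into two terms (one via Lemma~\ref{eq:reg-S}$(iii)$ for the $\widehat O^n$ spatial increment, one via Lemma~\ref{eq:reg-S}$(i)$ and Lemma~\ref{lem:besov-spaces}$(ii)$ for the $\phi$ mismatch) where you use a single Lipschitz estimate on $G_\tau f$, and that the bound $\|\widehat O^n_{s,r_h}(y)-\widehat O^n_{s,r_h}(y_n)\|_{L^m}\lesssim n^{-1/2}$ should be credited to Lemma~\ref{lem:diffhatOn} rather than to the deterministic kernel estimate of Lemma~\ref{lem:reg-Pnh}.
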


\begin{proof}
Fix $x \in \mathbb{T}$ and $0 \leq S < T \leq 1$. We will apply the Stochastic Sewing Lemma (Lemma~\ref{lem:SSL}). For $(s,t) \in \Delta_{[S,T]}$, define
\begin{align*}
    \mathcal{A}_t:=\int_S^t p_{T-r}(x,y) \Big( f(\phi_r(y)+O^n_{r_h}(y)) - f(\phi_{r_h}(y_n) + O_{r_h}^n(y_n)) \Big)  \, dy dr  
    ~~\text{and}~~ A_{s,t}:=\EE^s[ \mathcal{A}_t - \mathcal{A}_s].
\end{align*}
Observe that $\EE^s[\delta A_{s,u,t}]=0$, so \eqref{eq:condsew1} immediately holds. 
To show \eqref{eq:condsew2}, we shall prove that
\begin{align} \label{(4.8)-critic-}
    \|\delta A_{s,u,t}\|_{L^m}\leq C \Big( \| f \|_{\infty} n^{-1+2\varepsilon} + n^{2\varepsilon} \| f \|_{\mathcal{B}^{\gamma}_p} \left( \| \phi-\pi_n\phi \|_{L^{\infty,\infty}_{[h,1],x} L^m} + n^{-\frac{1}{2}} \right) \Big) \, (t-s)^{\frac{1}{2}+\varepsilon} .
\end{align} 
For $u=(s+t)/2$, the triangle inequality and the conditional Jensen inequality give that
\begin{align}\label{eq:backtoAst-}
    \|\delta A_{s,u,t}\|_{L^m}
        &\leq \int_u^t \int_{\T} p_{T-r}(x,y) \Big(\left\|\EE^s \left[f(\phi_r(y)+O^n_{r_h}(y)) - f(\phi_{r_h}(y_n) + O_{r_h}^n(y_n)) \right]  \right\|_{L^m} \nonumber \\
         & \quad +\|\EE^u [f(\phi_r(y)+O^n_{r_h}(y)) - f(\phi_{r_h}(y_n) + O_{r_h}^n(y_n)) ]  \|_{L^m}\Big) dy  dr \nonumber \\
            &\leq 2 \int_u^t \int_{\T} p_{T-r}(x,y) \left\|\EE^u  \left[f(\phi_r(y)+O^n_{r_h}(y)) - f(\phi_{r_h}(y_n) + O_{r_h}^n(y_n)) \right] \right\|_{L^m} \, dy dr .
\end{align}
\paragraph{The case $t-u \leq 2h$.} Using $h=c(2n)^{-2}$, we simply have that
\begin{align}\label{eq:Ast----<h}
  \|\delta A_{s,u,t}\|_{L^m}  \leq C \| f \|_{\infty} (t-s) &\leq C \| f \|_{\infty} h^{\frac{1}{2}-\varepsilon} (t-s)^{\frac{1}{2}+\varepsilon} \nonumber\\
& \leq C \| f \|_{\infty} n^{-1+2\varepsilon} (t-s)^{\frac{1}{2}+\varepsilon}  .
\end{align}
\paragraph{The case $t-u \ge 2h$.}We split the integral in the right-hand side of \eqref{eq:backtoAst-} first between $u$ and $u+2h$, and then between $u+2h$ and $t$. We denote the two respective integrals $J_1$ and $J_2$. For $J_1$, we have the simple bound $J_{1}\leq C \| f \|_\infty h \leq C \| f \|_{\infty} h^{\frac{1}{2}-\varepsilon} (t-s)^{\frac{1}{2}+\varepsilon}$. So using $h=c(2n)^{-2}$, $J_{1}\leq C \| f \|_{\infty} n^{-1+2\varepsilon} (t-s)^{\frac{1}{2}+\varepsilon}$. As for $J_2$, for $r \in [0,1]$ and $y \in \T$, consider the $\mathscr{B}(\R)\otimes \mathcal{F}_{S}$-measurable random function $F_{\rho,\zeta}(\cdot) := f(\phi_\rho(\zeta)+\cdot)$ for $\rho\in \{r,r_{h}\}, \zeta\in \{y,y_{n}\}$.
Then we have
\begin{align*}
J_2 & \leq C \int_{u+2h}^t \int_{\T} p_{T-r}(x,y) \left\|\EE^u  \left[F_{r,y}(O^n_{r_h}(y)) - F_{r_{h},y_{n}}(O_{r_h}^n(y_n)) \right] \right\|_{L^m} \, dy dr .
\end{align*}
Using Lemma~\ref{lem:reg-On}, we get
\begin{align*}
 J_2 & \leq C  \int_{u+2h}^t \int_{\T} p_{T-r}(x,y) \Big\| G_{Q^n(r_h-u)} F_{r,y}(\widehat{O}^n_{u,r_h}(y)) - G_{Q^n(r_h-u)} F_{r_{h},y_{n}}(\widehat{O}_{u,r_h}^n(y_n)) \Big\|_{L^m} \, dy dr \\
 & \leq C  \int_{u+2h}^t \int_{\T} p_{T-r}(x,y) \Bigg( \Big\| G_{Q^n(r_h-u)} F_{r,y}(\widehat{O}^n_{u,r_h}(y)) - G_{Q^n(r_h-u)} F_{r,y}(\widehat{O}_{u,r_h}^n(y_n)) \Big\|_{L^m} \\ 
 & \quad  + \Big\| G_{Q^n(r_h-u)} (F_{r,y} - F_{r_{h},y_{n}})(\widehat{O}^n_{u,r_h}(y_n)) \Big\|_{L^m} \, \Bigg) dy dr .
 \end{align*}
 Applying Lemma~\ref{eq:reg-S}$(iii)$, Lemma~\ref{lem:besov-spaces}$(i)$ and Lemma~\ref{lem:bound-Qn}$(iii)$ gives 
 \begin{align*}
 &\|G_{Q^n(r_h-u)} F_{r,y}(\widehat{O}^n_{u,r_h}(y)) - G_{Q^n(r_h-u)} F_{r,y}(\widehat{O}_{u,r_h}^n(y_n)) \|_{L^m} \\
 &\quad \leq C \| F_{r,y}\|_{\mathcal{B}^\gamma_{p}} Q^n(r_h-u)^{\frac{1}{2}(\gamma-\frac{1}{p}-1)} \, \|\widehat{O}^n_{u,r_h}(y)) - \widehat{O}_{u,r_h}^n(y_n)) \|_{L^m}\\
 &\quad \leq C \| f\|_{\mathcal{B}^\gamma_{p}} (r_h-u)^{\frac{1}{4}(\gamma-\frac{1}{p}-1)} \, \|\widehat{O}^n_{u,r_h}(y)) - \widehat{O}_{u,r_h}^n(y_n)) \|_{L^m}.
 \end{align*}
 Applying Lemma~\ref{eq:reg-S}$(i)$, Lemma~\ref{lem:besov-spaces}$(ii)$ and Lemma~\ref{lem:bound-Qn}$(iii)$ gives 
 \begin{align*}
 &\|G_{Q^n(r_h-u)} (F_{r,y} - F_{r_{h},y_{n}})(\widehat{O}^n_{u,r_h}(y_n))\|_{L^m} \\
 &\quad \leq C \| F_{r,y} - F_{r_{h},y_{n}}\|_{\mathcal{B}^{\gamma-1}_{p}}\, Q^n(r_h-u)^{\frac{1}{2}(\gamma-\frac{1}{p}-1)} \\
 &\quad \leq C \| f\|_{\mathcal{B}^\gamma_{p}} (r_h-u)^{\frac{1}{4}(\gamma-\frac{1}{p}-1)} \, \|\phi_{r}(y) - \phi_{r_h}(y_n)) \|_{L^m}.
 \end{align*}
Now use the two previous inequalities, Lemma~\ref{lem:diffhatOn}, $(r_h-u)\ge (r-u)/2$ and $\gamma-1/p\geq -1$ to get that
\begin{align*}
J_2 
& \leq C \| f \|_{\mathcal{B}^{\gamma}_p} \left( \| \phi-\pi_{n}\phi \|_{L^{\infty,\infty}_{[h,1],x} L^m} + n^{-\frac{1}{2}} \right) \int_{u+2h}^t \int_{\T} p_{T-r}(x,y)\, (r-u)^{\frac{1}{4}(\gamma-1-\frac{1}{p})}\, dy d r \\
& \leq C \| f \|_{\mathcal{B}^{\gamma}_p} \left( \| \phi-\pi_n\phi \|_{L^{\infty,\infty}_{[h,1],x} L^m} +n^{-\frac{1}{2}} \right) (t-s)^{\frac{1}{2}} .
\end{align*}
Using that $t-s \geq 2h$ and $h=c(2n)^{-2}$, one gets $(t-s)^{\frac{1}{2}} \leq C n^{2\varepsilon} (t-s)^{\frac{1}{2}+\varepsilon}$.
Hence when $t-u \ge 2h$, the bounds on $J_1$ and $J_2$ yield
\begin{align}\label{eq:Ast---->h}
 \|\delta A_{s,u,t}\|_{L^m} 
 & \leq C \Big( \| f \|_{\infty} n^{-1+2\varepsilon} +n^{2\varepsilon} \| f \|_{\mathcal{B}^{\gamma}_p} \left( \| \phi-\pi_n\phi \|_{L^{\infty,\infty}_{[h,1],x} L^m} +n^{-\frac{1}{2}} \right) \Big) (t-s)^{\frac{1}{2}+\varepsilon} .
\end{align}
Combining \eqref{eq:Ast----<h} and \eqref{eq:Ast---->h}, we deduce \eqref{(4.8)-critic-}.

\paragraph{Convergence in probability.} We omit the proof of \eqref{eq:convAt} as it follows the same lines as the paragraph ``Convergence in probability" in the proof of Lemma \ref{lem:bound-Khn}.

\smallskip

Applying Lemma~\ref{lem:SSL}, it thus comes
\begin{align*}%
   \Big\| \Big( \EE^S | \mathcal{A}_t-\mathcal{A}_s|^m \Big)^{\frac{1}{m}} \Big\|_{L^m} 
   \leq \|A_{s,t}\|_{L^m} + C \Big( \| f \|_{\infty} n^{-1+2\varepsilon} + n^{2\varepsilon}\| f \|_{\mathcal{B}^{\gamma}_p} \left( \| \phi-\pi_n\phi \|_{L^{\infty,\infty}_{[h,1],x} L^m}  + n^{-\frac{1}{2}} \right) \Big) (t-s)^{\frac{1}{2}+\varepsilon}  .
\end{align*}
Repeating the same arguments used to obtain \eqref{(4.8)-critic-} produces the same bound on $\|A_{s,t}\|_{L^m}$, namely:
\begin{align*}
  \|A_{s,t}\|_{L^m} \leq C \Big( \| f \|_{\infty} n^{-1+2\varepsilon} + n^{2\varepsilon}\| f \|_{\mathcal{B}^{\gamma}_p} \left( \| \phi-\pi_n\phi \|_{L^{\infty,\infty}_{[h,1],x} L^m}  + n^{-\frac{1}{2}} \right) \Big) (t-s)^{\frac{1}{2}+\varepsilon} ,
\end{align*}
from which the conclusion follows.
\end{proof}

Next, we extend the previous lemma to the case when the process $\phi$ is adapted.
\begin{prop}\label{prop:ssl-onh-final}
Let $m \in [2,+\infty)$, $\varepsilon \in (0, \frac{1}{2})$ and assume that $\gamma-1/p \ge -1$. There exists a constant $C$ such that for any $0 \leq S < T\leq 1$, any $(s,t)\in \Delta_{[S,T]}$, any $f \in \mathcal{C}_b^\infty (\R, \R)\cap \mathcal{B}^\gamma_{p}$, any $n\in \N^*$ and any $\mathbb{F}$-adapted process $\phi : [0,1] \times \T \times \Omega \rightarrow \mathbb{R}$, there is
\begin{align}\label{eq:ssl-onh-final}
\begin{split}
& \sup_{x \in \mathbb{T}} \Big\| \int_s^t \int_{\T} p_{T-r}(x,y) \left(f(\phi_r(y)+O^n_{r_h}(y)) - f(\phi_{r_h}(y_n) + O_{r_h}^n(y_n)) \right) dy dr \Big\|_{L^m}  \\
&\quad \leq C \Big( \| f \|_{\infty} n^{-1+2\varepsilon} + n^{2\varepsilon}\| f \|_{\mathcal{B}^{\gamma}_p} ( \| \phi-\pi_n\phi \|_{L^{\infty,\infty}_{[h,1],x} L^m} +n^{-\frac{1}{2}} ) \Big) (t-s)^{\frac{1}{2}+\varepsilon}  \\ 
&\quad\quad +  C \Bigg( (\phi)_{\mathcal{C}^{1,0}_{[0,1],x} L^{1,\infty}} \Big( \| f \|_{\mathcal{B}_p^\gamma} ( \| \phi- \pi_{n}\phi\|_{L^{\infty,\infty}_{[h,1],x} L^m}+n^{-\frac{1}{2}} ) +  \| f \|_{\mathcal{C}^1} n^{-2+2\varepsilon} \Big) \\ 
& \hspace{2cm} +  \| f \|_{\mathcal{B}_p^\gamma} \big(\phi-\pi_n\phi\big)_{\mathcal{C}^{\frac{1}{2}+\varepsilon,0}_{[0,1],x} L^{1,m}} \Bigg) (t-s)^{1+\varepsilon \wedge \frac{1}{4}} ,
\end{split}
\end{align}
where we recall that $\pi_{n}\phi$ is defined in \eqref{eq:defprojpi}.
\end{prop}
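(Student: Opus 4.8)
The plan is to apply the Stochastic Sewing Lemma (Lemma~\ref{lem:SSL}) with $q=m$ to the two-parameter quantity obtained by freezing $\phi$ at the left endpoint, in the same spirit as the proof of Proposition~\ref{prop:bound-Khn}. Fix $x \in \mathbb{T}$ and $0 \le S < T \le 1$, and for $(s,t) \in \Delta_{[S,T]}$ set
\[
A_{s,t} = \int_s^t \int_{\T} p_{T-r}(x,y)\left( f(\EE^s\phi_r(y) + O^n_{r_h}(y)) - f(\EE^s\phi_{r_h}(y_n) + O^n_{r_h}(y_n)) \right) dy\, dr,
\]
and let $\mathcal{A}_t$ be the full integral from $S$ to $t$ with $\phi$ unfrozen. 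Since $\EE^s \delta A_{s,u,t} \ne 0$ here (unlike in Lemma~\ref{sec:ssl-onh}), I need to check both sewing conditions \eqref{eq:condsew1} and \eqref{eq:condsew2}. For \eqref{eq:condsew2}, the bound on $\|\delta A_{s,u,t}\|_{L^m}$ follows exactly as in Lemma~\ref{sec:ssl-onh} (the freezing at $s$ rather than having $\phi$ globally $\mathcal{F}_S$-measurable does not affect that argument, since one only uses $\mathcal{F}_u$-measurability on $[u,t]$), giving the first line of \eqref{eq:ssl-onh-final} with exponent $\tfrac12+\varepsilon$. The residual term $\|A_{s,t}\|_{L^m}$ in the sewing conclusion is controlled by Lemma~\ref{sec:ssl-onh} applied to the $\mathcal{F}_s$-measurable process $\EE^s\phi$, which reproduces the same first line.

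The heart of the proof is the estimate on $\EE^s \delta A_{s,u,t}$ with $u = (s+t)/2$, which must produce the $(t-s)^{1+\varepsilon\wedge\frac14}$ terms. Writing $\delta A_{s,u,t}$ as an integral over $[u,t]$ of $f(\EE^s\phi_r + O^n_{r_h}) - f(\EE^s\phi_{r_h,y_n} + O^n_{r_h}) - f(\EE^u\phi_r + O^n_{r_h}) + f(\EE^u\phi_{r_h,y_n} + O^n_{r_h})$, I would split at $u+2h$. On $[u,u+2h]$ use the Lipschitz bound in terms of $\|f\|_{\mathcal{C}^1}$ together with $(\phi)_{\mathcal{C}^{1,0}}$-type control of $\EE^s\phi_r - \EE^u\phi_r$ and $h = c(2n)^{-2}$, yielding the $\|f\|_{\mathcal{C}^1}n^{-2+2\varepsilon}$ contribution. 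On $[u+2h,t]$, apply Lemma~\ref{lem:reg-On} to convert into $G_{Q^n(r_h-u)}$ acting on the $\mathcal{F}_u$-measurable random function, then apply Lemma~\ref{eq:reg-S} and Lemma~\ref{lem:besov-spaces} to gain the smoothing factor $(r_h-u)^{\frac14(\gamma-\frac1p-1)}$; this requires carefully tracking a first-order (Lipschitz-in-the-shift) decomposition of the four-term bracket, analogous to the use of Lemma~\ref{lem:besov-spaces}$(iii)$ in the proofs of Proposition~\ref{prop:bound-Khn} and Lemma~\ref{cor:ssl-4}. The two pieces of the bracket — the $\EE^s\phi_r$ vs $\EE^s\phi_{r_h,y_n}$ comparison (size $\|\phi-\pi_n\phi\|_{L^{\infty,\infty}} + n^{-1/2}$ after also accounting for the $\widehat O^n$ vs $\widehat O^n(\cdot_n)$ discrepancy via Lemma~\ref{lem:diffhatOn}) and the time-increment $\EE^s\phi - \EE^u\phi$ piece (size $(\phi-\pi_n\phi)_{\mathcal{C}^{1/2+\varepsilon,0}}(u-s)^{1/2+\varepsilon}$ or $(\phi)_{\mathcal{C}^{1,0}}(u-s)$) — combine with the integrable singularity $(r-u)^{\frac14(\gamma-1-\frac1p)}$ over $[u+2h,t]$ to give the two terms on the last line of \eqref{eq:ssl-onh-final}, the exponent $\varepsilon\wedge\frac14$ arising from the interplay between the time-regularity exponent $\tfrac12+\varepsilon$ and the available smoothing $\tfrac14$.

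Finally, the convergence-in-probability condition \eqref{eq:convAt} identifying the sewing limit with $\mathcal{A}$ is handled exactly as in the corresponding paragraph of Lemma~\ref{lem:bound-Khn} (or Lemma~\ref{sec:ssl-onh}), using $\|f\|_{\mathcal{C}^1}$, Lemma~\ref{lem:reg-On}, Lemma~\ref{lem:reg-Pnh}, Lemma~\ref{lem:bound-Qn}$(iii)$ and Lemma~\ref{lem:diffhatOn} to show the Riemann-type sums converge; I would simply refer to that argument rather than repeat it. Taking the supremum over $x \in \mathbb{T}$ at the end yields \eqref{eq:ssl-onh-final}. The main obstacle I anticipate is the bookkeeping in the $\EE^s\delta A_{s,u,t}$ estimate: one must split the four-term second-order bracket into a first-order-in-the-shift part (handled by Lemma~\ref{lem:besov-spaces}$(iii)$, absorbing one factor of $\EE^s\phi_r-\EE^u\phi_r$) and a genuinely second-order remainder, while simultaneously tracking the spatial/temporal projection error $\phi - \pi_n\phi$ and the discrete-OU regularisation scale $Q^n(r_h-u) \asymp (r_h-u)^{1/2}$ — getting all exponents to land on $1+\varepsilon\wedge\frac14$ with the stated norm combinations is delicate but mechanical once the decomposition is set up.
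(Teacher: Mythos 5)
Your proposal is correct and follows essentially the same route as the paper: freezing $\phi$ at the left endpoint, applying the Stochastic Sewing Lemma with $q=m$, reusing Lemma~\ref{sec:ssl-onh} for the $\|\delta A\|_{L^m}$ bound and the residual $\|A_{s,t}\|_{L^m}$, and obtaining the $(t-s)^{1+\varepsilon\wedge\frac14}$ terms from $\EE^s\delta A_{s,u,t}$ via the $2h$-splitting and a pivot-term decomposition of the four-term bracket into a second-order piece (Lemma~\ref{lem:besov-spaces}$(iii)$, together with Lemma~\ref{lem:diffhatOn}) and a first-order piece controlled by $(\phi-\pi_n\phi)_{\mathcal{C}^{1/2+\varepsilon,0}}$. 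The paper's proof implements exactly this bookkeeping (its terms $J_{21}$ and $J_{22}$), so no further changes are needed.
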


\begin{proof}
Fix $x \in \T$ and $0 \leq S < T\leq 1$. We will apply again Lemma~\ref{lem:SSL}. Define for $(s,t) \in \Delta_{[S,T]}$,
\begin{align*}
    &\mathcal{A}_t:=\int_S^t p_{T-r}(x,y) \left( f(\phi_{r}(y)+O^n_{r_h}(y)) - f(\phi_{r_h}(y_n) + O_{r_h}^n(y_n)) \right) dy dr \\
    &A_{s,t}:=\int_s^t p_{T-r}(x,y) \left( f(\EE^s [\phi_r(y)]+O^n_{r_h}(y)) - f(\EE^s [\phi_{r_h}(y_n)] + O_{r_h}^n(y_n)) \right) dy dr .
\end{align*}
Assume without loss of generality that the quantities in the r.h.s of \eqref{eq:ssl-onh-final} are finite, otherwise the result is obvious. We check the assumptions in order to apply Lemma \ref{lem:SSL} with $q=m$. To show that \eqref{eq:condsew1} and \eqref{eq:condsew2} hold true with $\varepsilon_1=\varepsilon \wedge 1/4$ and $\varepsilon_2=\varepsilon$, we show that there is a constant $C>0$ which does not depend on $s,t,S$ and $T$ such that
\begin{enumerate}[label=(\roman*)]
\item\label{en:(1bx)}  $\| \delta A_{s,u,t} \|_{L^m}  \leq  C \Big( \| f \|_{\infty} n^{-1+2\varepsilon} + n^{2\varepsilon} \| f \|_{\mathcal{B}^{\gamma}_p} \left( \| \phi-\pi_n\phi \|_{L^{\infty,\infty}_{[h,1],x} L^m} +n^{-\frac{1}{2}} \right) \Big) (t-s)^{\frac{1}{2}+\varepsilon} $;

\item\label{en:(2bx)} $\|\EE^s \delta A_{s,u,t}\|_{L^m} \\ 
\leq C \Big( (\phi)_{\mathcal{C}^{1,0}_{[0,1],x} L^{1,\infty}} \Big( \| f \|_{\mathcal{B}_p^\gamma} \| \phi- \pi_n\phi\|_{L^{\infty,\infty}_{[h,1],x} L^m} +  \| f \|_{\mathcal{C}^1} n^{-2+2\varepsilon} \Big) +  \| f \|_{\mathcal{B}_p^\gamma} \left(\phi-\pi_n\phi\right)_{\mathcal{C}^{\frac{1}{2}+\varepsilon,0}_{[0,1],x} L^{1,m}} \Big)\\
 \times (t-s)^{1+\varepsilon \wedge \frac{1}{4}} $; 

\item\label{en:(3bx)}  If \ref{en:(1bx)} and \ref{en:(2bx)} are satisfied, \eqref{eq:convAt} gives the 
convergence in probability of $\sum_{i=1}^{N_k-1} A_{t^k_i,t^k_{i+1}}$ on any sequence $\Pi_k=\{t_i^k\}_{i=1}^{N_k}$ of partitions of $[S,t]$ with a mesh that converges to $0$. We will show that the limit is $\mathcal{A}_{t}$.
\end{enumerate}

If \ref{en:(1bx)}, \ref{en:(2bx)} and \ref{en:(3bx)} are satisfied,  then Lemma \ref{lem:SSL} gives
\begin{align*}
&\Big\|  \int_s^t \int_{\T} p_{T-r}(x,y) \left( f(\phi_r(y)+O^n_{r_h}(y)) - f(\phi_{r_h}(y) + O_{r_h}^n(y_n)) \right) dy dr\Big\|_{L^m}  \nonumber \\
& \leq  \| A_{s,t} \|_{L^m} + C \Big( \| f \|_{\infty} n^{-1+2\varepsilon} + n^{2\varepsilon}\| f \|_{\mathcal{B}^{\gamma}_p} ( \| \phi-\pi_n\phi \|_{L^{\infty,\infty}_{[h,1],x} L^m} +n^{-\frac{1}{2}} ) \Big) (t-s)^{\frac{1}{2}+\varepsilon}  \nonumber \\ 
& \quad + C \Big( (\phi)_{\mathcal{C}^{1,0}_{[0,1],x} L^{1,\infty}} \Big( \| f \|_{\mathcal{B}_p^\gamma} \left( \| \phi- \pi_n\phi \|_{L^{\infty,\infty}_{[h,1],x} L^m} + n^{-\frac{1}{2}} \right) +  \| f \|_{\mathcal{C}^1} n^{-2+2\varepsilon} \Big) \\ 
& \quad +  \| f \|_{\mathcal{B}_p^\gamma} (\phi-\pi_n\phi)_{\mathcal{C}^{\frac{1}{2}+\varepsilon,0}_{[0,1],x} L^{1,m}} \Big) (t-s)^{1+\varepsilon \wedge \frac{1}{4}} .
\end{align*}

To bound $\| A_{{s},{t}} \|_{L^m}$, we apply  Lemma \ref{sec:ssl-onh} with $\EE^s \phi_r$ in place of $\phi_r$ to get that
\begin{align}\label{eq:bound-Lp-norm-Ax}
\| A_{s,t} \|_{L^m} 
& \leq C \Big( \| f \|_{\infty} n^{-1+2\varepsilon} + n^{2\varepsilon}\| f \|_{\mathcal{B}^{\gamma}_p} ( \| \phi-\pi_n\phi \|_{L^{\infty,\infty}_{[h,1],x} L^m} +n^{-\frac{1}{2}} ) \Big) (t-s)^{\frac{1}{2}+\varepsilon} . 
\end{align}
Thus, taking the supremum over $x \in \T$, we get \eqref{eq:ssl-o-on-2}.

~

Let us now verify that \ref{en:(1bx)}, \ref{en:(2bx)} and \ref{en:(3bx)} are satisfied.

\paragraph{Proof of \ref{en:(1bx)}:} For $u=(s+t)/2$, we have 
$ \|\delta A_{s,u,t}\|_{L^m} \leq  \|A_{s,t}\|_{L^m} +\|A_{s,u}\|_{L^m}  + \|A_{u,t}\|_{L^m}$.
Using \eqref{eq:bound-Lp-norm-Ax} for each term, it follows that 
\begin{align*}
 \|\delta A_{s,u,t}\|_{L^m} & \leq C \Big( \| f \|_{\infty} n^{-1+2\varepsilon} + n^{2\varepsilon}\| f \|_{\mathcal{B}^{\gamma}_p} \left( \| \phi-\pi_n\phi \|_{L^{\infty,\infty}_{[h,1],x} L^m} +n^{-\frac{1}{2}} \right) \Big) (t-s)^{\frac{1}{2}+\varepsilon}   .
\end{align*}

\paragraph{Proof of \ref{en:(2bx)}:}
Let $u=(s+t)/2$. We have
\begin{equation}\label{eq:deltaAOn}
\begin{split}
 \delta A_{s,u,t}& =  \int_u^t \int_{\T} p_{T-r}(x,y) \Big( f(\EE^s [\phi_r(y)]+O^n_{r_{h}}(y)) - f(\EE^u [\phi_r(y)]+O^n_{r_{h}}(y)) \\ 
 & \quad\quad - f(\EE^s [\phi_{r_{h}}(y_n)]+O^n_{r_{h}}(y_n)) + f(\EE^u [\phi_{r_{h}}(y_n)]+O^n_{r_{h}}(y_n)) \Big)\, dy dr.
\end{split}
\end{equation}
If $t-u \leq 2h$, then recalling the pseudo-norm defined in \eqref{def:holder-norm-phi}, the following simple bound holds:
\begin{align}\label{eq:Ast---<h}
  \| \EE^s  \delta A_{s,u,t}\|_{L^m}  
  \leq 2 \| f \|_{\mathcal{C}^1}   \big( \phi \big)_{\mathcal{C}^{1,0}_{[0,1],x} L^{1,m}} (t-s)^2 
  &\leq 2 \| f \|_{\mathcal{C}^1}   \big( \phi \big)_{\mathcal{C}^{1,0}_{[0,1],x} L^{1,\infty}} h^{1-\varepsilon} (t-s)^{1+\varepsilon} \nonumber\\
  &\leq C \| f \|_{\mathcal{C}^1} \big( \phi \big)_{\mathcal{C}^{1,0}_{[0,1],x} L^{1,\infty}} n^{-2+2\varepsilon} (t-s)^{1+\varepsilon},
\end{align}
using  $h=c(2n)^{-2}$ in the last inequality. 

Now if $t-u \ge 2h$, split the expression of $\delta A_{s,u,t}$ from \eqref{eq:deltaAOn} as 
the sum of an integral between $u$ and $u+2h$ and of an integral between $u+2h$ and $t$. Denote the two respective integrals by $J_1$ and $J_2$.

For $J_{1}$, proceed as in the case $t-u \leq 2h$ to get that
\begin{align}\label{eq:J1-On}
\|\EE^s J_{1}\|_{L^m} \leq C \| f \|_{\mathcal{C}^1}   \big( \phi \big)_{\mathcal{C}^{1,0}_{[0,1],x} L^{1,m}} h^2 \leq C \| f \|_{\mathcal{C}^1} \big( \phi \big)_{\mathcal{C}^{1,0}_{[0,1],x} L^{1,\infty}} n^{-2+2\varepsilon} (t-s)^{1+\varepsilon}.
\end{align}

For $J_{2}$, the tower property and the triangle inequality give
\begin{align*}
    \| \EE^s J_{2}\|_{L^m}
        &\leq  \int_{u+2h}^t \int_{\T} p_{T-r}(x,y) \Big\| \EE^s \EE^u \Big[f(\EE^s [\phi_r(y)]+O^n_{r_{h}}(y)) -f(\EE^u [\phi_r(y)]+O^n_{r_{h}}(y))  \\ 
        & \quad -f(\EE^s[ \phi_{r_{h}}(y_n)]+ O^n_{r_{h}}(y_n)) + f(\EE^u [\phi_{r_{h}}(y_n)] + O^n_{r_h}(y_n)) \Big]  \Big\|_{L^m}  dy dr .
\end{align*}
Now introduce the pivot term $ f(\EE^s[ \phi_{r_h}(y)]+\EE^u [\phi_r(y)] -\EE^s [\phi_r(y)] + O_{r_h}^n(y_n))$ in the previous expression:
\begin{align*}
\| \EE^s J_2 \|_{L^m}  
& \leq   \int_{u+2h}^t \int_{\T} p_{T-r}(x,y)  \Big\| \EE^s \EE^u  \Big[f(\EE^s [\phi_r(y)]+O^n_{r_h}(y)) -f(\EE^u [\phi_r(y)]+O^n_{r_h}(y) ) \\ 
& \quad -f(\EE^s [\phi_{r_h}(y_n)] +O^n_{r_h}(y_n)) + f(\EE^s [\phi_{r_h}(y_n)]+\EE^u [\phi_r(y)] -\EE^s [\phi_r(y)]+ O^n_{r_h} (y_n))  \Big]  \Big\|_{L^m}   \,  dy dr \\ 
& \quad +  \int_{u+2h}^t \int_{\T} p_{T-r}(x,y) \Big\| \EE^s \EE^u  \Big[f(\EE^u [\phi_{r_h}(y_n)]+ O^n_{r_h}(y_n)) - f(\EE^s [\phi_{r_h}(y_n)]+\EE^u [\phi_r(y)] \\ 
& \quad -\EE^s [\phi_r(y)]+ O^n_{r_h}(y_n)) \Big] \Big\|_{L^m} \,  dy dr  \\
& =: J_{21} + J_{22}. 
\end{align*}
For $r \in [0,1]$ and $y \in \T$, introduce the $\mathscr{B}(\R)\otimes \mathcal{F}_{u}$-measurable random functions $F_{s,r,u,y}$ and $\tilde{F}_{s,r,u,y,n}$ defined as follows:
\begin{align*}
F_{s,r,u,y} (\cdot) & := f(\EE^s [\phi_r(y)]+\cdot) -f(\EE^u [\phi_r(y)]+\cdot) \\ 
\tilde{F}_{s,r,u,y,n} (\cdot) &  := f(\EE^s [\phi_{r_h}(y_n)] +\cdot) - f(\EE^s [\phi_{r_h}(y_n)]+\EE^u [\phi_r(y)] -\EE^s [\phi_r(y)]+ \cdot).
\end{align*}
Following the same steps used in the proof of Lemma \ref{sec:ssl-onh} for $F_{\rho,\zeta}$ and using the same arguments (Lemma~\ref{lem:reg-On} and Lemma~\ref{eq:reg-S}), we deduce that
\begin{align*}
J_{21} & \leq C   \int_{u+2h}^t \int_{\T} p_{T-r}(x,y) \Big( \Big\| \EE^s  \|F_{s,r,u,y} - \tilde{F}_{s,r,u,y,n} \|_{\mathcal{B}_p^{\gamma-2}} \Big\|_{L^m} \\ 
& \quad +\left\| \EE^s \|  F_{s,r,u,y} \|_{\mathcal{B}_p^{\gamma-1}}  | \widehat{O}^n_{u,r_h}(y)-\widehat{O}^n_{u,r_h}(y_n) | \right\|_{L^m}  \Big) \Big] (r_h-u)^{\frac{1}{4}(\gamma-2-\frac{1}{p})} \, dy dr .
\end{align*}
Using Lemma~\ref{lem:besov-spaces}$(ii)$ and $(iii)$, there is
\begin{align*}
J_{21} & \leq C \| f \|_{\mathcal{B}_p^\gamma}   \int_{u+2h}^t \int_{\T} p_{T-r}(x,y) \Big( \| \EE^s | \EE^s  \phi_r(y)- \EE^u \phi_r(y)| \, | \EE^s \phi_r(y) -\EE^s \phi_{r_h}(y_n)  \|_{L^m} \\ & \quad +\|  \EE^s | \EE^s  \phi_r(y)- \EE^u \phi_r(y)|  \, | \widehat{O}^n_{u,r_h}(y)-\widehat{O}^n_{u,r_h}(y_n) | \|_{L^m}  \Big) \Big] (r_h-u)^{\frac{1}{4}(\gamma-2-\frac{1}{p})} \, dy dr  .
\end{align*}
It comes by Lemma~\ref{lem:diffhatOn} and $(r_h-u) \ge (r-u)/2$ that
\begin{align}\label{eq:boundJ21}
J_{21} & \leq   C \| f \|_{\mathcal{B}_p^\gamma} \int_{u+2h}^t \int_{\T} p_{T-r}(x,y) (\phi)_{\mathcal{C}^{1,0}_{[0,1],x} L^{1,\infty}} \left( \| \phi- \pi_n\phi \|_{L^{\infty,\infty}_{[h,1],x} L^m}+n^{-\frac{1}{2}} \right) (r-s)  (r-u)^{\frac{1}{4}(\gamma-2-\frac{1}{p})}  \, dy dr .
\end{align}
For $J_{22}$, we apply Lemma~\ref{lem:reg-On} to get
\begin{align*}
J_{22} & \leq C \int_{u+2h}^t \int_{\T} p_{T-r}(x,y) \Big\| \EE^s \Big[ \| f\left(\EE^u [\phi_{r_h}(y_n)]+ O^n_{r_h}(y_n)\right) \\
&\hspace{2cm}- f\left(\EE^s [\phi_{r_h}(y_n)]+\EE^u [\phi_r(y)] -\EE^s [\phi_r(y)]+ O^n_{r_h}(y_n)\right) \Big] \Big\|_{\mathcal{B}_p^{\gamma-1}} \|_{L^m} (r_h-u)^{\frac{1}{4}(\gamma-1-\frac{1}{p})} \, dy dr .
\end{align*}
It comes by applying Lemma~\ref{lem:besov-spaces}$(ii)$ that
\begin{align}
J_{22} &  \leq  C \| f \|_{\mathcal{B}_p^\gamma}  \int_{u+2h}^t \int_{\T} p_{T-r}(x,y) \Big\| \EE^s\Big[ |\EE^u \phi_{r_h}(y_n)  - \EE^s \phi_{r_h}(y_{n}) - \EE^u \phi_r(y) + \EE^s \phi_r(y) | \Big] \Big\|_{L^m} (r_h-u)^{\frac{1}{4}(\gamma-1-\frac{1}{p})} \Big) \, dy dr \nonumber \\ 
& \leq C \| f \|_{\mathcal{B}_p^\gamma}  \int_{u+2h}^t \int_{\T} p_{T-r}(x,y) (\phi-\pi_n\phi)_{\mathcal{C}^{\frac{1}{2}+\varepsilon,0}_{[0,1],x} L^{1,m}} (r-s)^{\frac{1}{2}+\varepsilon}  (r-u)^{\frac{1}{4}(\gamma-1-\frac{1}{p})} \Big) \, dy dr .\label{eq:boundJ22}
\end{align}
Combining \eqref{eq:boundJ21} and \eqref{eq:boundJ22}, we get
\begin{align}\label{eq:Ast--->h}
\| \EE^s J_2 \|_{L^m}  & \leq C  \| f \|_{\mathcal{B}_p^\gamma} \Big( (\phi)_{\mathcal{C}^{1,0}_{[0,1],x} L^{1,\infty}} \left( \| \phi- \pi_n\phi \|_{L^{\infty,\infty}_{[h,1],x} L^m} +n^{-\frac{1}{2}} \right) + (\phi-\pi_n\phi )_{\mathcal{C}^{\frac{1}{2}+\varepsilon,0}_{[0,1],x} L^{1,m}} \Big) (t-s)^{1+\varepsilon \wedge \frac{1}{4}} .
\end{align}
Thus putting together \eqref{eq:Ast---<h}, \eqref{eq:J1-On} and \eqref{eq:Ast--->h}, we deduce that for all $s \leq t$,
\begin{align*}%
& \|\EE^s \delta A_{s,u,t}\|_{L^m} \\
 & \leq C \Big( (\phi)_{\mathcal{C}^{1,0}_{[0,1],x} L^{1,\infty}} \Big( \| f \|_{\mathcal{B}_p^\gamma} \left(\| \phi- \pi_n\phi\|_{L^{\infty,\infty}_{[h,1],x} L^m}+n^{-\frac{1}{2}} \right) +  \| f \|_{\mathcal{C}^1} n^{-2+2\varepsilon} \Big)  \\ 
 & \quad +  \| f \|_{\mathcal{B}_p^\gamma} (\phi-\pi_n\phi )_{\mathcal{C}^{\frac{1}{2}+\varepsilon,0}_{[0,1],x} L^{1,m}} \Big) (t-s)^{1+\varepsilon \wedge \frac{1}{4}} ,
\end{align*}
which proves \ref{en:(2bx)}.

\paragraph{Proof of \ref{en:(3bx)}:}
Let $t\in [S,T]$. Let $(\Pi_k)_{k \in \mathbb{N}}$, with $\Pi_k=\{t_i^k\}_{i=1}^{N_k}$, be a 
sequence of partitions of $[S,t]$ with mesh size that converges to zero. Using the $\mathcal{C}^1$ norm of $f$, we get
\begin{align*}
    \Big\|\mathcal{A}_t-\sum_{i=1}^{N_{k}-1} A_{t^k_i,t^k_{i+1}} \Big\|_{L^1}
    & \leq\sum_{i=1}^{N_{k}-1} \int_{t^k_i}^{t^k_{i+1}} \int_{\T} p_{T-r}(x,y) \Big( \|f(\phi_r(y)+O^n_{r_h}(y))-f(\EE^{r_i^k}[ \phi_r(y)]+O^n_{r_h}(y)) \|_{L^1} \\ 
    & \quad + \|f(\phi_{r_h}(y_n)+O^n_{r_h}(y_n))- f(\EE^{t_i^k} [\phi_{r_h}(y_n)]+O^n_{r_h}(y_n)) \|_{L^1}\Big)\,  dy dr\\
    & \leq  2 \sum_{i=1}^{N_{k}-1} \int_{t^k_i}^{t^k_{i+1}} \int_{\T} p_{T-r}(x,y) \| f \|_{\mathcal{C}^1}  \big( \phi \big)_{\mathcal{C}^{1,0}_{[0,1],x} L^1}  (r-t_i^k) \, dy dr\\
    & \leq C \| f \|_{\mathcal{C}^1}  \big( \phi \big)_{\mathcal{C}^{1,0}_{[0,1],x} L^{1,\infty}} | \Pi_k | (t-S) \longrightarrow 0.
\end{align*}
\end{proof}

\subsection{H\"older regularity of $\mathcal{E}^{3,n,k}$}\label{app:E3}

In this section, we prove an upper bound on $\mathcal{E}^{3,n,k}$ (see \eqref{def:E4}). The following lemma is similar to \cite[Eq. (3.34)]{butkovsky2021optimal}, the only difference being the powers in $n$ and $(t-s)$ in the upper bound.
\begin{lemma}\label{lem:bound-E4}
Let $m \ge 2$ and $\varepsilon \in (0,\frac{1}{2})$. There exists a constant $C$ such that for all $0 \leq s \leq t \leq 1$ and $n\in \N^*, k \in \mathbb{N}$, one has the bound
\begin{align*}
 \sup_{x \in \mathbb{T}} \Big\| \int_{s}^{t} \int_\T (p_{t-r}-p_{(t-r)_h}^{n})(x,y)\, b^k \left(v_{r_h}^{n,k} (y_n)+ P^n_{r_h}(y_n) + O^n_{r_h} (y_n) \right) dy dr \Big\|_{L^m} 
&   \leq C \|b^k\|_\infty n^{-\frac{1}{2}+\varepsilon} |t-s|^{\frac{1}{2}+\frac{\varepsilon}{2}}  .
\end{align*}
\end{lemma}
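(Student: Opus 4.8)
The quantity to bound is a space-time integral against the \emph{difference} of the continuous heat kernel $p_{t-r}$ and the discrete one $p^n_{(t-r)_h}$, applied to a deterministic-in-shape argument, namely the fully discrete process $v^{n,k}_{r_h}(y_n)+P^n_{r_h}\psi_0(y_n)+O^n_{r_h}(y_n)=u^{n,k}_{r_h}(y_n)$. Since $b^k$ is evaluated at a \emph{gridpoint} value (it is constant in $r$ over each interval $[\,jh,(j+1)h)$ and constant in $y$ over each cell), the only object carrying any oscillation is the kernel difference itself. The plan is therefore \emph{not} to invoke stochastic sewing, but simply to estimate the $L^m$-norm by $\|b^k\|_\infty$ times the deterministic quantity $\int_s^t\int_\T |p_{t-r}(x,y)-p^n_{(t-r)_h}(x,y)|\,dy\,dr$, and to show that this deterministic integral is $\le C\,n^{-1/2+\varepsilon}|t-s|^{1/2+\varepsilon/2}$.

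\textbf{Key steps.} First I would pull $b^k$ out in $L^\infty$: by the triangle inequality,
\[
\sup_{x\in\T}\Big\|\int_s^t\!\!\int_\T (p_{t-r}-p^n_{(t-r)_h})(x,y)\,b^k(u^{n,k}_{r_h}(y_n))\,dy\,dr\Big\|_{L^m}
\le \|b^k\|_\infty \sup_{x\in\T}\int_s^t\!\!\int_\T |p_{t-r}(x,y)-p^n_{(t-r)_h}(x,y)|\,dy\,dr .
\]
Next I would invoke the kernel comparison estimate from the Appendix — this is Lemma~\ref{lem:P-Pn} (the discrete-vs-continuous heat kernel comparison established under the CFL condition, exactly in the spirit of \cite[Section 2]{butkovsky2021optimal}): it provides, for some small $\varepsilon>0$, a bound of the form $\int_\T|p_\sigma(x,y)-p^n_{\sigma_h}(x,y)|\,dy \le C\, n^{-1+2\varepsilon}\,\sigma^{-1/2-\varepsilon}$ (or the analogous $L^1_y$ bound with a time singularity integrable to the stated orders). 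Then I would integrate in $r$ over $[s,t]$, writing $\sigma=t-r$: $\int_s^t (t-r)^{-1/2-\varepsilon}\,dr \le C\,(t-s)^{1/2-\varepsilon}$, and combine $n^{-1+2\varepsilon}\cdot$ (a possible extra $n$-dependent factor from the kernel bound) to reach the total $n^{-1/2+\varepsilon}$. One should also treat separately the short-time regime $t-s\le 2h$, where one simply bounds $|p_{t-r}-p^n_{(t-r)_h}|$ crudely and uses $t-s\le 2h = c(2n)^{-2}$ to produce the required powers of $n$ and $(t-s)$; this is the same short/long-time dichotomy used throughout Section~\ref{sec:reg-On-1}. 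Matching the exact exponents $n^{-1/2+\varepsilon}$ and $(t-s)^{1/2+\varepsilon/2}$ is then a matter of choosing how to split the available powers of $n$ from $h$-bounds against the powers of $(t-s)$, exploiting $t-s\ge 2h\Rightarrow (t-s)^{a}\le C n^{-2a}$ in one direction and $t-s\le 2h$ in the other.

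\textbf{Main obstacle.} The only genuinely delicate point is the precise form of the kernel-difference estimate $\int_\T|p_\sigma(x,y)-p^n_{\sigma_h}(x,y)|\,dy$ and how its $n$-rate interacts with its time singularity: one needs a bound that, after integration in $r$, yields \emph{both} the factor $n^{-1/2+\varepsilon}$ \emph{and} a positive power $(t-s)^{1/2+\varepsilon/2}$ of the time increment — neither more nor less. Getting the bookkeeping right (distinguishing $t-s\le 2h$ from $t-s>2h$, and in the latter case further splitting the $r$-integral near $r=t$ where the continuous kernel is singular from the bulk where the discrete approximation is more accurate) is where the work lies; everything else is a one-line application of the triangle inequality and Lemma~\ref{lem:P-Pn}. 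Since the statement explicitly says it is \cite[Eq.~(3.34)]{butkovsky2021optimal} up to the powers of $n$ and $(t-s)$, I expect the proof to mirror that reference almost verbatim, only re-tracking the exponents through the mollification (which does not affect $\|b^k\|_\infty$ here).
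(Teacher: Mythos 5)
Your proposal is correct and follows essentially the same route as the paper: pull out $\|b^k\|_\infty$, treat $|t-s|\le h$ and the sliver $[t-h,t]$ by crude bounds using $h=c(2n)^{-2}$, and estimate the kernel difference on $[s,t-h]$. The one detail you left open resolves itself cleanly: the paper bounds $\int_\T|p_{t-r}-p^n_{(t-r)_h}|\,dy$ by the $L^2(\T)$ norm via Cauchy--Schwarz and applies the $L^2$ comparison of Lemma~\ref{lem:P-Pn}$(i)$ with $\alpha=1-2\varepsilon$, giving $n^{-\frac12+\varepsilon}(t-r)^{-\frac12+\frac{\varepsilon}{2}}$ directly, so the stated exponents come out after integration in $r$ with no further trading of powers of $n$ against powers of $(t-s)$.
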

\begin{proof}
First, for $|t-s| \leq h$, one can bound the quantity on the left-hand side by $C \|b^k \|_\infty |t-s| \leq  \|b^k \|_\infty n^{-1+\varepsilon} |t-s|^{1/2+\varepsilon}$.
Then, for $|t-s| \ge h$, we split the integral in two, first between $t-h$ and $t$ and then between $s$ and $t-h$. For the first integral, one has again the bound $  C\, \| b^k\|_\infty  n^{-2}  \leq C n^{-1+\varepsilon} |t-s|^{1/2+\varepsilon}$.
Between $s$ and $t-h$, we write
 \begin{align*}
 & \left| \int_{s}^{t-h}  \int_\T (p_{t-r}-p_{(t-r)_h}^{n})(x,y) \, b^k \left(v_{r_h}^{n,k} (y_n)+ P^n_{r_h}(y_n) + O^n_{r_h} (y_n) \right) dy dr  \right|  \\  
 & \quad \leq  C\, \| b^k \|_\infty  \int_s^{t-h} \int_{\T} |p_{t-r}(x,y) - p^n_{(t-r)_h}(x,y) | \, dy dr  \\
 & \quad \leq  C\, \| b^k \|_\infty  \int_s^{t-h} \left(\int_{\T} |p_{t-r}(x,y) - p^n_{(t-r)_h}(x,y) |^2 \, dy\right)^{\frac{1}{2}} dr  \\
& \quad \leq C\, \| b^k \|_\infty  n^{-\frac{1}{2}+\varepsilon} |t-s|^{\frac{1}{2}+\frac{\varepsilon}{2}} \ , 
\end{align*}
using Lemma~\ref{lem:P-Pn}$(i)$ with $\alpha= 1-2 \varepsilon$ in the last inequality. 
\end{proof}

\subsection{H\"older regularity of the numerical scheme}\label{subsec:reg-scheme}

We now use the regularisation result of Proposition \ref{prop:bound-Khn} to conclude on the regularity of the numerical scheme. 
\begin{proposition}\label{cor:bound-Khn}
Recall the functionals \eqref{def:holder-norm-phi} and \eqref{eq:discrete-seminorm}. Let $m \in [2, \infty)$ and assume that \eqref{eq:cond-gamma-p-H} holds.
Let $\mathcal{D}$ be a sub-domain of $\N^2$ satisfying \eqref{eq:assump-bn-bounded} for some parameter $\varepsilon \in (0, \frac{1}{2})$. Then
\begin{align}\label{eq:bound-Khn}
\sup_{(n,k) \in \mathcal{D}} ( u^{n,k}-O^n )_{\mathcal{C}^{\frac{1}{2}+\frac{\varepsilon}{2},0}_{[0,1],x} L^{m,\infty}}  \leq 2 \sup_{(n,k) \in \mathcal{D}} \big\{ u^{n,k} \big\}_{n,k,m, \frac{1}{2}+\frac{\varepsilon}{2}} < \infty .
\end{align}
\end{proposition}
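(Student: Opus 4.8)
The first bound in \eqref{eq:bound-Khn} is nothing but the $q=\infty$ version of \eqref{eq:comp-seminorms2}. I would apply \eqref{eq:comp-seminorms} with $q=\infty$, $\rho=u^{n,k}-O^n$ and
\[
Y=P^n_r\psi_0(x)+\int_0^sP^n_{(r-\theta)_h}b^k(u^{n,k}_{\theta_h})(x)\,d\theta,
\]
which is $\mathcal F_s$-measurable and, by the discrete mild formula \eqref{eq:discrete-mild}, satisfies $\rho_r(x)-Y=\int_s^rP^n_{(r-\theta)_h}b^k(u^{n,k}_{\theta_h})(x)\,d\theta$. Taking the supremum over $(s,u,r)$ and $x$ in an arbitrary interval $I\subseteq[0,1]$ (the right-hand side of \eqref{eq:comp-seminorms} no longer depending on $u$) and recalling \eqref{eq:discrete-seminorm} gives $(u^{n,k}-O^n)_{\mathcal C^{\tau,0}_{I,x}L^{m,\infty}}\le 2\,\{u^{n,k}\}_{n,k,m,\tau}$, where $\tau=\tfrac12+\tfrac\varepsilon2$; with $I=[0,1]$ and $\sup_{(n,k)\in\mathcal D}$ this is the first inequality.

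\paragraph{Reduction and local estimate.} It remains to prove $\sup_{(n,k)\in\mathcal D}\{u^{n,k}\}_{n,k,m,\tau}<\infty$. Fix $(n,k)\in\mathcal D$ and, for $[S,T]\subseteq[0,1]$, write $\kappa_{[S,T]}$ for the quantity \eqref{eq:discrete-seminorm} with the supremum restricted to $\Delta_{[S,T]}$. Under the CFL condition $p^n$ is a nonnegative kernel with $\int_\T p^n_{(t-r)_h}(x,y)\,dy=1$ (Lemma~\ref{lem:reg-Pnh}), so $|\int_s^tP^n_{(t-r)_h}b^k(u^{n,k}_{r_h})(x)\,dr|\le\|b^k\|_\infty(t-s)$ and hence $\kappa_{[S,T]}\le\|b^k\|_\infty(T-S)^{1-\tau}<\infty$ for this fixed $(n,k)$; this a priori finiteness is all that the bootstrap below needs. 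Next I would apply Proposition~\ref{prop:bound-Khn} with $f=b^k\in\mathcal C^\infty_b\cap\mathcal B^\gamma_p$ (using $\|b^k\|_{\mathcal B^\gamma_p}\le\|b\|_{\mathcal B^\gamma_p}$ from Definition~\ref{def:conv-gamma-}, $p\ge m$ from the reduction at the start of Section~\ref{sec:convergence}, and $\gamma-1/p\ge-1$ from \eqref{eq:cond-gamma-p-H}), with $\psi=u^{n,k}-O^n$, which is $\mathbb F$-adapted and satisfies $\psi_{r_h}(y_n)+O^n_{r_h}(y_n)=u^{n,k}_{r_h}(y_n)$, and with the free parameter $\eta=\varepsilon/2$; the proof of Proposition~\ref{prop:bound-Khn} goes through verbatim with the discrete kernel $p^n_{(t-r)_h}$ in place of $p_{T-r}$, since only the mass-one and smoothing properties of $p^n$ are used. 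Combining this with the first inequality on $[S,T]$ (to bound $(\psi)_{\mathcal C^{\tau,0}_{[S,T],x}L^{m,\infty}}\le2\kappa_{[S,T]}$) and with the uniform bounds $\|b^k\|_\infty n^{-1/2}\le C$, $\|b^k\|_{\mathcal C^1}n^{-1}\le C$ from \eqref{eq:assump-bn-bounded}, I obtain for all $(s,t)\in\Delta_{[S,T]}$ and $x\in\T$
\[
\Big\|\big(\EE^s\big|\textstyle\int_s^tP^n_{(t-r)_h}b^k(u^{n,k}_{r_h})(x)\,dr\big|^m\big)^{1/m}\Big\|_{L^\infty}\le C_\star(t-s)^{3/4}+C_\star\,\kappa_{[S,T]}\,(t-s)^{1+\varepsilon/2},
\]
with $C_\star$ depending only on $m,\varepsilon,\gamma,p,\|b\|_{\mathcal B^\gamma_p}$ and the constants in \eqref{eq:assump-bn-bounded}, in particular not on $(n,k)\in\mathcal D$ nor on $[S,T]$.

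\paragraph{Bootstrap on short intervals and chaining.} Dividing by $(t-s)^\tau$ and taking the supremum over $\Delta_{[S,T]}\times\T$, and using $3/4-\tau=\tfrac14-\tfrac\varepsilon2>0$ and $1+\varepsilon/2-\tau=\tfrac12>0$, I get $\kappa_{[S,T]}\le C_\star(T-S)^{1/4-\varepsilon/2}+C_\star(T-S)^{1/2}\kappa_{[S,T]}$. Choosing $\ell_0>0$ (depending only on $C_\star$) with $C_\star\ell_0^{1/2}\le\tfrac12$ and absorbing the last term (legitimate since $\kappa_{[S,T]}<\infty$) gives $\kappa_{[S,T]}\le2C_\star\ell_0^{1/4-\varepsilon/2}=:C_0$ whenever $T-S\le\ell_0$, uniformly over $(n,k)\in\mathcal D$. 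To pass to $[0,1]$, fix a uniform partition $0=S_0<\dots<S_N=1$ into grid points with spacing $\le\ell_0/2$, so $N\lesssim\ell_0^{-1}$: a pair with $t-s\le\ell_0/2$ lies in some $[S_a,S_{a+2}]$, of length $\le\ell_0$, and the bound above applies directly; if $t-s>\ell_0/2$, split $\int_s^t=\sum_l\int_{\alpha_l}^{\alpha_{l+1}}$ along $s<S_{a+1}<\dots<S_{b-1}<t$ and use, on each piece, the discrete semigroup identity $P^n_{(t-r)_h}=P^n_{(t-\alpha_{l+1})_h}P^n_{(\alpha_{l+1}-r)_h}$ valid for grid times (up to an error of order $\|b^k\|_\infty h|\log h|$, hence uniformly bounded, when $t$ is not a grid point), the $L^\infty$-contractivity of $P^n$, and the tower property $\EE^s=\EE^s\EE^{\alpha_l}$, so that each of the $\le N+1$ summands is $\le\kappa_{[S_l,S_{l+1}]}(\alpha_{l+1}-\alpha_l)^\tau\le C_0\ell_0^\tau$; dividing the sum by $(t-s)^\tau\ge(\ell_0/2)^\tau$ then yields a bound uniform in $(n,k)\in\mathcal D$. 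Hence $\sup_{(n,k)\in\mathcal D}\{u^{n,k}\}_{n,k,m,\tau}<\infty$, which together with the first inequality gives \eqref{eq:bound-Khn}.

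\paragraph{Main obstacle.} I expect the only genuine difficulty to be the chaining step: because the discrete kernel $P^n_{(t-r)_h}$ in \eqref{eq:discrete-seminorm} carries the upper endpoint $t$, the short-interval bounds on $\kappa_{[S_l,S_{l+1}]}$ do not concatenate the way a standard H\"older seminorm would, and one must rely on the exact discrete semigroup structure and the mass-one/contractivity of $p^n$ to control the $\simeq\ell_0^{-1}$ pieces \emph{uniformly in} $(n,k)$ — crucially, without letting $\|b^k\|_\infty$ (which typically diverges as $k\to\infty$) enter with a nonnegative power of $n$. Everything else reduces to a direct application of \eqref{eq:comp-seminorms}, Proposition~\ref{prop:bound-Khn} and \eqref{eq:assump-bn-bounded}.
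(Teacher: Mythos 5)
Your proof is correct and follows the same overall architecture as the paper's: the first inequality via \eqref{eq:comp-seminorms} with the $\mathcal{F}_s$-measurable pivot $Y$, then a self-bounding estimate obtained from Proposition~\ref{prop:bound-Khn} applied to $\psi=u^{n,k}-O^n$ together with $(\psi)_{\mathcal{C}^{\tau,0}_{[S,T],x}L^{m,\infty}}\le 2\kappa_{[S,T]}$ and the uniform bounds from \eqref{eq:assump-bn-bounded}, absorption on intervals of length $\ell$ independent of $(n,k)$, and iteration. Two points differ from the paper. First, where you assert that Proposition~\ref{prop:bound-Khn} ``goes through verbatim'' with $p^n_{(t-r)_h}$ in place of $p_{T-r}$, the paper instead applies the proposition exactly as stated (continuous kernel, discretised integrand) and then invokes Lemma~\ref{lem:bound-E4} to control the kernel difference; this produces an extra term $C\|b^k\|_\infty n^{-1/2+\varepsilon}(t-s)^{1/2+\varepsilon/2}$, harmless under \eqref{eq:assump-bn-bounded}. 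Your shortcut is defensible --- the proofs of Lemma~\ref{lem:bound-Khn} and Proposition~\ref{prop:bound-Khn} use the kernel only through its nonnegativity and unit mass in $y$ --- but the paper's route reuses stated results as black boxes instead of asking the reader to re-inspect a sewing proof, which is why I would keep it. Second, the chaining step you flag as the ``main obstacle'' is precisely the point the paper leaves implicit (``apply the previous inequality on $[0,\ell]$, then $[\ell,2\ell]$, etc.''), and your treatment via the discrete semigroup identity, the $L^\infty$-contractivity of $P^n$ and the tower property $\EE^s=\EE^s\EE^{\alpha_l}$ is a correct way to fill it in; a small simplification is to anchor the splitting points at $t-j\ell_h$ with $\ell_h=\lfloor \ell/h\rfloor h$, for which $(t-r)_h=\ell_h+((t-\ell_h)-r)_h$ exactly (both summands are multiples of $h$), so the semigroup factorisation is exact and the flooring error you estimate can be avoided altogether. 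Your explicit observation that $\kappa_{[S,T]}\le\|b^k\|_\infty (T-S)^{1-\tau}<\infty$ a priori, which legitimises the absorption, is likewise a detail the paper omits but needs.
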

\begin{proof}
The first inequality of \eqref{eq:bound-Khn} always holds by \eqref{eq:comp-seminorms2}. 
Applying Proposition \ref{prop:bound-Khn} with $f=b^n$, $\psi_{r}=P^n_{r} \psi_0+v^{n,k}_{r}=u^{n,k}_{r}-O^n_{r}$ and $\eta=\varepsilon/2$, there exists a constant $C$ such that for any $k,n \in \N^*$ and any $(s,t)\in \Delta_{[0,1]}$, we have
\begin{align*} 
&\Big\|  \Big( \EE^s \Big| \int_s^t \int_\T p_{t-r}(x,y) b^k (P_{r_h} \psi_0(y_n)+v^{n,k}_{r_h} (y_n)+ O^n_{r_h}(y_n)) \, dy dr \Big|^m  \Big)^{\frac{1}{m}} \Big\|_{L^{\infty}} \\
&\quad  \leq  C\, \Big( \|b^k \|_\infty\,  n^{-\frac{1}{2}} + \| b \|_{\mathcal{B}_p^\gamma} \Big) (t-s)^{\frac{3}{4}} + C\,  ( u^{n,k}-O^n )_{\mathcal{C}^{\frac{1}{2}+\frac{\varepsilon}{2},0}_{[s,t],x} L^{m,\infty}}  \, \Big( \| b \|_{\mathcal{B}_p^{\gamma}} + \| b^k \|_{\mathcal{C}^1} n^{-1} \Big)   \, (t-s)^{1+\frac{\varepsilon}{2}} .
\end{align*}
Moreover, by Lemma \ref{lem:bound-E4}, we know that
\begin{align*}
& \Big\|  \Big( \EE^s \Big| \int_s^t \int_\T (p^n_{(t-r)_h}-p_{t-r})(x,y)\, b^k (P_{r_h} \psi_0(y_n)+v^{n,k}_{r_h}(y_n) + O^n_{r_h}(y_n)) \, dy dr \Big|^m  \Big)^{\frac{1}{m}} \Big\|_{L^{\infty}} \\ 
& \quad \leq  C \| b^k \|_{\infty} n^{-\frac{1}{2}+\varepsilon} (t-s)^{\frac{1}{2}+\frac{\varepsilon}{2}} .
\end{align*}
By the triangle inequality, summing the two previous bounds yields
\begin{align*} 
& \Big\|  \Big( \EE^s \Big| \int_s^t \int_\T p^n_{(t-r)_h}(x,y) \, b^k (P_{r_h} \psi_0(y_n)+v^{n,k}_{r_h}(y_n) + O^n_{r_h}(y_n)) \, dy dr \Big|^m  \Big)^{\frac{1}{m}} \Big\|_{L^{\infty}} \\
 &\quad \leq  C\, \Big( \|b^k \|_\infty\,  n^{-\frac{1}{2}} + \| b \|_{\mathcal{B}_p^\gamma} \Big) (t-s)^{\frac{3}{4}}  + C\, ( u^{n,k}-O^n )_{\mathcal{C}^{\frac{1}{2}+\frac{\varepsilon}{2},0}_{[s,t],x} L^{m,\infty}}  \, \Big( \| b \|_{\mathcal{B}_p^{\gamma}} + \| b^k \|_{\mathcal{C}^1} n^{-1} \Big)   \, (t-s)^{1+\frac{\varepsilon}{2}}  \\ 
 & \quad \quad +C \| b^k \|_{\infty} n^{-\frac{1}{2}+\varepsilon} (t-s)^{\frac{1}{2}+\frac{\varepsilon}{2}} .
\end{align*}
Let $(S,T)\in \Delta_{[0,1]}$. Under \eqref{eq:assump-bn-bounded}, we have
$ \sup_{(h,k) \in \mathcal{D}} \| b^{k} \|_\infty n^{-\frac{1}{2}+\varepsilon} < \infty$ and $\sup_{(h,k) \in \mathcal{D}} \| b^{k} \|_{\mathcal{C}^1} n^{-1} < \infty$. 
Using these bounds, then dividing by $(t-s)^{\frac{1}{2}+\frac{\varepsilon}{2}}$ and taking the supremum over $(s,t)\in \Delta_{[S,T]}$ and $x\in \T$ yields
\begin{align*}
&\sup_{\substack{(s,t)\in \Delta_{[S,T]}\\  x \in \T}} \frac{\Big\|  \Big( \EE^s \Big| \int_s^t \int_\T p^n_{(t-r)_h}(x,y) \, b^k (u^{n,k}_{r_h}(y_n)) \, dy dr \Big|^m  \Big)^{\frac{1}{m}} \Big\|_{L^{\infty}}}{(t-s)^{\frac{1}{2}+\frac{\varepsilon}{2}}} \\
 &\quad \leq  C\, (1 + \| b \|_{\mathcal{B}_p^\gamma} )  + C\, ( u^{n,k}-O^n )_{\mathcal{C}^{\frac{1}{2}+\frac{\varepsilon}{2},0}_{[S,T],x} L^{m,\infty}} \, ( \| b \|_{\mathcal{B}_p^{\gamma}} + 1)   \, (T-S)^{\frac{1}{2}} \\
 &\quad \leq  C\, (1 + \| b \|_{\mathcal{B}_p^\gamma} )  \\
 &\quad\quad+ C\, \sup_{\substack{(s,t)\in \Delta_{[S,T]}\\  x \in \T}} \frac{\Big\|  \Big( \EE^s \Big| \int_s^t \int_\T p^n_{(t-r)_h}(x,y) \, b^k (u^{n,k}_{r_h}(y_n)) \, dy dr \Big|^m  \Big)^{\frac{1}{m}} \Big\|_{L^{\infty}}}{(t-s)^{\frac{1}{2}+\frac{\varepsilon}{2}}}  ( \| b \|_{\mathcal{B}_p^{\gamma}} + 1)   \, (T-S)^{\frac{1}{2}},
\end{align*}
using \eqref{eq:comp-seminorms2-0} in the last inequality.
Let $\ell = \big( \frac{1}{2C (\| b \|_{\mathcal{B}_p^{\gamma}}+1) } \big)^{2}$. Then for $T-S \leq \ell$, we have
\begin{align*}
\sup_{\substack{(s,t)\in \Delta_{[S,T]}\\  x \in \T}} \frac{\Big\|  \Big( \EE^s \Big| \int_s^t \int_\T p^n_{(t-r)_h}(x,y) \, b^k (u^{n,k}_{r_h}(y_n)) \, dy dr \Big|^m  \Big)^{\frac{1}{m}} \Big\|_{L^{\infty}}}{(t-s)^{\frac{1}{2}+\frac{\varepsilon}{2}}} \leq 2 C\, \Big(1 + \| b \|_{\mathcal{B}_p^\gamma} \Big) .
\end{align*}
Since $\ell$ does not depend on $n$ nor $k$, one can apply the previous inequality on $[0,\ell]$, then $[\ell,2\ell]$, etc. Iterating the argument, one gets
$ \{ u^{n,k}\}_{n,k,m,\frac{1}{2}+\frac{\varepsilon}{2}}  \leq \frac{2C}{\ell}\, (1 + \| b \|_{\mathcal{B}_p^\gamma} ) < \infty$.
\end{proof}

\subsection{Time and space regularity of the drift of the numerical scheme}\label{app:morereg-vhk}

We provide more results on the regularity of the drift part of the numerical scheme, i.e. $v^{n,k}$ defined in \eqref{def:v-hk}. These results are useful in the proof of the upper bound on the H\"older semi-norm of $\mathcal{E}^{2,h,k}$ in Corollary \ref{cor:bound-E2}, and also complement Proposition~\ref{cor:bound-Khn}.%

\begin{lemma}\label{lem:reg-v-t}
Let $m \in [2, \infty)$. Then for any $n \in \N^*$, $k \in \mathbb{N}$, we have 
\begin{align*}%
\| v^{n,k} - \pi_{n} v^{n,k} \|_{L^{\infty,\infty}_{[0,1],x} L^m} \leq C  \| b^k \|_{\infty} n^{-1+2 \varepsilon} .
\end{align*}
where we recall that $\pi_{n}$ is the projection operator on $\Lambda_{h}$ and $\T_{n}$ defined in \eqref{eq:defprojpi}.
\end{lemma}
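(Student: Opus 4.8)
The plan is to avoid stochastic sewing entirely and to argue by a direct, deterministic estimate on discrete heat kernels. The key observation is that $|b^k(\cdot)|\le\|b^k\|_\infty$ pointwise, so for fixed $t\in[0,1]$ and $x\in\T$ the quantity $v^{n,k}_t(x)-(\pi_n v^{n,k})_t(x)=v^{n,k}_t(x)-v^{n,k}_{t_h}(x_n)$ is dominated, $\omega$ by $\omega$, by $\|b^k\|_\infty$ times a deterministic integral of kernels $p^n$; hence its $L^m$ norm is exactly that bound and no probabilistic input is needed. Starting from the mild form \eqref{def:v-hk} and splitting the integral defining $v^{n,k}_t(x)$ at $t_h$, I would write
\[
v^{n,k}_t(x)-v^{n,k}_{t_h}(x_n)=\underbrace{\int_{t_h}^{t}P^n_{(t-r)_h}b^k(u^{n,k}_{r_h})(x)\,dr}_{A}
+\underbrace{\int_0^{t_h}\big(P^n_{(t-r)_h}-P^n_{(t_h-r)_h}\big)b^k(u^{n,k}_{r_h})(x)\,dr}_{B_1}
+\underbrace{\int_0^{t_h}\big(P^n_{(t_h-r)_h}b^k(u^{n,k}_{r_h})(x)-P^n_{(t_h-r)_h}b^k(u^{n,k}_{r_h})(x_n)\big)dr}_{B_2},
\]
where $B_1$ isolates the error coming from the time-argument shift (at fixed $x$) and $B_2$ the error from the space-argument shift (at the grid time $t_h$).

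For $A$, I would use $t-t_h<h$ together with the short-time $L^1$-stability $\sup_{s,x}\int_\T|p^n_s(x,y)|\,dy\le C$ (a consequence of the CFL condition; see Lemma~\ref{lem:reg-Pnh}) to get $|A|\le C\|b^k\|_\infty h\le C\|b^k\|_\infty n^{-1+2\varepsilon}$, recalling $h=c(2n)^{-2}$. For $B_1$, the crucial remark is that $(t-r)-(t_h-r)=t-t_h\in[0,h)$, so $(t-r)_h-(t_h-r)_h\in\{0,h\}$: the integrand vanishes outside a small set of times, and on each ``time cell'' $\{r\in[0,t_h]:(t_h-r)_h=jh\}$ (of length $h$) the subset where the difference equals $h$ has length at most $t-t_h<h$, where there $P^n_{(t-r)_h}-P^n_{(t_h-r)_h}=P^n_{(j+1)h}-P^n_{jh}$. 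Using the one-step smoothing bound $\int_\T|p^n_{(j+1)h}(x,y)-p^n_{jh}(x,y)|\,dy\le C(j\vee1)^{-1}$ (again Lemma~\ref{lem:reg-Pnh}), which is the discrete analogue of $\|\Delta g_\tau\|_{L^1}\lesssim\tau^{-1}$, and summing over $j\le\lfloor 1/h\rfloor$ gives $|B_1|\le C\|b^k\|_\infty h\,(1+|\log h|)\le C\|b^k\|_\infty n^{-1+2\varepsilon}$. For $B_2$, I would use the spatial regularity $\int_\T|p^n_s(x,y)-p^n_s(x',y)|\,dy\le C\big(|x-x'|\,s^{-1/2}\wedge1\big)$ (Lemma~\ref{lem:reg-Pnh}) together with $|x-x_n|\le(2n)^{-1}\asymp h^{1/2}$; on the cell $(t_h-r)_h=jh$ the integrand is at most $C\|b^k\|_\infty\big(h^{1/2}(jh)^{-1/2}\wedge1\big)$, so that
\[
|B_2|\le C\|b^k\|_\infty\sum_{j=0}^{\lfloor 1/h\rfloor}h\big(h^{1/2}(jh)^{-1/2}\wedge1\big)\le C\|b^k\|_\infty\big(h+h^{1/2}\big)\le C\|b^k\|_\infty n^{-1}\le C\|b^k\|_\infty n^{-1+2\varepsilon}.
\]
Adding the three bounds and taking $\sup_{t\in[0,1],x\in\T}$ would conclude.

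I expect the only real work to be bookkeeping: assembling the three discrete-kernel facts used above (short-time $L^1$ stability, spatial Lipschitz regularity in $x$, and the one-step smoothing bound for $P^n_{(j+1)h}-P^n_{jh}$) in precisely the forms stated — all of them standard consequences of the CFL condition on the eigenvalues of $\mathrm{Id}-h\Delta_n$ and, I would assume, already collected in Lemma~\ref{lem:reg-Pnh} — and then tracking carefully the ``mismatch'' sets for $B_1$ and the $j=0$ (i.e.\ $(t_h-r)_h=0$) contributions, which are all absorbed into the $\wedge1$ truncations. A minor point worth noting is that this scheme in fact produces a slightly sharper bound than claimed, of order $n^{-1}$ (up to a logarithm) rather than $n^{-1+2\varepsilon}$, so there is ample slack for whatever $\varepsilon$-losses the cited kernel estimates carry.
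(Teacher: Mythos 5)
Your decomposition and bookkeeping are sound, and the overall architecture (peel off the $[t_h,t]$ boundary piece, then control the remaining integral by kernel regularity in time and space with an integrable singularity) is essentially the paper's. The paper simply does not separate your $B_1$ from $B_2$: Lemma~\ref{lem:reg-Pnh}$(ii)$ bounds the combined increment $|P^n_t f(x)-P^n_{t_h}f(x_n)|$ in one shot, and the case $(t-r)_h=(t_h-r)_h+h$ is handled there by the $\delta\to h$ continuity argument of \eqref{eq:contPnh} rather than by your mismatch-set computation. Your finer split is correct but not needed.

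The one genuine issue is that two of the three kernel facts you invoke are not in Lemma~\ref{lem:reg-Pnh} in the form you state them. The one-step smoothing bound $\int_\T|p^n_{(j+1)h}(x,y)-p^n_{jh}(x,y)|\,dy\le C(j\vee1)^{-1}$ amounts to the full parabolic rate $\|h\Delta_n P^n_{jh}\|_{L^\infty\to L^\infty}\lesssim j^{-1}$, which the paper never establishes; what is actually available (Lemma~\ref{lem:reg-Pnh}$(ii)$ with $\beta=1$, extended to the endpoint $\delta=h$ exactly as in \eqref{eq:contPnh}) is the weaker $C(\log(2n))^{1/2}\,n^{-1}((j+1)h)^{-1/2}\asymp C(\log(2n))^{1/2}(j+1)^{-1/2}$. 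Likewise your log-free spatial $L^1$ bound $C\big(|x-x'|s^{-1/2}\wedge1\big)$ is not stated; for $x'=x_n$ and $s$ a grid time, Lemma~\ref{lem:reg-Pnh}$(ii)$ with $\beta=1$ gives $C(\log(2n))^{1/2}n^{-1}s^{-1/2}$. Fortunately your scheme has the slack you anticipated: with these weaker inputs, $|B_1|\lesssim\|b^k\|_\infty\, h\sum_{j}(\log(2n))^{1/2}(j+1)^{-1/2}\lesssim\|b^k\|_\infty(\log(2n))^{1/2}h^{1/2}\lesssim\|b^k\|_\infty n^{-1+\varepsilon}$, and similarly $|B_2|\lesssim\|b^k\|_\infty n^{-1+\varepsilon}$, so the claimed $n^{-1+2\varepsilon}$ still follows. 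If you insist on the sharp $j^{-1}$ and log-free bounds you would have to derive them from the eigenvalue analysis of $\mathrm{Id}+h\Delta_n$ in \cite{butkovsky2021optimal}, which is unnecessary here. (Incidentally, the paper's own proof applies Lemma~\ref{lem:reg-Pnh}$(ii)$ with $\beta=1/2$ and, as written, only reaches $n^{-1/2+\varepsilon}$; taking $\beta=1$, which is what your route effectively does, is what actually matches the stated rate.)
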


\begin{proof}
Let $t \in [0,1]$ and $x \in \mathbb{T}$. Then
\begin{align*}
| v^{n,k}_t (x) - \pi_{n}(v^{n,k})_{t}(x)  | & = \Big| \int_{t_h}^t P^n_{(t-r)_h} b^k(u^{n,k}_{r_h})(x) \, dr \\ 
& \quad + \int_0^{t_h}   \left( P^n_{(t-r)_h} b^k(u^{n,k}_{r_h})(x) - P^n_{(t_h-r)_h} b^k(u^{n,k}_{r_h})(x_n) \right) dr \Big|.
\end{align*}
Using Lemma \ref{lem:reg-Pnh}$(iii)$ with $\alpha=0$, the first integral above $|\int_{t_h}^t P^n_{(t-r)_h} b^k(u^{n,k}_{r_h})(x) dr|$ is bounded by $C \| b^k \|_\infty (t-t_h)$. Hence, recalling that $h=c(2n)^{-2}$, $|\int_{t_h}^t P^n_{(t-r)_h} b^k(u^{n,k}_{r_h})(x) dr|\leq  C \| b^k \|_\infty n^{-2}$. 

As for the second integral, if $t_h \leq 4h$, then by the same arguments, $|\int_0^{t_h}   ( P^n_{(t-r)_h} b^k(u^{n,k}_{r_h})(x) - P^n_{(t_h-r)_h} b^k(u^{n,k}_{r_h})(x_n) ) dr| \leq C \| b^k\|_{\infty} h\leq C \| b^k \|_\infty n^{-2}$. If $t_h \ge 4h$, then we split the integral first between $0$ and $t_h-4h$ and then between $t_h-4h$ and $t_h$. For the integral between $t_h-4h$ and $t_h$, the same arguments yield again 
$|\int_{t_h-4h}^{t_h} (P^n_{(t-r)_h} b^k(u^{n,k}_{r_h})(x) - P^n_{(t_h-r)_h} b^k(u^{n,k}_{r_h})(x_n)) dr| \leq C \| b^k \|_\infty n^{-2}$.
 
 Denote by $J = |\int_0^{t_h-4h} (P^n_{(t-r)_h} b^k(u^{n,k}_{r_h})(x) - P^n_{(t_h-r)_h} b^k(u^{n,k}_{r_h})(x_n) ) dr|$ the remaining term. 
Let $f \in L^{\infty}(\T)$. By Lemma \ref{lem:reg-Pnh}$(ii)$ (with $\beta=1/2$), we have that for any $u \in [h,1]$ and $\delta \in [0,h)$,
\begin{align*}
| P^n_{u_h+\delta} f(x)- P^n_{u_h}f(x_n) | \leq C \| f \|_{\infty} n^{-\frac{1}{2}+\varepsilon} (u_h+\delta)^{-\frac{1}{4}} .
\end{align*}
Letting $\delta$ go to $h$ and using the continuity in time of $P^n$ (see equations (2.9) and (2.11) in \cite{butkovsky2021optimal}), we deduce that for any $u \in [h,1]$ and $x\in \T$,
\begin{align}\label{eq:contPnh}
| P^n_{u_h+h} f(x)- P^n_{u_h}f(x_n) | \leq C \| f \|_{\infty} n^{-\frac{1}{2}+\varepsilon} (u_h+h)^{-\frac{1}{4}} .
\end{align}
Observe that either $(t-r)_h=(t_h-r)_h$ or $(t-r)_h= (t_h-r)_h + h$. Hence 
 using 
\eqref{eq:contPnh} and Lemma \ref{lem:reg-Pnh}$(ii)$ with $\beta=1/2$, we deduce that
\begin{align*}
J & \leq C \int_0^{t_h-4h} \| b^k \|_{\infty} n^{-\frac{1}{2}+\varepsilon} (t_h-r)_h^{-\frac{1}{4}} \, dr. 
\end{align*}
Since $r < t_h-4h$, we have $(t_h-r)_h \ge (t-r)/2$. It follows that  $J  \leq C \| b^k \|_{\infty} n^{-\frac{1}{2}+\varepsilon}$.
\end{proof}

Now we investigate the H\"older regularity in time of $v^{n,k}-\pi_{n} v^{n,k}$.
\begin{lemma}\label{lem:reg-v-t-2}
We have for any  $n \in \N^*$, $k \in \N$,
\begin{align*}%
\big( v^{n,k}-\pi_{n} v^{n,k} \big)_{\mathcal{C}^{\frac{3}{4},0}_{[0,1],x} L^\infty} \leq C \| b^k \|_{\infty} n^{-\frac{1}{2}+\varepsilon} .
\end{align*}
\end{lemma}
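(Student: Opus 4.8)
The statement to prove is the $\frac{3}{4}$-Hölder regularity in time (with respect to the semigroup increment) of $w := v^{n,k} - \pi_n v^{n,k}$, i.e. a bound on $\big( w \big)_{\mathcal{C}^{3/4,0}_{[0,1],x} L^\infty}$. Recall that, by definition \eqref{def:holder-norm-phi}, this pseudo-norm controls $\| \EE^u w_r(x) - \EE^s w_r(x) \|_{L^\infty}$ for $s \le u \le r$, divided by $|r-s|^{3/4}$. The plan is to write $w_r(x) = v^{n,k}_r(x) - v^{n,k}_{r_h}(x_n)$ explicitly using the integral representation \eqref{def:v-hk}, namely $v^{n,k}_r(x) = \int_0^r P^n_{(r-\theta)_h} b^k(u^{n,k}_{\theta_h})(x)\, d\theta$, and to compare the conditional expectations $\EE^u$ and $\EE^s$ of this quantity. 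Since $b^k(u^{n,k}_{\theta_h})$ is $\mathcal{F}_{\theta_h}$-measurable and $P^n$ is deterministic, only the part of the time integral with $\theta_h > s$ contributes to $\EE^u(\cdot) - \EE^s(\cdot)$; the part with $\theta_h \le s$ is already $\mathcal{F}_s$-measurable and cancels.

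First I would reduce to estimating, for $s \le u \le r$,
\[
\EE^u w_r(x) - \EE^s w_r(x) = \int_s^{r} \big(P^n_{(r-\theta)_h} - P^n_{(r_h-\theta)_h}(\,\cdot\,)\big)\big(\EE^u - \EE^s\big) b^k(u^{n,k}_{\theta_h})(\cdot)\,\big|_x \, d\theta + (\text{boundary terms}),
\]
where the difference of kernels $P^n_{(r-\theta)_h}(x,\cdot) - P^n_{(r_h-\theta)_h}(\cdot,\cdot)$ evaluated at the shifted space argument is exactly the object controlled in the proof of Lemma~\ref{lem:reg-v-t} via \eqref{eq:contPnh} and Lemma~\ref{lem:reg-Pnh}$(ii)$ with $\beta = 1/2$. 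The key point is that $(r-\theta)_h$ and $(r_h-\theta)_h$ differ by at most $h$, so the kernel difference is of size $\|b^k\|_\infty n^{-1/2+\varepsilon}(r_h-\theta)_h^{-1/4}$, while $\|(\EE^u - \EE^s) b^k(u^{n,k}_{\theta_h})\|_{L^\infty}$ is bounded trivially by $2\|b^k\|_\infty$. Splitting the integral as in Lemma~\ref{lem:reg-v-t} (the region $\theta \in (s, r_h - 4h)$ where $(r_h - \theta)_h \ge (r-\theta)/2$, versus the small residual regions near $r_h$, near $r$, and the case $r - s \le $ a few multiples of $h$) and integrating $\int_s^{r}(r-\theta)^{-1/4}\,d\theta \le C(r-s)^{3/4}$ gives the claimed $(r-s)^{3/4}$ rate with constant $C\|b^k\|_\infty n^{-1/2+\varepsilon}$. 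For the residual regions of length $O(h)$ one uses the crude bound $\|b^k\|_\infty \cdot O(h) \le C\|b^k\|_\infty h^{1/4}(r-s)^{3/4}$ (valid when $r - s \ge h$) together with $h^{1/4} = c^{1/4}(2n)^{-1/2} \le C n^{-1/2+\varepsilon}$, and similarly when $r-s \le h$ one has $\|w_r\|_{L^\infty} \le C\|b^k\|_\infty h \le C\|b^k\|_\infty h^{1/4}(r-s)^{3/4}$ after bounding $\|\EE^u w_r - \EE^s w_r\|_{L^\infty}$ by $2\|w_r\|_{L^\infty}$ and using Lemma~\ref{lem:reg-v-t}.

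The main obstacle is bookkeeping the various time regions and making sure the kernel-difference estimate \eqref{eq:contPnh} is applied with the correct arguments — in particular handling the two possibilities $(r-\theta)_h = (r_h-\theta)_h$ or $(r-\theta)_h = (r_h-\theta)_h + h$, exactly as in the proof of Lemma~\ref{lem:reg-v-t} — and checking that no extra $\log$ or singular factor creeps in from the $(r_h-\theta)_h^{-1/4}$ weight near $\theta = r_h$. Everything else is a routine repetition of the estimates already carried out for Lemma~\ref{lem:reg-v-t}; indeed the present lemma is the ``increment in time of the conditional expectation'' analogue of that $L^{\infty,\infty}$ bound, and the proof structure is essentially identical, only tracking the extra factor $|r-s|^{-3/4}$ and the two conditional expectations. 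I would therefore write the proof as a short argument that imports the kernel estimates from Lemma~\ref{lem:reg-v-t} verbatim and simply reorganizes the time integral to extract the $(r-s)^{3/4}$ power.
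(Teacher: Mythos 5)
Your proposal follows essentially the same route as the paper: reduce $\EE^u w_r - \EE^s w_r$ to (twice) the integral over $[s,r]$ of the kernel difference $P^n_{(r-\theta)_h}(x,\cdot) - P^n_{(r_h-\theta)_h}(x_n,\cdot)$ applied to the uniformly bounded integrand $b^k(u^{n,k}_{\theta_h})$ (the paper does this via \eqref{eq:comp-seminorms} with $Y=\int_0^s(\cdots)$, you do it by noting the $\mathcal{F}_s$-measurable part cancels — same mechanism), then estimate the kernel difference by $C\|b^k\|_\infty n^{-1/2+\varepsilon}(r_h-\theta)_h^{-1/4}$ via \eqref{eq:contPnh} and Lemma~\ref{lem:reg-Pnh}$(ii)$ with $\beta=1/2$, split off the $O(h)$-length residual regions, and integrate the $-1/4$ weight to get $(r-s)^{3/4}$. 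This all matches the paper's proof.

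The one step that fails as written is your treatment of the regime $r-s\leq h$. You propose to bound $\|\EE^u w_r-\EE^s w_r\|_{L^\infty}\leq 2\|w_r\|_{L^\infty}$ and invoke Lemma~\ref{lem:reg-v-t}; but that lemma gives $\|w_r\|_{L^\infty}\leq C\|b^k\|_\infty n^{-1+2\varepsilon}$ (not $C\|b^k\|_\infty h$ — the integral defining $w_r$ runs over all of $[0,r]$, so there is no reason for it to be of order $h$), and in any case a bound independent of $r-s$ cannot be divided by $(r-s)^{3/4}$ as $r-s\to 0$; your claimed chain $h\leq h^{1/4}(r-s)^{3/4}$ also reverses direction when $r-s\leq h$. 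The correct (and immediate) fix is the one the paper uses, and which your own reduction already provides: after passing to the integral over $[s,r]$, the trivial bound $\|P^n_{\cdot}b^k(\cdot)\|_\infty\leq C\|b^k\|_\infty$ (Lemma~\ref{lem:reg-Pnh}$(iii)$ with $\alpha=0$) gives $C\|b^k\|_\infty(r-s)=C\|b^k\|_\infty(r-s)^{1/4}(r-s)^{3/4}\leq C\|b^k\|_\infty h^{1/4}(r-s)^{3/4}\leq C\|b^k\|_\infty n^{-1/2}(r-s)^{3/4}$. With that correction the argument is complete.
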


\begin{proof}
Let $x \in \T$ and $0 \leq s \leq u < t \leq 1$. Using \eqref{eq:comp-seminorms} with $m=1,q=\infty$, $\rho_{t}=v^{n,k}_{t}-\pi_{n} (v^{n,k})_{t}$ and $Y = \int_0^s (P^n_{(t-r)_h}b^k(u^{n,k}_{r_{h}})(x)-P^n_{(t_h-r)_h}b^k(u^{n,k}_{r_{h}})(x_n)) \, dr$, we get
\begin{align*}
& \| \EE^s [v^{n,k}_t(x)-v^{n,k}_{t_h}(x_n)]- \EE^u [v^{n,k}_t(x)-v^{n,k}_{t_h}(x_n)] \|_{L^\infty} \\
& \leq 2 \Big\| \int_s^t  (P^n_{(t-r)_h} b^k(u^{n,k}_{r_h})(x)-P^n_{(t_h-r)_h}  b^k(u^{n,k}_{r_h})(x_n)) \, dr \Big\|_{L^\infty} .
\end{align*}
If $t-s \leq 4h$, we use Lemma \ref{lem:reg-Pnh}$(iii)$ with $\alpha=0$ to bound the previous quantity by $C\| b^k \|_\infty (t-s) \leq C \|b^k \|_\infty h^{\frac{1}{4}} (t-s)^{\frac{3}{4}} \leq C \|b^k \|_\infty n^{-\frac{1}{2}} (t-s)^{\frac{3}{4}}$. If $t-s > 4h$, we split the integral first between $s$ and $t-4h$, then between $t-4h$ and $t$. For the latter integral, we have again the bound $C \| b^k \|_{\infty} (2h) \lesssim \|b^k \|_\infty h^{\frac{1}{4}} (t-s)^{\frac{3}{4}}\leq C\|b^k \|_\infty n^{-\frac{1}{2}} (t-s)^{\frac{3}{4}}$.
We denote the first integral, on the interval $[s, t-4h]$ by $\tilde J$. For $r \in (s,t-4h)$, we have $t-r \ge t_h-r \ge h$. Moreover, we either have $(t-r)_h-(t_h-r)_h=h$ or $(t-r)_h=(t_h-r)_h$. Therefore applying Lemma \ref{lem:reg-Pnh}$(ii)$ with $\beta=1/2$ and \eqref{eq:contPnh}, we deduce that
\begin{align*}
J & \leq C \| b^k \|_{\infty} \int_s^{t-4h}  n^{-\frac{1}{2}+\varepsilon} (t_h-r)_h^{-\frac{1}{4}}\, dr.
\end{align*}
Finally, since $ (t_h-r)_h \ge (t-r)/2 \ge h$, we have $J \leq  C \| b^k \|_{\infty}  n^{-1/2+\varepsilon} (t-s)^{3/4}$, which concludes the proof.
\end{proof}

\section{Combined regularisation by the continuous and discrete Ornstein-Uhlenbeck processes, and regularity of $\mathcal{E}^{2,n,k}$}\label{sec:reg-On}

In this section, we show regularisation properties when both $O$ and $O^n$ appear, with a view towards controlling the H\"older regularity of $\mathcal{E}^{2,n,k}$ (defined in \eqref{def:E2}). Since $O^n$ is discretised in time, upper bounds on quantities such as $\int_0^t P_{t-r}\big( f(\phi_r + O_r)-f(\phi_r + O^n_r)\big)(x)\, dr$ will involve $\|f\|_{\infty}$, as seen in the previous section. Recall that $f$ plays the role of a sequence that converges to a distribution, so $\| f \|_\infty$ will have to be compensated by negative powers of $n$.

 \subsection{Combined regularisation by the continuous and discrete Ornstein-Uhlenbeck processes}\label{sec:reg-On-2}

We first study the joint regularisation effect of $O$ and $O^n$ when integrating an $\mathcal{F}_{S}$-measurable process, for fixed time $S$.
\begin{lemma}\label{sec:ssl-on}
Let $m \in [2,+\infty)$ and $\varepsilon \in (0, \frac{1}{2})$. There exists  $C>0$ such that for any $0 \leq S \leq T$, any $f  \in \mathcal{C}_b^\infty (\R, \R)$, any $(s,t) \in \Delta_{[S,T]}$, any $n\in \N^*$ and any $\phi : [0,1] \times \T \times \Omega \rightarrow \mathbb{R}$ such that for all $r \in [0,1]$ and $x \in \mathbb{T}$, $\phi_r(x)$ is $\mathcal{F}_S$-measurable, there is
\begin{align*}%
& \sup_{x \in \mathbb{T}} \Big\| \int_s^t \int_{\T} p_{T-r}(x,y) \Big( f(\phi_r(y)+O_r(y)) - f(\phi_r(y) + O_{r_h}^n(y)) \Big) dy dr\Big\|_{L^m}  \leq C \| f \|_\infty n^{-\frac{1}{2}+\varepsilon}\,  (t-s)^{\frac{1}{2} + \varepsilon}.
\end{align*}
\end{lemma}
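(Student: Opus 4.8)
The plan is to apply the Stochastic Sewing Lemma (Lemma~\ref{lem:SSL}) with $q=m$, exactly following the template used throughout Sections~\ref{sec:reg-On-1} and~\ref{sec:reg-On}. Fix $x\in\T$ and $0\le S<T\le 1$, and for $(s,t)\in\Delta_{[S,T]}$ set
\[
\mathcal{A}_t := \int_S^t\int_\T p_{T-r}(x,y)\Big(f(\phi_r(y)+O_r(y))-f(\phi_r(y)+O^n_{r_h}(y))\Big)\,dy\,dr,
\qquad A_{s,t}:=\EE^s[\mathcal{A}_t-\mathcal{A}_s].
\]
Then $\EE^s\delta A_{s,u,t}=0$ automatically, so \eqref{eq:condsew1} holds with $\Gamma_1=0$; the whole content is to establish \eqref{eq:condsew2} with $\Gamma_2\le C\|f\|_\infty n^{-1/2+\varepsilon}$ and $\varepsilon_2=\varepsilon$, together with the required convergence in probability of the Riemann-type sums to $\mathcal{A}$.

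To bound $\|\delta A_{s,u,t}\|_{L^m}$ with $u=(s+t)/2$, I would split as usual according to the size of $t-u$ relative to $h=c(2n)^{-2}$. In the regime $t-u\le 2h$ one has the trivial bound $\|\delta A_{s,u,t}\|_{L^m}\le C\|f\|_\infty(t-s)\le C\|f\|_\infty h^{1/2-\varepsilon}(t-s)^{1/2+\varepsilon}\le C\|f\|_\infty n^{-1+2\varepsilon}(t-s)^{1/2+\varepsilon}$, which is stronger than needed. In the regime $t-u\ge 2h$, by the triangle and conditional Jensen inequalities (as in \eqref{eq:backtoAst-}) it suffices to control $\int_u^t\int_\T p_{T-r}(x,y)\big\|\EE^u[f(\phi_r(y)+O_r(y))-f(\phi_r(y)+O^n_{r_h}(y))]\big\|_{L^m}\,dy\,dr$; splitting this integral between $u$ and $u+2h$ (trivial bound $C\|f\|_\infty h$) and between $u+2h$ and $t$, the main term $J_2$ is handled by writing, with the $\mathcal{F}_S$-measurable random function $F_{r,y}(\cdot):=f(\phi_r(y)+\cdot)$, the conditional-expectation identities $\EE^u f(\phi_r(y)+O_r(y))=G_{Q(r-u)}F_{r,y}(P_{r-u}O_u(y))$ via \eqref{eq:condexpf(OU)} and $\EE^u f(\phi_r(y)+O^n_{r_h}(y))=G_{Q^n(r_h-u)}F_{r,y}(\widehat O^n_{u,r_h}(y))$ via Lemma~\ref{lem:reg-On}. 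The difference of these two Gaussian-smoothed quantities is then estimated by combining: the comparison of variances $|Q(r-u)-Q^n(r_h-u)|$ and the difference of the two (centred Gaussian) arguments $P_{r-u}O_u(y)-\widehat O^n_{u,r_h}(y)$, which should be $O(n^{-1/2+\varepsilon}(r-u)^{\text{(positive power)}})$ by the heat-kernel and discrete-heat-kernel estimates of Appendix~\ref{app:general-lemmas} (Lemmas on $p-p^n$, on $Q,Q^n$ such as Lemma~\ref{lem:bound-Qn}, and on $\widehat O^n$ such as Lemma~\ref{lem:diffhatOn}), using the smoothing $\|G_a f-G_b f\|_\infty\lesssim\|f\|_\infty$ and Lipschitz-in-argument bounds with a Gaussian density gain of order $(r-u)^{-1/4}$; since here we only invoke $\|f\|_\infty$ (no Besov norm), the singular factor $(r-u)^{-1/2}$ of the form $(r_h-u)^{\frac14(\gamma-1/p-1)}$ with $\gamma-1/p=-1$ is integrable and, after using $r_h-u\ge(r-u)/2$ and integrating in $r$, yields $\|J_2\|_{L^m}\le C\|f\|_\infty n^{-1/2+\varepsilon}(t-s)^{1/2}$ (with the extra $n^{2\varepsilon}$ absorbed as in Lemma~\ref{sec:ssl-onh} by $(t-s)^{1/2}\le Cn^{2\varepsilon}(t-s)^{1/2+\varepsilon}$ since $t-s\ge 2h$). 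Combining the two regimes gives \eqref{eq:condsew2} with the claimed $\Gamma_2$.

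For the convergence in probability of $\sum_i A_{t_i^k,t_{i+1}^k}$ to $\mathcal{A}_t$, I would argue verbatim as in the ``Convergence in probability'' paragraph of Lemma~\ref{lem:bound-Khn}: using the $\mathcal{C}^1$-norm of $f$ and Lemma~\ref{lem:reg-On} together with the bounds $\EE|O^n_{r_h}-\widehat O^n_{t_i^k,r}|\le Q^n(r_h-t_i^k)^{1/2}$ and $\EE|O_r-\widehat{(\cdot)}|$-type estimates, one gets an $L^1$-bound of order $\|f\|_{\mathcal{C}^1}|\Pi_k|^{1/4}$ which vanishes as the mesh goes to $0$. Then Lemma~\ref{lem:SSL} applies with $\varepsilon_1>0$ arbitrary and $\varepsilon_2=\varepsilon$, yielding
\[
\Big\|\big(\EE^S|\mathcal{A}_t-\mathcal{A}_s|^m\big)^{1/m}\Big\|_{L^m}\le \|A_{s,t}\|_{L^m}+C\|f\|_\infty n^{-\frac12+\varepsilon}(t-s)^{\frac12+\varepsilon};
\]
a final repetition of the same splitting argument for $\|A_{s,t}\|_{L^m}=\|\EE^s[\mathcal{A}_t-\mathcal{A}_s]\|_{L^m}$ (now using $\EE^s$ in place of $\EE^u$, with $\mathcal{F}_S\subset\mathcal{F}_s$ so $\phi_r$ is still measurable) gives the same bound, and taking the supremum over $x\in\T$ concludes. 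The main obstacle I anticipate is purely technical: carefully bookkeeping the difference $G_{Q(r-u)}F_{r,y}(P_{r-u}O_u(y))-G_{Q^n(r_h-u)}F_{r,y}(\widehat O^n_{u,r_h}(y))$ so that the only $f$-norm appearing is $\|f\|_\infty$ while still extracting the $n^{-1/2+\varepsilon}$ gain and an integrable power of $(r-u)^{-1}$ — this requires invoking precisely the right heat-kernel comparison estimates from the Appendix, but no new idea beyond what is already used in Lemmas~\ref{sec:ssl-onh} and~\ref{lem:bound-Khn}.
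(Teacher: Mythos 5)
Your proposal is correct and follows essentially the same route as the paper: the same choice $A_{s,t}=\EE^s[\mathcal{A}_t-\mathcal{A}_s]$ making \eqref{eq:condsew1} trivial, the same case split on $t-u$ versus $2h$, the same reduction of $J_2$ via the Gaussian-convolution identities of \eqref{eq:condexpf(OU)} and Lemma~\ref{lem:reg-On} followed by comparison of the variances $Q,Q^n$ and of the conditioned processes, and the same treatment of $\|A_{s,t}\|_{L^m}$ by rerunning the argument with $\EE^s$. The only differences are cosmetic bookkeeping of the $\varepsilon$-powers (the paper gets $(r-u)^{-1/4}$ and $(t-s)^{3/4}$ directly rather than trading $n^{2\varepsilon}$ for $(t-s)^{\varepsilon}$), which does not affect the result.
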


\begin{proof}
Fix $x \in \mathbb{T}$ and $0 \leq S \leq T \leq 1$. We will apply the Stochastic Sewing Lemma (see Lemma~\ref{lem:SSL}). For any $(s,t) \in \Delta_{[S,T]}$, define
\begin{align*}
    \mathcal{A}_t:=\int_S^t p_{T-r}(x,y) \Big( f(\phi_r(y)+O_r(y)) - f(\phi_r(y) + O_{r_h}^n(y)) \Big)  \, dy dr  ~~\text{and}~~ A_{s,t}:=\EE^s[ \mathcal{A}_t - \mathcal{A}_s].
\end{align*}
Let $u=(s+t)/2$. Observe that $\EE^s[\delta A_{s,u,t}]=0$, so \eqref{eq:condsew1} holds.
To show \eqref{eq:condsew2}, let us prove that for some $\varepsilon_{2}>0$,
\begin{align} \label{(4.8)-critic}
    \|\delta A_{s,u,t}\|_{L^m}\leq  C \, \| f \|_\infty n^{-\frac{1}{2}+\varepsilon} \, (t-s)^{\frac{1}{2}+\varepsilon_2} .
\end{align} 
As in \eqref{eq:backtoAst-}, the triangle inequality and the conditional Jensen inequality give that
\begin{align}\label{eq:backtoAst}
    \|\delta A_{s,u,t}\|_{L^m} 
            &\leq 2 \int_u^t \int_{\T} p_{T-r}(x,y) \left\|\EE^u \left[f(\phi_r(y)+O_r(y)) - f(\phi_r(y) + O_{r_h}^n(y)) \right] \right\|_{L^m} \, dy dr .
\end{align}
\paragraph{The case $t-u \leq 2h$.} In this case, there is simply
\begin{align}\label{eq:Ast-<h}
  \|\delta A_{s,u,t}\|_{L^m} \leq C \| f \|_{\infty} (t-u) 
  \leq C \| f \|_{\infty} h^{\frac{1}{2}-\varepsilon} (t-s)^{\frac{1}{2}+\varepsilon} \leq  C \| f \|_{\infty} n^{-1+2 \varepsilon} (t-s)^{\frac{1}{2}+\varepsilon},
\end{align}
using $h=c(2n)^{-2}$.
\paragraph{The case $t-u \ge 2h$.} Split the integral in \eqref{eq:backtoAst} first between $u$ and $u+2h$, then between $u+2h$ and $t$ and call the two respective integrals $J_1$ and $J_2$. We bound $J_1$ as in \eqref{eq:Ast-<h}, so there is $J_{1}\leq C \| f \|_{\infty} n^{-1+2 \varepsilon} (t-s)^{1/2+\varepsilon}$. 

As for $J_2$, for $r \in [0,1]$ and $y \in \T$, consider the $\mathscr{B}(\R) \otimes \mathcal{F}_{S}$-measurable random function $F_{r,y}(\cdot) := f(\phi_r(y)+\cdot)$. Then we have
\begin{align*}
J_2 \leq C \int_{u+2h}^t \int_{\T} p_{T-r}(x,y) \left\|\EE^u  \left[F_{r,y}(O_{r}(y)) - F_{r,y}(O_{r_h}^n(y_n)) \right] \right\|_{L^m} \, dy dr .
\end{align*}
Lemma \ref{lem:reg-O} and Lemma \ref{lem:reg-On} give
\begin{align*}
 J_2 & =\int_{u+2h}^t  \int_{\T} p_{T-r}(x,y) \Big\|G_{Q(r-u)} F_{r,y}(P_{r-u} O_u(y)) - G_{Q^n(r_h-s)} F_{r,y}(\widehat{O}^n_{u,r_h}(y)) \Big\|_{L^m} \, dy dr  .
\end{align*}
Now Gaussian kernel estimates (see Lemma~\ref{lem:reg-Pnh}$(i)$ with $\alpha=0$, $\beta=1$) yield
\begin{align}\label{eq:J2lem61}
\begin{split} 
 J_2  & \leq C \| f \|_\infty  \int_{u+2h}^t \int_{\T} p_{T-r}(x,y) \Big( \| P_{r-u} O_u(y) - \widehat{O}^n_{u,r_h}(y) \|_{L^m} + | Q(r-u) - Q^n(r_h-u) |^{\frac{1}{2}} \Big) \\ 
 & \hspace{2.5cm} \times \Big( Q^n(r_h-u) \wedge Q(r-u) \Big)^{-\frac{1}{2}} dy d r .
 \end{split}
\end{align}
Let us now provide a bound on the different quantities that appear on the right-hand side of \eqref{eq:J2lem61}. First, $Q(r-u) \ge C (r-u)^{\frac{1}{2}}$ (see Lemma~\ref{lem:bound-Qn}$(i)$) and since $r_{h}-u\geq h$, Lemma~\ref{lem:bound-Qn}$(iii)$ states that $Q^n(r_h-u) \ge C (r_h-u)^{\frac{1}{2}} \ge \frac{C}{\sqrt{2}} (r-u)^{\frac{1}{2}} $.
Moreover, by Lemma~\ref{lem:reg-Pnh}$(i)$ (with $\alpha=\beta=(1-\varepsilon)/2$) and \cite[Corollary 2.3.2]{butkovsky2021optimal} (with $\beta=1$), one gets
\begin{align}\label{eq:O-On} 
\| P_{r-u} O_u(y) - \widehat{O}^n_{u,r_h}(y) \|_{L^m}  & \leq  \| P_{r-u} O_u(y)- P_{r_h-u} O_u(y) \|_{L^m} + \| P_{r_h-u} O_u(y) - \widehat{O}^n_{u,r_h}(y) \|_{L^m}  \nonumber \\
& \leq C \| O \|_{\mathcal{C}^{\frac{1}{2}-\frac{\varepsilon}{2},0}_{[0,1],x}L^m} n^{-\frac{1}{2}+\varepsilon} + C n^{-\frac{1}{2}+\varepsilon} \leq C n^{-\frac{1}{2}+\varepsilon} .
\end{align}
Moreover, we have
\begin{align}\label{eq:Q-Qn}
| Q(r-u) - Q^n(r_h-u) | & \leq | Q(r-u) - Q(r_h-u) | + | Q(r_h-u) - Q^n(r_h-u) | \nonumber \\
& \leq \int_{r_h-u}^{r-u} \int_{\T} |p_{2\theta}(y,0)|^2\, d y d\theta + | Q(r_h-u) - Q^n(r_h-u) |  \nonumber\\
& \leq C (\sqrt{r-u}-\sqrt{r_h-u}) + C n^{-1+2\varepsilon} \nonumber \\
& \leq C \sqrt{r-r_h} + C n^{-1+2\varepsilon} \leq C n^{-1+2\varepsilon} ,
\end{align}
where we used the fact that $\int_{\T} |p_{2\theta}(y,0)|^2\, d y \leq C \theta^{-\frac{1}{2}}$, and Lemma~\ref{lem:bound-Qn}$(ii)$ with $\alpha=2-4\varepsilon$. Going back to \eqref{eq:J2lem61}, we get
\begin{align*}
J_2 
& \leq C  \| f \|_{\infty} n^{-\frac{1}{2}+\varepsilon} \int_{u+2h}^t  (r-u)^{-\frac{1}{4}} d r  \leq C  n^{-\frac{1}{2}+\varepsilon} (t-s)^{\frac{3}{4}} .
\end{align*}
Hence, combining the bounds on $J_1$ and $J_2$, we have
\begin{align}\label{eq:Ast->h}
 \|\delta A_{s,u,t}\|_{L^m} & \leq C \| f \|_{\infty} n^{-\frac{1}{2}+\varepsilon} (t-s)^{\frac{3}{4}} .
\end{align}
Combining \eqref{eq:Ast-<h} and \eqref{eq:Ast->h}, we deduce \eqref{(4.8)-critic} for $\varepsilon_2=\varepsilon  >0$.
\paragraph{Convergence in probability.} 
We omit the proof of \eqref{eq:convAt} as it follows the same lines as the paragraph ``Convergence in probability" in the proof of Lemma \ref{lem:bound-Khn}.

~~
\smallskip

Applying Lemma~\ref{lem:SSL}, we get
\begin{align}\label{eq:lem34-ssl-goal}
   \Big\| \Big( \EE^S | \mathcal{A}_t-\mathcal{A}_s|^m \Big)^{\frac{1}{m}} \Big\|_{L^m} &\leq \|A_{s,t}\|_{L^m} + C \, \| f \|_\infty n^{-\frac{1}{2}+\varepsilon}\,  (t-s)^{\frac{1}{2} + \varepsilon}.
\end{align}
To bound $\|A_{s,t}\|_{L^m}$, notice that
\begin{align}\label{eq:boundAst}
   \|A_{s,t}\|_{L^m} & \leq \int_u^t \int_{\T} p_{T-r}(x,y) \|\EE^u  [f(\phi_r(y)+O_r(y)) - f(\phi_r(y) + O_r^n(y)) ] \|_{L^m} \, dy dr  .
\end{align}
Repeating the same arguments used to obtain \eqref{(4.8)-critic} (notice the right-hand side in \eqref{eq:backtoAst} is the same up to a factor $2$) we get
\begin{align*}
  \|A_{s,t}\|_{L^m} & \leq C \| f \|_\infty n^{-\frac{1}{2}+\varepsilon} (t-s)^{\frac{1}{2}+\varepsilon}. 
\end{align*}
Plugging the previous inequality in \eqref{eq:lem34-ssl-goal} yields the desired result.
\end{proof}

Now we generalise the previous lemma to any adapted process $\phi : [0,1] \times \T \times \Omega \rightarrow \mathbb{R}$. For such $\phi$, we recall the pseudo-norm \eqref{def:holder-norm-phi}.

\begin{prop}\label{prop:ssl-on-final}
Let  $m \in [2,+\infty)$ and $\varepsilon \in (0, \frac{1}{2})$.  There exists a constant $C$ such that for any $(s,t) \in \Delta_{[0,1]}$, any $f  \in \mathcal{C}_b^\infty (\R, \R)$, any $\mathbb{F}$-adapted process $\phi : [0,1] \times \T \times \Omega \rightarrow \mathbb{R}$ and any $n\in \N^*$, there is
\begin{align}\label{eq:ssl-on-final}
& \sup_{x \in \mathbb{T}} \Big\| \int_s^t \int_{\T} p_{T-r}(x,y) \Big( f\left(\phi_r(y)+O_r(y)\right) - f\left(\phi_r(y) + O_{r_h}^n(y)\right) \Big) dy dr \Big\|_{L^m} \nonumber \\
& \quad\leq C \, \| f \|_\infty n^{-\frac{1}{2}+\varepsilon}\,  (t-s)^{\frac{1}{2} + \varepsilon} +  C \| f \|_{\mathcal{C}^1}  \big( \phi \big)_{\mathcal{C}^{1,0}_{[0,1],x} L^{1,\infty}}  n^{-1+\varepsilon} (t-s)^{1+\varepsilon} .
\end{align}
\end{prop}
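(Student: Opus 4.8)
The plan is to apply the Stochastic Sewing Lemma (Lemma~\ref{lem:SSL} with $q=m$) to the two-parameter functional
\[
A_{s,t} = \int_s^t \int_{\T} p_{T-r}(x,y)\Big( f(\EE^s[\phi_r(y)]+O_r(y)) - f(\EE^s[\phi_r(y)]+O^n_{r_h}(y)) \Big)\, dy\, dr ,
\]
with the natural candidate limit $\mathcal{A}_t = \int_S^t \int_{\T} p_{T-r}(x,y)(f(\phi_r(y)+O_r(y)) - f(\phi_r(y)+O^n_{r_h}(y)))\, dy\, dr$. This mirrors the structure of the proof of Proposition~\ref{prop:ssl-onh-final}: first one freezes $\phi$ at time $s$ to get $A_{s,t}$, the ``clean'' part whose $L^m$ norm is controlled by Lemma~\ref{sec:ssl-on}, and then one checks the two sewing conditions on $\delta A_{s,u,t}$ with $u=(s+t)/2$.

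First I would verify \eqref{eq:condsew2}: the bound $\|\delta A_{s,u,t}\|_{L^m} \leq C\|f\|_\infty n^{-1/2+\varepsilon}(t-s)^{1/2+\varepsilon}$. Since $\delta A_{s,u,t} = A_{s,t} - A_{s,u} - A_{u,t}$ and each of the three pieces has the frozen structure of Lemma~\ref{sec:ssl-on} (for $A_{s,t}$ and $A_{s,u}$ one freezes at $s$, for $A_{u,t}$ at $u$), applying Lemma~\ref{sec:ssl-on} three times gives \eqref{eq:condsew2} with $\varepsilon_2 = \varepsilon$. Next I would verify \eqref{eq:condsew1}, the bound on $\|\EE^s\delta A_{s,u,t}\|_{L^m}$, which is the delicate one and carries the $n^{-1+\varepsilon}$ gain. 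Here
\[
\EE^s \delta A_{s,u,t} = \int_u^t \int_{\T} p_{T-r}(x,y)\, \EE^s\EE^u\Big[ G_{r,u,y}(O_r(y)) - G_{r,u,y}(O^n_{r_h}(y))\Big]\, dy\, dr,
\]
where $G_{r,u,y}(\cdot) := f(\EE^s[\phi_r(y)]+\cdot) - f(\EE^u[\phi_r(y)]+\cdot)$ is $\mathcal{F}_u$-measurable. I would split into the cases $t-u\leq 2h$ and $t-u>2h$; in the first, the Lipschitz bound on $f$ together with $h=c(2n)^{-2}$ and the definition of $(\phi)_{\mathcal{C}^{1,0}_{[0,1],x}L^{1,\infty}}$ gives a contribution of order $\|f\|_{\mathcal{C}^1}(\phi)_{\mathcal{C}^{1,0}}h^{?}(t-s)^{1+\varepsilon}$ with enough powers of $h$ to produce the $n^{-1+\varepsilon}$ factor. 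In the second case, one splits the integral at $u+2h$; the piece on $[u,u+2h]$ is handled like the short-interval case, while on $[u+2h,t]$ one uses Lemma~\ref{lem:reg-O} and Lemma~\ref{lem:reg-On} to write the conditional expectations via the heat-kernel smoothing $G_{Q(r-u)}$, respectively $G_{Q^n(r_h-u)}$, of $G_{r,u,y}$ evaluated at $P_{r-u}O_u(y)$, respectively $\widehat O^n_{u,r_h}(y)$, and then estimates the difference using Lemma~\ref{eq:reg-S}, the bounds \eqref{eq:O-On}, \eqref{eq:Q-Qn} already established in Lemma~\ref{sec:ssl-on}, and Lemma~\ref{lem:bound-Qn}. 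The Lipschitz difference $\|\EE^s[\phi_r(y)]-\EE^u[\phi_r(y)]\|_{L^m}\leq (\phi)_{\mathcal{C}^{1,0}}(r-s)$ supplies the factor $(\phi)_{\mathcal{C}^{1,0}_{[0,1],x}L^{1,\infty}}$ and an extra $(r-s)$, which after integration against $(r-u)^{-1/4}$ (from the Gaussian estimates) yields the $(t-s)^{1+\varepsilon}$ time power; combining with the $n^{-1/2+\varepsilon}$ from \eqref{eq:O-On}--\eqref{eq:Q-Qn} and a further $n^{-1/2+\varepsilon}$ gained from $t-s\geq 2h$ gives the claimed $n^{-1+\varepsilon}$.

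Having checked \eqref{eq:condsew1} and \eqref{eq:condsew2}, and the convergence-in-probability condition \eqref{eq:convAt} (which follows verbatim from the ``Convergence in probability'' argument in the proof of Lemma~\ref{lem:bound-Khn}, using the $\mathcal{C}^1$-norm of $f$ and $(\phi)_{\mathcal{C}^{1,0}}$), Lemma~\ref{lem:SSL} identifies $\mathcal{A}_t$ as the sewing and gives
\[
\|\mathcal{A}_t - \mathcal{A}_s - A_{s,t}\|_{L^m} \leq C\|f\|_\infty n^{-\frac{1}{2}+\varepsilon}(t-s)^{\frac{1}{2}+\varepsilon} + C\|f\|_{\mathcal{C}^1}(\phi)_{\mathcal{C}^{1,0}_{[0,1],x}L^{1,\infty}} n^{-1+\varepsilon}(t-s)^{1+\varepsilon} ;
\]
taking $s=S$, so $\mathcal{A}_S = 0$, and bounding $\|A_{S,t}\|_{L^m}$ by Lemma~\ref{sec:ssl-on} applied with $\EE^S\phi_r$ in place of $\phi_r$, then taking the supremum over $x\in\T$, yields \eqref{eq:ssl-on-final}. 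I expect the main obstacle to be the case analysis in the proof of \eqref{eq:condsew1}: one must carefully track which of $(t-r)_h$ versus $r_h$ appears, correctly invoke the joint estimates \eqref{eq:O-On} and \eqref{eq:Q-Qn} on $P_{r-u}O_u - \widehat O^n_{u,r_h}$ and on $|Q(r-u)-Q^n(r_h-u)|$, and balance powers of $h$ (equivalently $n^{-2}$) against powers of $(t-s)$ so that every leftover $h$ is converted into the target $n^{-1+\varepsilon}$ rate without losing the $(t-s)^{1+\varepsilon}$ Hölder exponent needed for the sewing.
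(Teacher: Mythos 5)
Your overall architecture coincides with the paper's: the same germ $A_{s,t}$ with $\phi$ frozen at time $s$, the same use of Lemma~\ref{sec:ssl-on} (applied to each of $A_{s,t}$, $A_{s,u}$, $A_{u,t}$) to obtain \eqref{eq:condsew2}, the same case split at $t-u=2h$ together with Lemma~\ref{lem:reg-O}, Lemma~\ref{lem:reg-On}, \eqref{eq:O-On}, \eqref{eq:Q-Qn} and Lemma~\ref{lem:bound-Qn} for \eqref{eq:condsew1}, and the same conclusion via Lemma~\ref{lem:SSL} with $\|A_{s,t}\|_{L^m}$ controlled by Lemma~\ref{sec:ssl-on} applied to $\EE^s\phi$. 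Up to that point the sketch is faithful.

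There is, however, a genuine gap in how you produce the rate $n^{-1+\varepsilon}$ in \eqref{eq:condsew1}. You propose to combine ``the $n^{-\frac{1}{2}+\varepsilon}$ from \eqref{eq:O-On}--\eqref{eq:Q-Qn}'' with ``a further $n^{-\frac{1}{2}+\varepsilon}$ gained from $t-s\geq 2h$''. The second factor does not exist: since $h=c(2n)^{-2}$, the inequality $t-s\geq 2h$ only allows the conversion $h^{\delta}\leq C(t-s)^{\delta}$, i.e.\ trading a negative power of $n$ for a positive power of $t-s$; the reverse conversion $(t-s)^{-\delta}\leq C\,n^{2\delta}$ produces a \emph{positive} power of $n$, a loss. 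The mechanism actually used in the paper is different: because $F_{s,u,r,y}(\cdot)=f(\EE^s\phi_r(y)+\cdot)-f(\EE^u\phi_r(y)+\cdot)$ is an increment of $f$, the Gaussian comparison of Lemma~\ref{lem:reg-Pnh}$(i)$ can be applied at second order, with $\alpha=0$ and $\beta=2-2\eta$, so that $|Q(r-u)-Q^n(r_h-u)|$ enters with exponent $1-\eta$ and $|P_{r-u}O_u(y)-\widehat{O}^n_{u,r_h}(y)|$ with exponent $2-2\eta$; by \eqref{eq:Q-Qn} and \eqref{eq:O-On} each contributes $n^{(-1+2\varepsilon)(1-\eta)}$, and the only price is the singularity $(Q\wedge Q^n)^{-1+\eta}\lesssim (r-u)^{-\frac{1}{2}+\frac{\eta}{2}}$, which remains integrable; taking $\eta$ proportional to $\varepsilon$ yields the full $n^{-1+\varepsilon}$. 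With only the first-order estimate, your bound on $\EE^s\delta A_{s,u,t}$ would be of order $\|f\|_{\mathcal{C}^1}(\phi)_{\mathcal{C}^{1,0}_{[0,1],x}L^{1,\infty}}\,n^{-\frac{1}{2}+\varepsilon}(t-s)^{1+\varepsilon}$, which is strictly weaker and insufficient downstream: in Corollary~\ref{cor:bound-E2} the factor multiplying $\|b^k\|_{\mathcal{C}^1}$, a norm that diverges as $k\to\infty$, must be $n^{-1+\varepsilon}$ for the rates of Theorem~\ref{thm:main-SPDE} to hold.
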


\begin{proof}
Let $x \in \T$ and $0\leq S\leq T \leq 1$. We aim to apply the Stochastic Sewing Lemma (Lemma~\ref{lem:SSL}). For $(s,t) \in \Delta_{[S,T]}$, define
\begin{align*}
    \mathcal{A}_t &:=\int_S^t p_{T-r}(x,y) \Big( f\left(\phi_r(y)+O_r(y)\right) - f\left(\phi_r(y) + O_{r_h}^n(y)\right) \Big) dy dr \\
    A_{s,t}&:=\int_s^t p_{T-r}(x,y) \Big( f\left(\EE^s \phi_r(y)+O_r(y)\right) - f\left(\EE^s \phi_r(y) + O_{r_h}^n(y)\right) \Big)  dy dr .
\end{align*}
Let $\varepsilon \in (0,1/2)$. Assume without loss of generality that $\big( \phi \big)_{\mathcal{C}^{1,0}_{[0,1],x} L^{1,\infty}} < +\infty$, otherwise the result is obvious. We check the assumptions in order to apply Lemma \ref{lem:SSL} with $q=m$. To show that \eqref{eq:condsew1} and \eqref{eq:condsew2} hold true with $\varepsilon_1=\varepsilon_2=\varepsilon$, we prove that there exists  $C>0$ which does not depend on $s,t,S$ and $T$ such that

\begin{enumerate}[label=(\roman*)]
\item\label{en:(1b+)}  $\| \delta A_{s,u,t} \|_{L^m}  \leq C \| f \|_\infty n^{-\frac{1}{2}+\varepsilon}  (t-s)^{\frac{1}{2}+\varepsilon} $;

\item\label{en:(2b+)} $\|\EE^s \delta A_{s,u,t}\|_{L^m}  \leq C \| f \|_{\mathcal{C}^1}   \big( \phi \big)_{\mathcal{C}^{1,0}_{[0,1],x} L^{1,\infty}}  n^{-1+\varepsilon} (t-s)^{1+\varepsilon} $; 

\item\label{en:(3b+)}  If \ref{en:(1b+)} and \ref{en:(2b+)} are satisfied, \eqref{eq:convAt} gives the 
convergence in probability of $\sum_{i=1}^{N_k-1} A_{t^k_i,t^k_{i+1}}$ on any sequence $\Pi_k=\{t_i^k\}_{i=1}^{N_k}$ of partitions of $[S,t]$ with a mesh that converges to $0$. We will show 
that the limit is the previously defined process $\mathcal{A}$.
\end{enumerate}

If \ref{en:(1b+)}, \ref{en:(2b+)} and \ref{en:(3b+)} are assumed to hold, then Lemma \ref{lem:SSL} gives
\begin{align*}
& \Big\|  \int_s^t \int_{\T} p_{T-r}(x,y) \Big( f\left(\phi_r(y)+O_r(y)\right) - f\left(\phi_r(y) + O_{r_h}^n(y)\right) \Big) dy dr\Big\|_{L^m}   \\
&\quad \leq C \| f \|_\infty n^{-\frac{1}{2}+\varepsilon}  (t-s)^{\frac{1}{2}+\varepsilon} +C \| f \|_{\mathcal{C}^1}   \big( \phi \big)_{\mathcal{C}^{1,0}_{[0,1],x} L^{1,\infty}}  n^{-1+\varepsilon} (t-s)^{1+\varepsilon} + \| A_{s,t} \|_{L^m}.
\end{align*}
To bound $\| A_{{s},{t}} \|_{L^m}$, apply Lemma \ref{sec:ssl-on} with $\EE^s \phi_r$ in place of $\phi_r$ to get that
\begin{align}\label{eq:bound-Lp-norm-A+}
\| A_{s,t} \|_{L^m} & \leq C \| f \|_\infty n^{-\frac{1}{2}+\varepsilon}  (t-s)^{\frac{1}{2}+\varepsilon} . 
\end{align}
Thus, taking the supremum over $x \in \T$, we get \eqref{eq:ssl-on-final}.

~

Let us now verify that  \ref{en:(1b+)}, \ref{en:(2b+)} and \ref{en:(3b+)} are satisfied.

\paragraph{Proof of \ref{en:(1b+)}:}
For $u=(s+t)/2$, there is
\begin{align*}
 \|\delta A_{s,u,t}\|_{L^m} \leq  \|A_{s,t}\|_{L^m} +\|A_{s,u}\|_{L^m}  + \|A_{u,t}\|_{L^m} .
\end{align*}
Using \eqref{eq:bound-Lp-norm-A+} for each term separately, we get \ref{en:(1b+)}.

\paragraph{Proof of \ref{en:(2b+)}:}
For $u=(s+t)/2$, the triangle inequality and Jensen's inequality for 
conditional expectation give that
\begin{equation}\label{eq:decompAsutProp6.2}
\begin{split}
    \| \EE^s  \delta A_{s,u,t}\|_{L^m}
        &\leq \int_u^t \int_{\T} p_{T-r}(x,y) \Big\| \EE^s \EE^u \Big[f(\EE^s \phi_r(y)+O_{r}(y)) -f(\EE^u \phi_r(y)+O_r(y))  \\
        & \hspace{2cm} -f(\EE^s \phi_r(y)+ O_{r_h}^n(y)) + f(\EE^u \phi_r(y) + O_{r_h}^n(y))\Big]  \Big\|_{L^m} dy  dr .
\end{split}
\end{equation}

If $t-u \leq 2h$, we simply have
\begin{align}\label{eq:Ast--<h}
  \| \EE^s  \delta A_{s,u,t}\|_{L^m} 
   \leq C \| f \|_{\infty}   \big( \phi \big)_{\mathcal{C}^{1,0}_{[0,1],x} L^m} (t-s)^2 
   &\leq C \| f \|_{\infty}   \big( \phi \big)_{\mathcal{C}^{1,0}_{[0,1],x} L^m} h^{1-\varepsilon} (t-s)^{1+\varepsilon} \nonumber\\
 & \leq C \| f \|_{\infty}   \big( \phi \big)_{\mathcal{C}^{1,0}_{[0,1],x} L^m} n^{-2+2\varepsilon} (t-s)^{1+\varepsilon} .
\end{align}

If $t-u \ge 2h$, we split the integral in the right-hand side of \eqref{eq:decompAsutProp6.2} first between $u$ and $u+2h$, and then between $u+2h$ and $t$. Denote the two respective integrals $J_1$ and $J_2$. For $J_1$, we obtain $J_{1}\leq C \| f \|_{\infty}  \big( \phi \big)_{\mathcal{C}^{1,0}_{[0,1],x} L^m} n^{-2+2\varepsilon} (t-s)^{1+\varepsilon} $, as in the case $t-u \leq 2h$.
As for $J_2$, consider the $\mathscr{B}(\R) \otimes \mathcal{F}_{S}$-measurable random function $F_{s,u,r,y}(\cdot) := f(\EE^s \phi_r(y)+\cdot)-f(\EE^u \phi_r(y)+\cdot)$, for which $J_{2}$ reads
\begin{align*}
J_2 = \int_{u+2h}^t \int_{\T} p_{T-r}(x,y) \left\| \EE^s \EE^u  [F_{s,u,r,y}(O_r(y)) -F_{s,u,r,y}(O_{r_h}^n(y))] \right\|_{L^m} dy  dr .
\end{align*}
Lemma \ref{lem:reg-O} and Lemma \ref{lem:reg-On} give
\begin{align*}
J_2 = C \int_{u+2h}^t \int_{\T} p_{T-r}(x,y) \left\| \EE^s \left( G_{Q(r-u)} F_{s,u,r,y}(P_{r-u}O_u(y)) -G_{Q^n(r_h-u)} F_{s,u,r,y}(\widehat{O}_{u,r_h}^n(y))    \right) \right\|_{L^m} dy  dr .
\end{align*}
Using Lemma \ref{lem:reg-Pnh}$(i)$ (with $\alpha=0$ and $\beta=2-2\eta$ for $\eta \in (0,1)$), we obtain
\begin{align*}
J_2 
& \leq C  \int_{u+2h}^t \int_{\T} p_{T-r}(x,y)  \Big\| \left( | Q(r-u) - Q^n(r_h-u) |^{1-\eta}+ | P_{r-u} O_u(y)-\widehat{O}^n_{r_h,u}(y) |^{2-2\eta} \right)   \\ 
& \hspace{4cm} \times \left( Q^n(r_h-u) \wedge Q(r-u) \right)^{-1+\eta} \| F_{s,u,r,y} \|_\infty  \Big\|_{L^m} dy  d r .
\end{align*}
Since $Q(r-u) \ge C (r-u)^{\frac{1}{2}}$ and $Q^n(r_h-u) \ge C (r_h-u)^{\frac{1}{2}} \ge \frac{C}{\sqrt{2}} (r-u)^{\frac{1}{2}}$ (see Lemma~\ref{lem:bound-Qn}), it comes
\begin{align*}
J_2 & \leq C  \int_{u+2h}^t \int_{\T} p_{T-r}(x,y) \Big\| \left( | Q(r-u) - Q^n(r_h-u) |^{1-\eta}+| P_{r-u}O_u(y)-\widehat{O}_{r_h,u}^n(y) |^{2-2\eta} \right)    (r-u)^{-\frac{1}{2}+\frac{\eta}{2}} \\ 
& \hspace{4cm} \times \| f \|_{\mathcal{C}^1}  \left| \EE^s \left[\EE^s \phi_r(y)-\EE^u\phi_r(y)\right]\right| \Big\|_{L^m} dy dr .
\end{align*}
Using \eqref{eq:Q-Qn} and \eqref{eq:O-On}, we get
\begin{align*}
J_2 &  \leq C \| f \|_{\mathcal{C}^1} \big(\phi\big)_{\mathcal{C}^{1,0}_{[0,1],x} L^{1,\infty}} n^{-1+3\eta}    (u-s) \int_{u+2h}^t  (r-u)^{-\frac{1}{2}+\frac{\eta}{2}} d r \\
& \leq C   \| f \|_{\mathcal{C}^1} \big( \phi \big)_{\mathcal{C}^{1,0}_{[0,1],x} L^\infty} n^{-1+3\eta} (t-s)^{\frac{3}{2}} . 
\end{align*}
Hence, taking $\eta=\frac{1}{3}\varepsilon$ and combining the bounds on $J_1$ and $J_2$, we have obtained that when $t-u \ge 2h$,
\begin{align}\label{eq:Ast-->h}
 \| \EE^s  \delta A_{s,u,t}\|_{L^m} & \leq C \| f \|_{\mathcal{C}^1}   \big( \phi \big)_{\mathcal{C}^{1,0}_{[0,1],x} L^{1,\infty}}  n^{-1+\varepsilon} (t-s)^{1+\varepsilon} .
\end{align}
Combining \eqref{eq:Ast--<h} and \eqref{eq:Ast-->h}, we deduce that for all $s \leq t$,
\begin{align*}%
 \|\EE^s \delta A_{s,u,t}\|_{L^m} & \leq C \| f \|_{\mathcal{C}^1}   \big( \phi \big)_{\mathcal{C}^{1,0}_{[0,1],x} L^{1,\infty}}  n^{-1+\varepsilon} (t-s)^{1+\varepsilon} .
\end{align*}

\paragraph{Proof of \ref{en:(3b+)}:} We proceed similarly to the third step of previous proofs, using the Lipschitz property of $f$ and that $\big( \phi \big)_{\mathcal{C}^{1,0}_{[0,1],x} L^{1,\infty}}<\infty$.
\end{proof}

\subsection{H\"older regularity of $\mathcal{E}^{2,n,k}$}\label{subsec:reg-E2}

Combining the results of Proposition \ref{prop:ssl-on-final} and Proposition \ref{prop:ssl-onh-final}, we deduce the following. 

\begin{proposition}\label{cor:ssl-o-onh-final}
Let $m \in [2,+\infty)$ and $\varepsilon \in (0, \frac{1}{2})$. There exists $C$ such that for any $(s,t) \in \Delta_{[0,1]}$, any  $f  \in \mathcal{C}_b^\infty (\R, \R) \cap \mathcal{B}^\gamma_{p}$, any $\mathbb{F}$-adapted process $\phi : [0,1] \times \T \times \Omega \rightarrow \mathbb{R}$ and any $n\in \N^*$, there is
\begin{align}\label{eq:ssl-o-onh-final}
\begin{split}
& \sup_{x \in \mathbb{T}} \Big\| \int_s^t \int_{\T} p_{t-r}(x,y) \left( f(\phi_r(y)+O_{r}(y)) - f(\phi_{r_h}(y_n) + O_{r_h}^n(y_n)) \right) dy dr \Big\|_{L^m}  \\
& \leq C ( 1+\| f \|_{\mathcal{B}^{\gamma}_p}) \bigg( \| f \|_{\infty} n^{-\frac{1}{2}+\varepsilon} + n^{2\varepsilon} \left(\| \phi-\pi_n \phi \|_{L^{\infty,\infty}_{[h,1],x} L^m} +n^{-\frac{1}{2}} \right)  \\ 
& \quad +  (\phi)_{\mathcal{C}^{1,0}_{[0,1],x} L^{1,\infty}} \Big(  \| \phi- \pi_n \phi \|_{L^{\infty,\infty}_{[h,1],x} L^m} +n^{-\frac{1}{2}}+  \| f \|_{\mathcal{C}^1} n^{-1+\varepsilon} \Big)  +  (\phi-\pi_n \phi )_{\mathcal{C}^{\frac{1}{2}+\varepsilon,0}_{[0,1],x} L^{m,\infty}} \bigg) (t-s)^{\frac{1}{2}+\varepsilon}   .
\end{split}
\end{align}
\end{proposition}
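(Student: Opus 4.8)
The plan is to split the integrand additively at the intermediate configuration $(\phi_r(y)+O_{r_h}^n(y))$, so that
\[
f(\phi_r(y)+O_r(y)) - f(\phi_{r_h}(y_n)+O_{r_h}^n(y_n)) = \Big( f(\phi_r(y)+O_r(y)) - f(\phi_r(y)+O_{r_h}^n(y)) \Big) + \Big( f(\phi_r(y)+O_{r_h}^n(y)) - f(\phi_{r_h}(y_n)+O_{r_h}^n(y_n)) \Big).
\]
Integrating against $p_{t-r}(x,y)$ and using the triangle inequality reduces the claim to bounding two terms. For the first term, I would apply Proposition~\ref{prop:ssl-on-final} (with $T=t$), which handles exactly the replacement of the continuous OU process $O_r$ by the discrete one $O^n_{r_h}$ while keeping the same argument $\phi_r$; this contributes $C\,\|f\|_\infty n^{-1/2+\varepsilon}(t-s)^{1/2+\varepsilon} + C\,\|f\|_{\mathcal{C}^1}(\phi)_{\mathcal{C}^{1,0}_{[0,1],x}L^{1,\infty}}n^{-1+\varepsilon}(t-s)^{1+\varepsilon}$. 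For the second term, I would apply Proposition~\ref{prop:ssl-onh-final} (again with $T=t$), which handles the replacement of $\phi_r$ by its projection $\pi_n\phi$ and of $O^n_{r_h}(y)$ by $O^n_{r_h}(y_n)$, using that the OU process is discretised in time only through $O^n_{r_h}$; this contributes the remaining terms involving $\|f\|_{\mathcal{B}^\gamma_p}$, $\|\phi-\pi_n\phi\|_{L^{\infty,\infty}_{[h,1],x}L^m}$, $(\phi)_{\mathcal{C}^{1,0}_{[0,1],x}L^{1,\infty}}$ and $(\phi-\pi_n\phi)_{\mathcal{C}^{1/2+\varepsilon,0}_{[0,1],x}L^{1,m}}$.

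The remaining work is purely bookkeeping: I would collect the powers of $(t-s)$ appearing in the two propositions. Proposition~\ref{prop:ssl-on-final} produces exponents $\tfrac12+\varepsilon$ and $1+\varepsilon$, while Proposition~\ref{prop:ssl-onh-final} produces $\tfrac12+\varepsilon$ and $1+\varepsilon\wedge\tfrac14$. Since $(s,t)\in\Delta_{[0,1]}$, every exponent strictly larger than $\tfrac12+\varepsilon$ can be bounded by $(t-s)^{1/2+\varepsilon}$ up to a constant, so all terms can be factored as (constant)$\times(t-s)^{1/2+\varepsilon}$; this is why the right-hand side of \eqref{eq:ssl-o-onh-final} carries the single factor $(t-s)^{1/2+\varepsilon}$. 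I would also use the trivial bounds $\|f\|_{\mathcal{C}^1}\le \|f\|_{\mathcal{C}^1}$ (kept explicit where it is multiplied by a negative power of $n$), $n^{-1+\varepsilon}\le n^{-1+\varepsilon}$, and the fact that $n^{2\varepsilon}n^{-1/2}\le n^{-1/2+2\varepsilon}$ is absorbed into the stated form, together with the prefactor $(1+\|f\|_{\mathcal{B}^\gamma_p})$ to dominate both the $\|f\|_{\mathcal{B}^\gamma_p}$ terms and the pure $\|f\|_\infty,\|f\|_{\mathcal{C}^1}$ contributions.

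There is essentially no genuine obstacle here, since the two component propositions do all the analytic heavy lifting (the stochastic sewing arguments, the discrete and continuous OU kernel estimates, the comparison of variances $Q$ and $Q^n$). The only point requiring a little care is to make sure the intermediate term $f(\phi_r(y)+O^n_{r_h}(y))$ is a legitimate argument for \emph{both} propositions: in Proposition~\ref{prop:ssl-on-final} we need $\phi$ adapted, which holds; in Proposition~\ref{prop:ssl-onh-final} we need $\phi$ adapted and the projection $\pi_n\phi$ well-defined, which also holds. One should also note that Proposition~\ref{prop:ssl-onh-final} is stated with $p_{T-r}$ for general $T$, and here we specialise to $T=t$, so the kernel is $p_{t-r}(x,y)$ exactly as in the statement to be proved. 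After assembling the two bounds and simplifying the time exponents, taking the supremum over $x\in\mathbb{T}$ yields \eqref{eq:ssl-o-onh-final}.

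\begin{proof}
We decompose the integrand as
\begin{align*}
& f(\phi_r(y)+O_{r}(y)) - f(\phi_{r_h}(y_n) + O_{r_h}^n(y_n)) \\
&\quad = \Big( f(\phi_r(y)+O_{r}(y)) - f(\phi_r(y)+O_{r_h}^n(y)) \Big) + \Big( f(\phi_r(y)+O_{r_h}^n(y)) - f(\phi_{r_h}(y_n)+O_{r_h}^n(y_n)) \Big).
\end{align*}
Integrating against $p_{t-r}(x,y)$ over $(r,y) \in [s,t] \times \T$ and using the triangle inequality gives
\begin{align*}
& \Big\| \int_s^t \int_{\T} p_{t-r}(x,y) \left( f(\phi_r(y)+O_{r}(y)) - f(\phi_{r_h}(y_n) + O_{r_h}^n(y_n)) \right) dy dr \Big\|_{L^m} \\
&\quad \leq \Big\| \int_s^t \int_{\T} p_{t-r}(x,y) \left( f(\phi_r(y)+O_{r}(y)) - f(\phi_r(y) + O_{r_h}^n(y)) \right) dy dr \Big\|_{L^m}  \\
&\quad\quad + \Big\| \int_s^t \int_{\T} p_{t-r}(x,y) \left( f(\phi_r(y)+O_{r_h}^n(y)) - f(\phi_{r_h}(y_n) + O_{r_h}^n(y_n)) \right) dy dr \Big\|_{L^m} .
\end{align*}
For the first term on the right-hand side, we apply Proposition~\ref{prop:ssl-on-final} with $T=t$ to obtain
\begin{align*}
& \sup_{x \in \mathbb{T}}\Big\| \int_s^t \int_{\T} p_{t-r}(x,y) \left( f(\phi_r(y)+O_{r}(y)) - f(\phi_r(y) + O_{r_h}^n(y)) \right) dy dr \Big\|_{L^m} \\
&\quad \leq C \, \| f \|_\infty n^{-\frac{1}{2}+\varepsilon}\,  (t-s)^{\frac{1}{2} + \varepsilon} +  C \| f \|_{\mathcal{C}^1}  \big( \phi \big)_{\mathcal{C}^{1,0}_{[0,1],x} L^{1,\infty}}  n^{-1+\varepsilon} (t-s)^{1+\varepsilon} .
\end{align*}
For the second term, we apply Proposition~\ref{prop:ssl-onh-final} with $T=t$ to obtain
\begin{align*}
& \sup_{x \in \mathbb{T}}\Big\| \int_s^t \int_{\T} p_{t-r}(x,y) \left(f(\phi_r(y)+O^n_{r_h}(y)) - f(\phi_{r_h}(y_n) + O_{r_h}^n(y_n)) \right) dy dr \Big\|_{L^m} \\
&\quad \leq C \Big( \| f \|_{\infty} n^{-1+2\varepsilon} + n^{2\varepsilon}\| f \|_{\mathcal{B}^{\gamma}_p} ( \| \phi-\pi_n\phi \|_{L^{\infty,\infty}_{[h,1],x} L^m} +n^{-\frac{1}{2}} ) \Big) (t-s)^{\frac{1}{2}+\varepsilon}  \\
&\quad\quad +  C \Bigg( (\phi)_{\mathcal{C}^{1,0}_{[0,1],x} L^{1,\infty}} \Big( \| f \|_{\mathcal{B}_p^\gamma} ( \| \phi- \pi_{n}\phi\|_{L^{\infty,\infty}_{[h,1],x} L^m}+n^{-\frac{1}{2}} ) +  \| f \|_{\mathcal{C}^1} n^{-2+2\varepsilon} \Big) \\
& \hspace{2cm} +  \| f \|_{\mathcal{B}_p^\gamma} \big(\phi-\pi_n\phi\big)_{\mathcal{C}^{\frac{1}{2}+\varepsilon,0}_{[0,1],x} L^{1,m}} \Bigg) (t-s)^{1+\varepsilon \wedge \frac{1}{4}} .
\end{align*}
We now sum the two bounds and simplify. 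Since $(s,t) \in \Delta_{[0,1]}$, every power $(t-s)^{\alpha}$ with $\alpha \geq \frac{1}{2}+\varepsilon$ is bounded by $(t-s)^{\frac{1}{2}+\varepsilon}$; in particular the exponents $1+\varepsilon$ and $1+\varepsilon\wedge\frac{1}{4}$ are both at least $\frac{1}{2}+\varepsilon$ (recall $\varepsilon \in (0,\frac{1}{2})$, so $\varepsilon \wedge \frac14 \ge \varepsilon/2 \ge$ $\varepsilon$ when $\varepsilon\le 1/4$, and $1+\varepsilon\wedge\frac14 \ge 5/4 > \frac12+\varepsilon$ in all cases). Hence all contributions can be written as a constant times $(t-s)^{\frac{1}{2}+\varepsilon}$. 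Moreover $\| f \|_{\infty} n^{-1+2\varepsilon} \leq \| f \|_{\infty} n^{-\frac{1}{2}+\varepsilon}$, $\| f \|_{\mathcal{C}^1} n^{-1+\varepsilon} \leq \| f \|_{\mathcal{C}^1}n^{-1+\varepsilon}$ and $\| f \|_{\mathcal{C}^1} n^{-2+2\varepsilon} \leq \| f \|_{\mathcal{C}^1} n^{-1+\varepsilon}$. Using the prefactor $(1+\| f \|_{\mathcal{B}^{\gamma}_p})$ to dominate all the $\| f \|_{\mathcal{B}^\gamma_p}$-weighted terms as well as the pure $\| f \|_{\infty}$ and $\| f \|_{\mathcal{C}^1}$ terms, we collect
\begin{align*}
& \sup_{x \in \mathbb{T}}\Big\| \int_s^t \int_{\T} p_{t-r}(x,y) \left( f(\phi_r(y)+O_{r}(y)) - f(\phi_{r_h}(y_n) + O_{r_h}^n(y_n)) \right) dy dr \Big\|_{L^m}  \\
&\quad \leq C ( 1+\| f \|_{\mathcal{B}^{\gamma}_p}) \bigg( \| f \|_{\infty} n^{-\frac{1}{2}+\varepsilon} + n^{2\varepsilon} \left(\| \phi-\pi_n \phi \|_{L^{\infty,\infty}_{[h,1],x} L^m} +n^{-\frac{1}{2}} \right)  \\
&\quad\quad +  (\phi)_{\mathcal{C}^{1,0}_{[0,1],x} L^{1,\infty}} \Big(  \| \phi- \pi_n \phi \|_{L^{\infty,\infty}_{[h,1],x} L^m} +n^{-\frac{1}{2}}+  \| f \|_{\mathcal{C}^1} n^{-1+\varepsilon} \Big)  +  (\phi-\pi_n \phi )_{\mathcal{C}^{\frac{1}{2}+\varepsilon,0}_{[0,1],x} L^{m,\infty}} \bigg) (t-s)^{\frac{1}{2}+\varepsilon} ,
\end{align*}
where we used $\big(\phi-\pi_n\phi\big)_{\mathcal{C}^{\frac{1}{2}+\varepsilon,0}_{[0,1],x}L^{1,m}} \leq \big(\phi-\pi_n\phi\big)_{\mathcal{C}^{\frac{1}{2}+\varepsilon,0}_{[0,1],x}L^{m,\infty}}$. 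This is precisely \eqref{eq:ssl-o-onh-final}.
\end{proof}
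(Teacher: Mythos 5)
Your proposal is correct and follows exactly the paper's own proof: split at the intermediate term $f(\phi_r(y)+O^n_{r_h}(y))$, apply Proposition~\ref{prop:ssl-on-final} to the first piece and Proposition~\ref{prop:ssl-onh-final} to the second, then collect terms. (The only blemish is the garbled parenthetical justifying $1+\varepsilon\wedge\tfrac14\geq\tfrac12+\varepsilon$; the inequality itself is trivially true since $1+\varepsilon\wedge\tfrac14\geq 1>\tfrac12+\varepsilon$ for $\varepsilon<\tfrac12$, so nothing is lost.)
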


\begin{proof}
Let $x \in \mathbb{T}$ and $0 \leq S \leq T \leq 1$. For $(s,t) \in \Delta_{[S,T]}$, write
\begin{align*}
& \Big\|  \int_s^t \int_{\T} p_{T-r}(x,y) \left(f(\phi_r(y)+O_{r}(y)) - f(\phi_{r_h}(y_n) + O_{r_h}^n(y_n)) \right) dy dr \Big\|_{L^m}  \\
& \quad  \leq \Big\|  \int_s^t \int_{\T} p_{T-r}(x,y) \left( f(\phi_r(y)+O_{r}(y)) - f(\phi_{r}(y) + O_{r_h}^n(y))\right) dy dr \Big\|_{L^m}  \\ 
& \quad \quad + \Big\| \int_s^t \int_{\T} p_{T-r}(x,y) \left( f(\phi_r(y)+O^n_{r_h}(y)) - f(\phi_{r_h}(y_n) + O_{r_h}^n(y_n))\right) dy dr \Big\|_{L^m}  \\ 
& \quad =: J_1 + J_2 .
\end{align*}
Apply Proposition~\ref{prop:ssl-on-final} to $J_1$ and Proposition~\ref{prop:ssl-onh-final} to $J_2$ to get the result.
\end{proof}

Using the regularisation result of Proposition~\ref{cor:ssl-o-onh-final}, we can now state a bound on $\mathcal{E}^{2,n,k}$ (which was defined in \eqref{def:E2}).
\begin{corollary}\label{cor:bound-E2}
Assume that \eqref{eq:cond-gamma-p-H} holds.
Let $m \ge 2$ and $\varepsilon \in (0,\frac{1}{4})$. There exists $C$ such that for any $ (s,t) \in \Delta_{[0,1]}$ and any $n\in \N^*$, $k \in \mathbb{N}$, we have
\begin{align*}%
\sup_{x \in \mathbb{T}} \Big\| \mathcal{E}^{2,n,k}_{s,t}(x) \Big\|_{L^m} 
  \leq C (1+\| \psi_0 \|_{\mathcal{C}^{\frac{1}{2}-\varepsilon}} ) \Big( (1+\| b^k \|_{\infty}) n^{-\frac{1}{2}+\varepsilon} + (1+\| b^k \|_{\infty}) \| b^k \|_{\mathcal{C}^1}  n^{-1+\varepsilon} \Big) (t-s)^{\frac{1}{2}} .
\end{align*}
\end{corollary}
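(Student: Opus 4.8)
The plan is to decompose $\mathcal{E}^{2,n,k}$ by recognising it as an increment-type quantity governed by the combined regularisation of $O$ and $O^n$, and then to feed in the a priori regularity of the numerical scheme's drift part $v^{n,k}$. Recall from \eqref{def:E2} that, up to bookkeeping of the initial-condition terms, $\mathcal{E}^{2,n,k}_{s,t}(x)$ is precisely a quantity of the form
\[
\int_s^t \int_{\T} p_{t-r}(x,y)\Big( b^k\big(\phi_r(y)+O_r(y)\big) - b^k\big(\phi_{r_h}(y_n)+O^n_{r_h}(y_n)\big)\Big)\, dy\, dr
\]
with $\phi = v^{n,k} + P^n_{\cdot}\psi_0$ (an $\mathbb{F}$-adapted process), \emph{except} that the continuous-noise argument carries $P^n_r\psi_0$ rather than $P_r\psi_0$; so the very first step is to add and subtract $b^k(v^{n,k}_r(y) + P^n_r\psi_0(y) + O_r(y))$ versus $b^k(v^{n,k}_r(y)+P^n_r\psi_0(y)+O^n_{r_h}(y))$ so that the correct form appears, and to absorb any genuinely-continuous leftover. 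Then I would apply Proposition~\ref{cor:ssl-o-onh-final} directly with $f = b^k$ and $\phi$ as above.

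The second step is to estimate each of the pseudo-norms of $\phi$ appearing on the right-hand side of \eqref{eq:ssl-o-onh-final}. For $\|\phi - \pi_n\phi\|_{L^{\infty,\infty}_{[h,1],x}L^m}$, I would split $\phi - \pi_n\phi = (v^{n,k}-\pi_n v^{n,k}) + (P^n_\cdot\psi_0 - \pi_n(P^n_\cdot\psi_0))$; the first piece is bounded by $C\|b^k\|_\infty n^{-1+2\varepsilon}$ via Lemma~\ref{lem:reg-v-t}, and the second piece, involving only the discrete semigroup acting on the $\mathcal{C}^{1/2-\varepsilon}$ datum $\psi_0$, is $O(\|\psi_0\|_{\mathcal{C}^{1/2-\varepsilon}} n^{-1/2+\varepsilon})$ by standard discrete-kernel estimates (Lemma~\ref{lem:reg-Pnh}, as used in the proof of Lemma~\ref{lem:reg-v-t} itself and in \eqref{eq:contPnh}). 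For $(\phi - \pi_n\phi)_{\mathcal{C}^{1/2+\varepsilon,0}_{[0,1],x}L^{m,\infty}}$ I would use Lemma~\ref{lem:reg-v-t-2} (which controls the $\mathcal{C}^{3/4,0}$ pseudo-norm of $v^{n,k}-\pi_n v^{n,k}$, hence a fortiori the $\mathcal{C}^{1/2+\varepsilon,0}$ one for $\varepsilon<1/4$) by $C\|b^k\|_\infty n^{-1/2+\varepsilon}$, plus the analogous $\psi_0$-contribution of the same order. For $(\phi)_{\mathcal{C}^{1,0}_{[0,1],x}L^{1,\infty}}$, which controls the Lipschitz-in-time regularity of $v^{n,k}+P^n\psi_0$, the drift part $v^{n,k}$ is genuinely Lipschitz in time with constant $\lesssim \|b^k\|_\infty$ because its time-derivative is (essentially) $P^n$ applied to $b^k$ of the scheme, of size $\lesssim\|b^k\|_\infty$; the $P^n\psi_0$ part contributes $\lesssim \|\psi_0\|_{\mathcal{C}^{1/2-\varepsilon}}$. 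Feeding all of these into \eqref{eq:ssl-o-onh-final}, each term is either $O(\|b^k\|_\infty n^{-1/2+\varepsilon})$ or $O(\|b^k\|_\infty\|b^k\|_{\mathcal{C}^1} n^{-1+\varepsilon})$ up to the universal factor $(1+\|b\|_{\mathcal{B}_p^\gamma})$, after using $\|b^k\|_{\mathcal{B}_p^\gamma}\le\|b\|_{\mathcal{B}_p^\gamma}$ and absorbing the $n^{2\varepsilon}$ prefactors into a slightly larger $\varepsilon$; collecting these yields exactly the claimed bound with the $(1+\|\psi_0\|_{\mathcal{C}^{1/2-\varepsilon}})$ and $(1+\|b^k\|_\infty)$, $(1+\|b^k\|_\infty)\|b^k\|_{\mathcal{C}^1}$ structure.

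The main obstacle, I expect, is \emph{bookkeeping rather than analysis}: one must carefully track which terms carry $O_r$ versus $O^n_{r_h}$ and which carry $P_r\psi_0$ versus $P^n_r\psi_0$ versus their $\pi_n$-projections, so that Proposition~\ref{cor:ssl-o-onh-final} can be applied with the precise $\phi$ it requires, and so that the leftover differences (like $P^n_r\psi_0 - \pi_n(P^n_r\psi_0)$ inside a continuous convolution) are genuinely of order $n^{-1/2+\varepsilon}$ and not larger. A secondary point of care is that Proposition~\ref{cor:ssl-o-onh-final} gives its estimate on an interval starting from some $S$, with pseudo-norms taken over $[h,1]$ or $[0,1]$; one should note these are monotone in the interval so that taking $S=0$ (equivalently, working on all of $[0,1]$) is harmless, and that the restriction $\varepsilon\in(0,1/4)$ in the statement (versus $\varepsilon\in(0,1/2)$ in Proposition~\ref{cor:ssl-o-onh-final}) is exactly what is needed so that $n^{2\varepsilon}\cdot n^{-1/2}$ and $n^{2\varepsilon}\cdot n^{-1}$ still beat $n^{-1/2+\varepsilon'}$ and $n^{-1+\varepsilon'}$ for a suitable $\varepsilon'$. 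Once the decomposition is set up correctly, the rest is a direct substitution of Lemmas~\ref{lem:reg-v-t} and~\ref{lem:reg-v-t-2} and the discrete-kernel estimates into \eqref{eq:ssl-o-onh-final}, followed by collecting powers of $n$.
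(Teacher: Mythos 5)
Your proposal follows the paper's proof of Corollary~\ref{cor:bound-E2}: apply Proposition~\ref{cor:ssl-o-onh-final} with $f=b^k$ and $\phi=v^{n,k}+P^n\psi_0$, then feed in Lemma~\ref{lem:reg-v-t}, Lemma~\ref{lem:reg-v-t-2}, Lemma~\ref{lem:reg-Pnh}$(iii)$ and the bound $(v^{n,k})_{\mathcal{C}^{1,0}_{[0,1],x}L^{1,\infty}}\leq C\|b^k\|_\infty$, and collect powers of $n$. Two small corrections. First, there is no mismatch to repair at the outset: with $\phi=v^{n,k}+P^n\psi_0$, the expression $f(\phi_r(y)+O_r(y))-f(\phi_{r_h}(y_n)+O^n_{r_h}(y_n))$ is exactly the integrand of $\mathcal{E}^{2,n,k}$ in \eqref{def:E2} (the continuous-noise term there already carries $P^n_r\psi_0$), so the add-and-subtract of the pivot $b^k(\phi_r(y)+O^n_{r_h}(y))$ merely reproduces the $J_1/J_2$ split that lives \emph{inside} the proof of Proposition~\ref{cor:ssl-o-onh-final} and is not needed when invoking it. Second, your justification of the bound on $(\phi)_{\mathcal{C}^{1,0}_{[0,1],x}L^{1,\infty}}$ via ``Lipschitz-in-time regularity'' would fail for the $P^n\psi_0$ part if taken at face value: the heat semigroup applied to a $\mathcal{C}^{\frac{1}{2}-\varepsilon}$ datum is not Lipschitz in time near $t=0$, so a contribution $\lesssim\|\psi_0\|_{\mathcal{C}^{\frac{1}{2}-\varepsilon}}$ cannot be obtained that way. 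The step is saved by the actual definition \eqref{def:holder-norm-phi}: this pseudo-norm measures increments of conditional expectations, so the deterministic summand $P^n\psi_0$ contributes exactly zero and only $(v^{n,k})_{\mathcal{C}^{1,0}_{[0,1],x}L^{1,\infty}}\leq C\|b^k\|_\infty$ survives (this is \eqref{eq:E2-1} in the paper); the same observation makes the $\psi_0$-contribution to $(\phi-\pi_n\phi)_{\mathcal{C}^{\frac{1}{2}+\varepsilon,0}_{[0,1],x}L^{m,\infty}}$ vanish rather than being ``of the same order''. With these adjustments the argument closes as you describe.
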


\begin{proof}
Apply Proposition~\ref{cor:ssl-o-onh-final} with $\phi = v^{n,k}+P^n \psi_0$ to get
\begin{align*}
&\sup_{x \in \mathbb{T}} \|  \mathcal{E}^{2,n,k}_{s,t}(x) \|_{L^m}\\
&= \sup_{x \in \mathbb{T}} \Big\| \int_s^t \int_{\T} p_{t-r}(x,y)  \Big( b^k(v^{n,k}_r(y) + P_r^n \psi_0(y)+ O_r(y)) - b^k(v^{n,k}_{r_h}(y_n) + P_{r_h}^n \psi_0(y_n) + O^n_{r_h}(y_n)) \Big)\, dy dr \Big\|_{L^m}  \\ 
&\hspace{1.5cm}\leq C (1+ \|b^k\|_{\mathcal{B}^{\gamma}_p}) \Big( \| b^k\|_{\infty} n^{-\frac{1}{2}+\varepsilon} + n^{2\varepsilon} (\| \phi-\pi_n \phi \|_{L^{\infty,\infty}_{[h,1],x} L^m} +n^{-\frac{1}{2}})  \\ 
&\hspace{1.5cm} \quad +  (\phi)_{\mathcal{C}^{1,0}_{[0,1],x} L^{1,\infty}} \Big(  \| \phi- \pi_n \phi \|_{L^{\infty,\infty}_{[h,1],x} L^m}+n^{-\frac{1}{2}} +  \| b^k \|_{\mathcal{C}^1} n^{-1+\varepsilon} \Big)  +  (\phi-\pi_n \phi )_{\mathcal{C}^{\frac{1}{2}+\varepsilon,0}_{[0,1],x} L^{m,\infty}} \Big) (t-s)^{\frac{1}{2}+\varepsilon}   .
\end{align*}
Now use the following properties: from the definition of the sequence $(b^k)_{k\in \N}$, one has ${\sup_{k}  \|b^k\|_{\mathcal{B}^{\gamma}_p}<\infty}$; using that $\psi_{0}$ is deterministic, $(v^{n,k}+P^n \psi_0)_{\mathcal{C}^{1,0}_{[0,1],x} L^{1,\infty}} = (v^{n,k})_{\mathcal{C}^{1,0}_{[0,1],x} L^{1,\infty}}$ and $\big(v^{n,k}+P^n \psi_0 - \pi_{n}(v^{n,k}+P^n \psi_0)\big)_{\mathcal{C}^{1,0}_{[0,1],x} L^{1,\infty}} = (v^{n,k} - \pi_{n}v^{n,k})_{\mathcal{C}^{1,0}_{[0,1],x} L^{1,\infty}}$; and 
$$ \| \phi- \pi_n \phi \|_{L^{\infty,\infty}_{[h,1],x} L^m} \leq  \| v^{n,k}- \pi_n v^{n,k} \|_{L^{\infty,\infty}_{[h,1],x} L^m} + { \| P^n \psi_0- \pi_n P^n \psi_0 \|_{L^{\infty,\infty}_{[h,1],x}}}.$$
 Hence the previous inequality now reads
\begin{align}\label{eq:discrete-final-comb}
\begin{split}
&\sup_{x \in \mathbb{T}} \|  \mathcal{E}^{2,n,k}_{s,t}(x) \|_{L^m}\\
&\quad \leq C \bigg( \| b^k \|_{\infty} n^{-\frac{1}{2}+\varepsilon} + n^{-\frac{1}{2}+2\varepsilon}  + n^{2\varepsilon} \| v^{n,k}- \pi_n v^{n,k}  \|_{L^{\infty,\infty}_{[h,1],x} L^m} + n^{2\varepsilon} \sup_{\substack{t \in [0,1] \\ x \in \T}} |P^n_t \psi_0(x)  -P^n_{t_h} \psi_0(x_n) |  \\
&\quad\quad+ (v^{n,k})_{\mathcal{C}^{1,0}_{[0,1],x} L^{1,\infty}} \Big(\| v^{n,k}- \pi_n v^{n,k}  \|_{L^{\infty,\infty}_{[h,1],x} L^m} +  \sup_{\substack{t \in [0,1] \\ x \in \T}} |P^n_t \psi_0(x)  -P^n_{t_h} \psi_0(x_n) | +n^{-\frac{1}{2}}  +  \| b^k \|_{\mathcal{C}^1} n^{-1+\varepsilon} \Big)\\
& \quad\quad   +  (v^{n,k}-\pi_n v^{n,k} )_{\mathcal{C}^{\frac{1}{2}+\varepsilon,0}_{[0,1],x} L^{m,\infty}}   \bigg)  (t-s)^{\frac{1}{2}+\varepsilon} .
\end{split}
\end{align}
Observe that for $s \leq u  < t$, using \eqref{eq:comp-seminorms1} with $\rho=v^{n,k}$ and 
$Y=\int_0^s P^n_{(t-\theta)_h} b^k(u^{n,k}_{\theta_{h}}(y)) d \theta$, we get
\begin{align*}
\EE^s |\EE^u v^{n,k}_r(x)-\EE^s v^{n,k}_r(x)|   
& \leq 2 \EE^s \Big|\int_s^t \int_{\T} p^n_{(t-\theta)_h}(x,y) \, b^k(u^{n,k}_{\theta_h}(y_n)) \, dy d\theta\Big| \\
& \leq C \| b^k \|_\infty (t-s) ,
\end{align*}
where we used Lemma~\ref{lem:reg-Pnh}$(iii)$ with $\alpha=0$ in the last inequality. 
Therefore, we get that
\begin{align}\label{eq:E2-1}
\big( v^{n,k} \big)_{\mathcal{C}^{1,0}_{[0,1],x} L^{1,\infty}} \leq C \| b^k \|_\infty .
\end{align}
Moreover, by Lemma~\ref{lem:reg-Pnh}$(iii)$ (with $\alpha=1/2-\varepsilon$), we have
\begin{align}\label{eq:E2-2}
|P^n_t \psi_0(x)  -P^n_{t_h} \psi_0(x_n) | \leq C \| \psi_0 \|_{\mathcal{C}^{\frac{1}{2}-\varepsilon}} n^{-\frac{1}{2}+\varepsilon}. 
\end{align}
Finally, by Lemma~\ref{lem:reg-v-t} and Lemma~\ref{lem:reg-v-t-2}, recall that
\begin{align}\label{eq:E2-3}
\begin{split}
\| v^{n,k} -\pi_n v^{n,k}\|_{L^{\infty,\infty}_{[0,1],x} L^m} & \leq C  \| b^k \|_{\infty} n^{-1+ \varepsilon}   \leq  C  \| b^k \|_{\mathcal{C}^1} n^{-1+\varepsilon} \\
 (v^{n,k}-\pi_n v^{n,k} )_{\mathcal{C}^{\frac{1}{2}+\varepsilon,0}_{[0,1],x} L^{m,\infty}} &\leq \big( v^{n,k}-\pi_n v^{n,k} \big)_{\mathcal{C}^{\frac{3}{4},0}_{[0,1],x} L^\infty} \leq C \| b^k \|_{\infty} n^{-\frac{1}{2}+\varepsilon} .
\end{split}
\end{align}
Injecting \eqref{eq:E2-1}, \eqref{eq:E2-2} and \eqref{eq:E2-3} in \eqref{eq:discrete-final-comb}, it comes
\begin{align*}
\sup_{x \in \mathbb{T}} \|  \mathcal{E}^{2,n,k}_{s,t}(x) \|_{L^m} 
&\leq C \bigg( \| b^k \|_{\infty} n^{-\frac{1}{2}+\varepsilon} + n^{-\frac{1}{2}+2\varepsilon}  +  \| b^k \|_{\mathcal{C}^1} n^{-1+3\varepsilon} + \| \psi_0 \|_{\mathcal{C}^{\frac{1}{2}-\varepsilon}} n^{-\frac{1}{2}+3\varepsilon} \\
&\quad\quad+ \| b^k \|_{\infty} \Big(\| b^k \|_{\infty} n^{-1+ \varepsilon} +  \| \psi_0 \|_{\mathcal{C}^{\frac{1}{2}-\varepsilon}} n^{-\frac{1}{2}+\varepsilon} +n^{-\frac{1}{2}}  +  \| b^k \|_{\mathcal{C}^1} n^{-1+\varepsilon} \Big)\\
& \quad\quad   + \| b^k \|_{\infty} n^{-\frac{1}{2}+\varepsilon}   \bigg)  (t-s)^{\frac{1}{2}+\varepsilon} 
\end{align*}
and up to changing $3\varepsilon$ into $\varepsilon$, we finally get the result. 
\end{proof}

\begin{appendices}

\section{General lemmas and estimates}\label{app:general-lemmas}

In this section, we recall and prove estimates on the discrete and continuous heat semigroup on the torus $P^n$ and $P$, the variance of the discrete and continuous OU processes, some Besov and regularity estimates. Finally we state the Stochastic Sewing Lemma, its critical version and an application used several times in Section~\ref{sec:reg-O}.

\subsection{Heat and Besov regularity estimates}

We gather in the following lemma some classical estimates on the continuous heat semigroup, as well as their less standard counterpart for the discrete heat semigroup $P^n$. In the following lemmas, recall that $h$ denotes the time step related to the space discretization parameter $n$ by the relation $h = c(2n)^{-2}$.

\begin{lemma}\label{lem:reg-Pnh}
Let $\alpha\leq\beta\in [0,1]$.
\begin{enumerate}[label=(\roman*)]
\item Let $\mathcal{S}$ be either $P$ or $G$, defined on the domain $\mathcal{D}$ which is respectively either $\T$ or $\R$. There exists $C>0$ such that for any $f\in \mathcal{C}^\alpha(\mathcal{D})$, any $(s,t)\in \Delta_{[0,1]}$ and any $x,y \in \mathcal{D}$, 
\begin{align*}
|\mathcal{S}_{t}f(x) - \mathcal{S}_{s}f(y)| \leq C\, \|f\|_{\mathcal{C}^\alpha(\mathcal{D})}\, \left( (t-s)^{\frac{\beta}{2}} + |x-y|^{\beta} \right) \, s^{\frac{\alpha-\beta}{2}}.
\end{align*}

\item There exists $C>0$ such that for any $f\in L^\infty(\T)$, any $n\in \N^*$, any $t \in [h,1]$, any $x\in \T$ and any $z\in \{x,x_{n}\}$, 
\begin{align*}
|P^n_{t}f(x) - P^n_{t_{h}}f(z)| \leq C\, \|f\|_{L^\infty(\T)}\, \left(\log(2n)\right)^{\frac{\beta}{2}} n^{-\beta}\, t^{-\frac{\beta}{2}}.
\end{align*}

\item There exists $C>0$ such that for any $f\in \mathcal{C}^\alpha(\T)$, any $n\in \N^*$, any $t \in [0,1]$, any $x\in \T$ and any $z\in \{x,x_{n}\}$, 
\begin{align*}
 \| P_t^n f \|_{\mathcal{C}^\alpha} & \leq C \| f \|_{\mathcal{C}^\alpha}\\
|P^n_{t}f(x) - P^n_{t_{h}}f(z)| & \leq C\, \|f\|_{\mathcal{C}^\alpha(\T)}\, n^{-\alpha} .
\end{align*}
\end{enumerate}
\end{lemma}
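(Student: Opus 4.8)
\emph{Item (i).} The plan is to reduce to two classical facts about the semigroup $\mathcal S\in\{P,G\}$ acting on $\mathcal D\in\{\T,\R\}$: the smoothing estimate $\|\mathcal S_s g\|_{\mathcal C^\beta(\mathcal D)}\le C\, s^{\frac{\alpha-\beta}{2}}\|g\|_{\mathcal C^\alpha(\mathcal D)}$ for $0\le\alpha\le\beta\le1$, obtained by differentiating the explicit kernel ($g_s$, resp.\ the theta kernel $p_s$) under the integral sign and using $\int_{\mathcal D}|z|^\beta\,|\partial^k_x(\text{kernel}_s)(z)|\,dz\lesssim s^{\frac{\beta-k}{2}}$; and the near-identity estimate $\|(\mathcal S_\tau-\mathrm{Id})g\|_{\infty}\le C\,\tau^{\beta/2}\|g\|_{\mathcal C^\beta(\mathcal D)}$, obtained from $\mathcal S_\tau g(x)-g(x)=\int(\text{kernel}_\tau)(z)\,(g(x-z)-g(x))\,dz$ together with $|g(x-z)-g(x)|\le\|g\|_{\mathcal C^\beta}|z|^\beta$. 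Then I would split
\[
\mathcal S_tf(x)-\mathcal S_sf(y)=\big(\mathcal S_{t-s}-\mathrm{Id}\big)(\mathcal S_sf)(x)+\big(\mathcal S_sf(x)-\mathcal S_sf(y)\big),
\]
bound the first term by $C(t-s)^{\beta/2}\|\mathcal S_sf\|_{\mathcal C^\beta}$ and the second by $[\mathcal S_sf]_{\mathcal C^\beta}|x-y|^\beta$, and insert $\|\mathcal S_sf\|_{\mathcal C^\beta}\le Cs^{\frac{\alpha-\beta}{2}}\|f\|_{\mathcal C^\alpha}$.

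\emph{Item (ii).} Since $P^n_tf(x)=\int_\T p^n_t(x,y)f(y_n)\,dy$ (Lemma~\ref{def:discrete-kernel}), one has $|P^n_tf(x)-P^n_{t_h}f(z)|\le\|f\|_{L^\infty(\T)}\int_\T|p^n_t(x,y)-p^n_{t_h}(z,y)|\,dy$, so it suffices to bound the $L^1$ norm of the kernel difference. Two bounds on this single number are needed: the trivial one $\int_\T|p^n_t(x,y)-p^n_{t_h}(z,y)|\,dy\le 2$ (each discrete kernel integrates to $1$ under the CFL condition), and the endpoint
\[
\int_\T\big|p^n_t(x,y)-p^n_{t_h}(z,y)\big|\,dy\le C\,(\log(2n))^{1/2}\,n^{-1}\,t^{-1/2};
\]
interpolating these as $N^{1-\beta}N^{\beta}$ then yields the claimed $(\log(2n))^{\beta/2}n^{-\beta}t^{-\beta/2}$ for all $\beta\in[0,1]$. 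For the endpoint I would split into a time increment $\int_\T|p^n_t(x,y)-p^n_{t_h}(x,y)|\,dy$ (with $0\le t-t_h\le h=c(2n)^{-2}$) and a space increment $\int_\T|p^n_{t_h}(x,y)-p^n_{t_h}(x_n,y)|\,dy$ (with $|x-x_n|\le(2n)^{-1}$, this term being absent if $z=x$); both increments live at the grid scale $n^{-1}$ and are controlled by the first-order discrete heat kernel estimates of \cite[Section~2]{butkovsky2021optimal}, in which the logarithmic loss is precisely the price of discreteness at the smallest scale. Here the hypothesis $t\in[h,1]$ is used to ensure $t_h\asymp t$, so that the $t_h^{-1/2}$ produced by those bounds becomes $t^{-1/2}$.

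\emph{Item (iii).} The bound $\|P^n_tf\|_{\mathcal C^\alpha}\le C\|f\|_{\mathcal C^\alpha}$ follows from the non-negativity and translation-covariance of $p^n_t$ on $\T$ (so that $P^n_t$ is, up to constants, a contraction on $\mathcal C^0(\T)$ and on the $\alpha$-Hölder seminorm), as in \cite[Section~2]{butkovsky2021optimal}. For the second inequality I would split the increment into a spatial part $|P^n_{t_h}f(x)-P^n_{t_h}f(x_n)|\le\|P^n_{t_h}f\|_{\mathcal C^\alpha}|x-x_n|^\alpha\le C\|f\|_{\mathcal C^\alpha}n^{-\alpha}$, using the first inequality, and a temporal part $|P^n_tf(x)-P^n_{t_h}f(x)|$. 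For the latter, writing $\tilde f(y):=f(y_n)$ and using $\int_\T p^n_t(x,y)\,dy=\int_\T p^n_{t_h}(x,y)\,dy=1$, one gets $|P^n_tf(x)-P^n_{t_h}f(x)|\le\int_\T|p^n_t(x,y)-p^n_{t_h}(x,y)|\,|\tilde f(y)-\tilde f(x)|\,dy\le\|f\|_{\mathcal C^\alpha}\int_\T|p^n_t(x,y)-p^n_{t_h}(x,y)|\,(|y-x|+(2n)^{-1})^{\alpha}\,dy$, which the discrete kernel bounds of \cite[Section~2]{butkovsky2021optimal} control by $C\|f\|_{\mathcal C^\alpha}\,(t-t_h)^{\alpha/2}\le C\|f\|_{\mathcal C^\alpha}\,h^{\alpha/2}=C\|f\|_{\mathcal C^\alpha}\,n^{-\alpha}$ (no negative power of $t$ appears because the $\mathcal C^\alpha$ regularity of $f$ is already exploited, so no smoothing gain is needed; the case $t<h$, where $t_h=0$, is handled directly since then $t^{\alpha/2}\lesssim n^{-\alpha}$).

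\emph{Main obstacle.} The only genuinely delicate step is the endpoint estimate in (ii): extracting the sharp rate $n^{-1}t^{-1/2}$ together with the exact $(\log(2n))^{1/2}$ loss for the combined time-and-space increment of the discrete heat kernel at the grid scale. Everything else is either classical (item (i)) or a routine reduction to the discrete kernel bounds of \cite{butkovsky2021optimal}.
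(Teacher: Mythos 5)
Your proposal is correct in outline but takes a genuinely different route from the paper: the paper's proof of this lemma is purely a set of citations to \cite{butkovsky2021optimal} (item $(i)$ is Lemma~2.2.1$(i)$ there; item $(ii)$ is the first statement of Lemma~2.2.6 with $\alpha=0$; item $(iii)$ combines Lemma~2.2.5 with the two statements of Lemma~2.2.6, the first with $\beta=\alpha$ for $t\geq h$ and the second for $t\leq h$), whereas you reconstruct the arguments. Your proof of $(i)$ — splitting off $(\mathcal S_{t-s}-\mathrm{Id})\mathcal S_s f$ and combining the smoothing estimate with the near-identity estimate — is the standard argument and is fine. For $(ii)$, reducing to the $\beta=1$ endpoint and interpolating the scalar $\int_\T|p^n_t(x,y)-p^n_{t_h}(z,y)|\,dy$ against the mass bound is a clean reorganisation, but note that the endpoint (the $L^1$ bound on the time-plus-space increment of $p^n$ with the $(\log(2n))^{1/2}$ loss) is exactly the content of the cited discrete-kernel lemma, so you have relocated rather than removed the dependence on \cite{butkovsky2021optimal}. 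What your version buys is transparency about where each hypothesis ($t\geq h$, the CFL condition, the Hölder regularity of $f$) enters; what the paper's version buys is brevity.

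Two steps in your sketch are thinner than they look. First, in $(iii)$ the bound $\|P^n_t f\|_{\mathcal C^\alpha}\leq C\|f\|_{\mathcal C^\alpha}$ does not follow from positivity and translation invariance alone: since $P^n_tf(x)=\int_\T p^n_t(x,y)f(y_n)\,dy$ evaluates $f$ only at grid points, the naive estimate yields $|P^n_tf(x)-P^n_tf(x')|\leq\|f\|_{\mathcal C^\alpha}\sup_w|(x-w)_n-(x'-w)_n|^\alpha$, which is of order $(2n)^{-\alpha}$ even for sub-grid separations $|x-x'|\ll(2n)^{-1}$; recovering $|x-x'|^\alpha$ at those scales requires in addition that the kernel does not concentrate inside single grid cells (i.e.\ the Bernstein-type regularity of the degree-$n$ trigonometric polynomial $y\mapsto p^n_t(x,y)$), which is what the proof of \cite[Lemma~2.2.5]{butkovsky2021optimal} actually exploits. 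Second, your temporal estimate in $(iii)$, $\int_\T|p^n_t(x,y)-p^n_{t_h}(x,y)|\,(|y-x|+(2n)^{-1})^\alpha\,dy\lesssim(t-t_h)^{\alpha/2}$, is asserted exactly at the borderline scale $t\asymp h$ where the discrete kernel estimates typically carry logarithmic corrections; this is consistent with the statement, but it is the one place where the continuous-kernel heuristic does not substitute for the precise discrete bounds. Neither point is a fatal gap — both are covered by the reference on which the paper itself relies — but they are where the word ``routine'' hides the actual work.
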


\begin{proof}
\begin{enumerate}[label=(\roman*),itemsep=0pt]
\item This is \cite[Lemma 2.2.1 (i)]{butkovsky2021optimal}.

\item This is the first statement of \cite[Lemma 2.2.6]{butkovsky2021optimal} with $\alpha=0$.

\item The first inequality is from \cite[Lemma 2.2.5]{butkovsky2021optimal}. The second one is a combination of the first statement of \cite[Lemma 2.2.6]{butkovsky2021optimal} with $\beta = \alpha$ when $t\geq h$ and of the second statement of \cite[Lemma 2.2.6]{butkovsky2021optimal} when $t\leq h$.
\end{enumerate}
\end{proof}

The next lemma provides comparisons between the continuous and discrete semigroups.
\begin{lemma}\label{lem:P-Pn}
~
\begin{enumerate}[label=(\roman*)]
\item Let $\alpha\in [0,2]$. There exists $C>0$ such that for any $n\in \N^*$, any $t \in [h,1]$ and any $x\in \T$,
\begin{align*}
 \left\|p_{t}(x,\cdot) - p_{t_{h}}^n(x,\cdot) \right\|^2_{L^2(\T)} \leq C\, n^{-\alpha}\, t^{-\frac{\alpha+1}{2}}.
\end{align*}

\item Let $\alpha\in [0,1]$. There exists $C>0$ such that for any $f\in \mathcal{C}^\alpha(\T)$, any $n\in \N^*$ and any $t \in [0,1]$ and any $x\in \T$,
\begin{align*}
\left| P_{t}^n f(x) - P_{t} f(x) \right| \leq C\, n^{-\alpha}\, \|f\|_{\mathcal{C}^\alpha(\T)}.
\end{align*}
\end{enumerate}
\end{lemma}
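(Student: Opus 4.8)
\textbf{Proof proposal for Lemma~\ref{lem:P-Pn}.}

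The plan is to prove the two comparison estimates separately, relying on the Fourier (spectral) representation of both the continuous heat kernel $p_t$ on $\T$ and the discrete kernel $p^n_{t_h}$. Recall that $p_t(x,y) = \sum_{k \in \Z} e^{-4\pi^2 k^2 t} e^{i2\pi k(x-y)}$, while the discrete heat kernel $p^n_{t_h}$ is built from the eigenvalues of $\mathrm{Id} - \delta t \Delta_n$, which under the CFL condition $c<\tfrac12$ are of the form $(1-\mu^n_k)^{t_h/h}$ with $\mu^n_k = c\,(1 - \cos(2\pi k/(2n)))\cdot 2 \in [0, 2c)$ for the $2n$ Fourier modes $|k| \le n$. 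The key elementary fact, already encoded in \cite[Lemma 2.1.5]{butkovsky2021optimal}, is that $(1-\mu^n_k)^{t_h/h}$ approximates $e^{-4\pi^2 k^2 t}$ with an error controlled by $k$, $n$ and $t$; this is what drives both parts.

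For part $(i)$, I would expand $\|p_t(x,\cdot) - p^n_{t_h}(x,\cdot)\|_{L^2(\T)}^2$ via Parseval, which turns it into a sum over Fourier modes $k$ of squared differences of the two symbols, namely $\sum_{|k|\le n} \big(e^{-4\pi^2 k^2 t} - (1-\mu^n_k)^{t_h/h}\big)^2 + \sum_{|k|>n} e^{-8\pi^2 k^2 t}$. The high-frequency tail $\sum_{|k|>n} e^{-8\pi^2 k^2 t}$ is bounded by $C\,n^{-\alpha} t^{-(\alpha+1)/2}$ for any $\alpha \in [0,2]$ by a standard Gaussian-tail/interpolation estimate (compare $e^{-c k^2 t}$ with $(k^2 t)^{-(\alpha+1)/2} n^{-\alpha}$ for $|k|>n$, integrating the resulting series). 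For the low-frequency part, I would split the error $e^{-4\pi^2k^2 t} - (1-\mu^n_k)^{t_h/h}$ into the piece coming from $t_h$ versus $t$ (bounded using $|t-t_h|\le h = c(2n)^{-2}$ and the derivative bound $|\partial_t e^{-4\pi^2 k^2 t}| \le 4\pi^2 k^2 e^{-4\pi^2 k^2 t}$) and the piece comparing $(1-\mu^n_k)^{t_h/h}$ with $e^{-4\pi^2 k^2 t_h}$ (bounded using $|\mu^n_k - 4\pi^2 k^2 h| \le C k^4 h^2 = C k^4 n^{-4}$ for $|k|\le n$, together with $|e^{-a} - e^{-b}| \le |a-b| e^{-\min(a,b)}$). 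In both cases one extracts a factor $k^2 n^{-2}$ (or better) multiplied by a Gaussian weight $e^{-c k^2 t}$, and summing $\sum_k (k^2 n^{-2})^{?} e^{-c k^2 t}$ and interpolating yields the claimed $n^{-\alpha} t^{-(\alpha+1)/2}$. The cleanest route may actually be to cite \cite[Lemma 2.2.6 or its neighbourhood]{butkovsky2021optimal} directly, since this $L^2$-comparison is essentially \cite[Corollary 2.3.2]{butkovsky2021optimal}'s ingredient for the Ornstein--Uhlenbeck discretisation error; I would phrase the proof as "this follows from the spectral estimates in \cite[Section 2]{butkovsky2021optimal}" and only sketch the interpolation.

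For part $(ii)$, the estimate $|P^n_t f(x) - P_t f(x)| \le C n^{-\alpha}\|f\|_{\mathcal{C}^\alpha(\T)}$ is a direct consequence of part $(i)$ combined with the time-discretisation already handled there: write $P^n_t f(x) - P_t f(x) = \int_\T (p^n_{t_h}(x,y) - p_t(x,y))(f(y_n) - f(x))\,dy + \int_\T(p^n_{t_h}(x,y) - p_t(x,y))\,dy\cdot f(x)$, plus the correction replacing $f(y)$ by $f(y_n)$. The second term vanishes since both kernels integrate to $1$ in $y$. For the first term, use $|f(y)-f(x)|\le \|f\|_{\mathcal{C}^\alpha}|x-y|^\alpha$ and Cauchy--Schwarz against part $(i)$; one also needs that $\int_\T |y-x|^{2\alpha}(|p_t(x,y)|+|p^n_{t_h}(x,y)|)\,dy \le C t^{\alpha}$, which is standard. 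The replacement error $|f(y)-f(y_n)| \le \|f\|_{\mathcal{C}^\alpha} n^{-\alpha}$ integrated against $p^n_{t_h}$ directly gives $C n^{-\alpha}\|f\|_{\mathcal{C}^\alpha}$. Balancing $t^\alpha \cdot (n^{-\alpha} t^{-(\alpha+1)/2})^{1/2}$-type terms against the trivial bound $|P^n_t f - P_t f|\le 2\|f\|_\infty$ for small $t$ closes the estimate uniformly in $t\in[0,1]$. The main obstacle is the bookkeeping in part $(i)$: keeping track of the interpolation between the $\alpha=0$ and $\alpha=2$ endpoints while simultaneously handling the $t$ vs $t_h$ discrepancy and the $\mu^n_k$ vs $4\pi^2 k^2 h$ discrepancy without losing the sharp power of $t$; I expect to lean on the already-established \cite[Lemma 2.2.6]{butkovsky2021optimal} to avoid redoing this from scratch.
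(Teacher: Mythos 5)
The paper's own proof of this lemma is a two-line citation: part $(i)$ is \cite[Lemma 2.2.7]{butkovsky2021optimal} and part $(ii)$ is \cite[Lemma 2.2.9]{butkovsky2021optimal}. Your fallback of "cite the spectral estimates in \cite[Section~2]{butkovsky2021optimal}" is therefore exactly what the authors do, and your from-scratch Fourier/Parseval sketch of part $(i)$ is a sound, more self-contained version of the same underlying argument: the high-frequency tail and the two low-frequency discrepancies ($t$ vs.\ $t_h$, and $(1-\mu^n_k)^{t_h/h}$ vs.\ $e^{-4\pi^2k^2t_h}$) are handled correctly, and the interpolation in $\alpha\in[0,2]$ goes through since $t\geq h\sim n^{-2}$.

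Part $(ii)$, however, does not close as you have assembled it. Two concrete problems. First, plain Cauchy--Schwarz of $\int_\T |p^n_{t_h}-p_t|\,|y-x|^\alpha\,dy$ against part $(i)$ applied with the same exponent $\alpha$ yields $\|p_t-p^n_{t_h}\|_{L^2}\leq C n^{-\alpha/2}t^{-(\alpha+1)/4}$, i.e.\ only half the claimed rate in $n$; to recover $n^{-\alpha}$ you must invoke part $(i)$ with exponent $2\alpha$ (which is why part $(i)$ is stated up to $\alpha=2$), and even then you are left with an uncontrolled factor $t^{-(2\alpha+1)/4}$. Second, your proposed rescue for small $t$ — "balancing against the trivial bound $|P^n_tf-P_tf|\leq 2\|f\|_\infty$" — cannot work: $2\|f\|_\infty$ carries no factor $n^{-\alpha}$, so any range of $t$ on which it is the only available bound destroys the conclusion. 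The correct small-time argument is to compare both semigroups to the identity, $|P_tf(x)-f(x)|\leq C\|f\|_{\mathcal{C}^\alpha}t^{\alpha/2}$ and $|P^n_tf(x)-f(x)|\leq C\|f\|_{\mathcal{C}^\alpha}(t^{\alpha/2}+n^{-\alpha})$, which handles $t\lesssim n^{-2}$; for $t\gtrsim n^{-2}$ one should use the weighted Cauchy--Schwarz $\int|g|\,|y-x|^\alpha \leq (\int|g|)^{1/2}(\int|g|\,|y-x|^{2\alpha})^{1/2}$ with $g=p_t-p^n_{t_h}$, bounding the first factor via part $(i)$ at $\alpha=2$ and the second via the moment bound $\int(|p_t|+|p^n_{t_h}|)|y-x|^{2\alpha}\,dy\leq Ct^{\alpha}$ you mention; one checks that $n^{-1}t^{\alpha/2-3/8}\leq Cn^{-\alpha}$ on $t\in[n^{-2},1]$ for all $\alpha\in[0,1]$. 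Alternatively — and more in the spirit of the cited reference — run the spectral comparison directly on the Fourier multipliers of $P_t$ and $P^n_t$ acting on $f$, which avoids the Cauchy--Schwarz loss altogether. The ingredients you list are the right ones, but the combination you describe would only prove the lemma with $n^{-\alpha/2}$ in place of $n^{-\alpha}$, or fail outright for small $t$.
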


\begin{proof}
The first point is exactly \cite[Lemma 2.2.7]{butkovsky2021optimal} and the second one is \cite[Lemma 2.2.9]{butkovsky2021optimal}.
\end{proof}

The following Lemma states useful estimates on $Q$, $Q^n$ (defined in \eqref{def:discrete-OU}) and their difference.

\begin{lemma}\label{lem:bound-Qn}
~
\begin{enumerate}[label=(\roman*)]
\item There exists a constant $C>0$ such that for any $t \in (0,1]$,
\begin{align}\label{eq:lower-bound-Q}
C^{-1} \sqrt{t} \leq Q(t) \leq C \sqrt{t} .
\end{align}

\item  Let $\alpha\in [0,2)$. There exists a constant $C>0$ such that for any $n \in \mathbb{N}^*$ and $t\in (0,1]$,
\begin{align*}
|Q^n(t) - Q(t)| \leq C \, n^{-\frac{\alpha}{2}}\, t^{\frac{1}{2} - \frac{\alpha}{4}}.
\end{align*}

\item There exists a constant $C>0$ such that for any $n \in \mathbb{N}^*$,
\begin{align*}
\forall t\in (0,1],~ Q^n(t) \leq C \sqrt{t}  \quad \text{ and } \quad \forall t\in [h,1],~  Q^n(t) \geq C^{-1} \sqrt{t}.
\end{align*}

\end{enumerate}

\end{lemma}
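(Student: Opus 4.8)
\textbf{Proof plan for Lemma~\ref{lem:bound-Qn}.}

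The three statements concern the variance $Q(t)=\int_0^t\int_\T|p_r(y,0)|^2\,dy\,dr$ of the continuous OU process and its discrete analogue $Q^n(t)=\int_0^t\int_\T|p^n_{r_h}(x,y)|^2\,dy\,dr$. The plan is to reduce everything to sharp pointwise bounds on the heat kernels that are already available in the excerpt and in \cite{butkovsky2021optimal}. For $(i)$, I would first use the series representation $p_r(y,0)=\sum_{k\in\Z}e^{-4\pi^2k^2r}e^{i2\pi k y}$, so that by Parseval $\int_\T|p_r(y,0)|^2\,dy=\sum_{k\in\Z}e^{-8\pi^2k^2r}$. One then observes that this sum is comparable to $\int_\R e^{-8\pi^2\xi^2 r}\,d\xi=C\,r^{-1/2}$ up to multiplicative constants uniform in $r\in(0,1]$: the lower bound is immediate by keeping the $k=0$ term together with a comparison of the sum to the integral, and the upper bound follows from the standard comparison $\sum_{k\geq1}e^{-ak^2}\leq C a^{-1/2}$ for $a\in(0,C']$. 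Integrating the resulting two-sided bound $C^{-1}r^{-1/2}\leq \int_\T|p_r(y,0)|^2dy\leq C r^{-1/2}$ over $r\in(0,t]$ yields $C^{-1}\sqrt{t}\leq Q(t)\leq C\sqrt{t}$, which is $(i)$.

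For $(iii)$, I would proceed analogously but now invoke the discrete kernel estimates of \cite{butkovsky2021optimal}: the CFL condition $c<1/2$ ensures (via \cite[Lemma 2.1.5]{butkovsky2021optimal}) the right decay of the eigenvalues of $(\mathrm{Id}-\delta t\,\Delta_n)$, from which one gets $\int_\T|p^n_{r_h}(x,y)|^2\,dy\leq C\,(r_h\vee h)^{-1/2}$ uniformly in $n$ and $x$. Integrating over $r\in(0,t]$ and using $r_h\geq r/2$ when $r\geq 2h$ (and the trivial bound $\int_\T|p^n_{r_h}|^2\,dy\lesssim h^{-1/2}$ for $r\leq 2h$, contributing $\lesssim h\cdot h^{-1/2}=h^{1/2}\lesssim \sqrt t$) gives the upper bound $Q^n(t)\leq C\sqrt t$. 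The lower bound for $t\geq h$ is obtained by keeping the zero-frequency contribution of $p^n_{r_h}$, which is bounded below by $c\,(r_h\vee h)^{-1/2}\geq c\,r^{-1/2}/\sqrt2$ on the range $r\in[h,t]$ (again using $r_h\geq r/2$ once $r\geq 2h$, and handling $r\in[h,2h]$ separately), then integrating; this is essentially the computation already carried out on \cite[page 1121]{butkovsky2021optimal} and referred to after \eqref{def:discrete-OU}.

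For $(ii)$, the difference $|Q^n(t)-Q(t)|$ is split as $\int_0^t\big|\int_\T(|p^n_{r_h}(x,y)|^2-|p_r(y,0)|^2)\,dy\big|\,dr$ and estimated via $|a^2-b^2|=|a-b|\,|a+b|$ together with Cauchy--Schwarz: for each $r$, $\big|\int_\T(|p^n_{r_h}|^2-|p_r|^2)\,dy\big|\leq \|p^n_{r_h}(x,\cdot)-p_r(x,\cdot)\|_{L^2}\cdot\big(\|p^n_{r_h}(x,\cdot)\|_{L^2}+\|p_r(x,\cdot)\|_{L^2}\big)$. The first factor is controlled by Lemma~\ref{lem:P-Pn}$(i)$, which gives $\|p_r(x,\cdot)-p^n_{r_h}(x,\cdot)\|_{L^2}^2\leq C\,n^{-\alpha}r^{-(\alpha+1)/2}$ for $r\geq h$ and any $\alpha\in[0,2]$; the second factor is $\lesssim r^{-1/4}$ by $(i)$ and $(iii)$ above. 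Hence, on $r\in[h,t]$, the integrand is $\lesssim n^{-\alpha/2}r^{-(\alpha+1)/4}\cdot r^{-1/4}=n^{-\alpha/2}r^{-(\alpha+2)/4}$, which is integrable on $(0,t]$ for $\alpha<2$ and integrates to $C\,n^{-\alpha/2}t^{1/2-\alpha/4}$; the range $r\in(0,h)$ contributes at most $C\,h\cdot h^{-1/2}\lesssim h^{1/2}=c^{1/2}(2n)^{-1}\leq C n^{-\alpha/2}t^{1/2-\alpha/4}$ for $t\geq h$ and, for $t<h$, the whole difference is $\lesssim t\cdot(t^{-1/2}+h^{-1/2})\lesssim \sqrt t\leq n^{-\alpha/2}t^{1/2-\alpha/4}$ after adjusting constants. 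The main technical obstacle is keeping the singular-time integrals convergent and getting the exponent of $t$ and the power of $n$ to match exactly; this is purely a matter of bookkeeping once the right input estimates (Lemma~\ref{lem:P-Pn}$(i)$ and parts $(i)$, $(iii)$ of the present lemma) are in place, so there is no genuine difficulty beyond careful case distinctions between $r\lessgtr h$ and $t\lessgtr h$.
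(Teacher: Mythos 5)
Your route is genuinely different from the paper's. The paper proves essentially nothing here from scratch: $(i)$ and $(ii)$ are direct citations of \cite[Eq.~(2.51)]{butkovsky2021optimal} and \cite[Lemma~2.3.3]{butkovsky2021optimal}, the upper bound in $(iii)$ follows in two lines from $(ii)$ with $\alpha=0$ plus the identity $Q^n(t)=2nt$ for $t<h$ (which immediately gives $Q^n(t)\leq\sqrt{c}\,\sqrt{t}$ there), and the lower bound in $(iii)$ is again a citation of \cite[Lemma~2.3.4]{butkovsky2021optimal}. Your reconstruction of $(i)$ via Parseval and the sum--integral comparison is correct, and your Cauchy--Schwarz scheme for $(ii)$, with the factorisation $|a^2-b^2|=|a-b|\,|a+b|$, Lemma~\ref{lem:P-Pn}$(i)$ for the first factor and the pointwise $L^2$ kernel bounds for the second, does produce the exponent $t^{\frac12-\frac\alpha4}$ with the correct restriction $\alpha<2$ for integrability at $r=0$; the boundary cases $r<h$ and $t<h$ also check out numerically. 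What your approach buys is self-containedness; what it costs is that for $(ii)$ and $(iii)$ you end up re-deriving the discrete kernel estimates that \cite{butkovsky2021optimal} already packages, and you miss the shortcut the paper uses for the upper bound in $(iii)$, namely $Q^n(t)\leq Q(t)+|Q^n(t)-Q(t)|\leq C\sqrt{t}$ directly from $(i)$ and $(ii)$.

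There is one step that does not work as you describe it: the lower bound $Q^n(t)\geq C^{-1}\sqrt{t}$ for $t\geq h$. You propose ``keeping the zero-frequency contribution of $p^n_{r_h}$, which is bounded below by $c\,(r_h\vee h)^{-1/2}$.'' By Parseval, $\int_\T|p^n_{r_h}(x,y)|^2\,dy$ is a sum over the $2n$ discrete modes of $\lambda_k^{2r_h/h}$, and any single mode (in particular $k=0$) contributes at most $1$, not $c\,r^{-1/2}$; keeping one mode only yields $Q^n(t)\gtrsim t$, which is far weaker than $\sqrt{t}$ for small $t$. To get the claimed lower bound one must sum over the $\sim r^{-1/2}$ lowest modes, each of which stays bounded below thanks to the CFL condition on the eigenvalues of $(\mathrm{Id}-\delta t\,\Delta_n)$ (in the continuous case the analogous shortcut is to keep the $k=0$ term of the \emph{image} representation $p_{2r}(x,x)=\sum_k g_{4r}(k)\geq g_{4r}(0)\sim r^{-1/2}$, but that trick has no immediate discrete analogue). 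This is precisely the content of \cite[Lemma~2.3.4]{butkovsky2021optimal}, which both the paper and, ultimately, you defer to; so the lemma is safe, but the mechanism you sketch for proving that inequality yourself is not the right one.
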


\begin{proof}
\begin{enumerate}[label=(\roman*),itemsep=0pt]
\item The inequality \eqref{eq:lower-bound-Q} is a direct consequence of the definition of $Q$, see \cite[Eq. (2.51)]{butkovsky2021optimal}.

\item This is \cite[Lemma 2.3.3]{butkovsky2021optimal}.

\item If $t \ge h$, then by the previous point applied with $\alpha=0$, one gets that $Q^n(t) \leq Q(t) + C t^{1/2}$. Then use the first point. 
If $t < h$, then in view of (2.26) in \cite{butkovsky2021optimal}, there is $Q^n(t) = 2 n t \leq t^{1/2} (2 n h^{1/2})$. Using that $h=c(2n)^{-2}$, then $Q^n(t) \leq \sqrt{c} t^{1/2}$. 

As for the lower bound, this is \cite[Lemma 2.3.4]{butkovsky2021optimal}.
\end{enumerate}
\end{proof}

We refer to Lemma A.2 in \cite{athreya2020well} for a proof of the following Lemma.

\begin{lemma}\label{lem:besov-spaces}
Let $f$ a tempered distribution on $\R$ and let $\beta\in \R$, $p\in [1,\infty]$. For any $a_1,a_2,a_3 \in \mathbb{R}$ and $\alpha, \alpha_1, \alpha_2 \in [0,1]$, there is
\begin{itemize}
\item[(i)] $\| f(a + \cdot ) \|_{\mathcal{B}_p^\beta} \leq \| f \|_{\mathcal{B}_p^\beta}$ .
\item[(ii)] $\| f(a_1 + \cdot) - f(a_2 + \cdot) \|_{\mathcal{B}_p^\beta} \leq C |a_1 - a_2 |^{\alpha} \| f \|_{\mathcal{B}_p^{\beta+\alpha}}$ .
\item[(iii)] $\| f(a_1 + \cdot) - f(a_2 + \cdot) -f(a_3 + \cdot) + f(a_3+a_2-a_1+\cdot) \|_{\mathcal{B}_p^\beta} \leq C |a_1 - a_2 |^{\alpha_1} |a_1 - a_3 |^{\alpha_2} \| f \|_{\mathcal{B}_p^{\beta+\alpha_1 + \alpha_2}} .$
\end{itemize} 
\end{lemma}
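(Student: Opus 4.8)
The final statement to prove is Lemma~\ref{lem:besov-spaces}, which collects three Besov-norm estimates for translates and finite differences of a tempered distribution $f$. Since the paper itself refers to \cite[Lemma A.2]{athreya2020well} for the proof, the plan is to reconstruct that proof using the Littlewood--Paley characterisation of the nonhomogeneous Besov norm $\|f\|_{\mathcal{B}_p^\beta}=\sup_{j\geq -1}2^{j\beta}\|\Delta_j f\|_{L^p}$, exploiting that all three operations (translation, first difference, second-type difference) commute with the Littlewood--Paley blocks $\Delta_j$.

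\textbf{Proof of (i).} First I would note that for the shift operator $\tau_a f := f(a+\cdot)$ one has $\Delta_j(\tau_a f)=\tau_a(\Delta_j f)$, because $\Delta_j$ is a Fourier multiplier and translation commutes with Fourier multipliers. Since the $L^p(\R)$ norm is translation invariant, $\|\Delta_j(\tau_a f)\|_{L^p}=\|\Delta_j f\|_{L^p}$, and multiplying by $2^{j\beta}$ and taking the supremum over $j$ gives equality (in particular the claimed inequality).

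\textbf{Proof of (ii).} Here the key is to interpolate between two trivial bounds. For fixed $j$, write $\Delta_j(\tau_{a_1}f-\tau_{a_2}f)=\tau_{a_1}(\Delta_j f)-\tau_{a_2}(\Delta_j f)$. On one hand, by (i)-type reasoning, $\|\Delta_j(\tau_{a_1}f-\tau_{a_2}f)\|_{L^p}\leq 2\|\Delta_j f\|_{L^p}$. On the other hand, using that $\Delta_j f$ has Fourier support in an annulus of size $\sim 2^j$, its derivative satisfies the Bernstein inequality $\|\partial_x \Delta_j f\|_{L^p}\leq C 2^{j}\|\Delta_j f\|_{L^p}$ (Bernstein's lemma, \cite[Lemma 2.1]{bahouri2011fourier}), so by the mean value theorem in $L^p$, $\|\tau_{a_1}(\Delta_j f)-\tau_{a_2}(\Delta_j f)\|_{L^p}\leq |a_1-a_2|\,\|\partial_x \Delta_j f\|_{L^p}\leq C|a_1-a_2|2^{j}\|\Delta_j f\|_{L^p}$. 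Taking the geometric mean of the two bounds with weights $1-\alpha$ and $\alpha$ yields $\|\Delta_j(\tau_{a_1}f-\tau_{a_2}f)\|_{L^p}\leq C|a_1-a_2|^\alpha 2^{j\alpha}\|\Delta_j f\|_{L^p}$; multiplying by $2^{j\beta}$ and taking the supremum over $j$ gives $\|f(a_1+\cdot)-f(a_2+\cdot)\|_{\mathcal{B}_p^\beta}\leq C|a_1-a_2|^\alpha\|f\|_{\mathcal{B}_p^{\beta+\alpha}}$, as claimed. (For $\alpha\in\{0,1\}$ the statement is one of the two endpoint bounds directly.)

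\textbf{Proof of (iii).} Set $g:=\Delta_j f$ and consider the second-type difference $D := \tau_{a_1}g-\tau_{a_2}g-\tau_{a_3}g+\tau_{a_3+a_2-a_1}g$. The natural approach is to write this as a genuine mixed second difference: with $b:=a_2-a_1$ one has $D=\big(\tau_{a_1}g-\tau_{a_1+b}g\big)-\big(\tau_{a_3}g-\tau_{a_3+b}g\big)$, i.e. $D = \phi(a_1)-\phi(a_3)$ where $\phi(t):=\tau_t g - \tau_{t+b}g$. Then three competing estimates on $\|D\|_{L^p}$ are available: (a) the trivial bound $\|D\|_{L^p}\leq 4\|g\|_{L^p}$; (b) bounding $\phi(a_1)-\phi(a_3)$ by $|a_1-a_3|\sup_t\|\phi'(t)\|_{L^p}$ and then $\|\phi'(t)\|_{L^p}=\|\tau_t\partial_x g-\tau_{t+b}\partial_x g\|_{L^p}\leq \min(2\|\partial_x g\|_{L^p},\,|b|\,\|\partial_x^2 g\|_{L^p})$, giving $\|D\|_{L^p}\leq C|a_1-a_3|\min(2^j,\,|a_1-a_2|2^{2j})\|g\|_{L^p}$ via Bernstein; and (c) the symmetric bound obtained by first differencing in the $a_2-a_1$ variable, $\|D\|_{L^p}\leq C|a_1-a_2|\min(2^j,|a_1-a_3|2^{2j})\|g\|_{L^p}$. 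Interpolating these (choosing, say, the bound $\|D\|_{L^p}\le C\,|a_1-a_2|^{\alpha_1}|a_1-a_3|^{\alpha_2}2^{j(\alpha_1+\alpha_2)}\|g\|_{L^p}$ as the appropriate geometric combination, valid for $\alpha_1,\alpha_2\in[0,1]$ with $\alpha_1+\alpha_2\le 2$, which holds since $\alpha_1,\alpha_2\le 1$) and then multiplying by $2^{j\beta}$ and taking $\sup_j$ gives $\|f(a_1+\cdot)-f(a_2+\cdot)-f(a_3+\cdot)+f(a_3+a_2-a_1+\cdot)\|_{\mathcal{B}_p^\beta}\leq C|a_1-a_2|^{\alpha_1}|a_1-a_3|^{\alpha_2}\|f\|_{\mathcal{B}_p^{\beta+\alpha_1+\alpha_2}}$.

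\textbf{Main obstacle.} None of the three parts is genuinely hard; the only point requiring a little care is (iii), where one must be disciplined about which translation variable is being differenced and make sure the mixed second difference structure is exploited so that one gains a factor $|a_1-a_2|^{\alpha_1}|a_1-a_3|^{\alpha_2}$ rather than just a first-order gain. The Bernstein inequalities ($\|\partial_x\Delta_j f\|_{L^p}\lesssim 2^j\|\Delta_j f\|_{L^p}$ and $\|\partial_x^2\Delta_j f\|_{L^p}\lesssim 2^{2j}\|\Delta_j f\|_{L^p}$) and the interpolation of the trivial bound with the smooth bound are the two mechanisms underlying all three estimates, and invoking \cite[Lemma 2.1]{bahouri2011fourier} handles them cleanly.
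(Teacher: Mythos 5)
Your proof is correct. Note that the paper does not prove Lemma~\ref{lem:besov-spaces} itself but simply cites \cite[Lemma A.2]{athreya2020well}; your reconstruction via the Littlewood--Paley characterisation of $\mathcal{B}_{p,\infty}^\beta$, Bernstein's inequalities, and geometric interpolation between the trivial and the differentiated bounds is the standard argument behind that reference, and the one delicate step --- combining the four bounds $4\|g\|_{L^p}$, $C|a_1-a_3|2^j\|g\|_{L^p}$, $C|a_1-a_2|2^j\|g\|_{L^p}$ and $C|a_1-a_2||a_1-a_3|2^{2j}\|g\|_{L^p}$ with weights summing to one so as to produce the exponent pair $(\alpha_1,\alpha_2)\in[0,1]^2$ --- is handled correctly.
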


\smallskip

Let us now recall classical heat estimates in Besov spaces,  from \cite{bahouri2011fourier,athreya2020well} (see also the proof of \cite[Lemma 3.1]{GHR2023}). 
\begin{lemma}\label{eq:reg-S}
Let $\beta \in \R$, $p\in [1,\infty]$ and $f \in \mathcal{B}_p^\beta$.
Then 
\begin{enumerate}[label=(\roman*)]

\item If $\beta-\frac{1}{p}<0$, $\| G_t f \|_{\infty} \leq C\, \|f \|_{\mathcal{B}_p^\beta}\, t^{\frac{1}{2}(\beta - \frac{1}{p})}$, for all $t > 0$.
 
 \item $\|G_t f - f\|_{\mathcal{B}_p^{\beta-\varepsilon}} \leq C\, t^{\frac{\varepsilon}{2}}\, \| f \|_{\mathcal{B}_p^\beta}$ for all $\varepsilon\in (0,1]$ and $t>0$. 
 In particular,  $\lim_{t \rightarrow
 0} \|G_t f -f\|_{\mathcal{B}_p^{\tilde{\beta}}}=0$ for every $\tilde{\beta}< \beta$.

 \item If $\beta-\frac{1}{p}<0$, $\|G_t f \|_{\mathcal{C}^1} \leq C\, \| f \|_{\mathcal{B}_p^\beta}  \, t^{\frac{1}{2}(\beta- \frac{1}{p}-1)}$ for all $t>0$.

\end{enumerate}
\end{lemma}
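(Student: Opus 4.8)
The plan is to work entirely at the level of Littlewood--Paley blocks, using that $G_t$ is the Fourier multiplier with symbol $e^{-t|\xi|^2/2}=\widehat{g_t}(\xi)$ and therefore commutes with every $\Delta_j$. The single analytic input needed is a scale-by-scale control of $G_t$, $\partial_xG_t$ and $G_t-\mathrm{Id}$ on a fixed dyadic block. Writing $\Delta_jg=\chi_j\ast g$ and choosing an enlarged cutoff $\widetilde\chi_j$ that equals $1$ on a neighbourhood of the Fourier support of $\chi_j$ (a dilated annulus of scale $2^j$ for $j\geq0$, a fixed ball for $j=-1$), one has $G_t\Delta_jg=K_{j,t}\ast g$ with $\widehat{K_{j,t}}=e^{-t|\xi|^2/2}\widetilde\chi_j=\widehat{g_t}\,\widetilde\chi_j$, and $(G_t-\mathrm{Id})\Delta_jg=M_{j,t}\ast g$ with $\widehat{M_{j,t}}=(e^{-t|\xi|^2/2}-1)\widetilde\chi_j$. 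A rescaling $\xi=2^j\eta$ together with $\|g_t\|_{L^1}=1$ and integration by parts then yields, for constants $c,C$ depending only on the cutoff,
\begin{align*}
\|K_{j,t}\|_{L^1}\leq Ce^{-ct2^{2j}},\qquad \|\partial_xK_{j,t}\|_{L^1}\leq C\,2^{j}e^{-ct2^{2j}}\qquad(j\geq0),
\end{align*}
with the factor $e^{-ct2^{2j}}$ replaced by $1$ for the single low-frequency block $j=-1$. Combining these with Young's inequality and the Bernstein inequality $\|h\|_{L^\infty}\leq C2^{j/p}\|h\|_{L^p}$ for $h$ with Fourier support of scale $2^j$ gives, for all $j$,
\begin{align*}
\|G_t\Delta_jf\|_{L^\infty}\leq C2^{j/p}e^{-ct2^{2j}}\|\Delta_jf\|_{L^p},\qquad \|\partial_xG_t\Delta_jf\|_{L^\infty}\leq C2^{j(1+1/p)}e^{-ct2^{2j}}\|\Delta_jf\|_{L^p}.
\end{align*}

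For $(i)$ and $(iii)$ I would simply sum over $j$, using $\|\Delta_jf\|_{L^p}\leq2^{-j\beta}\|f\|_{\mathcal{B}_p^\beta}$ and restricting to $t\in(0,1]$, which is the range in which these bounds are used in the paper. For $(i)$ this produces $\|G_tf\|_{L^\infty}\leq C\|f\|_{\mathcal{B}_p^\beta}\sum_{j\geq-1}2^{j(1/p-\beta)}e^{-ct2^{2j}}$; since $1/p-\beta>0$, splitting the series at $2^j\sim t^{-1/2}$ (geometric-type growth below the threshold, where the exponential is $\approx1$, and exponential decay above it) bounds it by $Ct^{-\frac12(1/p-\beta)}=Ct^{\frac12(\beta-1/p)}$, the $j=-1$ term being $O(\|f\|_{\mathcal{B}_p^\beta})$ and absorbed since $t^{\frac12(\beta-1/p)}\geq1$ for $t\leq1$. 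The same summation with the extra factor $2^j$ gives $\|\partial_xG_tf\|_{L^\infty}\leq Ct^{\frac12(\beta-1/p-1)}\|f\|_{\mathcal{B}_p^\beta}$; since $G_tf$ is smooth its $\mathcal{C}^1$-seminorm equals $\|\partial_xG_tf\|_{L^\infty}$, and for $t\leq1$ the exponent $\frac12(\beta-1/p-1)$ is the smaller one, whence $\|G_tf\|_{\mathcal{C}^1}\leq Ct^{\frac12(\beta-1/p-1)}\|f\|_{\mathcal{B}_p^\beta}$, which is $(iii)$.

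For $(ii)$, I would estimate $M_{j,t}=g_t\ast\phi_j-\phi_j$ with $\phi_j=\mathcal{F}^{-1}\widetilde\chi_j$. The elementary bound $|e^{-x}-1|\leq x^{\varepsilon/2}$ for $x\geq0$, $\varepsilon\in(0,1]$, together with $\|g_s\ast\phi-\phi\|_{L^1}\leq C\|\phi\|_{W^{1,1}}\min(s^{1/2},1)\leq Cs^{\varepsilon/2}$ and the rescaling $s=t2^{2j}$, gives $\|M_{j,t}\|_{L^1}\leq C(t2^{2j})^{\varepsilon/2}$ for all $j$ and $t$; by Young's inequality, $\|\Delta_j(G_tf-f)\|_{L^p}\leq C(t2^{2j})^{\varepsilon/2}\|\Delta_jf\|_{L^p}$. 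Multiplying by $2^{j(\beta-\varepsilon)}$ the powers of $2^j$ cancel exactly, so $\sup_j2^{j(\beta-\varepsilon)}\|\Delta_j(G_tf-f)\|_{L^p}\leq Ct^{\varepsilon/2}\|f\|_{\mathcal{B}_p^\beta}$, i.e.\ $\|G_tf-f\|_{\mathcal{B}_p^{\beta-\varepsilon}}\leq Ct^{\varepsilon/2}\|f\|_{\mathcal{B}_p^\beta}$. The ``in particular'' statement follows by choosing, for $\tilde\beta<\beta$, some $\varepsilon\in(0,\min(1,\beta-\tilde\beta))$ and using $\mathcal{B}_p^{\beta-\varepsilon}\hookrightarrow\mathcal{B}_p^{\tilde\beta}$, then letting $t\to0$.

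None of these steps is genuinely hard --- the estimates are the nonhomogeneous analogues of classical heat-flow bounds in \cite{bahouri2011fourier} --- and the only points requiring some attention are the uniform-in-$t$ control of the kernel norms $\|K_{j,t}\|_{L^1}$, $\|\partial_xK_{j,t}\|_{L^1}$ and $\|M_{j,t}\|_{L^1}$ obtained by rescaling to a fixed dyadic frequency, the trivial separate treatment of the low-frequency block $j=-1$ where the heat symbol provides no gain, and keeping track of which $t$-range is relevant for $(i)$ and $(iii)$.
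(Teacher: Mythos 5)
Your proof is correct and follows the standard Littlewood--Paley route; the paper itself gives no proof of this lemma but merely cites \cite{bahouri2011fourier,athreya2020well} and the proof of Lemma~3.1 in \cite{GHR2023}, and your argument (heat-flow smoothing on dyadic blocks via the kernel bounds $\|K_{j,t}\|_{L^1}\leq Ce^{-ct2^{2j}}$, Bernstein, and summation split at $2^j\sim t^{-1/2}$) is essentially the one those references use. One remark: your restriction to $t\in(0,1]$ in $(i)$ and $(iii)$ is not merely a convenience but is actually necessary --- for $f\equiv 1\in\mathcal{B}_\infty^\beta$ with $\beta<0$ one has $\|G_t f\|_\infty=1$ while $t^{\beta/2}\to 0$, so the nonhomogeneous low-frequency block prevents the stated bounds from holding for all $t>0$; since the paper only ever applies the lemma with $t$ bounded (e.g.\ $t=1/k$ or $t=Q(t-s)\leq C$), this costs nothing, and your part $(ii)$ does hold uniformly in $t>0$ as you show.
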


In the next lemma, we give estimates on the time regularity of the conditional expectation of random functions of the space-time Ornstein-Uhlenbeck process. See \cite[Lemma C.4]{athreya2020well} for a proof.

\begin{lemma}\label{lem:reg-O}
Let $\beta \in \R$, $p\in [1,\infty]$, $x \in \mathbb{T}$ and $(s,t)\in \Delta_{[0,1]}$. Assume that $\beta-1/p <0$ and recall $Q$ the variance of the OU process defined in \eqref{def:O-Q}. Let $f: \mathbb{R} \times \Omega \rightarrow \mathbb{R}$ be a bounded $\mathscr{B}(\mathbb{R}) \otimes \mathcal{F}_{s}$-measurable function. Then
\begin{equation}\label{eq:condexpf(OU)}
\mathbb{E}^s f\left(O_t(x)\right)=G_{Q(t-s)} f\left(P_{t-s} O_s(x)\right) .
\end{equation}
In addition, we have
$$ \left|\EE^s f\left(O_t(x)\right)\right| \leq C\|f\|_{\mathcal{B}_p^\beta}(t-s)^{\frac{1}{4}(\beta-\frac{1}{p})}  .$$
\end{lemma}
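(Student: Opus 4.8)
Lemma \ref{lem:reg-O} is the final statement; I outline how I would prove it, taking for granted the relation $\EE^{u}O_{t}(x) = P_{t-u}O_{u}(x)$ (which follows from writing $O_{t}(x)$ as a Wiener integral against $\xi$ and using that increments of $\xi$ over $[u,t)$ are independent of $\mathcal F_{u}$) and the fact that, conditionally on $\mathcal F_{s}$, the random variable $O_{t}(x)$ is Gaussian with mean $P_{t-s}O_{s}(x)$ and variance $Q(t-s)$.

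The plan is as follows. First I would establish \eqref{eq:condexpf(OU)}: fix $x$ and $(s,t)$, and decompose $O_{t}(x) = P_{t-s}O_{s}(x) + N$ where $N := \int_{s}^{t}\int_{\T}p_{t-r}(x,y)\,\xi(dy,dr)$ is $\mathcal F_{s}$-independent and centered Gaussian with variance $\int_{s}^{t}\int_{\T}|p_{t-r}(x,y)|^{2}\,dy\,dr = Q(t-s)$ (the last equality by the semigroup/translation-invariance computation already recorded in \eqref{def:O-Q}). Since $f$ is $\mathscr B(\R)\otimes\mathcal F_{s}$-measurable and bounded, conditioning on $\mathcal F_{s}$ freezes the first argument of $f$ and the $\mathcal F_{s}$-measurable quantity $P_{t-s}O_{s}(x)$, so $\EE^{s}f(O_{t}(x)) = \EE^{s}[f(P_{t-s}O_{s}(x)+N)] = (g_{Q(t-s)} * f)(P_{t-s}O_{s}(x)) = G_{Q(t-s)}f(P_{t-s}O_{s}(x))$, using that the law of $N$ is exactly $g_{Q(t-s)}(z)\,dz$ and that $G$ denotes convolution with the Gaussian kernel. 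One should be a little careful that $f$ carries an $\omega$-dependence, but since this dependence is $\mathcal F_{s}$-measurable it is simply a frozen parameter under $\EE^{s}$, so a monotone-class / Fubini argument for kernels makes the identity rigorous.

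Next, for the quantitative bound I would invoke the Besov smoothing estimate for the Gaussian semigroup: Lemma~\ref{eq:reg-S}$(i)$ gives $\|G_{\tau}f\|_{\infty}\leq C\|f\|_{\mathcal B_{p}^{\beta}}\,\tau^{\frac12(\beta-\frac1p)}$ whenever $\beta-\frac1p<0$, valid for the (deterministic) Besov norm. Applying this pathwise with $\tau = Q(t-s)$ and taking the supremum over the (frozen) first argument yields $\|\EE^{s}f(O_{t}(x))\|_{L^{\infty}(\Omega)}\leq \sup_{a}\|G_{Q(t-s)}f(a+\cdot)\|_{\infty}\leq C\|f\|_{\mathcal B_{p}^{\beta}}\,Q(t-s)^{\frac12(\beta-\frac1p)}$, where I used Lemma~\ref{lem:besov-spaces}$(i)$ to control the translated Besov norm by $\|f\|_{\mathcal B_{p}^{\beta}}$. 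Finally, since $\beta-\frac1p<0$ the exponent $\frac12(\beta-\frac1p)$ is negative, so the lower bound $Q(t-s)\geq C^{-1}\sqrt{t-s}$ from Lemma~\ref{lem:bound-Qn}$(i)$ gives $Q(t-s)^{\frac12(\beta-\frac1p)}\leq C\,(t-s)^{\frac14(\beta-\frac1p)}$, which is the claimed bound.

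I do not expect a serious obstacle here — the statement is essentially a packaging of the conditional Gaussian structure of the Ornstein–Uhlenbeck field together with standard heat-kernel Besov estimates. The only point requiring mild care is the measurability bookkeeping in \eqref{eq:condexpf(OU)} when $f$ is allowed to be random but $\mathcal F_{s}$-measurable, and matching the variance computation $\int_{s}^{t}\int_{\T}|p_{t-r}(x,y)|^{2}dy\,dr = Q(t-s)$ to the definition \eqref{def:O-Q}; both are routine. Since the excerpt explicitly points to \cite[Lemma C.4]{athreya2020well} for the proof, I would in the write-up simply reproduce this two-line argument (conditional Gaussianity $\Rightarrow$ Gaussian convolution $\Rightarrow$ Besov smoothing $\Rightarrow$ lower bound on $Q$) and refer to that reference for full details.
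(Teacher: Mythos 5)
Your proof is correct and follows exactly the argument of the cited reference: the paper itself gives no proof of this lemma, deferring entirely to \cite[Lemma C.4]{athreya2020well}, whose argument is precisely the conditional-Gaussianity/freezing identity followed by the Gaussian Besov smoothing estimate of Lemma~\ref{eq:reg-S}$(i)$ and the lower bound on $Q$ from Lemma~\ref{lem:bound-Qn}$(i)$. No gaps.
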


The following lemma states the same result as Lemma \ref{lem:reg-O} but for the discrete OU process $O^n$. The proof is an adaptation of the proof of \cite[Lemma C.4]{athreya2020well} and uses Lemma \ref{lem:bound-Qn}.

\begin{lemma}\label{lem:reg-On}
Let $\beta \in \R$, $p\in [1,\infty]$, $x \in \mathbb{T}$ and $0 \leq s \leq u \leq t \leq 1$. Assume that $\beta-1/p <0$ and recall $Q^n$ the variance of the discrete OU process defined in \eqref{def:discrete-OU}, and $\widehat{O}^n_{s,t}$ defined in \eqref{def:Onst} for $(s,t) \in \Delta_{[0,1]}$.
Let $f: \mathbb{R} \times \Omega \rightarrow \mathbb{R}$ be a bounded $\mathscr{B}(\mathbb{R}) \otimes \mathcal{F}_{s}$-measurable function. Then
$$
\EE^s f\left(O^n_t(x)\right)=G_{Q^n(t-s)} f\big(\widehat{O}^n_{s,t}(x)\big) .
$$
In addition, when $t-s \geq h$ (recall that $h=c(2n)^{-2}$), there is
$$ \left|\EE^s f\left(O^n_t(x)\right)\right| \leq C\|f\|_{\mathcal{B}_p^\beta}(t-s)^{\frac{1}{4}(\beta-\frac{1}{p})}  .$$
\end{lemma}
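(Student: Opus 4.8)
The statement is the discrete analogue of Lemma~\ref{lem:reg-O}, so the plan is to mimic the proof of \cite[Lemma C.4]{athreya2020well} while replacing the continuous heat semigroup by the discrete one and the increment $t-s$ by the discrete increment $(t-s)_h$ wherever the time discretisation intervenes. First I would establish the conditional expectation formula $\EE^s f(O^n_t(x)) = G_{Q^n(t-s)} f(\widehat{O}^n_{s,t}(x))$. To do so, recall from \eqref{def:discrete-OU} and \eqref{def:Onst} that
\begin{align*}
O^n_t(x) = \int_0^t \int_\T p^n_{(t-r)_h}(x,y)\, \xi(dy,dr) = \widehat{O}^n_{s,t}(x) + \int_s^t \int_\T p^n_{(t-r)_h}(x,y)\, \xi(dy,dr),
\end{align*}
where $\widehat{O}^n_{s,t}(x)$ is $\mathcal{F}_s$-measurable and the second term is independent of $\mathcal{F}_s$, Gaussian, centred, with variance $\int_s^t \int_\T |p^n_{(t-r)_h}(x,y)|^2 dy dr$. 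A change of variables shows this variance equals $\int_0^{t-s}\int_\T |p^n_{r_h}(x,y)|^2 dy dr = Q^n(t-s)$ (here one uses that $Q^n$ does not depend on $x$). Since $f$ is $\mathscr{B}(\R)\otimes \mathcal{F}_s$-measurable and bounded, conditioning on $\mathcal{F}_s$ freezes the random parameter of $f$ and leaves the integration over an independent centred Gaussian of variance $Q^n(t-s)$, which is exactly the definition of $G_{Q^n(t-s)}f$ evaluated at $\widehat{O}^n_{s,t}(x)$; this yields the claimed identity.

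For the quantitative bound, when $t-s\geq h$ I would apply Lemma~\ref{eq:reg-S}$(i)$ to the smooth bounded function $G_{Q^n(t-s)}f$: since $\beta - 1/p < 0$, one has $\|G_{Q^n(t-s)} f(\widehat{O}^n_{s,t}(x))\|_\infty \leq \|G_{Q^n(t-s)} f\|_\infty \leq C\|f\|_{\mathcal{B}_p^\beta}\, Q^n(t-s)^{\frac{1}{2}(\beta-\frac{1}{p})}$. Now invoke Lemma~\ref{lem:bound-Qn}$(iii)$: since $t-s\geq h$, we have $Q^n(t-s) \geq C^{-1}\sqrt{t-s}$, and because $\beta - 1/p < 0$ the map $q\mapsto q^{\frac{1}{2}(\beta-\frac{1}{p})}$ is decreasing, so $Q^n(t-s)^{\frac{1}{2}(\beta-\frac{1}{p})} \leq C (t-s)^{\frac{1}{4}(\beta-\frac{1}{p})}$. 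Combining the two displays and taking the (trivial, since the right-hand side is deterministic) conditional expectation gives $|\EE^s f(O^n_t(x))| \leq C\|f\|_{\mathcal{B}_p^\beta}(t-s)^{\frac{1}{4}(\beta-\frac{1}{p})}$, which is the assertion.

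\textbf{Main obstacle.} The genuinely delicate point is the identification of the conditional law of $O^n_t(x)$ given $\mathcal{F}_s$ as a shifted Gaussian with the correct variance $Q^n(t-s)$ — i.e.\ checking that the ``forward'' part $\int_s^t\int_\T p^n_{(t-r)_h}(x,y)\xi(dy,dr)$ is independent of $\mathcal{F}_s$ and has variance independent of $x$ and equal to $Q^n(t-s)$ after the change of variable $r\mapsto r-s$. The subtlety compared to the continuous case is purely notational bookkeeping of the floor operations $(\cdot)_h$ inside $p^n$, which commute with the translation in time, together with the reference to \cite[page 1121]{butkovsky2021optimal} for $x$-independence of $Q^n$ already noted after \eqref{def:discrete-OU}. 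Everything else (the Besov estimate, the lower bound on $Q^n$, monotonicity of the power) is a direct citation of the lemmas gathered in Appendix~\ref{app:general-lemmas}.
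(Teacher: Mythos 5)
Your proof is correct and follows essentially the same route as the paper's: decompose $O^n_t(x)$ into the $\mathcal{F}_s$-measurable part $\widehat{O}^n_{s,t}(x)$ plus an independent centred Gaussian of variance $Q^n(t-s)$ to obtain the conditional expectation formula, then combine Lemma~\ref{eq:reg-S}$(i)$ with the lower bound $Q^n(t-s)\geq C^{-1}\sqrt{t-s}$ from Lemma~\ref{lem:bound-Qn}$(iii)$ and the sign of $\beta-1/p$. The extra detail you give on the change of variables identifying the variance with $Q^n(t-s)$ is accurate and only makes explicit what the paper states directly.
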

\begin{proof}
Since $O^n_t(x)=O^n_t(x)-\widehat{O}^n_{s,t}(x)+ \widehat{O}^n_{s,t}(x)$ with $\widehat{O}^n_{s,t}(x)$ $\mathcal{F}_s$-measurable and $O^n_t(x)-\widehat{O}^n_{s,t}(x)$ is independent of $\mathcal{F}_s$, we get
\begin{align*}
\EE\big[ ( O^n_t(x)-\widehat{O}^n_{s,t}(x))^2\big] = \int_s^t \int_\T p^n_{(t-r)_h}(x,y)^2 \, dy dr = Q^n(t-s) .
\end{align*}
Hence
$\EE^s f\left(O^n_t(x)\right) = G_{Q^n(t-s)}f(\widehat{O}^n_{s,t}(x))$. 
Using Lemma \ref{eq:reg-S}$(i)$, we get
\begin{align*}
|\EE^s f\left(O^n_t(x)\right)| & \leq C \| f \|_{\mathcal{B}_p^\beta}\, Q^n(t-s)^{\frac{1}{2}(\beta-\frac{1}{p})} .
 \end{align*}
 We conclude using that $\beta-\frac{1}{p}<0$ and $Q^n(t-s) \geq C\sqrt{t-s}$ when $t-s\geq h$ (by Lemma~\ref{lem:bound-Qn}$(iii)$).
\end{proof}

\begin{lemma}\label{lem:diffhatOn}
Let $m \ge 2$. There exists a constant $C>0$ such that for any $n \in \N^*$ and $(s,t) \in \Delta_{[0,1]}$ with $t-s \ge h$, we have
\begin{align*}
\| \widehat{O}^n_{s,t}(y) -\widehat{O}_{s,t}^n(y_n) \|_{L^m} \leq C n^{-\frac{1}{2}} .
\end{align*}
\end{lemma}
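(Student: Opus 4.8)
Lemma~\ref{lem:diffhatOn} asks for a bound on the $L^m$-norm of the difference $\widehat{O}^n_{s,t}(y) - \widehat{O}^n_{s,t}(y_n)$, where by definition~\eqref{def:Onst} this difference equals $\int_0^s \int_\T (p^n_{(t-r)_h}(y,z) - p^n_{(t-r)_h}(y_n,z))\, \xi(dz,dr)$. The plan is to first reduce to the case $m=2$, since $\widehat{O}^n_{s,t}(y)-\widehat{O}^n_{s,t}(y_n)$ is a centered Gaussian random variable (it is a Wiener integral against the white noise $\xi$ of a deterministic kernel), so all its $L^m$ moments are controlled by a constant (depending on $m$) times its $L^2$-norm. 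Thus it suffices to estimate $\EE|\widehat{O}^n_{s,t}(y) - \widehat{O}^n_{s,t}(y_n)|^2$.

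By the It\^o isometry for the white noise, this second moment equals
\begin{align*}
\int_0^s \int_\T |p^n_{(t-r)_h}(y,z) - p^n_{(t-r)_h}(y_n,z)|^2\, dz\, dr .
\end{align*}
The next step is to bound the spatial increment of the discrete kernel. Since $|y - y_n| \leq (2n)^{-1}$, I would use the spatial regularity of $p^n$ encoded in Lemma~\ref{lem:reg-Pnh}$(iii)$ (the statement $\| P^n_\tau f \|_{\mathcal{C}^\alpha} \leq C\|f\|_{\mathcal{C}^\alpha}$, together with the time-shift/projection bound $|P^n_\tau f(y) - P^n_{\tau_h} f(y_n)| \leq C\|f\|_{\mathcal{C}^\alpha} n^{-\alpha}$), or more directly the $L^2$ comparison estimates of Lemma~\ref{lem:P-Pn}. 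Concretely, for $r \leq s \leq t - h$ we have $(t-r)_h \geq h$, so the discrete kernel $p^n_{(t-r)_h}(y,\cdot)$ is close in $L^2(\T)$ to the continuous kernel $p_{(t-r)_h}(y,\cdot)$ by Lemma~\ref{lem:P-Pn}$(i)$, and the continuous kernel has the standard spatial modulus of continuity $\|p_\tau(y,\cdot) - p_\tau(y_n,\cdot)\|_{L^2(\T)}^2 \leq C |y-y_n|\, \tau^{-1} \leq C n^{-1} \tau^{-1}$. Combining these yields, for each $r$, a bound of the form $C n^{-1} (t-r)_h^{-1}$ up to an additional error from the kernel comparison; using $(t-r)_h \geq (t-r)/2$ and integrating $\int_0^s (t-r)^{-1}\,dr$ over an interval where $t - r \geq h = c(2n)^{-2}$, one gets a logarithmic factor $\log n$, which is harmless against the $n^{-1}$ prefactor, giving overall $C n^{-1} \log n \leq C n^{-1/2}$ after absorbing the log. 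One must also handle the regime $r \in (s-h, s)$ or near $r = t$ separately by the crude bound $\|p^n_{(t-r)_h}(y,\cdot)\|_{L^2}^2 \lesssim (t-r)_h^{-1/2} \vee n$, contributing at most $C n \cdot h = C n^{-1}$, again acceptable.

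The main obstacle I anticipate is controlling the small-time part of the $r$-integral, i.e. when $(t-r)_h$ is of order $h \sim n^{-2}$: there the pointwise bound on $|p^n_\tau(y,z) - p^n_\tau(y_n,z)|^2$ degenerates, and one cannot afford to lose more than $n^{-1/2}$ overall. The resolution is to not use the increment bound there but rather the trivial $L^2$ bound on each kernel separately, $\|p^n_{(t-r)_h}(y,\cdot)\|_{L^2(\T)}^2 + \|p^n_{(t-r)_h}(y_n,\cdot)\|_{L^2(\T)}^2 \leq C/\sqrt{(t-r)_h}$ for $(t-r)_h \geq h$ (from $Q^n$-type estimates, cf. Lemma~\ref{lem:bound-Qn}) combined with the constant bound $Q^n(\tau) = 2n\tau$ for $\tau < h$; splitting the integral at $t - r \sim n^{-1}$ (so that for $t-r > n^{-1}$ the increment bound $C n^{-1}(t-r)^{-1}$ is summable to $C n^{-1}\log n$, and for $t - r \leq n^{-1}$ the crude bound integrates to $C \int_0^{n^{-1}} \tau^{-1/2}\, d\tau \leq C n^{-1/2}$) gives the claimed $C n^{-1/2}$. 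Finally, taking the square root and invoking Gaussianity to pass to general $m$ completes the argument; one should double-check that the implicit constants do not depend on $n$, $s$, $t$ or $y$, only on $m$.
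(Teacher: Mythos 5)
Your overall strategy is sound and genuinely different from the paper's: you compute the variance of the Gaussian variable $\widehat{O}^n_{s,t}(y)-\widehat{O}^n_{s,t}(y_n)$ directly by the It\^o isometry and then estimate the $L^2(\T)$-increment of the discrete kernel, whereas the paper simply writes the triangle inequality through the pivot $P_{t-s}O_s(y)$ and invokes two ready-made estimates from \cite{butkovsky2021optimal} (Eq.\ (2.53) with $\beta=1$ for $\|\widehat{O}^n_{s,t}(\cdot)-P_{t-s}O_s(\cdot)\|_{L^m}\lesssim n^{-1/2}$, and Proposition 2.3.1 for the spatial $\tfrac12$-H\"older regularity of $P_{t-s}O_s$). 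However, your quantitative bookkeeping does not deliver the stated bound. Since all the bounds you produce are bounds on the \emph{variance} $\int_0^s\|p^n_{(t-r)_h}(y,\cdot)-p^n_{(t-r)_h}(y_n,\cdot)\|_{L^2(\T)}^2\,dr$, you need this integral to be $O(n^{-1})$ in order to conclude $\|\cdot\|_{L^m}\leq Cn^{-1/2}$ after taking the square root. What you actually establish is: (a) on the region $t-r>n^{-1}$, the bound $Cn^{-1}(t-r)^{-1}$ integrates to $Cn^{-1}\log n$, which already yields only $Cn^{-1/2}\sqrt{\log n}$ for the $L^2$ norm; and (b) on the region $t-r\leq n^{-1}$ your ``crude'' bound contributes $C\int_0^{n^{-1}}\tau^{-1/2}\,d\tau= Cn^{-1/2}$ to the variance, hence only $Cn^{-1/4}$ to the $L^2$ norm. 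The step ``$Cn^{-1}\log n\leq Cn^{-1/2}$ \dots gives the claimed $Cn^{-1/2}$; finally, taking the square root\dots'' conflates the variance with the $L^2$ norm: a variance of order $n^{-1/2}$ gives $\|\cdot\|_{L^m}\leq Cn^{-1/4}$, which falls short of the lemma by a factor $n^{1/4}$.

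The approach can be repaired without changing its architecture, by using sharper kernel estimates so that the variance is genuinely $O(n^{-1})$ with no crude region and no logarithm. First, note that $r\leq s\leq t-h$ forces $(t-r)\geq h$ on the whole domain of integration, so no separate small-time regime is needed. For the discrete-versus-continuous comparison, apply Lemma~\ref{lem:P-Pn}$(i)$ with $\alpha=2$ rather than $\alpha=1$: this gives $\|p_{t-r}(y,\cdot)-p^n_{(t-r)_h}(y,\cdot)\|_{L^2}^2\leq Cn^{-2}(t-r)^{-3/2}$, and $\int_0^s n^{-2}(t-r)^{-3/2}\,dr\leq Cn^{-2}(t-s)^{-1/2}\leq Cn^{-2}h^{-1/2}=Cn^{-1}$. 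For the spatial increment of the continuous kernel, replace the geometric-mean bound $C|y-y_n|\tau^{-1}$ by the two-sided interpolation $\|p_\tau(y,\cdot)-p_\tau(y_n,\cdot)\|_{L^2}^2\leq C\min\bigl(\tau^{-1/2},|y-y_n|^2\tau^{-3/2}\bigr)$ (immediate from the Fourier representation of $p_\tau$); splitting the $\tau$-integral at $\tau=|y-y_n|^2$ gives $\int_0^1\min(\tau^{-1/2},|y-y_n|^2\tau^{-3/2})\,d\tau\leq C|y-y_n|\leq Cn^{-1}$. Summing the three squared contributions then bounds the variance by $Cn^{-1}$, and Gaussianity yields $\|\widehat{O}^n_{s,t}(y)-\widehat{O}^n_{s,t}(y_n)\|_{L^m}\leq C_mn^{-1/2}$ as claimed.
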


\begin{proof}
By the triangle inequality, 
\begin{align*}
\| \widehat{O}^n_{s,t}(y) -\widehat{O}_{s,t}^n(y_n) \|_{L^m} & \leq \| \widehat{O}^n_{s,t}(y) - P_{t-s}O_{s}(y) \|_{L^m}+ \| P_{r-s}O_s(y_n)-\widehat{O}^n_{s,t}(y_n) \|_{L^m}  \\ & \quad + \| P_{t-s} O_{s}(y)-P_{t-s}O_s(y_n) \|_{L^m} .
\end{align*}
Then use \cite[Eq (2.53)]{butkovsky2021optimal} for the first two terms (with $\beta=1$) and use \cite[Proposition 2.3.1]{butkovsky2021optimal} for the last term.
\end{proof}

\subsection{Existence of functions with prescribed decay of their Littlewood-Paley blocks}\label{subsec:LPblocks}

The $\mathcal{B}^\gamma_{\infty}$ norm is given by
\begin{align*}
\| b \|_{\mathcal{B}_{\infty}^{\gamma}}= \sup_{j \ge -1} 2^{j \gamma} \| \Delta_j  b  \|_{\infty},
\end{align*}
where for $j\geq 0$, $\Delta_j$ is defined by
\begin{align*}
\Delta_j b (x) = \int_{\R} 2^j h(2^j(x-y)) b(y) \, dy ,
\end{align*} 
where $h$ is the inverse Fourier transform of a radial smooth function supported on an annulus, see e.g. \cite[Proposition 2.10]{bahouri2011fourier} (for $j=-1$, one considers a function $\tilde{h}$ instead of $h$, which is the inverse Fourier transform of a smooth function supported on a centred ball).

\begin{lemma}\label{lem:example}
Let $\gamma \in (-1,0)$. In any neighbourhood of $\gamma$, there exists $\eta$ for which there exists $b\in \mathcal{B}_{\infty}^{\eta}(\R^d)$ such that for some $J \in \N$ and $C>0$, 
\begin{align}\label{eq:delta_j}
\forall j\geq J,\quad \| \Delta_j b \|_{\infty} \ge C\, 2^{-j\eta} .
\end{align}
\end{lemma}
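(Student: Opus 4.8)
The plan is to construct $b$ explicitly as a lacunary-type series of rescaled copies of the single bump $h$ appearing in the definition of $\Delta_j$, and to verify the two claims by examining the Littlewood-Paley blocks directly. First I would pick a rapidly growing sequence of frequencies $j_1 < j_2 < \dots$ (e.g. $j_\ell = 2^\ell$, or any sequence with $j_{\ell+1} - j_\ell \to \infty$) so that the functions $2^{j_\ell} h(2^{j_\ell}\,\cdot)$ have Fourier supports in pairwise disjoint annuli that are well-separated, and set
\begin{align*}
b(x) = \sum_{\ell \ge 1} 2^{-j_\ell \eta}\, 2^{j_\ell} h\big(2^{j_\ell} x\big),
\end{align*}
for $\eta$ to be chosen close to $\gamma$. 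Because $h$ is the inverse Fourier transform of a smooth function supported on a fixed annulus, each summand is a Schwartz function whose frequency content lives in a dyadic shell around $2^{j_\ell}$; by the standard almost-orthogonality of Littlewood-Paley projections, $\Delta_j b$ picks out only the finitely many (in fact, for $j$ large, exactly one) summands whose shell meets the support of the $j$-th Paley block. This is the mechanism that both makes $b$ lie in a Besov space and forces the lower bound \eqref{eq:delta_j}.

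The key steps, in order: (1) Fix the annulus $\mathcal{C}$ on which $\widehat{h}$ is supported and recall that $\widehat{\Delta_j v}$ is supported in $2^j \mathcal{C}$; choose the gap in $(j_\ell)$ large enough that the dilated annuli $2^{j_\ell}\mathcal{C}$ are disjoint and moreover each intersects $2^j\mathcal{C}$ for at most one value of $\ell$ once $j$ is large. (2) Compute $\Delta_{j_\ell} b = 2^{-j_\ell \eta} 2^{j_\ell} h(2^{j_\ell}\,\cdot)$ exactly for $\ell$ large (the Paley multiplier equals $1$ on the relevant shell, by the partition-of-unity property), hence $\|\Delta_{j_\ell} b\|_\infty = 2^{-j_\ell \eta}\|h\|_\infty \cdot \big(\text{const}\big)$ — here I use that $\|2^{j}h(2^{j}\cdot)\|_\infty = 2^{j}\|h\|_\infty$ in $d=1$, so the claimed bound \eqref{eq:delta_j} holds along the subsequence $j = j_\ell$ with an appropriate power; one then checks that the statement only asserts the lower bound for $j \ge J$ — wait, it asserts it for \emph{all} $j \ge J$, so I would instead take $(j_\ell)$ to be \emph{all} integers $\ge J$, i.e. use $b(x) = \sum_{j \ge J} 2^{-j\eta} 2^{j} h(2^j x)$, accepting that now neighbouring blocks overlap but each $\Delta_j b$ still contains the full contribution of the $j$-th term plus controlled neighbours. (3) Bound $\|\Delta_j b\|_\infty$ from above by summing the (at most three) neighbouring contributions, each of size $\lesssim 2^{-j'\eta}2^{j'}$ with $|j'-j|\le 1$, giving $\|\Delta_j b\|_\infty \lesssim 2^{j(1-\eta)}$; this shows $b \in \mathcal{B}^{\eta-1}_\infty$, hence — replacing $\eta$ by $\eta+1$ in the construction, i.e. choosing the coefficients $2^{-j(\eta+1)}2^j = 2^{-j\eta}$ — we obtain $b\in\mathcal{B}^{\eta}_\infty$. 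Adjusting $\eta$ to lie in any prescribed neighbourhood of $\gamma$ and still in $(-1,0)$ is immediate since the construction works for every $\eta$. (4) Bound $\|\Delta_j b\|_\infty$ from below: for $j\ge J$, the $j$-th block contains exactly the summand $2^{-j\eta} \Delta_j\big(2^j h(2^j\cdot)\big)$; since the frequency support of $2^j h(2^j \cdot)$ sits inside $2^j\mathcal{C}$ where the $j$-th Paley multiplier is identically $1$ on a sub-annulus, one gets $\|\Delta_j b\|_\infty \ge c\, 2^{-j\eta}\|2^j h(2^j\cdot)\|_\infty - (\text{neighbour error})$. The neighbour error from $j\pm 1$ is $O(2^{-(j\pm1)\eta} 2^{j\pm1}) = O(2^{j(1-\eta)})\cdot 2^{\mp\eta}$, which is \emph{larger}, not smaller — so this naive estimate fails, and the honest fix is to use lacunarity after all: take $(j_\ell)$ with large gaps so that for every $j$ there is at most one term, \emph{and} observe that the statement's ``$\forall j \ge J$'' can be weakened: re-reading, \eqref{eq:delta_j} does quantify over all $j\ge J$. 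The resolution is to use the disjoint-support construction but with coefficients placed at \emph{every} scale after all and to exploit that $\|2^jh(2^j\cdot)\|_\infty = 2^j\|h\|_\infty$ dominates its own convolution-smoothed neighbours by a fixed ratio depending only on $\|h\|_{L^1}$; concretely, $\|\Delta_j b\|_\infty \ge 2^{-j\eta}2^j\|h\|_\infty - \|h\|_{L^1}\big(2^{-(j-1)\eta}2^{j-1}+2^{-(j+1)\eta}2^{j+1}\big)$, and by rescaling $h$ at the outset (replace $h$ by $h(\cdot/\lambda)$-type normalisation, or simply multiply $b$'s $j$-th coefficient by a slowly-varying weight) one arranges the main term to beat the error, giving \eqref{eq:delta_j} with a possibly smaller constant $C$.

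The main obstacle is exactly this separation of the diagonal contribution from the off-diagonal one: because consecutive Littlewood-Paley blocks overlap in frequency, $\Delta_j b$ is never literally equal to the $j$-th summand, and a crude triangle-inequality bound on the neighbours is of the same order as the main term. I expect the cleanest route is genuine lacunarity — choose $j_\ell$ so sparse that $2^{j_\ell}\mathcal{C} \cap 2^{j}\mathcal{C} = \emptyset$ whenever $j\notin\{j_\ell-1,j_\ell,j_\ell+1\}$, build $b = \sum_\ell 2^{-j_\ell\eta}2^{j_\ell}h(2^{j_\ell}x)$, verify $b\in\mathcal{B}^\eta_\infty$ by the disjointness (each $\|\Delta_j b\|_\infty \lesssim 2^{-j\eta}2^j$ trivially), and then obtain \eqref{eq:delta_j} only along $j=j_\ell$ — and finally note that the statement as used in Section~\ref{sec:optimality} is invoked for a sequence $j^\star$ tending to infinity, so the lower bound along a sparse subsequence is in fact all that the application needs; if a literal ``all $j\ge J$'' is required, one instead fills every scale but multiplies the $j$-th coefficient by a factor $\theta^{j}$ with $\theta>1$ chosen so that $\theta\|h\|_\infty > 2\|h\|_{L^1}(1+2^{1-\eta})$, absorbing the off-diagonal terms and yielding \eqref{eq:delta_j} at every $j\ge J$ with $\eta$ replaced by $\eta - \log_2\theta$, which can still be taken in any neighbourhood of $\gamma\in(-1,0)$. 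The remaining verifications — smoothness and decay of $b$, convergence of the series in $\mathcal{B}^\eta_\infty$, and the identity $b\notin\bigcup_{\eta'>\eta}\mathcal{B}^{\eta'}_\infty$ used in the discussion — are then routine consequences of \eqref{eq:delta_j}.
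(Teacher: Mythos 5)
There is a genuine gap. Your construction $b=\sum_{j'} 2^{-j'\eta}\,h(2^{j'}\cdot)$ (after your own correction of the normalisation) is a sound starting point, but neither of the two resolutions you offer for the overlap problem actually proves \eqref{eq:delta_j} for \emph{all} $j\geq J$, which is what the lemma asserts and what Section~\ref{sec:optimality} uses (the index $j^\star$ there is dictated by $k$ and sweeps through essentially all large integers, so a lacunary subsequence is not enough without reworking that argument). Your second fix — multiplying the $j$-th coefficient by $\theta^j$ — cannot work: the ratio between the diagonal contribution to $\Delta_j b$ and the contribution of the $(j+1)$-st summand is invariant under any geometric reweighting of the coefficients (it only shifts $\eta$ to $\eta-\log_2\theta$), and for $\eta<0$ the crude bound on the $(j+1)$-neighbour, $\|h\|_{L^1}\,2^{-(j+1)\eta}\|h\|_\infty = 2^{-\eta}\|h\|_{L^1}\|h\|_\infty\,2^{-j\eta}$, always dominates the diagonal term $2^{-j\eta}\|h\ast h\|_\infty \le 2^{-j\eta}\|h\|_{L^1}\|h\|_\infty$. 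So the triangle inequality is structurally incapable of giving the lower bound, no matter how you tune the weights.

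The repair within your own framework is short, and you came close to it: instead of comparing sup norms, evaluate at the single point $x=0$ and use that in the standard Littlewood--Paley construction (\cite[Prop.~2.10]{bahouri2011fourier}) the multiplier $\varphi=\widehat h$ is nonnegative, radial and smooth. By Parseval, the contribution of the $j'$-th summand to $\Delta_j b(0)$ equals $(2\pi)^{-1}\,2^{-j'\eta}\int \varphi(2^{-j}\xi)\,\varphi(\xi/2^{j'})\,2^{-j'}d\xi\geq 0$ for every $j'$, and the diagonal term $j'=j$ equals $(2\pi)^{-1}2^{-j\eta}\int\varphi^2>0$; hence $\|\Delta_j b\|_\infty\geq \Delta_j b(0)\geq c\,2^{-j\eta}$ for all $j$, with no cancellation to control. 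Combined with your upper bound $\|\Delta_j b\|_\infty\lesssim 2^{-j\eta}$ (which shows $b\in\mathcal{B}^\eta_\infty$, taking the limit of the partial sums in $\mathcal{S}'$ and checking the uniform block bounds), this closes the proof — and in fact gives the conclusion at $\eta=\gamma$ itself, with no need for the ``in any neighbourhood'' caveat. For comparison, the paper proceeds quite differently: it takes the explicit function $b(x)=x^\gamma\mathds{1}_{(0,1)}(x)$, computes $2^{j\gamma}\|\Delta_j b\|_\infty\geq |\int_0^{2^j}h(-y)y^\gamma dy|$ by scaling, and then uses analyticity of $\eta\mapsto\int_{\R_+}h(-y)y^\eta dy$ to guarantee non-vanishing for some $\eta$ near $\gamma$ — which is exactly why the lemma is stated with the neighbourhood qualifier. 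Your route, once fixed as above, is arguably cleaner and slightly stronger, but as written it does not reach the stated conclusion.
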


\begin{proof}
We provide an example in $\R$, but a similar one works in $\R^d$. Let $b(x)=x^{\gamma} \mathds{1}_{(0,1)}(x)$. Then 
\begin{align*}
\Delta_j b(x) = \int_{0}^1 2^j h(2^j(x-y) y^{\gamma} \, dy = 2^{-j \gamma} \int_0^{2^j} h(2^j x -y) y^{\gamma} \, dy .
\end{align*}
It follows that
\begin{align*}
2^{j \gamma} \| \Delta_j b \|_{\infty} = \sup_{x \in \R} \Big| \int_0^{2^j} h(x -y) y^{\gamma} \, dy \Big| \ge \Big| \int_0^{2^j} h(-y) y^{\gamma} \, dy\Big| .
\end{align*}
By a dominated convergence argument, using that $\gamma >-1$ and that $h$ is a Schwartz function, we have that $\int_0^{2^j} h(-y) y^{\gamma} dy$ converges to $\int_{\R_{+}} h(-y) y^{\gamma} dy \in \R$ as $j$ goes to $\infty$. From this and the equality above, one can deduce that  $\sup_{j \ge -1} 2^{j \gamma} \| \Delta_j b \|_{\infty} < \infty$ and therefore that $b \in \mathcal{B}_\infty^\gamma$. Moreover, this implies that there exists $J \ge 0$, such that for any $j \ge J$, we have
\begin{align*}
2^{j \gamma} \| \Delta_j b \|_{\infty} \ge \frac{1}{2} \Big| \int_{\R_{+}} h(-y) y^{\gamma} \, dy\Big|.
\end{align*}
Now we claim that the integral in the right-hand side can only vanish for isolated values of $\gamma\in (0,1]$. Indeed, the function $H:\eta \mapsto \int_{\R_{+}} h(-y) y^{\eta} dy$ is  analytic  for $\eta \in (-1,0]$. Since for $\eta = 0$, we have $\int_{\R_{+}} h(-y)  dy = \| \mathds{1}_{\R_{+}} \|_{\mathcal{B}_\infty^0}>0$, we know that $H$ cannot vanish on any open interval. Hence, in any neighbourhood of $\gamma$, there exists $\eta$ such that $H(\eta)>0$. So \eqref{eq:delta_j} is satisfied.
\end{proof}

\subsection{The Stochastic Sewing Lemma and applications}

\label{appendix:ssl}We recall the Stochastic Sewing Lemma as it is stated in \cite{athreya2020well}. 

\begin{lemma}[Theorem 4.1 in \cite{athreya2020well} with $\alpha_1=\alpha_2=0$] \label{lem:SSL}
Let $0\leq S<T\leq 1$, $m \in [2, \infty)$ and $q \in [m,\infty]$. Let $(\Omega,\mathcal{F},\mathbb{F},\mathbb{P})$ be a filtered probability space, let $A: \Delta_{[S,T]} \rightarrow L^m$ a process such that $A_{s,t}$ is $\mathcal{F}_t$-measurable for any $(s,t) \in \Delta_{[S,T]}$. We assume that there are constants $\Gamma_1,\Gamma_2\geq 0$, and $\varepsilon_1,\varepsilon_2>0$ such that for any $(s,t) \in \Delta_{[S,T]}$ and $u=(s+t)/2$,
\begin{align}
    \|\EE^s[\delta A_{s,u,t}]\|_{L^q}&\leq \Gamma_1 \,  (t-s)^{1+\varepsilon_1},\label{eq:condsew1}\\
   \Big\| \Big( \EE^S | \delta A_{s,u,t} |^m \Big)^{\frac{1}{m}}  \Big\|_{L^q} &\leq \Gamma_2 \,  (t-s)^{\frac{1}{2}+\varepsilon_2}. \label{eq:condsew2}
\end{align}
Then there is a process $(\mathcal{A}_t)_{t\in [S,T]}$ such that, for any $t \in [S,T]$ and any sequence of partitions $\Pi_k=\{t_i^k\}_{i=1}^{N_k}$ of $[S,t]$ whose mesh size converges to zero, one has
\begin{align} \label{eq:convAt}
    \mathcal{A}_t=\lim_{k\rightarrow \infty}\sum_{i=1}^{N_k-1} A_{t_i^k,t_{i+1}^k} \text{ in probability.} 
\end{align}

In addition, there is a constant $C$ which depends only on $\varepsilon_1$, $\varepsilon_2$, $m$ and independent of $S,T$ such that for every $(s,t) \in \Delta_{[S,T]}$, there is
\begin{align*}
    \Big\| \Big( \EE^S | \mathcal{A}_t-\mathcal{A}_s-A_{s,t} |^m \Big)^{\frac{1}{m}} \Big\|_{L^q} \leq C\, \Gamma_1  (t-s)^{ 1+\varepsilon_1} + C\, \Gamma_2  (t-s)^{\frac{1}{2} \varepsilon_2},
\end{align*}
and
\begin{align*}
    \|\EE^S[\mathcal{A}_t-\mathcal{A}_s-A_{s,t}]\|_{L^{q}}\leq C\, \Gamma_1 (t-s)^{ 1+\varepsilon_1}.
\end{align*}
\end{lemma}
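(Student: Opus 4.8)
The plan is to follow the dyadic sewing argument of \citet{le2020stochastic} (which is exactly the scheme behind Theorem~4.1 in \cite{athreya2020well}). Fix $(S,T)\in\Delta_{[0,1]}$. For $t\in[S,T]$ and $n\in\N$, let $\Pi^n_{S,t}=\{S=r^n_0<r^n_1<\dots<r^n_{2^n}=t\}$ be the uniform partition of $[S,t]$ into $2^n$ subintervals, and set $\mathcal{A}^n_{t}:=\sum_{i=1}^{2^n}A_{r^n_{i-1},r^n_i}$, so that $\mathcal{A}^0_t=A_{S,t}$. I would first show that $(\mathcal{A}^n_t)_n$ converges in the conditional norm $f\mapsto\big\|(\EE^S|f|^m)^{1/m}\big\|_{L^q}$, call the limit $\mathcal{A}_t$, then prove the two quantitative estimates for $\mathcal{A}_t-\mathcal{A}_s-A_{s,t}$ by reconstructing $\mathcal{A}_{s,t}$ from partitions of $[s,t]$, and finally check that $\sum_i A_{t^k_i,t^k_{i+1}}\to\mathcal{A}_t$ in probability for any sequence of partitions with mesh tending to $0$.

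\textbf{The core estimate.} Passing from $\Pi^n_{S,t}$ to $\Pi^{n+1}_{S,t}$ inserts the midpoint $m^n_i:=\tfrac{r^n_{i-1}+r^n_i}{2}$ of each subinterval, so by the very definition of the $3$-increment,
\begin{equation*}
\mathcal{A}^{n+1}_t-\mathcal{A}^n_t=-\sum_{i=1}^{2^n}\delta A_{r^n_{i-1},\,m^n_i,\,r^n_i}.
\end{equation*}
Thus everything reduces to estimating a sum of midpoint increments $\sum_i \delta A_{s_i,u_i,t_i}$ over a partition $\{s_i\}$ of $[S,t]$, which I would split as $\delta A_{s_i,u_i,t_i}=\EE^{s_i}[\delta A_{s_i,u_i,t_i}]+\xi_i$ with $\xi_i:=\delta A_{s_i,u_i,t_i}-\EE^{s_i}[\delta A_{s_i,u_i,t_i}]$. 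The ``drift'' part is handled by the triangle inequality and \eqref{eq:condsew1}:
\begin{equation*}
\Big\|\sum_i\EE^{s_i}[\delta A_{s_i,u_i,t_i}]\Big\|_{L^q}\leq \Gamma_1\sum_i(t_i-s_i)^{1+\varepsilon_1}.
\end{equation*}
For the ``martingale'' part, $\xi_i$ is $\mathcal{F}_{t_i}$-measurable with $t_i\leq s_{i+1}$ and $\EE^{s_i}\xi_i=0$, so $(\xi_i)_i$ is a martingale-difference sequence for the filtration $(\mathcal{F}_{t_i})_i$. Conditioning on $\mathcal{F}_S$ (all $s_i\geq S$), the Burkholder inequality followed by the conditional Minkowski inequality in $L^{m/2}$ and then the ordinary Minkowski inequality in $L^{q/2}$ (here $m\geq2$ and $q\geq m$ are used) give
\begin{equation*}
\Big\|\big(\EE^S\big|\textstyle\sum_i\xi_i\big|^m\big)^{1/m}\Big\|_{L^q}\leq C_m\Big(\sum_i\big\|(\EE^S|\xi_i|^m)^{1/m}\big\|_{L^q}^2\Big)^{1/2}\leq 2C_m\,\Gamma_2\Big(\sum_i(t_i-s_i)^{1+2\varepsilon_2}\Big)^{1/2},
\end{equation*}
using $\|(\EE^S|\xi_i|^m)^{1/m}\|_{L^q}\leq2\|(\EE^S|\delta A_{s_i,u_i,t_i}|^m)^{1/m}\|_{L^q}$ and \eqref{eq:condsew2}. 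For the uniform partition into $N=2^n$ pieces, the two right-hand sides become $\Gamma_1\,2^{-n\varepsilon_1}(t-S)^{1+\varepsilon_1}$ and $\lesssim\Gamma_2\,2^{-n\varepsilon_2}(t-S)^{1/2+\varepsilon_2}$, both geometrically summable in $n$.

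\textbf{Conclusion and general partitions.} Summing the telescoping identity over $n$ shows $\mathcal{A}_t:=\lim_n\mathcal{A}^n_t$ exists in $\|(\EE^S|\cdot|^m)^{1/m}\|_{L^q}$ (hence in $L^m$), and that $\mathcal{A}_t-A_{S,t}=\sum_{n\geq0}(\mathcal{A}^{n+1}_t-\mathcal{A}^n_t)$ satisfies the announced $\Gamma_1$-plus-$\Gamma_2$ bound with a constant depending only on $\varepsilon_1,\varepsilon_2,m$; running the same construction on $[s,t]$ produces $\mathcal{A}_{s,t}$ with $\|(\EE^S|\mathcal{A}_{s,t}-A_{s,t}|^m)^{1/m}\|_{L^q}\lesssim\Gamma_1(t-s)^{1+\varepsilon_1}+\Gamma_2(t-s)^{1/2+\varepsilon_2}$. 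For the conditional-expectation bound, note that $\EE^S\xi_i=0$ for every $i$, so $\EE^S$ kills every martingale contribution and only the $\Gamma_1$-terms survive, giving $\|\EE^S[\mathcal{A}_{s,t}-A_{s,t}]\|_{L^q}\lesssim\Gamma_1(t-s)^{1+\varepsilon_1}$. Additivity $\mathcal{A}_{s,t}+\mathcal{A}_{u,t}=\mathcal{A}_{s,t}$ and the identification $\mathcal{A}_{s,t}=\mathcal{A}_t-\mathcal{A}_s$ (so that $\mathcal{A}$ is a genuine one-parameter process) then follow from the last point: for an arbitrary partition $\Pi_k=\{t^k_i\}$ of $[S,t]$ with $|\Pi_k|\to0$, comparing $\Pi_k$ with the common refinement $\Pi_k\cup\Pi^n_{S,t}$ and re-applying the core estimate over each subinterval of $\Pi_k$ bounds $\big\|\sum_iA_{t^k_i,t^k_{i+1}}-\mathcal{A}_t\big\|_{L^m}$ by $O(|\Pi_k|^{\varepsilon_1}+|\Pi_k|^{\varepsilon_2})\to0$, which yields \eqref{eq:convAt} and, a fortiori, the independence of the limit from the chosen partition sequence.

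\textbf{Main obstacle.} The delicate step is the conditional martingale estimate in the core paragraph: one must verify carefully that $(\xi_i)_i$ is truly a martingale-difference sequence for the correct filtration — this rests on the adaptedness of $A$ and on $t_i\leq s_{i+1}$ for the dyadic (hence midpoint-based) refinements — and that the nested norm $\|(\EE^S|\cdot|^m)^{1/m}\|_{L^q}$ interacts correctly with Burkholder's inequality, which is precisely where the hypotheses $m\geq2$ and $q\geq m$ enter. The remaining bookkeeping (summing geometric series with partition-independent constants, and the comparison of arbitrary partitions with dyadic ones) is routine but must be carried out with constants uniform in $S,T$.
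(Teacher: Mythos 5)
Your proposal is correct and follows the standard dyadic stochastic-sewing argument of \citet{le2020stochastic} and of Athreya--Butkovsky--L\^e--Mytnik, which is precisely the proof behind the cited Theorem 4.1 of \cite{athreya2020well}; the paper itself only recalls the lemma and does not reprove it, so there is nothing to diverge from. Note that your bound $C\,\Gamma_2\,(t-s)^{\frac{1}{2}+\varepsilon_2}$ for the martingale contribution is the correct one --- the exponent $(t-s)^{\frac{1}{2}\varepsilon_2}$ appearing in the statement as reproduced here is a typo (a missing ``$+$'').
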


\begin{remark}\label{rmk:critical-sewing}
In Propositions~\ref{prop:drift-approx} and~\ref{prop:bound-E1-critic}, we use an adaptation of the Stochastic Sewing Lemma with critical-exponent, which comes from \cite[Theorem 4.5]{athreya2020well}. With the same notations and assumptions as the previous lemma (choose $q=m$), and with the additional hypothesis that there exist $\Gamma_3, \varepsilon_4 >0$, $\Gamma_4\geq 0$ such that
\begin{align}\label{eq:sts4}
\left\|\EE^s\left[\delta A_{s, u, t}\right]\right\|_{L^m} \leq \Gamma_3|t-s|+\Gamma_4|t-s|^{1+\varepsilon_4}.
\end{align}
Then for any $(s,t) \in \Delta_{[S,T]}$,
\begin{align}\label{eq:sts-critic}
\left\|\mathcal{A}_t-\mathcal{A}_s-A_{s, t}\right\|_{L^m} \leq C \Gamma_3\left(1+\left|\log \frac{\Gamma_1 T^{\varepsilon_1}}{\Gamma_3}\right|\right)(t-s)+C \Gamma_2(t-s)^{\frac{1}{2}+\varepsilon_2}+C \Gamma_4(t-s)^{1+\varepsilon_4}.
\end{align}
\end{remark}

We also recall a regularisation property of the Ornstein-Uhlenbeck process which was originally stated as Lemma~6.1 in \cite{athreya2020well}. This result is used several times in Section~\ref{sec:reg-O}.

\begin{lemma}\label{lem:6.1athreya}
Let $0 \leq  S \leq T$. Let $\beta \in (-2,0), m \in [2, \infty), n \in [m, \infty]$ and $p \in [n, \infty]$. There exists a constant $C$ such that for any bounded measurable function $h:\mathbb{R} \times[S, T] \times \T \times \Omega \rightarrow \mathbb{R}$ satisfying:
\begin{enumerate}
\item  for any fixed $(z, r, x) \in \mathbb{R} \times[S, T] \times \T$, the random variable $h(z, r, x)$ is $\mathcal{F}_S$-measurable;
\item there exists a constant $\Gamma_h>0$ such that
\begin{align*}
\sup _{(r, x) \in [S, T] \times \T}\ |\| h(\cdot, r, x)\|_{\mathcal{B}_p^\gamma}\|_{L^n} \leq \Gamma_h;
\end{align*}
\end{enumerate}
then for any $t \in[S, T]$, the following inequality holds true:
\begin{align*}
\sup_{x \in \T} \left\| \left( \EE^S \left| \int_S^t \int_{\T} p_{T-r}(x,y) h\left(O_r(y), r, y\right) d y d r \right|^m \right)^{\frac{1}{m}}  \right\|_{L^n} \leq C \Gamma_h (t-S)^{1+\frac{1}{4}(\beta-\frac{1}{p})} .
\end{align*}
\end{lemma}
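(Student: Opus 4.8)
The plan is to apply the Stochastic Sewing Lemma (Lemma \ref{lem:SSL}) with $q = n$ to the two-parameter process
\begin{align*}
A_{s,t} := \mathbb{E}^s \int_s^t \int_{\T} p_{T-r}(x,y)\, h(O_r(y),r,y)\, dy\, dr,
\end{align*}
for each fixed $x \in \T$, and to identify the associated sewn process $\mathcal{A}$ with $t \mapsto \int_S^t \int_\T p_{T-r}(x,y) h(O_r(y),r,y)\,dy\,dr$. The key observation is that $h(\cdot,r,y)$ is $\mathcal{F}_S$-measurable, so the conditional law of $O_r(y)$ given $\mathcal{F}_s$ (for $s \geq S$) interacts with $h$ exactly through the heat-kernel smoothing formula \eqref{eq:condexpf(OU)} of Lemma \ref{lem:reg-O}, namely $\mathbb{E}^s h(O_r(y),r,y) = G_{Q(r-s)} h(\cdot,r,y)\big(P_{r-s}O_s(y)\big)$, whose sup-norm is then controlled by $\|h(\cdot,r,y)\|_{\mathcal{B}_p^\beta} Q(r-s)^{\frac{1}{2}(\beta-\frac1p)}$ via Lemma \ref{eq:reg-S}$(i)$ (using $\beta - 1/p < 0$) and then by $C\|h(\cdot,r,y)\|_{\mathcal{B}_p^\beta}(r-s)^{\frac14(\beta-\frac1p)}$ thanks to the lower bound $Q(t) \geq C^{-1}\sqrt t$ from Lemma \ref{lem:bound-Qn}$(i)$.

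The main work is verifying the two sewing hypotheses. For \eqref{eq:condsew1}, since $\delta A_{s,u,t} = \mathbb{E}^s\int_u^t\int_\T p_{T-r}(x,y)\big(h(O_r(y),r,y) - \mathbb{E}^u[\cdots]\big)\,dy\,dr$ —wait, more carefully: $\delta A_{s,u,t} = A_{s,t}-A_{s,u}-A_{u,t}$, and a direct computation using the tower property shows $\mathbb{E}^s[\delta A_{s,u,t}] = \mathbb{E}^s\int_u^t\int_\T p_{T-r}(x,y)\big(\mathbb{E}^s h(O_r(y),r,y) - \mathbb{E}^u h(O_r(y),r,y)\big)\,dy\,dr$. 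Applying \eqref{eq:condexpf(OU)} both with conditioning at $s$ and at $u$, and then using that $G_{Q(r-s)}h - G_{Q(r-u)}h$ (evaluated at the respective OU-conditional-means) is a difference of heat-smoothed versions of the same Besov function, one gets a bound of order $\Gamma_h (r-u)^{\frac14(\beta-\frac1p)-\frac12}(\cdots)$ which after integration in $r$ against the bounded heat kernel mass $\int_\T p_{T-r}(x,y)\,dy = 1$ yields a power strictly larger than $1$ in $(t-s)$; here one uses $\beta > -2$ to make the resulting time exponent exceed $1$. For \eqref{eq:condsew2}, one bounds $\|(\mathbb{E}^S|\delta A_{s,u,t}|^m)^{1/m}\|_{L^n} \le \|(\mathbb{E}^S|A_{s,t}|^m)^{1/m}\|_{L^n} + \|(\mathbb{E}^S|A_{s,u}|^m)^{1/m}\|_{L^n} + \|(\mathbb{E}^S|A_{u,t}|^m)^{1/m}\|_{L^n}$, and for each piece uses $|A_{s,t}| \le \int_s^t\int_\T p_{T-r}(x,y)\,|\mathbb{E}^s h(O_r(y),r,y)|\,dy\,dr \le C\int_s^t \sup_{y}\|h(\cdot,r,y)\|_{\mathcal{B}_p^\beta}(r-s)^{\frac14(\beta-\frac1p)}\,dr$; taking the $L^n$-norm and applying hypothesis (2) (with a conditional Jensen step to pass the $\mathbb{E}^S$ inside and absorb it) gives $C\Gamma_h (t-s)^{1+\frac14(\beta-\frac1p)}$, and since $\beta - 1/p > -2$ the exponent $1 + \frac14(\beta-\frac1p) > \frac12$, providing $\varepsilon_2 > 0$.

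The hard part will be the two routine-but-delicate estimates on $\mathbb{E}^s\big[G_{Q(r-s)}h - G_{Q(r-u)}h\big]$ in the proof of \eqref{eq:condsew1}: one must carefully write $G_{Q(r-s)}h(P_{r-s}O_s) - G_{Q(r-u)}h(P_{r-u}O_u)$ as the superposition of (a) a variance-increment term $(G_{Q(r-s)} - G_{Q(r-u)})h$ evaluated at a common point and (b) a spatial-shift term $G_{Q(r-u)}h$ evaluated at two nearby points $P_{r-s}O_s$ vs $P_{r-u}O_u$, then invoke Lemma \ref{eq:reg-S}$(ii)$–$(iii)$ for (a) and a Lipschitz/Hölder bound from Lemma \ref{eq:reg-S}$(iii)$ plus Lemma \ref{lem:reg-Pnh}$(i)$ (continuity of $O$) for (b), tracking that the $Q(r-s)^{\frac12(\beta-\lambda-\frac1p)}$ factors combine with $|Q(r-s)-Q(r-u)| \lesssim (u-s)^{1/2}$-type bounds to yield the claimed time exponent $1+\varepsilon_1$. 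Once \eqref{eq:condsew1} and \eqref{eq:condsew2} hold, together with the convergence in probability $\sum_i A_{t_i^k,t_{i+1}^k} \to \int_S^t\int_\T p_{T-r}(x,y)h(O_r(y),r,y)\,dy\,dr$ (proved by a standard telescoping-and-Lipschitz argument as in the third step of the proof of Proposition \ref{prop:ssl-o-on-2}, using the smoothness of $h$), Lemma \ref{lem:SSL} gives $\|(\mathbb{E}^S|\mathcal{A}_t - \mathcal{A}_S - A_{S,t}|^m)^{1/m}\|_{L^n} \le C\Gamma_1(t-S)^{1+\varepsilon_1} + C\Gamma_2(t-S)^{\frac12+\varepsilon_2}$, and since $\mathcal{A}_S = 0$ and the first two terms are dominated by the second-type term of order $(t-S)^{1+\frac14(\beta-\frac1p)}$, while $\|(\mathbb{E}^S|A_{S,t}|^m)^{1/m}\|_{L^n} \le C\Gamma_h(t-S)^{1+\frac14(\beta-\frac1p)}$ by the same estimate used for \eqref{eq:condsew2}, the triangle inequality yields the desired bound. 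Finally one takes the supremum over $x \in \T$.
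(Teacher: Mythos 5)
The paper does not reprove this statement: its proof is a two-line citation of \cite[Lemma 6.1]{athreya2020well}, applied with the weight $X_r(y)=p_{T-r}(x,y)$ (whose total mass is $1$), followed by a supremum over $x\in\T$. Your proposal instead reconstructs the sewing proof behind that cited result. The architecture you choose (germ $A_{s,t}=\EE^s\int_s^t\int_\T p_{T-r}(x,y)\,h(O_r(y),r,y)\,dy\,dr$, conditional Gaussian smoothing via Lemma~\ref{lem:reg-O}, then Lemma~\ref{eq:reg-S}$(i)$ and Lemma~\ref{lem:bound-Qn}$(i)$) is the right one, but the execution has a genuine gap precisely at the step you single out as ``the hard part''.

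With your germ, the tower property gives $\EE^s[\delta A_{s,u,t}]=\EE^s\int_u^t\int_\T p_{T-r}(x,y)\,\big(\EE^s h-\EE^s\EE^u h\big)\,dy\,dr=0$, so \eqref{eq:condsew1} holds with $\Gamma_1=0$ and there is nothing to estimate. You miss this cancellation and instead plan to bound $\EE^s h(O_r(y),r,y)-\EE^u h(O_r(y),r,y)$ term by term through the variance-increment/spatial-shift decomposition. Carried out as described, that route caps the time exponent at $1+\frac14(\beta-\frac1p)$: the variance increment $|Q(r-s)-Q(r-u)|$ buys at most a factor $(u-s)^{1/2}$ while the accompanying derivative (or $\kappa$-H\"older) loss on $G_{Q(r-u)}h$ costs $(r-u)^{-1/2}$ (resp.\ $(r-u)^{-\kappa/2}$), and these exactly offset after integrating in $r$. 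Since $\beta-\frac1p<0$, the resulting exponent never exceeds $1$, so \eqref{eq:condsew1} with some $\varepsilon_1>0$ cannot be obtained this way; the argument only closes once you observe the exact vanishing. Two further points. First, your verification of \eqref{eq:condsew2} yields $\varepsilon_2=\frac12+\frac14(\beta-\frac1p)$, which is positive only when $\beta-\frac1p>-2$; that inequality is not among the hypotheses (only $\beta>-2$ is), and the lemma is invoked in the paper at and below $\beta-\frac1p=-2$ (e.g.\ in the proof of Proposition~\ref{prop:drift-approx}$(b)$, where it is applied with $\beta-\varepsilon$ in place of $\beta$), so your argument covers only part of the stated range. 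Second, the identification of the sewing limit with $\int_S^t\int_\T p_{T-r}(x,y)h(O_r(y),r,y)\,dy\,dr$ cannot rely on ``the smoothness of $h$'': here $h$ is only bounded and measurable with a uniform Besov bound, so this step needs a different argument (e.g.\ martingale convergence of $\EE^{t_i^k}h(O_r(y),r,y)$ plus dominated convergence, or smoothing as in the convergence step of Lemma~\ref{lem:bound-Khn}).
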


\begin{proof}
Let $x \in \T$. Apply \cite[Lemma 6.1]{athreya2020well} with $X_r(y) = p_{T-r}(x,y)$. Since $\int_\T p_{T-r}(x,y) dy =1$, all the assumptions of the Lemma are satisfied. To conclude, take the supremum over $x \in \T$ in the result.
\end{proof}

\section{A critical Gr\"onwall-type lemma}\label{app:B}

In Lemma 3.1 of \cite{GHR2023}, a Gr\"onwall-type lemma with logarithmic factor and perturbation $\eta$ was established for vector-valued functions depending on a single parameter. Here an extension to two-parameter functions is needed.

\begin{lemma}\label{lem:rate-critical}
For $(E, \|\cdot\|)$ a normed vector space, $\ell >0, \ C_{1}, C_{2} \ge 0$ and $\eta\in(0,1)$, we define $\mathcal{R}_{2}(\eta,\ell, C_{1}, C_{2})$ the set of functions from $\Delta_{[0,1]}$ to $E$ which fulfills the following conditions:
\begin{enumerate}[label=(\roman*)]
\item $f$ is bounded, $f_{0,0}=0$ and for any $(s,t)\in \Delta_{[0,1]}$ such that $t-s \leq \ell$,
\begin{equation}\label{eq:boundIncf}
\begin{split}
\|f_{s,t}\| &\leq C_1 \, (\|f_{0,\cdot}\|_{L^\infty_{[s,t]}E} + \eta) \, (t-s)^{\frac{1}{2}}  \\
&\quad + C_2 \, ( \|f_{0,\cdot}\|_{L^\infty_{[s,t]}E}+\eta ) \, \big| \log \big( \|f_{0,\cdot}\|_{L^\infty_{[s,t]}E} + \eta \big)\big| \, (t-s). 
\end{split}
\end{equation}

\item for any $(s,t)\in \Delta_{[0,1]}$, $ \|f_{0,t}\| - \|f_{0,s}\| \leq \|f_{s,t}\|$.%
\end{enumerate}
Then for any $\delta\in(0, e^{-C_{2}})$, there exists $\bar{\eta} \equiv \bar{\eta}(C_{1},C_{2},\ell,\delta)$ such that for any $\eta<\bar{\eta}$ and any $f\in \mathcal{R}_{2}(\eta,\ell, C_{1}, C_{2})$,
\begin{equation*}
\| f_{0,\cdot} \|_{L^\infty_{[0,1]} E} \leq \eta^{e^{-C_{2}}-\delta} .
\end{equation*}
\end{lemma}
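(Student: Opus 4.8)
\emph{Proof strategy.} The plan is to reduce the statement to a one-parameter (scalar) critical Gr\"onwall inequality of the type treated in \cite[Lemma 3.1]{GHR2023}, and then to run the Osgood-type argument underlying that result.

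Fix $f\in\mathcal{R}_2(\eta,\ell,C_1,C_2)$ and set $g(t):=\|f_{0,t}\|$, $u(t):=\sup_{r\in[0,t]}g(r)+\eta$, so that $u$ is non-decreasing, $u(0)=\eta$ (using $f_{0,0}=0$), and $\|f_{0,\cdot}\|_{L^\infty_{[s,t]}E}=\sup_{r\in[s,t]}g(r)$. The first task is to show, for $\eta$ below some threshold $\bar\eta_0(C_1,C_2,\ell)$, that $u(t)\le e^{-1}$ on $[0,1]$; this follows by a continuity/bootstrap argument, since by conditions (i)--(ii) the increments of $u$ on a short interval are small as soon as $u$ itself is small (the right-hand side of the increment bound below vanishes as the interval shrinks, because $g$ is bounded). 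Granting $u\le e^{-1}$, I would combine (ii), (i), the bounds $\sup_{r\in[s,t']}g(r)\le\sup_{r\in[s,t]}g(r)\le u(t)$ for $t'\in[s,t]$, and the monotonicity of $x\mapsto x|\log x|$ on $(0,e^{-1})$, to obtain the scalar inequality
\begin{equation*}
u(t)-u(s)\le C_1\,u(t)\,(t-s)^{1/2}+C_2\,u(t)\,|\log u(t)|\,(t-s),\qquad 0\le s\le t\le 1,\ t-s\le\ell.
\end{equation*}
Since $\|f_{0,\cdot}\|_{L^\infty_{[0,1]}E}=u(1)-\eta$, it then suffices to prove $u(1)\le\eta^{e^{-C_2}-\delta}$.

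For the latter I would pass to $w(t):=-\log u(t)\ge 1$, non-increasing with $w(0)=|\log\eta|=:L$, and discretise on a grid tuned to $\eta$: fixing a small $\kappa>0$ and, for $\eta$ small, setting $\tau:=\kappa/(C_2L)\le\ell$ and $t_i:=i\tau$ for $i=0,\dots,N:=\lfloor 1/\tau\rfloor$, which (together with the last short piece) covers $[0,1]$. Applying the displayed inequality on $[t_i,t_{i+1}]$, dividing by $u(t_{i+1})$, taking logarithms and using $-\log(1-x)\le(1+2\kappa)x$ for the small quantity $x=C_1\tau^{1/2}+C_2w(t_{i+1})\tau$ (small since $w(t_{i+1})\le L$ forces $C_2w(t_{i+1})\tau\le\kappa$, while $C_1\tau^{1/2}\to 0$ as $\eta\to 0$), I would get $w(t_{i+1})\ge\bigl(w(t_i)-(1+2\kappa)C_1\tau^{1/2}\bigr)/\bigl(1+(1+2\kappa)C_2\tau\bigr)$. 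Iterating, bounding $(1+(1+2\kappa)C_2\tau)^N\le e^{(1+2\kappa)C_2}$ and the resulting geometric error sum by $\bigl((1+2\kappa)C_2\tau\bigr)^{-1}$, and inserting $\tau=\kappa/(C_2L)$, yields
\begin{equation*}
w(1)\ge L\Bigl(e^{-(1+2\kappa)C_2}-\tfrac{C_1}{\sqrt{C_2\kappa}}\,L^{-1/2}\Bigr).
\end{equation*}
Choosing first $\kappa$ small enough that $e^{-(1+2\kappa)C_2}\ge e^{-C_2}-\delta/2$, and then $\bar\eta$ small enough (below $\bar\eta_0$, and small enough for $\tau\le\ell$, for the smallness of $x$ above, and for $\tfrac{C_1}{\sqrt{C_2\kappa}}L^{-1/2}\le\delta/2$), I would conclude $w(1)\ge L(e^{-C_2}-\delta)$, i.e. $u(1)=e^{-w(1)}\le\eta^{e^{-C_2}-\delta}$, which is the claim.

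I expect the main obstacle to be the handling of the supercritical term $C_1(t-s)^{1/2}$: it rules out a direct ODE comparison (the formal derivative of $u$ from the increment bound is $+\infty$), so a partition is unavoidable, and the delicate point is that the mesh must depend on $\eta$, of order $(C_2|\log\eta|)^{-1}$, so that the logarithmic feedback is integrated essentially exactly — producing the sharp constant $e^{-C_2}$ — while the square-root term contributes only an $O(|\log\eta|^{1/2})$ correction, negligible against $|\log\eta|$ and hence absorbed into the loss $\delta$. The bootstrap $u\le e^{-1}$ is a secondary but slightly technical step, requiring some care because $g$ is not assumed continuous.
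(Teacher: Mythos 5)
Your argument is correct, but it organises the discretisation differently from the paper. The paper's proof (following Lemma~3.1 of \cite{GHR2023}) partitions by the \emph{values} of $\|f_{0,t}\|$: it introduces stopping times $t_{k+1}=\inf\{t>t_k:\eta+\|f_{0,t}\|\ge a^{k+1}\eta\}$ for a fixed $a>1$ tuned to $\delta$, uses condition (ii) to get the lower bound $\|f_{t_k,t_{k+1}}\|\ge(a^{k+1}-a^k)\eta$, combines this with \eqref{eq:boundIncf} to bound $t_{k+1}-t_k$ from below by roughly $\big(C_2\frac{a}{a-1}|\log(a^{k+1}\eta)|\big)^{-1}$, and sums these crossing times to show that level $a^{N+1}\eta\approx\eta^{e^{-C_2}-\delta}$ cannot be reached before time $1$. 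You instead partition \emph{time} uniformly with an $\eta$-dependent mesh $\tau\sim(C_2|\log\eta|)^{-1}$, reduce via (ii) and the monotonicity of $x\mapsto x|\log x|$ to a scalar inequality for $u(t)=\sup_{[0,t]}\|f_{0,\cdot}\|+\eta$, and iterate a multiplicative recursion on $w=-\log u$; the geometric product $(1+(1+2\kappa)C_2\tau)^{-N}\ge e^{-(1+2\kappa)C_2}$ produces the same sharp constant, and the square-root term contributes only $O(|\log\eta|^{1/2})$, absorbed into $\delta$. I checked your recursion and the error-sum bound $\frac{C_1}{\sqrt{C_2\kappa}}L^{1/2}$; they are right. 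What each buys: your scalar reduction makes the Osgood mechanism very transparent and isolates exactly where the loss $\delta$ comes from; the paper's level-set version avoids the explicit preliminary bootstrap $u\le e^{-1}$ (the levels $a^k\eta$ are automatically small in the relevant range) and transfers verbatim from the one-parameter case.

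Two small points to tighten. First, your bootstrap is not circular and does go through, because (i), (ii) and the boundedness of $f$ imply that $u$ is non-decreasing with increments $u(t)-u(s)\le\sup_{r\in(s,t]}\|f_{s,r}\|\to 0$ uniformly as $t-s\to 0$, hence $u$ is continuous; this is exactly the continuity observation the paper also makes, and it lets the standard $T^*=\sup\{t:u\le e^{-1}\text{ on }[0,t]\}$ argument close. Second, your mesh $\tau=\kappa/(C_2 L)$ is undefined when $C_2=0$; as in the paper, one should note that $C_1,C_2>0$ may be assumed without loss of generality (enlarging $C_2$ slightly and adjusting $\delta$ accordingly). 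Neither point affects the validity of the approach.
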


\begin{remark}
\begin{itemize}
\item For a one-parameter function $f$, denote $f_{s,t}$ the increment $f_{t}-f_{s}$. Then the condition $(ii)$ above is automatically satisfied. Hence one recovers Lemma~3.1 from \cite{GHR2023}.

\item In this paper, the two-parameter function satisfies the relation $f_{s,t} = f_{0,t} - P_{t-s} f_{0,s}$. The contraction property of the Gaussian semigroup ensures that condition $(ii)$ is fulfilled. More generally, the above lemma could be useful for differential equations written in mild form.
\end{itemize}
\end{remark}

\begin{proof}
The proof follows the same lines as the proof from \cite{GHR2023}. Hence assume without any loss of generality that $C_1, C_2 >0$. Let $\delta \in (0,\frac{1}{2} e^{-C_2})$ and $a > 1$ such that $e^{-C_2 \frac{a\log a}{a-1}} =  e^{-C_2}-\delta$, let $\varepsilon \equiv \varepsilon(C_2,\delta) \in (0,1)$ be such that $e^{-C_2 \frac{a\log a}{(a-1)(1-\varepsilon)}} \geq e^{-C_2}-2 \delta$. Define $\alpha = 1- e^{-C_2 \frac{a\log a}{(a-1)(1-\varepsilon)}}$. For $\eta \in (0,1)$ and $f\in \mathcal{R}_{2}(\eta,\ell,C_{1},C_{2})$, define again an increasing sequence of times as follows: $t_0=0$ and for $k \in \N$, 
\begin{equation*}
t_{k+1} = 
\begin{cases}
\inf \{t>t_{k}\colon ~  \eta+ \|f_{0,t}\| \ge a^{k+1}\, \eta \} \wedge 1 & \text{ if } t_{k}<1,\\
1 & \text{ if } t_{k}=1 .
\end{cases}
\end{equation*}
By \eqref{eq:boundIncf} and the boundedness of $f$, the mapping $(s,t)\mapsto \|f_{s,t}\|$ is continuous. By definition of $(t_k)_{k\in \N}$, it follows that for any $k$,
$\|f_{0,\cdot}\|_{L^\infty_{[0,t_{k}]}E} = a^k\eta - \eta \leq a^k \eta$.
Let 
\begin{equation*}%
N = \left\lfloor - \alpha \frac{\log(\eta)}{\log(a)} \right\rfloor -1 ,
\end{equation*}
and let $\bar{\eta}_{0} \equiv \bar{\eta}_{0}(C_{2},\delta)$ be such that $N\geq 1$ when $\eta<\bar{\eta}_{0}$.
As in \cite{GHR2023}, it suffices to show the following statement to prove the lemma:
\begin{align}\label{eq:statementEpsBar}
\mbox{There exists $\bar{\eta} \equiv \bar{\eta}(C_1,C_{2},\ell, \delta)$ such that } \forall \eta<\bar{\eta} \mbox{ and } \forall f\in \mathcal{R}_{2}(\eta,\ell,C_{1},C_{2}), ~ t_{N+1}=1.
\end{align}
To prove the statement \eqref{eq:statementEpsBar}, fix $\eta < \bar{\eta}_{0}$ and $f\in \mathcal{R}_{2}(\eta,\ell,C_{1},C_{2})$. 
Let $N_{0} = \inf\left\{ k\in \N\colon~ t_{k+1} = 1 \right\}$.
Proving that $N_{0}\leq N$ will imply that $t_{N+1} = 1$. First, if $N_{0} = 0$, we have directly that $N\geq N_{0}$. Thus we now we assume that $N_{0}\geq 1$.
 For any $k \leq N_{0}-1$, one has $\eta+\|f_{0,t_{k}}\| = a^k\, \eta$ and $\eta+\|f_{0,t_{k+1}}\| = a^{k+1}\, \eta$. Now use that $\|f_{0,t_{k}}\|\leq \|f_{0,t_{k+1}}\|$ and condition $(ii)$ to get that  $ \|f_{t_{k},t_{k+1}} \|\geq (a^{k+1}-a^k) \eta$.
 Then this inequality with \eqref{eq:boundIncf} suffice, as in \cite{GHR2023}, to prove that for some $\bar{\eta}_1>0$, for any $\eta < \bar{\eta}_1$, there is
\begin{align*}
1 &\leq \frac{C_1}{\sqrt{\ell}} \frac{a}{a-1}\,  (t_{k+1}-t_{k})^{\frac{1}{2}} + C_2 \frac{a}{a-1} \,  |\log (a^{k+1} \eta) | \, (t_{k+1}-t_{k}) .
\end{align*}
The above inequality implies that $t_{N+1} = 1$, exactly as in \cite{GHR2023}.
\end{proof}

\end{appendices}

\bibliographystyle{abbrvnat}

\begin{thebibliography}{34}
\providecommand{\natexlab}[1]{#1}
\providecommand{\url}[1]{\texttt{#1}}
\expandafter\ifx\csname urlstyle\endcsname\relax
  \providecommand{\doi}[1]{doi: #1}\else
  \providecommand{\doi}{doi: \begingroup \urlstyle{rm}\Url}\fi

\bibitem[Anton et~al.(2020)Anton, Cohen, and Quer-Sardanyons]{anton2020fully}
R.~Anton, D.~Cohen, and L.~Quer-Sardanyons.
\newblock A fully discrete approximation of the one-dimensional stochastic heat
  equation.
\newblock \emph{IMA J. Numer. Anal.}, 40\penalty0 (1):\penalty0 247--284, 2020.

\bibitem[Anzeletti et~al.(2023{\natexlab{a}})Anzeletti, L{\^e}, and
  Ling]{AnzelettiLeLing}
L.~Anzeletti, K.~L{\^e}, and C.~Ling.
\newblock {Path-by-path uniqueness for stochastic differential equations under
  Krylov-R\"ockner condition}.
\newblock \emph{Preprint arXiv:2304.06802}, 2023{\natexlab{a}}.

\bibitem[Anzeletti et~al.(2023{\natexlab{b}})Anzeletti, Richard, and
  Tanr\'e]{anzeletti2021regularisation}
L.~Anzeletti, A.~Richard, and E.~Tanr\'e.
\newblock Regularisation by fractional noise for one-dimensional differential
  equations with distributional drift.
\newblock \emph{Electron. J. Probab.}, 28:\penalty0 1--49, 2023{\natexlab{b}}.

\bibitem[Athreya et~al.(2024)Athreya, Butkovsky, L\^{e}, and
  Mytnik]{athreya2020well}
S.~Athreya, O.~Butkovsky, K.~L\^{e}, and L.~Mytnik.
\newblock Well-posedness of stochastic heat equation with distributional drift
  and skew stochastic heat equation.
\newblock \emph{Comm. Pure Appl. Math.}, 77\penalty0 (5):\penalty0 2708--2777,
  2024.

\bibitem[Bahouri et~al.(2011)Bahouri, Chemin, and Danchin]{bahouri2011fourier}
H.~Bahouri, J.-Y. Chemin, and R.~Danchin.
\newblock \emph{Fourier analysis and nonlinear partial differential equations},
  volume 343 of \emph{Grundlehren der mathematischen Wissenschaften
  [Fundamental Principles of Mathematical Sciences]}.
\newblock Springer, Heidelberg, 2011.


\bibitem{BounebacheZambotti}
S.~Bounebache and L.~Zambotti.
\newblock A skew stochastic heat equation.
\newblock \emph{J. Theoret. Probab.}, 27:\penalty0 168--201, 2014.


\bibitem[Br\'{e}hier and Debussche(2018)]{BrehierDebussche:17}
C.-E. Br\'{e}hier and A.~Debussche.
\newblock Kolmogorov equations and weak order analysis for {SPDE}s with
  nonlinear diffusion coefficient.
\newblock \emph{J. Math. Pures Appl. (9)}, 119:\penalty0 193--254, 2018.

\bibitem[Butkovsky et~al.(2021)Butkovsky, Dareiotis, and
  Gerencs\'{e}r]{butkovsky2021approximation}
O.~Butkovsky, K.~Dareiotis, and M.~Gerencs\'{e}r.
\newblock Approximation of {SDE}s: a stochastic sewing approach.
\newblock \emph{Probab. Theory Related Fields}, 181\penalty0 (4):\penalty0
  975--1034, 2021.

\bibitem[Butkovsky et~al.(2023{\natexlab{a}})Butkovsky, Dareiotis, and
  Gerencs\'{e}r]{butkovsky2021optimal}
O.~Butkovsky, K.~Dareiotis, and M.~Gerencs\'{e}r.
\newblock Optimal rate of convergence for approximations of {SPDE}s with
  nonregular drift.
\newblock \emph{SIAM J. Numer. Anal.}, 61\penalty0 (2):\penalty0 1103--1137,
  2023{\natexlab{a}}.

\bibitem[Butkovsky et~al.(2023{\natexlab{b}})Butkovsky, L{\^e}, and
  Mytnik]{ButkovskyLeMytnik}
O.~Butkovsky, K.~L{\^e}, and L.~Mytnik.
\newblock {Stochastic equations with singular drift driven by fractional
  Brownian motion}.
\newblock \emph{Preprint arXiv:2302.11937}, 2023{\natexlab{b}}.

\bibitem[Conus et~al.(2019)Conus, Jentzen, and
  Kurniawan]{ConusJentzenKurniawan:14}
D.~Conus, A.~Jentzen, and R.~Kurniawan.
\newblock Weak convergence rates of spectral {G}alerkin approximations for
  {SPDE}s with nonlinear diffusion coefficients.
\newblock \emph{Ann. Appl. Probab.}, 29\penalty0 (2):\penalty0 653--716, 2019.

\bibitem{Davie}
A.~M. Davie.
\newblock	 Uniqueness of solutions of stochastic differential equations.
\newblock \emph{Int. Math. Res. Not. IMRN}, no.~24, Art. ID rnm124, 26, 2007.

\bibitem[Davie and Gaines(2001)]{davie2001convergence}
A.~M. Davie and J.~G. Gaines.
\newblock Convergence of numerical schemes for the solution of parabolic
  stochastic partial differential equations.
\newblock \emph{Math. Comp.}, 70\penalty0 (233):\penalty0 121--134, 2001.

\bibitem[Debussche(2011)]{Debussche:11}
A.~Debussche.
\newblock Weak approximation of stochastic partial differential equations: the
  nonlinear case.
\newblock \emph{Math. Comp.}, 80\penalty0 (273):\penalty0 89--117, 2011.

\bibitem[Friz et~al.(2021)Friz, Hocquet, and L{\^e}]{FHL}
P.~K. Friz, A.~Hocquet, and K.~L{\^e}.
\newblock Rough stochastic differential equations.
\newblock \emph{Preprint arXiv:2106.10340}, 2021.

\bibitem[Galeati and Gerencs{\'e}r(2022)]{GaleatiGerencser}
L.~Galeati and M.~Gerencs{\'e}r.
\newblock {Solution theory of fractional SDEs in complete subcritical regimes}.
\newblock \emph{Preprint arXiv:2207.03475}, 2022.

\bibitem[Galeati et~al.(2023)Galeati, Harang, and
  Mayorcas]{GaleatiHarangMayorcas}
L.~Galeati, F.~A. Harang, and A.~Mayorcas.
\newblock Distribution dependent {SDE}s driven by additive fractional
  {B}rownian motion.
\newblock \emph{Probab. Theory Related Fields}, 185\penalty0 (1-2):\penalty0
  251--309, 2023.

\bibitem[Gouden{\`e}ge et~al.(2023)Gouden{\`e}ge, Haress, and Richard]{GHR2023}
L.~Gouden{\`e}ge, E.~M. Haress, and A.~Richard.
\newblock {Numerical approximation of SDEs with fractional noise and
  distributional drift}.
\newblock \emph{Preprint arXiv:2302.11455}, 2023.

\bibitem[Gy\"ongy(1998)]{Gyongy:98}
I.~Gy\"ongy.
\newblock Lattice approximations for stochastic quasi-linear parabolic partial
  differential equations driven by space-time white noise. {I}.
\newblock \emph{Potential Anal.}, 9\penalty0 (1):\penalty0 1--25, 1998.

\bibitem[Gy\"ongy(1999)]{Gyongy:99}
I.~Gy\"ongy.
\newblock Lattice approximations for stochastic quasi-linear parabolic partial
  differential equations driven by space-time white noise. {II}.
\newblock \emph{Potential Anal.}, 11\penalty0 (1):\penalty0 1--37, 1999.

\bibitem[{Gy\"ongy} and Millet(2005)]{GyongyMillet:05}
I.~{Gy\"ongy} and A.~Millet.
\newblock On discretization schemes for stochastic evolution equations.
\newblock \emph{Potential Anal.}, 23\penalty0 (2):\penalty0 99--134, 2005.

\bibitem[{Gy\"ongy} and Nualart(1995)]{GyongyNualart:95}
I.~{Gy\"ongy} and D.~Nualart.
\newblock Implicit scheme for quasi-linear parabolic partial differential
  equations perturbed by space-time white noise.
\newblock \emph{Stochastic Process. Appl.}, 58\penalty0 (1):\penalty0 57--72,
  1995.

\bibitem[{Gy\"ongy} and Nualart(1997)]{GyongyNualart:97}
I.~{Gy\"ongy} and D.~Nualart.
\newblock Implicit scheme for stochastic parabolic partial differential
  equations driven by space-time white noise.
\newblock \emph{Potential Anal.}, 7\penalty0 (4):\penalty0 725--757, 1997.

\bibitem[Jentzen(2011)]{Jentzen:11}
A.~Jentzen.
\newblock Higher order pathwise numerical approximations of {SPDE}s with
  additive noise.
\newblock \emph{SIAM J. Numer. Anal.}, 49\penalty0 (2):\penalty0 642--667,
  2011.

\bibitem[Jentzen and Kloeden(2009)]{JentzenKloeden}
A.~Jentzen and P.~E. Kloeden.
\newblock The numerical approximation of stochastic partial differential
  equations.
\newblock \emph{Milan J. Math.}, 77:\penalty0 205--244, 2009.

\bibitem[Jentzen and Kloeden(2011)]{jentzen2011taylor}
A.~Jentzen and P.~E. Kloeden.
\newblock \emph{Taylor approximations for stochastic partial differential
  equations}, volume~83 of \emph{CBMS-NSF Regional Conference Series in Applied
  Mathematics}.
\newblock Society for Industrial and Applied Mathematics (SIAM), Philadelphia,
  PA, 2011.

\bibitem[Jentzen and Kurniawan(2021)]{JentzenKurniawan:15}
A.~Jentzen and R.~Kurniawan.
\newblock Weak convergence rates for Euler-type approximations of semilinear
  stochastic evolution equations with nonlinear diffusion coefficients.
\newblock \emph{Found. Comput. Math.}, 21\penalty0 (2):\penalty0 445--536, Apr
  2021.

\bibitem[Kloeden and Shott(2001)]{KloedenShott:01}
P.~E. Kloeden and S.~Shott.
\newblock Linear-implicit strong schemes for {I}t\^o-{G}alerkin approximations
  of stochastic {PDE}s.
\newblock \emph{J. Appl. Math. Stochastic Anal.}, 14\penalty0 (1):\penalty0
  47--53, 2001.
\newblock Special issue: Advances in applied stochastics.

\bibitem[Kruse(2014)]{kruse2012strong}
R.~Kruse.
\newblock \emph{Strong and weak approximation of semilinear stochastic
  evolution equations}, volume 2093 of \emph{Lecture Notes in Mathematics}.
\newblock Springer, Cham, 2014.

\bibitem[L\^{e}(2020)]{le2020stochastic}
K.~L\^{e}.
\newblock A stochastic sewing lemma and applications.
\newblock \emph{Electron. J. Probab.}, 25:\penalty0 Paper No. 38, 55, 2020.

\bibitem[L{\^e} and Ling(2021)]{le2021taming}
K.~L{\^e} and C.~Ling.
\newblock {Taming singular stochastic differential equations: A numerical
  method}.
\newblock \emph{Preprint arXiv:2110.01343}, 2021.

\bibitem[Lord and Tambue(2013)]{LordTambue:13}
G.~J. Lord and A.~Tambue.
\newblock Stochastic exponential integrators for the finite element
  discretization of {SPDE}s for multiplicative and additive noise.
\newblock \emph{IMA J. Numer. Anal.}, 33\penalty0 (2):\penalty0 515--543, 2013.

\bibitem[Lord et~al.(2014)Lord, Powell, and Shardlow]{LordPowellShardlow}
G.~J. Lord, C.~E. Powell, and T.~Shardlow.
\newblock \emph{An introduction to computational stochastic {PDE}s}.
\newblock Cambridge Texts in Applied Mathematics. Cambridge University Press,
  New York, 2014.

\bibitem[Printems(2001)]{Printems:01}
J.~Printems.
\newblock On the discretization in time of parabolic stochastic partial
  differential equations.
\newblock \emph{M2AN Math. Model. Numer. Anal.}, 35\penalty0 (6):\penalty0
  1055--1078, 2001.

\bibitem[Runst and Sickel(1996)]{runst2011sobolev}
T.~Runst and W.~Sickel.
\newblock \emph{Sobolev spaces of fractional order, {N}emytskij operators, and
  nonlinear partial differential equations}, volume~3 of \emph{De Gruyter
  Series in Nonlinear Analysis and Applications}.
\newblock Walter de Gruyter \& Co., Berlin, 1996.

\bibitem[Wang(2017)]{Wang:17}
X.~Wang.
\newblock Strong convergence rates of the linear implicit {E}uler method for
  the finite element discretization of {SPDE}s with additive noise.
\newblock \emph{IMA J. Numer. Anal.}, 37\penalty0 (2):\penalty0 965--984, 2017.

\end{thebibliography}

\end{document}